\CompileMatrices\SelectTips{cm}{12}
\theoremstyle{plain}
\newtheorem{Thm}{\sc Theorem}[section]
\newtheorem{theorem}[Thm]{\sc Theorem}
\newtheorem{corollary}[Thm]{\sc Corollary}
\newtheorem*{corollary*}{\sc Corollary}
\newtheorem{proposition}[Thm]{\sc Proposition}
\newtheorem*{proposition*}{\sc Proposition}
\newtheorem{lemma}[Thm]{\sc Lemma}
\newtheorem{question}[Thm]{\sc Questions}
\theoremstyle{remark}
\newtheorem{remark}[Thm]{Remark}
\newtheorem{example}[Thm]{Example}
\newtheorem*{example*}{Example}
\newtheorem*{remark*}{Remark}
\newcommand{\cA}{{\mathcal A}}
\newcommand{\cB}{{\mathcal B}}
\newcommand{\cE}{{\mathcal E}}
\newcommand{\cF}{{\mathcal F}}
\newcommand{\cG}{{\mathcal G}}
\newcommand{\cL}{{\mathcal L}}
\newcommand{\cM}{{\mathcal M}}
\newcommand{\cO}{{\mathcal O}}
\renewcommand{\AA}{{\mathbb A}}
\newcommand{\FF}{{\mathbb F}}
\newcommand{\GG}{{\mathbb G}}
\newcommand{\PP}{{\mathbb P}}
\newcommand{\QQ}{{\mathbb Q}}
\newcommand{\RR}{{\mathbb R}}
\newcommand{\VV}{{\mathbb V}}
\newcommand{\ev}{\mathop{\rm ev}}
\newcommand{\Bl}{{\mathop{\rm Bl\,}}}
\newcommand{\codim}{\mathop{\rm codim \, }}
\newcommand{\Der}{\mathop{\rm Der}}
\newcommand{\Exc}{\mathop{\rm Exc\, }}
\newcommand{\Flagf}{\underline{\mathop{\rm Flag}}}
\newcommand{\Flag}{{\mathop{\rm Flag}}}
\newcommand{\Gras}{\mathop{\rm Grass}}
\newcommand{\id}{{\mathop{\rm id}}}
\newcommand{\Fr}{{\mathop{\rm Fr}}}
\newcommand{\Spec}{\mathop{\rm Spec \, }}
\newcommand{\Proj}{\mathop{\rm Proj \, }}
\newcommand{\Sch}{{\mathop{{\rm Sch }}}}
\newcommand{\tors}{\mathop{{\rm Torsion}}}
\newcommand{\Hom}{{\mathop{{\rm Hom}}}}
\newcommand{\cHom}{{\mathop{{\cal H}om}}}
\newcommand{\GL}{\mathop{\rm GL}}
\newcommand{\Sym}{{\mathop{\rm Sym}}}
\newcommand{\Sing}{{\mathop{\rm Sing\,}}}
\begin{document}

\markboth{\rm A.\ Langer}{\rm  Geometry of Drinfeld half-spaces over a finite field}

\title{Birational geometry of compactifications of Drinfeld half-spaces over a finite field}
\author{Adrian Langer}

\date{\today}

\maketitle

{\sc Address:}\\
Institute of Mathematics, University of Warsaw,
ul.\ Banacha 2, 02-097 Warszawa, Poland\\
e-mail: {\tt alan@mimuw.edu.pl}

\medskip

\begin{abstract}
We study compactifications of Drinfeld half-spaces over a finite field. In particular, we construct a purely inseparable endomorphism of  Drinfeld's half-space $\Omega (V)$ over a finite field $k$ that does not extend to an endomorphism of the projective space $\PP (V)$. This should be compared with theorem of R\'emy, Thuillier and Werner  that every $k$-automorphism of $\Omega (V)$ extends to a $k$-automorphism of $\PP (V)$. Our construction uses an inseparable analogue of the Cremona transformation. We also study foliations on Drinfeld's half-spaces.
This leads to various examples of interesting varieties in positive characteristic. In particular, we show 
a new example of a non-liftable projective Calabi--Yau threefold in characteristic $2$ and we show examples of
rational surfaces with klt singularities, whose cotangent bundle contains an ample line bundle.
\end{abstract}

{2010 \emph{Mathematics Subject Classification.} Primary 14G17; Secondary 14E10, 14J26}

\let\thefootnote\relax\footnote{Author's work was partially supported by
Polish National Science Centre (NCN) contract numbers 2013/08/A/ST1/00804 and
2015/17/B/ST1/02634.}

\section{Introduction}

Let $k$ be a finite field and let $\Omega (k^{n+1})$ be Drinfeld's
half-space, i.e., the complement of all $k$-rational hyperplanes in
$\PP^n$. The main aim of the paper is to study various
compactifications of $\Omega (V)$ and their purely inseparable
quotients from the point of view of birational geometry. In this way we obtain 
a few interesting examples of varieties or maps defined over finite fields.

Our first result describes endomorphisms of Drinfeld's half-space.

\begin{theorem} \label{main-1}
\begin{enumerate}
\item Every separable dominant $k$-endomorphism $\psi$ of $\Omega
  (k^{n+1})$ is a $k$-automorphism and it extends to a
  $k$-automorphism of $\PP^n$.
\item There exists a radicial $k$-endo\-morphism $\psi$ of $\Omega
  (k^{n+1})$ that does not extend to a $k$-endomorphism of $\PP^n$.
\end{enumerate}
\end{theorem}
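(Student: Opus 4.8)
I would derive the first statement from the fact that $\Omega (k^{n+1})$ is of logarithmic general type, together with the theorem of R\'emy--Thuillier--Werner for the extension step. Write $D=\PP^n\setminus\Omega$ for the union of the $1+q+\cdots+q^n$ many $k$-rational hyperplanes; since $\deg D=1+q+\cdots+q^n>n+1$ and the arrangement is essential, one checks that $\overline{\kappa}(\Omega)=n$, i.e. $K_Y+D_Y$ is big on a log resolution $(Y,D_Y)\to(\PP^n,D)$. A dominant separable endomorphism $\psi$ of $\Omega$ prolongs to a dominant rational self-map of $(Y,D_Y)$ carrying the boundary into the boundary, and the usual Iitaka-type comparison of logarithmic plurigenera — writing $\psi^{*}(K+D)=(K+D)+R_\psi$ with $R_\psi\ge 0$ the logarithmic ramification and iterating $\psi$, so that $\sum_i(\psi^{i})^{*}R_\psi$ is forced to have bounded volume — yields $R_\psi=0$ and $\deg\psi=1$. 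Hence $\psi$ is birational and \'etale in codimension one; as $D$ is a rigid arrangement, $\psi$ cannot contract a divisor, so it is a $k$-automorphism of $\Omega$, and by R\'emy--Thuillier--Werner it extends to $\PP^n$. The only delicate point here is the passage from ``\'etale in codimension one, degree one'' to ``automorphism'', which I would close using finiteness of the logarithmic automorphism group of $(\PP^n,D)$.

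\textbf{Part (2).}
Here one must take $n\ge 2$: for $n=1$ every rational self-map of $\PP^1$ is already a morphism, so there is nothing to construct. The plan is to produce an explicit radicial morphism $\psi\colon\Omega\to\Omega$ such that the rational self-map of $\PP^n$ it defines has non-removable indeterminacy; then $\psi$ cannot extend, because a morphism $\PP^n\to\PP^n$ restricting to $\psi$ on the dense open $\Omega$ would resolve that indeterminacy. I would work with the description $\Omega(L)=\{[x_0:\cdots:x_n]\;:\;x_0,\ldots,x_n\ \text{are}\ \FF_q\text{-linearly independent in }L\}$, whose boundary $D$ is the zero locus of the Moore determinant $\Delta=\det(x_i^{q^{j}})_{0\le i,j\le n}$, and with an \emph{inseparable Cremona transformation}: the analogue of the standard quadratic map $[x_i]\mapsto[\prod_{j\ne i}x_j]$ obtained by replacing ``coordinate hyperplanes through the vertex $e_i$'' with ``all $\FF_q$-hyperplanes through $e_i$'', built out of the Moore minors $\Delta_i$ (the $n\times n$ Moore determinants of the variables other than $x_i$) and the recursion $\Delta(x_0,\ldots,x_n,T)=\Delta\cdot\prod_{\lambda}(T-\lambda)$. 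After the correct normalisation — needed because the most naive candidates, such as $[x_i]\mapsto[\Delta_i]$, degenerate for small $q$ and $n$ — one obtains forms $F_0,\ldots,F_n$ with no common polynomial factor and sets $\psi=[F_0:\cdots:F_n]$.

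Three points then need to be checked. First, $\psi$ is defined on all of $\Omega$ and $\psi(\Omega)\subseteq\Omega$: both reduce to identities for Moore determinants and linear algebra over $\FF_q$, namely that the relevant $\FF_q$-linear combinations of the $F_j(x)$ are again, up to $q$-th powers, linear forms in the $x_i$, so that non-vanishing of $\Delta(x)$ propagates. Second, $\psi$ is radicial: differentiating a Moore determinant annihilates every row $x_i^{q^{j}}$ with $j\ge 1$, so each $\partial_{x_i}F_j$ is a $q$-th power; hence $d\psi$ factors through the relative Frobenius, the extension $k(\PP^n)/\psi^{*}k(\PP^n)$ is purely inseparable, and (using the same identities) $\psi$ is dominant, so $\psi$ is indeed radicial. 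Third, non-extension: one exhibits a point of $D$ lying on $F_0=\cdots=F_n=0$; since the $F_i$ share no factor this is a genuine base point of the rational map, which therefore is not a morphism, so $\psi$ does not extend to $\PP^n$.

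\textbf{Main obstacle.}
The crux is the construction in Part (2): the ``Cremona'' heuristic must be incarnated so that $\psi$ is simultaneously a self-map of $\Omega$, dominant, and \emph{properly} inseparable — not, say, a power of Frobenius composed with an automorphism, which would extend by R\'emy--Thuillier--Werner. Arranging all three conditions at once is where the real work lies, and I expect the language of $p$-closed foliations to be the right organising tool: a radicial endomorphism $\psi$ of degree $q$ corresponds to a foliation $\mathcal{F}\subset T_\Omega$ with $\Omega/\mathcal{F}\cong\Omega$, and one can look for $\mathcal{F}$ among the $\mathrm{PGL}_{n+1}(\FF_q)$-invariant foliations arising from the Moore structure on $\Omega$. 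By comparison, the non-extension step and the logarithmic-general-type input in Part (1) are routine.
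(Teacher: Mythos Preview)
Your strategy is the paper's, but you use only bigness where the paper proves \emph{very ampleness} of $K_{\tilde X}+D$ on the wonderful compactification $\tilde X$, and that difference is exactly what closes the argument. With ampleness in hand, a separable dominant $\psi$ gives (via $\psi^*\omega_{\tilde X}(D)\hookrightarrow\omega_{\tilde X}(D)$ outside codimension~$2$) injective, hence bijective, endomorphisms of each $H^0(m(K_{\tilde X}+D))$; taking $m$ with $m(K_{\tilde X}+D)$ very ample embeds $\tilde X$, and the induced linear automorphism of the ambient projective space restricts to an automorphism of $\tilde X$ extending $\psi$. Descent to $\PP^n$ is then the intersection-theoretic step of R\'emy--Thuillier--Werner. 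Your route instead lands at ``$\deg\psi=1$, \'etale in codimension one'' and then asserts ``automorphism of $\Omega(k^{n+1})$'': that step is not justified --- neither rigidity of $D$ nor finiteness of the logarithmic automorphism group does it, and your iterated-volume sketch is informal (one must first extend $\psi$ to a rational self-map of a fixed compactification before divisor identities like $\psi^*(K+D)=(K+D)+R_\psi$ make sense). The paper's very ampleness is obtained from an explicit identity $K_{\tilde X}+D=(q-1)\sum_{c=2}^{n+1}H_c$, where each $H_c$ is the pullback of $\cO(1)$ under a morphism $\tilde X\to\PP(\bigwedge^{n+2-c}V)$ and these morphisms jointly give a closed embedding.

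\textbf{Part (2).} Your candidate $[x_0:\cdots:x_n]\mapsto[(-1)^i\Delta_q(x_0,\ldots,\hat x_i,\ldots,x_n)]_i$ is exactly the paper's $\psi$, and your non-extension argument (the base scheme $Z$ has codimension~$2$, so the forms have no common factor and the indeterminacy is genuine) is correct. The real gap is the verification $\psi(\Omega(V))\subset\Omega(V^*)$: ``Moore identities and linear algebra'' names the difficulty without resolving it. The paper's organising device is \emph{not} a foliation but an auxiliary compactification: one sets $X_V:=\bigcap_{i=0}^{n-1}\PP((\Fr^i)^*T_{\PP(V)})\subset\PP(V)\times_k\PP(V^*)$, shows it is the blow-up of $\PP(V)$ along the union $Z$ of all $\FF_q$-rational codimension-$2$ linear subspaces, and identifies the boundary components as $E_L=L\times_k L^\perp$ for $L\in\cL(V)$. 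The ambient map $(x,y)\mapsto(y,\Fr^{n-1}x)$ on $\PP(V)\times_k\PP(V^*)$ restricts to $\varphi_{X_V}:X_V\to X_{V^*}$, which visibly sends $E_L$ to $E_{L^\perp}$ and hence preserves the open part $\Omega(V)$; the identity $\varphi_{X_{V^*}}\circ\varphi_{X_V}=\Fr_{X_V}^{\,n-1}$ then yields pure inseparability and $\deg\psi=q^{\binom{n}{2}}$ at once. Foliations play no role here --- they appear in the paper only for the unrelated Calabi--Yau and surface constructions --- and in particular $\psi$ is not a single degree-$q$ foliation quotient.
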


This should be compared with the main result of \cite{RTW} that says
that every $k$-automorphism of $\Omega (k^{n+1})$ extends to a
$k$-automorphism of $\PP^n$. The proof of this fact is done in two
steps: first the authors use Berkovich spaces to prove that every
$k$-automorphism of $\Omega (k^{n+1})$ extends to a $k$-automorphism
of the wonderful compactification $\tilde X$ of $\Omega (k^{n+1})$ and
then they use intersection theory to prove that one can descend this
automorphism to the projective space. We give a new proof of the first
step without Berkovich spaces following an idea that the author
learnt from H. Esnault and V. Srinivas. Namely, the result follows if
one proves that $K_{\tilde X}+D$ is ample for the complement $D$ of
$\Omega (k^{n+1})$ in $\tilde X$.  This fact goes back to Iitaka (at
least in the characteristic zero case; see \cite[Theorem 11.6]{Ii}).
Ampleness of $K_{\tilde X}+D$ is claimed by Mustafin in \cite[p.~227,
Lemma]{Mu}, but his proof shows only that it is strictly nef (he
checks only that the degree of $K_{\tilde X}+D$ on curves is
positive). We prove ampleness of this line bundle by showing that
$K_{\tilde X}+D=(|k|-1)L$ for some line bundle $L$ such that the complete
linear system $|L|$ is the composition of embedding of $\tilde X$ into
the full flag variety (as a Deligne--Lusztig variety), then embedding
the full flag variety into a product of projective spaces (via Pl\"ucker
embeddings of partial flags) and finally embedding this product by
the Segre embedding into some projective space.

To show the second part of the theorem we construct a new natural
compactification $X$ of $\Omega(k^{n+1})$ (which is non-normal if
$n\ge 3$) to which one can extend $\psi$, so that we have a
commutative diagram
$$ \xymatrix{
  X\ar[r]^{\varphi_X}\ar[d]^{\pi}&X\ar[d]^{\pi}\\
  \PP ^n\ar@{-->}[r]^{\psi}&\PP ^n\\
  \Omega (k^{n+1}) \ar[r]^{\psi}\ar@{^{(}->}[u] &\Omega (k^{n+1}) \ar@{^{(}->}[u] .  \\
}$$ in which $\varphi_X$ is a purely inseparable morphism such that
$\varphi_X^2$ is the composition of $(n-1)$ $k$-linear Frobenius
endomorphisms $\Fr_X$ of $X$.  This can be considered as a kind of an
``inseparable flop'' exchanging the role played by $k$-rational
codimension $i$ subspaces of $\PP^n$ and $k$-rational codimension
$(n+1-i)$ subspaces of $\PP^n$.  We also have a similar diagram with
$X$ replaced by the wonderful compactification $\tilde X$ of
$\Omega(k^{n+1})$ but then the corresponding endomorphism
$\varphi_{\tilde X}$ is not so easy to see directly.

Taking Stein's factorization of $\pi\circ \varphi_X$ we get a
birational morphism $f:X\to Y$ and a purely inseparable morphism
$\varphi_Y:Y\to \PP ^n$. The fact that $\varphi_X^2=\Fr_X^{\dim V-2}$
leads to a new purely inseparable morphism $\varphi: \PP ^n\to Y$. In
particular, we have the following more precise version of the second part of Theorem
\ref{main-1}:

\begin{theorem}  \label{main-3}
We have the following commutative diagram of $k$-schemes
$$ \xymatrix{
  Y\ar[d]^{\varphi_Y}&X\ar[l]_{f}\ar[r]^{\pi}\ar[d]^{\varphi_X}&\PP ^n\ar[d]^{\varphi}\\
  \PP ^n&X\ar[l]_{\pi}\ar[r]^{f}&Y\\
}$$ in which horizontal maps are birational, vertical maps are purely
inseparable and they satisfy the following relations
$$\varphi_X^2=\Fr _X^{n-1},\quad \varphi _Y\circ \varphi =\Fr_{\PP^n}^{n-1}, \quad \varphi\circ \varphi _{Y}=\Fr_Y^{n-1}.$$
The map $\psi=\pi\circ \varphi_X\circ \pi^{-1}: \PP^n\dashrightarrow
\PP ^n$ is purely inseparable and it satisfies
$\psi^2=\Fr_{\PP^n}^{n-1}$. Moreover, $\psi$ defines an endomorphism
of $\Omega(k^{n+1})$.
\end{theorem}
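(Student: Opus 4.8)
The plan is to derive everything formally from the diagram and the relation $\varphi_X^2=\Fr_X^{n-1}$ already established, using only three facts: $\pi$ and $f$ are birational; the $k$-linear Frobenius is functorial, i.e.\ $h\circ\Fr=\Fr\circ h$ for every $k$-morphism $h$; and $\Fr$, as well as any finite purely inseparable morphism, is a surjective universal homeomorphism (in particular a bijection on points).

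First I would record the left-hand square as Stein's factorization. Since $\pi\circ\varphi_X$ is a birational morphism followed by a purely inseparable one, $k(X)$ is a purely inseparable --- hence algebraic --- extension of $(\pi\circ\varphi_X)^{\ast}k(\PP^n)$, so the integral closure of $(\pi\circ\varphi_X)^{\ast}k(\PP^n)$ in $k(X)$ is all of $k(X)$. Consequently, in Stein's factorization $X\xrightarrow{\,f\,}Y\xrightarrow{\,\varphi_Y\,}\PP^n$ the morphism $f$ is birational (with $f_{\ast}\cO_X=\cO_Y$) and $\varphi_Y$ is finite with purely inseparable function-field extension; being also surjective, $\varphi_Y$ is finite purely inseparable, in particular a universal homeomorphism. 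The resulting identity $\varphi_Y\circ f=\pi\circ\varphi_X$ is the left-hand square.

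Next I would construct the morphism $\varphi$. Put $g:=f\circ\varphi_X$. Using $\varphi_Y\circ f=\pi\circ\varphi_X$, the hypothesis $\varphi_X^2=\Fr_X^{n-1}$ and functoriality of Frobenius,
$$\varphi_Y\circ g=\varphi_Y\circ f\circ\varphi_X=\pi\circ\varphi_X^2=\pi\circ\Fr_X^{n-1}=\Fr_{\PP^n}^{n-1}\circ\pi .$$
Hence for every $p\in\PP^n$ the set $\varphi_Y(g(\pi^{-1}(p)))$ is the single point $\Fr_{\PP^n}^{n-1}(p)$, and since $\varphi_Y$ is a universal homeomorphism, $g(\pi^{-1}(p))$ is a single point as well: $g$ contracts every fibre of $\pi$. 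As $\pi$ is proper and birational onto the normal variety $\PP^n$, this forces $g$ to factor through $\pi$: the scheme-theoretic image $\Gamma'$ of the graph of $g$ under $\pi\times\id_Y$ is proper over $\PP^n$ with finite fibres, hence finite, and is an isomorphism over the (dense open) locus where $\pi$ is an isomorphism, hence birational; so $\Gamma'\to\PP^n$ is an isomorphism by Zariski's main theorem, and $\Gamma'$ is the graph of a morphism $\varphi\colon\PP^n\to Y$ with $\varphi\circ\pi=g=f\circ\varphi_X$, which is the right-hand square. I expect this promotion of $\varphi$ from the obvious rational map to an everywhere-defined morphism to be the only step that is not pure diagram-chasing; its crux is the contraction property of $g$, which is itself immediate once $\varphi_Y$ is known to be a universal homeomorphism.

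Finally I would read off the remaining assertions. Cancelling the dominant $\pi$ on the right of $\varphi_Y\circ\varphi\circ\pi=\varphi_Y\circ g=\Fr_{\PP^n}^{n-1}\circ\pi$ gives $\varphi_Y\circ\varphi=\Fr_{\PP^n}^{n-1}$; in particular $\varphi$ is finite and purely inseparable (being a factor of the finite purely inseparable $\Fr_{\PP^n}^{n-1}$). From
$$\varphi\circ\varphi_Y\circ f=\varphi\circ\pi\circ\varphi_X=f\circ\varphi_X\circ\varphi_X=f\circ\Fr_X^{n-1}=\Fr_Y^{n-1}\circ f$$
and the dominance of $f$ we obtain $\varphi\circ\varphi_Y=\Fr_Y^{n-1}$; together with the given $\varphi_X^2=\Fr_X^{n-1}$ this establishes the commutative diagram and all three identities. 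For the statement about $\psi$, since $\pi$ is birational, $\psi=\pi\circ\varphi_X\circ\pi^{-1}$ is a dominant rational self-map of $\PP^n$ whose induced extension of function fields is the conjugate by $\pi^{\ast}$ of $\varphi_X^{\ast}\colon k(X)\hookrightarrow k(X)$, hence purely inseparable, and $\psi^2=\pi\circ\varphi_X^2\circ\pi^{-1}=\pi\circ\Fr_X^{n-1}\circ\pi^{-1}=\Fr_{\PP^n}^{n-1}$ as rational maps. Since $\pi$ is an isomorphism over $\Omega(k^{n+1})$ and $\varphi_X$ maps the open subset $\Omega(k^{n+1})\subset X$ into itself (the bottom row of the first diagram of the introduction), $\psi$ is defined on $\Omega(k^{n+1})$ and restricts there to the endomorphism of $\Omega(k^{n+1})$ produced earlier, which finishes the proof.
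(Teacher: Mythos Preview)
Your proof is correct and follows essentially the same route as the paper: both arguments hinge on the identity $\varphi_Y\circ(f\circ\varphi_X)=\Fr_{\PP^n}^{n-1}\circ\pi$ (derived from $\varphi_X^2=\Fr_X^{n-1}$ and the left square) to promote the rational map $\varphi=f\circ\varphi_X\circ\pi^{-1}$ to a genuine morphism. The only difference is the mechanism at that step: the paper takes Stein's factorization of $g=f\circ\varphi_X$ as $X\to Y'\to Y$, observes that $h=\varphi_Y\circ g=\Fr_{\PP^n}^{n-1}\circ\pi$ then has the two Stein factorizations $X\to Y'\to\PP^n$ and $X\to\PP^n\to\PP^n$, and invokes uniqueness (both $Y'$ and $\PP^n$ are $\Spec h_*\cO_X$) to get $Y'\simeq\PP^n$; you instead push the graph of $g$ down along $\pi\times\id_Y$ and apply Zariski's main theorem to the resulting finite birational map to $\PP^n$. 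These are equivalent incarnations of the same normality argument.

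One cosmetic slip: you describe $\pi\circ\varphi_X$ as ``a birational morphism followed by a purely inseparable one,'' but in that composition $\varphi_X$ (purely inseparable) is applied first and $\pi$ (birational) second. The conclusion you draw---that $k(X)$ is purely inseparable over $(\pi\circ\varphi_X)^*k(\PP^n)$---is unaffected.
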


In the above theorem $\Fr$ stands for the $k$-linear Frobenius
endomorphism (``raising to $q$-th power'' for $k=\FF_q$). In fact, our
construction of schemes $X$, $Y$ and maps $\psi, \varphi_X$ etc. is
functorial in the $k$-vector space $V=k^{n+1}$ but the precise
statements become more complicated (see Theorem \ref{main-3'}) and we
get a morphism $\psi _V: \Omega (V)\to \Omega(V^*)$ rather than an
endomorphism of $ \Omega (V)$.

\medskip

We can look at $\psi$ as an analogue of the Cremona transformation,
which can be recovered by specializing to ``characteristic $1$''. We
illustrate this in dimension $n=2$. Let us recall that the Cremona
transformation is defined as the blow up of $3$ distinct points of
$\PP ^2$ and then blowing down the strict transforms of $3$ lines
between any two of the blown up points.  Similarly, $\psi$ is obtained
by first blowing up all $k$-rational points of $\PP ^2$, performing an
inseparable endomorphism $\varphi_X$ and then contracting images of
the strict transforms of all $k$-rational lines.  A slightly different
way to see the analogy is the following. The Cremona transformation
can be defined by choosing $3$ distinct points of $\PP ^2$ and taking
the map defined by products of equations of lines passing through $2$
of these points. To define $\psi$ in coordinates we need to choose $3$
distinct $k$-rational points of $\PP ^2$ and take the products of
equations of all $k$-rational lines passing through these points. For
example, for a standard choice of points $[1,0,0]$, $[0,1,0]$,
$[0,0,1]$ in $\PP^2$, the Cremona transformation is given
by $$[x_0,x_1,x_2]\to [x_1x_2, x_2x_0, x_0x_1],$$ whereas $\psi$ is
given by
$$[x_0,x_1,x_2]\to [w_0,w_1,w_2]=[x_1x_2^q-x_1^qx_2 , x_2x_0^q-x_0x_2^q, x_0 x_1^q-x_1x_0^q].$$
This definition makes it rather non-obvious that $\psi^2$ is given by
$[x_0,x_1,x_2]\to [x_0^q,x_1^q,x_2^q]$ or that $\psi\circ \pi$ factors
through a purely inseparable morphism $X\to X$.

\medskip

We construct $X$ as a complete intersection in the product of
two projective spaces. Then we can easily construct $\varphi_X$ as
restriction of a certain morphism on the product. Finally, we check
that it preserves the boundary divisor producing an endomorphism of
$\Omega (k^{n+1})$.

\medskip

A large part of the paper is devoted to the study of $1$-foliations related to Drinfeld's compactification.
More precisely, the logarithmic tangent sheaf of $\PP^n$ with poles along complement of $\Omega (k^{n+1})$
splits into a direct sum of line bundles. The direct sums of these line bundles define $1$-foliations
of arbitrary rank on $\PP ^n$ and singularities of these $1$-foliations can be resolved by successive 
blowing ups along $\FF_q$-rational linear spaces of fixed codimension. 
In case $k=\FF_{p}$ and rank $1$ we obtain the $1$-foliation considered in \cite{Hi} to construct a non-liftable Calabi-Yau threefold in characteristic $3$.  For $k=\FF_{2}$  we can use our rank $2$ $1$-foliation to
construct a new example of a non-liftable projective  Calabi-Yau threefold.

\begin{theorem}\label{new-CY}
  There exists a smooth, projective Calabi--Yau $3$-fold $\tilde Y/\bar
  \FF_2$ such that the following conditions are satisfied:
\begin{enumerate}
\item $b_2(\tilde Y)=51$ and $h^1(\cO_{\tilde  Y})=h^2(\cO_{\tilde Y})=0$,
\item  $\tilde  Y$ is unirational,
\item $h^0(T_{\tilde Y})=0$, 
\item $T_{\tilde  Y}$ is not semistable with respect to some ample polarizations,
\item $\tilde Y$ does not admit a formal lifting to characteristic zero.
\end{enumerate}
\end{theorem}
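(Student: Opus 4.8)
The plan is to realize $\tilde Y$ as a resolution of the quotient of a suitable compactification of $\Omega(\FF_2^4)$ by the rank $2$ $1$-foliation coming from the splitting of the logarithmic tangent sheaf $T_{\PP^3}(-\log D)$ described in the paper. More precisely, I would first fix the rank $2$ subsheaf $\cF\subset T_{\PP^3}(-\log D)$ given by the direct sum of two of the splitting line bundles, and check that it is $p$-closed (a $1$-foliation), so that the quotient $Z=\PP^3/\cF$ exists as a normal projective threefold over $\bar\FF_2$ with a purely inseparable map $\PP^3\to Z$ of degree $4$. Then I would resolve the singularities of $Z$ by the successive blow-ups along $\FF_2$-rational linear subspaces of the appropriate codimension, as asserted earlier in the excerpt; call the result $\tilde Y$. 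By the adjunction/canonical bundle formula for quotients by $1$-foliations (the Rudakov–Shafarevich type formula $K_Z = \pi_*(K_{\PP^3}+ (p-1)\cF^{\vee})$ suitably interpreted, with $p=2$), together with a computation that the blow-up discrepancies exactly cancel the boundary contribution, one gets $K_{\tilde Y}\sim 0$; this is where the arithmetic of the specific line bundles in the splitting, and the choice of which rank $2$ summand to take, is essential.

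Next I would establish the numerical and cohomological invariants. Unirationality (2) is immediate since $\PP^3$ dominates $\tilde Y$ by a purely inseparable (hence separably-ruled-irrelevant, but still dominant generically finite) morphism composed with the resolution, so $\tilde Y$ is unirational; in particular $h^1(\cO)=h^2(\cO)=0$ by the standard argument that these groups inject into those of a resolution of $\PP^3$ under the purely inseparable cover together with the fact that $\tilde Y$ is Calabi–Yau (so $h^1(\cO_{\tilde Y})=h^2(\cO_{\tilde Y})$ by Serre duality and one of them vanishes). The Betti number $b_2(\tilde Y)=51$ I would get by bookkeeping: $b_2$ is unchanged under the purely inseparable quotient $\PP^3\to Z$ (it is a universal homeomorphism, so étale cohomology is preserved), and each blow-up along an $\FF_2$-rational line or point adds a controlled number of exceptional divisors; counting the $\FF_2$-rational points and lines in $\PP^3$ (there are $15$ points and $35$ lines) and tracking the order and centers of the blow-ups gives the number $51$. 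The vanishing $h^0(T_{\tilde Y})=0$ in (3) would follow from $h^0(T_Z)=0$, which in turn follows because a nonzero global vector field on $Z$ would pull back (after dividing by the foliation) to one on $\PP^3$ preserving the configuration of all $\FF_2$-rational hyperplanes, and such a vector field must lie in the foliation itself, hence is zero in the quotient.

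For (4) I would exhibit the destabilizing subsheaf of $T_{\tilde Y}$ directly: the quotient foliation structure produces a saturated rank $1$ (or rank $2$) subsheaf of $T_{\tilde Y}$ whose slope with respect to a suitable ample class exceeds $\mu(T_{\tilde Y})=0$; concretely this is the image of one of the splitting line bundles of $T_{\PP^3}(-\log D)$ that is \emph{not} contained in $\cF$, transported to $\tilde Y$, and the positivity of that line bundle on $\PP^3$ survives because the morphisms involved are finite. Finally, non-liftability (5) — which I expect to be the main obstacle — I would prove by a Kodaira-type vanishing obstruction in the spirit of the characteristic $3$ example of Hirokado: if $\tilde Y$ lifted to characteristic zero (even just to $W_2$), then a Bogomolov–Sommese or Akizuki–Nakano vanishing statement on the lift would contradict the existence of the ample line bundle inside $\Omega^1_{\tilde Y}$ produced in (4); equivalently, the presence of an ample subsheaf in the cotangent bundle of a Calabi–Yau threefold is incompatible with liftability, since on a lift one would deduce $H^0(\tilde Y, \Omega^1)\neq 0$ by Hodge symmetry contradicting $h^1(\cO_{\tilde Y})=0$, or one runs the Shepherd-Barron / Ekedahl argument that such foliation quotients are obstructed. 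Making this last step rigorous — pinning down exactly which vanishing theorem fails and checking the ampleness hypothesis is genuinely used — is the crux, and I would model it closely on the argument in the cited work of Hirokado for characteristic $3$, adapting the numerics to $p=2$ and to the rank $2$ foliation.
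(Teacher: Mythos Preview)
Your construction has the two steps in the wrong order, and this causes real problems downstream. The paper does \emph{not} take the quotient of $\PP^3$ by the singular foliation $\cF_2\subset T_{\PP^3}$ and then resolve; it first passes to the wonderful compactification $\tilde X$ (the iterated blow-up along all $\FF_2$-points and then all strict transforms of $\FF_2$-lines), on which the induced foliation $\tilde\cF_2\subset T_{\tilde X}$ is already \emph{smooth} (Proposition~\ref{codim-1-foliation}), and then sets $\tilde Y=\tilde X/\tilde\cF_2$. Smoothness of $\tilde Y$ is then automatic, and the canonical bundle formula $\tilde f^*K_{\tilde Y}=K_{\tilde X}-(p-1)c_1(\det\tilde\cF_2)$ applies directly with no discrepancy bookkeeping. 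Your ``quotient then resolve'' route would require you to identify the singularities of $\PP^3/\cF_2$ and produce a crepant resolution, neither of which is obvious.

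Your argument for $h^1(\cO_{\tilde Y})=0$ has a genuine gap: in positive characteristic, unirationality does \emph{not} force $h^1(\cO)=0$ (think of Shioda's unirational surfaces with $p_g>0$, or Zariski surfaces), and there is no ``injection'' trace map for $H^i(\cO)$ along a purely inseparable cover. The paper instead uses the exact sequence $0\to\cO_{\tilde Y}\to\tilde f_*\cO_{\tilde X}\to\tilde f_*\tilde\cF_2^*$ from Lemma~\ref{foliation-sequence} and checks by hand that $H^0(\tilde\cF_2^*)=0$ via the explicit filtration of $\tilde\cF_2$ (Lemma~\ref{seq-fol-dim3}); this is Lemma~\ref{H^1}. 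Only after $h^1=0$ is established does Riemann--Roch plus Serre duality give $h^2=0$ and $K_{\tilde Y}=0$ simultaneously.

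Finally, your non-liftability mechanism conflates the surface result with the threefold: the paper never claims $\Omega_{\tilde Y}$ contains an \emph{ample} line bundle (that is Theorem~\ref{B-S-failure}, about the surface $Y$). What is available on the threefold is the destabilizing sub-line-bundle $\cL=T_{\tilde Y/\tilde X^{(1)}}\subset T_{\tilde Y}$ of positive slope with respect to a nef and big class $g^*H$; the paper then invokes Schr\"oer's argument \cite[Theorem~2.1]{Sc} (instability of $T$ on a Calabi--Yau obstructing the lift), not a Bogomolov--Sommese statement about $\Omega^1$. Your sketch ``ample subsheaf in $\Omega^1$ $\Rightarrow$ $H^0(\Omega^1)\ne 0$ on a lift'' is a non sequitur in any case.
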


In the above theorem $b_i(X)$ denotes the $i$-th Betti number of $X$,
i.e., the dimension of $i$-th $l$-adic cohomology group of $X$ for
some $l\ne p$.

\medskip

Note that till now the only smooth projective Calabi--Yau $3$-folds that are not liftable to characteristic zero 
were constructed in characteristic $2$ and $3$ (see, e.g., \cite{Hi}, \cite{Sc}, \cite{CS}). There exist also 
non-projective examples in other characteristics (see \cite{CS}). It seems that constructions 
in characteristic $2$ are somewhat more complicated than those in characteristic $3$.

\medskip

Before stating other applications of our result, let us recall that in
any characteristic the cotangent bundle of a separably uniruled smooth
proper variety does not contain any big line bundles (see
\cite[Chapter V, Lemma 5.1]{Ko}). This was used by J. Koll\'ar (see
\cite[Chapter V, Theorem 5.14]{Ko}) and in various subsequent papers
to prove non-rationality of some complex Fano varieties. Moreover, it
is known that if $Y$ is a complex projective variety with only klt
singularities then $\hat \Omega_Y=(\Omega_Y)^{**}$ does not contain
any big $\QQ$-Cartier Weil divisors (see \cite[Theorem 7.2]{GKKP}).
Here we show that in positive characteristic $\hat \Omega_Y$ can
contain ample line bundles, even if $Y$ has only klt singularities and
it is separably uniruled. More precisely, we have the following
theorem:

\begin{theorem}\label{B-S-failure}
Let $k=\FF_{q}$ be a finite field of characteristic $p$. There exists a geometrically normal
projective surface $Y_0/k$ such that $Y=(Y_0)_{\bar k}$ has
the following properties:
\begin{enumerate}
\item $Y$ is rational,
\item $Y$ has only klt singulartities of type a cone over $(\PP^1, \cO_{\PP^1}(q))$,
\item $\rho (Y)=1$,
\item $\hat \Omega_Y$ contains an ample line bundle,
\item $T_Y$ is not generically semi-negative,
\item if $p=2$ then  $-K_Y$ is ample; moreover, if $q=2$ then  $T_Y$ is locally free,
\item if $p>2$ then  $K_Y$ is ample.
\end{enumerate}
\end{theorem}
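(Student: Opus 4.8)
The plan is to take for $Y_0$ the purely inseparable quotient of $\PP^2_k$ attached to the rank one foliation $\mathcal F\subset T_{\PP^2}$ generated by $\sum_i x_i^q\,\partial_{x_i}$ --- a logarithmic direct summand $\cO(1-q)$ of $T_{\PP^2}(-\log D)$, where $D$ is the complement of $\Omega(k^3)$ --- equivalently the surface $Y$ produced in Theorem~\ref{main-3} for $n=2$; it is defined over $\FF_q$ since the $q^2+q+1$ $\FF_q$-rational points and lines of $\PP^2$ are. Geometrically $Y$ is obtained by blowing up the $\FF_q$-rational points of $\PP^2$ --- write $E_1,\dots,E_{q^2+q+1}$ for the exceptional curves and $H$ for the pullback of a hyperplane --- and contracting the strict transforms $\tilde L_1,\dots,\tilde L_{q^2+q+1}$ of the $\FF_q$-rational lines; since two $\FF_q$-lines meet in an $\FF_q$-point, after the blow-up these strict transforms are pairwise disjoint smooth rational curves with $\tilde L_j^2=-q$, and their contraction $\nu\colon\tilde Y\to Y$ is the minimal resolution. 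Each of the $q^2+q+1$ singular points of $Y$ is then analytically the cone over $(\PP^1,\cO_{\PP^1}(q))$, i.e.\ the cyclic quotient $\tfrac1q(1,1)$, which is klt because its log discrepancy $2/q$ is positive; this gives (2). Since $\tilde Y$ is a blow-up of $\PP^2$ it is rational, hence so is $Y$, which is (1). Finally $\rho(\tilde Y)=1+(q^2+q+1)$ and the disjoint $(-q)$-curves $\tilde L_j$ span a subgroup of corank one in $\NS(\tilde Y)$, so $\rho(Y)=1$, which is (3). All four statements are already visible over $\FF_q$.

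For (4) the point is that $Y$ is a compactification of the Drinfeld half-space: $\Omega(k^3)\hookrightarrow Y$ is a dense open immersion whose complement is exactly the finite singular locus, as $\pi$ and $\nu$ are isomorphisms over $\Omega(k^3)$. Hence $\hat\Omega_Y|_{\Omega(k^3)}=\Omega_{\Omega(k^3)}=\Omega_{\PP^2}(\log D)|_{\Omega(k^3)}$, and dualising the splitting $T_{\PP^2}(-\log D)=\cO(1-q)\oplus\cO(1-q^2)$ gives $\Omega_{\PP^2}(\log D)=\cO(q-1)\oplus\cO(q^2-1)$, a sum of ample line bundles. I would take the first summand: the inclusion $\cO_{\PP^2}(q-1)|_{\Omega(k^3)}\hookrightarrow\hat\Omega_Y|_{\Omega(k^3)}$ extends over the singular points because $\hat\Omega_Y$ is reflexive and the singular locus has codimension two, yielding a saturated rank one reflexive subsheaf $\mathcal L=\cO_Y(B)\hookrightarrow\hat\Omega_Y$. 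Then I would (i) compute the Weil divisor class $B$ by pulling back to $\tilde Y$, where the lifted foliation is a subbundle with conormal $\cO_{\tilde Y}\bigl((q-1)H-\sum_iE_i\bigr)$, and verify by a local computation at the cone points that $\cO_Y(B)$ is a line bundle; and (ii) deduce ampleness of $B$ from $\rho(Y)=1$: it is enough that $B\cdot C>0$ for one irreducible curve, and the image of a general line meets $\Omega(k^3)$, where $B$ restricts to $\cO(q-1)$ of positive degree. Statement (5) then follows: dualising $\mathcal L\hookrightarrow\hat\Omega_Y$ over $Y^{\mathrm{sm}}$ and restricting to a general curve $C$ exhibits a sub-line-bundle of $T_Y|_C$ of degree $-K_Y\cdot C+B\cdot C$, positive as soon as $-K_Y$ is ample; and in any case $Y$, being rational, is uniruled, so $\hat\Omega_Y$ is not generically semipositive, i.e.\ $T_Y$ is not generically semi-negative.

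For (6) and (7) I would compute $K_Y$ as a $\QQ$-Cartier divisor. The discrepancy of contracting a smooth rational $(-q)$-curve is $\tfrac2q-1$, so $\nu^*K_Y=K_{\tilde Y}+\tfrac{q-2}{q}\sum_j\tilde L_j$; together with $K_{\tilde Y}=-3H+\sum_iE_i$ and $\sum_j\tilde L_j=(q^2+q+1)H-(q+1)\sum_iE_i$ this gives an explicit class for $\nu^*K_Y$ in $\NS(\tilde Y)_\QQ$. Since $\rho(Y)=1$ the sign of $K_Y$ is read off from a single intersection number against the image of a general line, and the computation yields (with the appropriate model in each case) that $K_Y$ is ample when $p>2$ and $-K_Y$ is ample when $p=2$. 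The remaining assertion of (6) is local: for $q=2$ the singularities are ordinary double points $z^2=xy$, and in characteristic $2$ one has $d(z^2-xy)=-y\,dx-x\,dy$, whose $dz$-coefficient vanishes; hence the reflexive tangent sheaf at such a point is $\{(a,b,c):ya+xb=0\}$, the syzygy module of the regular sequence $(y,x)$ together with a free rank one summand, and is therefore free --- so $T_Y$ is locally free everywhere.

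The main obstacle is item (4): not the (formal) reflexive extension, but the proof that the extended subsheaf $\cO_Y(B)$ is an actual line bundle and is ample rather than merely big or pseudo-effective. This needs the precise divisor class on $\tilde Y$ and the fact that the boundary $(-q)$-curves carry no positivity that could be lost.
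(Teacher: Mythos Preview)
Your construction of $Y$ and the verification of (1)--(3), (6), (7) follow the paper's approach. The genuine gap is in item~(4): you chose the wrong direct summand.

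On the blow-up $X=\tilde Y$ the saturation of $\cO_{\PP^2}(q-1)|_{\Omega}$ inside $\Omega_X(\log\tilde B)$ maps isomorphically (over $\Omega$) to the \emph{quotient} $\cO_X((q-1)H-E)$ of the paper's sequence, hence sits inside it. Since $((q-1)H-E)\cdot\tilde L_i=(q-1)-(q+1)=-2$, this class does \emph{not} descend to a Cartier divisor on $Y$---your step (i) would fail---and its reflexive pushforward is the paper's $\cO_Y(B_2)\sim_{\QQ}-\tfrac{q+2}{q^2+q+1}E_Y$, which is anti-ample. Your positivity test ``the image of a general line meets $\Omega(k^3)$, where $B$ restricts to $\cO(q-1)$'' is the error: the image $f(\tilde\ell)$ of a general line passes through \emph{every} singular point of $Y$, and the reflexive extension acquires negative contributions there; knowing $\cO_Y(B)|_\Omega$ does not compute $B\cdot f(\tilde\ell)$.

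The paper takes the \emph{other} summand $\cO_{\PP^2}(q^2-1)$, whose saturation in $\Omega_X(\log\tilde B)$ is the sub-line-bundle $\cO_X((q^2-1)H-(q-1)E)$. The crucial computation is
\[
\bigl((q^2-1)H-(q-1)E\bigr)\cdot\tilde L_i=(q^2-1)-(q-1)(q+1)=0,
\]
so this line bundle is trivial near each $\tilde L_i$ and descends to an honest line bundle $\cO_Y(B_1)$; only then does the $\rho(Y)=1$ argument apply, with $B_1\cdot f_*H=q^2-1>0$ giving ampleness. The same exact sequence $0\to\cO_Y(B_1)\to\hat\Omega_Y\to\cO_Y(B_2)\to 0$ with $B_2$ anti-ample immediately yields (5): restricted to any curve $\hat\Omega_Y$ has a negative-degree quotient, so $T_Y$ has a positive-degree sub. (Your dualisation argument for (5) is backwards---dualising an inclusion gives a surjection, not a sub-line-bundle.)
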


This theorem is related to the following proposition:

\begin{proposition} \label{main-2}
  Let $g: \tilde S=\VV (\cO_{\PP^1}(-d))\to S=\Spec \bigoplus _{i\ge
    0} H^0(\PP^1, \cO(id))$ be the resolution of a cone over $(\PP^1,
  \cO_{\PP^1}(d))$ in characteristic $p$ dividing $d$. Then $S$ has
  only klt, $F$-regular singularities but $g_*\Omega_{\tilde S}$ is
  not reflexive.
\end{proposition}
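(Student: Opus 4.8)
\textbf{Proof plan for Proposition \ref{main-2}.}

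The plan is to make the geometry of the resolution completely explicit and then exhibit, by hand, a differential form witnessing the failure of reflexivity; the two statements about the singularities of $S$ are comparatively routine and I would dispose of them first. Write $C=\PP^1$, $N=\cO_{\PP^1}(-d)$, and identify $\tilde S$ with the total space of the line bundle $N$ over $C$, with projection $\pi\colon\tilde S\to C$ and zero-section $E\cong C$ of self-intersection $E^2=-d$; the morphism $g$ then contracts $E$ to the vertex $v\in S$ and restricts to an isomorphism over $S\setminus\{v\}$. Here $S=\Spec R$ with $R=\bigoplus_{m\ge 0}H^0(\PP^1,\cO(md))$, which is the $d$-th Veronese subring $A^{(d)}$ of the polynomial ring $A=k[x,y]$. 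Since $A$ is module-finite over $A^{(d)}$ (it is generated over $A^{(d)}$ by the finitely many monomials of degree $<d$) and $A^{(d)}$ is an $A^{(d)}$-module direct summand of $A$, while $A$ is regular, hence strongly $F$-regular, a standard result (Hochster--Huneke) gives that $R=A^{(d)}$ is strongly $F$-regular; in particular $S$ has $F$-regular singularities, in any characteristic. For the klt statement one first notes that $S$ is $\QQ$-Gorenstein (with index dividing $d$: on $\tilde S$ one has $\omega_{\tilde S}=\pi^\ast(\omega_C\otimes N^\vee)=\pi^\ast\cO_C(d-2)$, and the same computation gives $K_{\tilde S}=g^\ast K_S+\tfrac{2-d}{d}E$), and since the discrepancy $\tfrac{2-d}{d}$ is $>-1$ for every $d\ge 1$, the normal surface singularity $S$ is klt.

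The core of the argument is the non-reflexivity of $\cG:=g_\ast\Omega_{\tilde S}$, and here the hypothesis $p\mid d$ enters in an essential way. Because $S$ is affine and normal, $g$ is an isomorphism over the complement of the codimension-two point $v$, and $\cG$ is locally free (hence reflexive) there, the sheaf $\cG$ is reflexive if and only if the always-injective restriction map
\[
\Gamma\bigl(\tilde S,\Omega_{\tilde S}\bigr)=\Gamma(S,\cG)\ \longrightarrow\ \Gamma\bigl(S\setminus\{v\},\cG\bigr)=\Gamma\bigl(\tilde S\setminus E,\Omega_{\tilde S}\bigr)
\]
is surjective; so it suffices to produce a global regular $1$-form on $\tilde S\setminus E$ that does not extend across $E$. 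To do this I would cover $C$ by the standard charts with coordinates $u$ and $v=1/u$, trivialize $N$ accordingly, and write $\tilde S$ as $\Spec k[u,x]$ glued to $\Spec k[v,y]$ along $x=v^{d}y$, with $E=\{x=0\}=\{y=0\}$. Since $p\mid d$, one has $d(v^{d})=0$, so on the overlap $dx=v^{d}\,dy$ and therefore $dx/x=dy/y$. Hence $\omega:=dx/x=dy/y$ is a well-defined regular $1$-form on $\tilde S\setminus E=\{x\ne 0\}\cup\{y\ne 0\}$; it has a logarithmic pole along $E$, so $\omega\notin\Gamma(\tilde S,\Omega_{\tilde S})$. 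Thus the displayed map is not surjective and $g_\ast\Omega_{\tilde S}$ is not reflexive.

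The only genuinely delicate point is this last one: it is exactly the divisibility $p\mid d$ that forces the ``base-direction'' term $y\,d(v^{d})$ in the transition formula to vanish, which is what glues $dx/x$ into a global form on the punctured resolution. (For $p\nmid d$, or in characteristic $0$, one instead gets $dx/x=dy/y+d\cdot dv/v$, which has a pole at $v=0$ and does not glue — consistently with $g_\ast\Omega_{\tilde S}$ being reflexive in those cases.) Everything else — the Veronese/direct-summand argument for strong $F$-regularity and the adjunction computation for klt — is standard.
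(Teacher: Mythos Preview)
Your proof is correct. The klt and $F$-regularity parts are handled by the standard arguments (the paper's proof of the more general Proposition~\ref{main-2'} records essentially the same discrepancy computation and does not spell out $F$-regularity, so your Veronese/direct-summand argument is a welcome addition). For non-reflexivity, however, you take a genuinely different route. The paper argues structurally: by a result of Wahl one has $\Omega_{\tilde S}(\log E)\simeq \pi^*G$, where $G$ is the extension of $\cO_{\PP^1}$ by $\Omega_{\PP^1}$ with class $c_1(\cO(d))\in H^1(\Omega_{\PP^1})$; since $p\mid d$ this class vanishes, the extension splits, and one obtains a decomposition $\Omega_{\tilde S}\simeq \cO_{\tilde S}(-E)\oplus \pi^*\Omega_{\PP^1}$, whence $g_*\Omega_{\tilde S}\simeq m_v\oplus g_*(\pi^*\Omega_{\PP^1})$ with the ideal sheaf $m_v$ visibly non-reflexive. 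Your argument is the coordinate avatar of the same phenomenon: the form $\omega=dx/x$ you exhibit is precisely a generator of the $\cO_{\tilde S}$-summand of $\Omega_{\tilde S}(\log E)$, and your key identity $d(v^{d})=0$ is the explicit incarnation of the vanishing of that extension class. What your approach buys is self-containment and elementarity---no appeal to Wahl's description is needed; what the paper's approach buys is conceptual clarity and an immediate generalisation to cones over $(\PP^m,\cO(d))$ for all $m$.
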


This should be contrasted with the main result of \cite[Theorem
1.4]{GKKP}, which says that over complex numbers if $g: \tilde Z\to Z$
is a log resolution a quasi-projective variety $Z$ with at most klt
singularities then $g_*\Omega_{\tilde Z}$ is reflexive.

\medskip

\subsection*{Relation to other work.}

Let $V$ be an $(n+1)$-dimensional vector space over a finite field $k$.
After most of the paper was written, the author noticed a related
paper \cite{PS} of R. Pink and S. Schieder in which the authors
construct some purely inseparable maps $g_V:\Omega (V)\to \Omega
(V^*)$ and $f_V:\Omega (V^*)\to \Omega (V)$ (see \cite[Proposition
9.1]{PS}; note that most of the proof is skipped and ``lengthy
elementary calculations'' are left to the reader).

Note that in their case $f_Vg_V= \Fr _{\Omega (V)}^n$ and $g_Vf_V= \Fr
_{\Omega (V^*)}^n$, whereas we construct a map $\psi_V: \Omega(V)\to \Omega (V^*)$ 
such that $\psi_{V^*}\circ \psi _V=\Fr_{\Omega (V)}^{n-1}$ (see  Theorem \ref{main-3'}, which 
is a more precise version of Theorem \ref{main-3}). It is rather easy to see that $f_V=\psi_V$ and after 
asking about relation between the results, the author was informed by R. Pink that after 
some computations one can show that $g_{V^*}=f_{V}\circ \Fr_{:\Omega (V^*)}$. In particular,   
\cite[Proposition 9.1]{PS} agrees with Theorem \ref{main-3'}.
Our approach and description of the map $\psi _V$ is different and more geometric 
than that of $f_V$, showing its relation to the Cremona transformation.

S. Kurul and A. Werner proved in a recent preprint \cite{KW} that all
endomorphisms of Drinfeld's half-space extend to the wonderful
compactification. In fact, their method allows to deal also with more
general complements of hyperplane arrangements in $\PP ^n$.

Theorem \ref{B-S-failure} comes from a detailed study of the example
in \cite[Section 8]{La} and from the related paper \cite{CT} by
Cascini and Tanaka.  More precisely, in \cite{La} the author showed
that if $D$ on $X$ considered above is the sum of strict transforms of
$k$-rational lines then $\Omega_X(\log D)$ contains a big line bundle
with positive self-intersection. However, this was only an existence
result proven using non-trivial methods. Here we produce such a line
bundle bundle explicitly using some computations involving certain foliation on $X$. 
Then we push it down to surface $Y$ obtained by
contracting $D$ and studied in \cite{CT} (see \cite[Lemma 2.4]{CT}).
\medskip

\subsection*{Notation}

Let $k$ be a finite field with $q=p^e$ elements and let $\Fr _k: \bar
k\to \bar k$ be the Frobenius automorphism of $\bar k$ defined by
$\Fr_k(a)=a^q$.  Clearly, $\Fr_k$ is identity on $k\subset \bar k$.

Let $X$ be a scheme of characteristic $p$. The absolute Frobenius morphism
$F_X:X\to X$ is defined as identity on topological spaces and raising
to $p$-th power on structure sheaves. For a $k$-scheme $X$ we define a
$k$-scheme $X^{(i)}$ as the base change of $X/k$ via $k\to k$ defined
by $a\to a^{p^i}$. Then we get a $k$-linear morphism $F^i: X\to
X^{(i)}$ induced by $(F_X)^i:X\to X$, i.e., composition of $i$
absolute Frobenius morphisms $F_X$.  Since $\Fr _k=(F_k)^e$ is
identity on $k$, the $k$-schemes $X$ and $X^{(e)}$can be identified
and we have a $k$-linear endomorphism $\Fr_X=F^e: X\to X^{(e)}=X$.

\medskip

If $V$ is a finite dimension $k$-vector space then we set
$\PP (V)=\Proj (\Sym ^{\bullet} V)$.  More generally, if $E$ is a
locally free sheaf on a scheme $X$ then we set $\PP (E)=\Proj (\Sym
^{\bullet} E)$.

A scheme structure on a $k$-vector space $W$ is given by $W=\Spec \Sym
^{\bullet} W^*$.  By definition we have the standard projection map
$V^*-\{0\}\to \PP (V)$. This is a quotient by the canonical
$\GG_m$-action on $V^*$ (which is free on $V^*-\{0\}$).  Linear
coordinates in $V^*$ (i.e., affine coordinates vanishing at $0$ or a
choice of basis of $V$) give homogeneous coordinates in $\PP (V)$.  If
we choose homogeneous coordinates $x_0,...,x_n$ in $\PP (V)$ and take
an element $f\in k[x_0,...,x_n]$ then we denote by $D_{+}(f)$ the
principal open subset corresponding to $f$, i.e., $D_{+}(f)=\{\mathfrak p
\in \PP(V)=\Proj k[x_0,...,x_n] | \, f \not \in \mathfrak p \}$.

\section{Preliminaries}

\subsection{Moore determinant}

The \emph{Moore determinant} $\Delta_q(w_1,...,w_n)$  is defined as the determinant of the matrix
$$\left(
\begin{array}{ccc}
w_1&...&w_n\\
w_1^q&...&w_n^q\\
\vdots&& \vdots\\
w_1^{q^{n-1}}&...&w_n^{q^{n-1}}\\
\end{array}
\right)
$$
treated as an element of $\FF_q[w_1,...,w_n]$.

\begin{lemma} (see \cite[Corollary 1.3.7]{Go})\label{Moore}
Moore determinant is equal to the product of all linear forms over $\FF_q$ with the last non-zero coefficient equal to $1$, i.e., we have
$$\Delta_q(w_1,...,w_n)=\prod _{i=1}^n\prod _{a_{1}\in \FF_q}...\prod _{a_{i-1}\in \FF_q}(a_1w_1+...+a_{i-1}w_{i-1}+w_i)$$
in $\FF_q[w_1,...,w_n]$.
\end{lemma}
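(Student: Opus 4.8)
The plan is to prove the identity by induction on $n$, comparing both sides as polynomials in $\FF_q[w_1,\dots,w_n]$. First I would check that the right-hand side, call it $P_n(w_1,\dots,w_n)$, is a homogeneous polynomial of the same degree as $\Delta_q(w_1,\dots,w_n)$: the determinant is visibly homogeneous of degree $1+q+\dots+q^{n-1}$ (the $j$-th row contributing degree $q^{j-1}$), while the product on the right has $\sum_{i=1}^n q^{i-1}$ linear factors, one for each choice of $(a_1,\dots,a_{i-1})\in\FF_q^{i-1}$, so the degrees match. Next I would argue that $\Delta_q$ is divisible by every linear form $L = a_1w_1+\dots+a_{i-1}w_{i-1}+w_i$ appearing on the right: substituting $w_i = -(a_1w_1+\dots+a_{i-1}w_{i-1})$ makes the $i$-th column of the Moore matrix an $\FF_q$-linear combination of the first $i-1$ columns, because the operation $w\mapsto w^q$ is $\FF_q$-linear and fixes the scalars $a_j\in\FF_q$; hence each row, and so the determinant, vanishes. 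Since these linear forms are pairwise non-proportional (each is normalized to have a distinct ``last nonzero coefficient'' pattern) and the polynomial ring is a UFD, their product $P_n$ divides $\Delta_q$. Comparing degrees, $\Delta_q = c\cdot P_n$ for some constant $c\in\FF_q$.

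It then remains to pin down the constant $c$. The cleanest way is to compare the coefficient of a single well-chosen monomial on both sides, for instance the ``diagonal'' monomial $w_1 w_2^q w_3^{q^2}\cdots w_n^{q^{n-1}}$. On the determinant side this monomial arises only from the main diagonal of the Moore matrix (any other permutation produces a monomial in which some $w_j$ appears with an exponent not equal to a power of $q$, or with the wrong multiset of exponents), so its coefficient in $\Delta_q$ is $1$. On the product side, expanding $P_n$, the same monomial is obtained by selecting, from each of the $q^{i-1}$ factors indexed by $i$, exactly the $w_i$-term when that factor sits at ``level'' $i$; a short bookkeeping argument — or, more conveniently, the inductive hypothesis combined with a Laplace expansion of the determinant along the last row — shows the coefficient is again $1$. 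Hence $c=1$ and the identity follows.

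The inductive packaging makes the constant computation essentially automatic: writing $\Delta_q(w_1,\dots,w_n)$ and expanding along the last row expresses it in terms of the $(n-1)\times(n-1)$ Moore-type minors, and matching this with the factorization $P_n = P_{n-1}(w_1,\dots,w_{n-1})\cdot \prod_{a_1,\dots,a_{n-1}\in\FF_q}(a_1w_1+\dots+a_{n-1}w_{n-1}+w_n)$ — the inner product over the new variable being exactly the product of all $q^{n-1}$ monic degree-$1$ forms in $w_n$ over the $\FF_q$-span of $w_1,\dots,w_{n-1}$, i.e. an additive polynomial in $w_n$ of the correct shape — reduces everything to the $n-1$ case. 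The main obstacle, and the only genuinely delicate point, is verifying that the divisibility statement accounts for \emph{all} of $\Delta_q$ with no leftover factor and no repeated factor: this rests on (a) the linear forms being pairwise coprime, which is immediate from the normalization, and (b) the degree count, which I have already arranged to be tight. Everything else is a routine, if slightly tedious, expansion; I would simply cite \cite[Corollary 1.3.7]{Go} for the precise combinatorial identity rather than reproduce it in full.
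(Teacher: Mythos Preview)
Your argument is correct and is essentially the standard proof of Moore's identity: show each normalized linear form divides $\Delta_q$ by $\FF_q$-linearity of Frobenius, use coprimality and the degree count to conclude $\Delta_q = c\cdot P_n$, and fix $c=1$ via the diagonal monomial (or by induction on $n$). The only place you are slightly loose is the coefficient comparison on the product side; the clean way to say it is that $w_n$ appears only in the $q^{n-1}$ factors indexed by $i=n$, so to obtain $w_n^{q^{n-1}}$ one must choose $w_n$ from each, reducing at once to the coefficient of $w_1 w_2^q\cdots w_{n-1}^{q^{n-2}}$ in $P_{n-1}$.

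There is nothing to compare against in the paper: the lemma is stated with a citation to \cite[Corollary 1.3.7]{Go} and no proof is given. Your write-up therefore supplies more than the paper does, and in fact your closing remark---that one would simply cite Goss---is exactly what the paper does.
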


\begin{corollary}\label{partial-Moore}
We have
$$\frac{\partial \Delta _q}{\partial w_i} (w_1,...,w_n)=(-1)^{i+1}
(\Delta _q(w_1,...,\hat{ w_i},...,w_n))^q.$$
\end{corollary}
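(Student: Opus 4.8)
The plan is to compute $\partial \Delta_q / \partial w_i$ by expanding the Moore determinant along the last row and exploiting the fact that, in characteristic $p$, the only row of the defining matrix whose entries are not $p$-th powers is the first one. More precisely, write $\Delta_q(w_1,\dots,w_n) = \sum_{j=1}^n (-1)^{n+j} w_j^{q^{n-1}} M_j$, where $M_j$ is the minor obtained by deleting the last row and the $j$-th column. Since every entry of the matrix other than those in the first row is of the form $w_\ell^{q^m}$ with $m\ge 1$, hence a $q$-th power (indeed a $p$-th power), the only place a variable $w_i$ appears ``with multiplicity one'' — i.e. contributes to the derivative — is in the first row. So when we differentiate, the product rule kills every term except differentiation of the explicit factor $w_i$ sitting in position $(1,i)$ of the matrix, and in each minor $M_j$ the single occurrence of $w_i$ in its top row.

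The cleanest way to organize this: differentiating the full Laplace expansion, $\partial \Delta_q/\partial w_i$ equals the determinant of the matrix obtained from the Moore matrix by replacing its first row $(w_1,\dots,w_n)$ by $(\partial w_1/\partial w_i, \dots, \partial w_n/\partial w_i) = (0,\dots,1,\dots,0)$ (the $i$-th standard basis vector), because all other rows are constant under $\partial/\partial w_i$ (their entries being $q$-th powers). Expanding this new determinant along its first row picks out exactly the $(1,i)$ cofactor, which is $(-1)^{1+i}$ times the minor of the Moore matrix obtained by deleting row $1$ and column $i$. That minor is
$$\det\begin{pmatrix} w_1 & \cdots & \widehat{w_i} & \cdots & w_n\\ \vdots & & & & \vdots \\ w_1^{q^{n-1}} & \cdots & \widehat{w_i^{q^{n-1}}} & \cdots & w_n^{q^{n-1}}\end{pmatrix},$$
i.e. the Moore-type matrix in the variables $w_1,\dots,\widehat{w_i},\dots,w_n$ but with exponents $1, q, q^2, \dots, q^{n-2}, q^{n-1}$ — note the top exponent $1$ is missing and instead we have exponents $q^0$ through $q^{n-1}$ skipping $q^0$? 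No: deleting the first row removes the exponent $q^0 = 1$, leaving exponents $q^1, q^2, \dots, q^{n-1}$ on $n-1$ columns. Factoring $q$ out of every exponent (equivalently, recognizing every entry as the $q$-th power of the corresponding entry of the standard Moore matrix in $n-1$ variables) shows this minor equals $(\Delta_q(w_1,\dots,\widehat{w_i},\dots,w_n))^q$. Combining, $\partial\Delta_q/\partial w_i = (-1)^{1+i}(\Delta_q(w_1,\dots,\widehat{w_i},\dots,w_n))^q$, which is the claimed formula.

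The one point requiring a word of care is the justification that differentiation commutes with the determinant expansion in the stated way — that is, that rows $2,\dots,n$ genuinely contribute nothing. This is where characteristic $p$ is essential: over $\mathbb{Z}$ the derivative of $w_\ell^{q^m}$ is $q^m w_\ell^{q^m-1}\neq 0$, but over $\mathbb{F}_q$ (or any field of characteristic $p$, since $q = p^e$) we have $\partial(w_\ell^{q^m})/\partial w_i = 0$ for $m\ge 1$. So I would state this observation explicitly, then invoke the multilinearity of the determinant in its rows to write $\partial\Delta_q/\partial w_i$ as a sum of $n$ determinants (one for each row being differentiated), of which $n-1$ vanish identically. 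I do not expect any real obstacle here; the ``lengthy elementary calculation'' is entirely bypassed by the multilinearity argument, and the only substantive input is Lemma \ref{Moore}'s companion fact that the Moore matrix minor deleting the first row is a $q$-th power, which is immediate from inspection of the exponents.
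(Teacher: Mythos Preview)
Your argument is correct and proceeds along the same lines as the paper's: both observe that in characteristic $p$ only the first row of the Moore matrix contributes to the derivative, reducing the computation to the $(1,i)$-cofactor. The one difference is in identifying that minor as a $q$-th power: the paper writes it as $\Delta_q(w_1^q,\dots,\widehat{w_i^q},\dots,w_n^q)$ and appeals to the product formula of Lemma~\ref{Moore} (each linear factor becomes its own $q$-th power since the coefficients lie in $\FF_q$), whereas you note directly that every entry of the minor is a $q$-th power and that Frobenius commutes with determinants. Your route is slightly more elementary---it does not use Lemma~\ref{Moore} at all---so your closing remark that Lemma~\ref{Moore} is the ``only substantive input'' is off; you have in fact given an independent proof.
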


\begin{proof}
Using expansion of the defining matrix with respect to the $i$-th column, we get
$$\frac{\partial \Delta _q}{\partial w_i} (w_1,...,w_n)=
(-1)^{i+1} \Delta _q(w_1^q,..., \hat {w_i^q},...,w_n^q).$$
Since $$a_1w_1^q+...+a_{i-1}w_{i-1}^q+w_i^q=(a_1w_1+...+a_{i-1}w_{i-1}+w_i)^q$$
for $a_1,...,a_{i-1}\in \FF_q$, the required equality follows from Lemma \ref{Moore}.
\end{proof}

\subsection{Graphs of rational maps and blow-ups}

Let $X$ be an integral scheme defined over some field $k$ and let $\cL$
be an invertible sheaf on $X$. Let us consider a finite dimensional $k$-subspace
$W\subset H^0 (X, \cL)$ and let
$$u: W\otimes _k \cO_X\subset H^0 (X, \cL)\otimes _k \cO_X\to \cL$$ be the
restriction of the evaluation map. The image of the induced map
$W\otimes _k\cL^{-1}\to \cO_X$ is an ideal sheaf of some subscheme
$Y\subset X$. The following lemma is certainly well known but we prove it for lack of an appropriate reference.

\begin{lemma}\label{blow-up}
The blow up of $X$ along $Y$ is the schematic closure in $X\times _k\PP (W)$ of the graph of the map
$U=X-Y\to \PP (W)$ defined by the linear system corresponding to $W$.
\end{lemma}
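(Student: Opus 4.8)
\textbf{Proof plan for Lemma \ref{blow-up}.}

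The plan is to identify both sides with the same closed subscheme of $X\times_k\PP(W)$ by comparing them on the open set $U=X-Y$ and then checking that neither side has embedded or extra components along $Y$. First I would set up coordinates: choose a basis $s_0,\dots,s_N$ of $W$, so that the rational map $U\to\PP(W)$ is $x\mapsto [s_0(x):\dots:s_N(x)]$, and the ideal sheaf of $Y$ is the image of $W\otimes_k\cL^{-1}\to\cO_X$, i.e.\ locally (after trivializing $\cL$) the ideal $(f_0,\dots,f_N)$ generated by the local functions $f_j$ representing $s_j$. Recall that $\Bl_Y X=\Proj\bigl(\bigoplus_{d\ge 0}\cI_Y^d\bigr)$, and there is a standard surjection from the Rees algebra of the ideal $(f_0,\dots,f_N)$ onto this, which yields a closed immersion $\Bl_YX\hookrightarrow X\times_k\PP(W)$ whose image is cut out on the chart $\{s_j\ne 0\}$ by the equations $f_is_j-f_js_i$ (the relations forcing $[s_0:\dots:s_N]=[f_0:\dots:f_N]$), modulo the subtlety that the ideal of $\Bl_YX$ in $X\times\PP(W)$ is the \emph{saturation} of these relations with respect to $\cI_Y$.

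Next I would recall that the schematic closure $Z\subset X\times_k\PP(W)$ of the graph $\Gamma$ of $U\to\PP(W)$ is, by definition, the smallest closed subscheme of $X\times_k\PP(W)$ whose restriction to $U\times_k\PP(W)$ contains $\Gamma$; equivalently, if $j:U\times\PP(W)\hookrightarrow X\times\PP(W)$ is the open immersion, then $\cI_Z=\ker(\cO_{X\times\PP(W)}\to j_*\cO_\Gamma)$, which is automatically $\cI_Y$-saturated (as a sheaf on $X\times\PP(W)$, no section killed by a power of $\cI_Y$ can vanish on $U$ unless it is zero, because $X\times\PP(W)$ — being integral, as $X$ is integral and $\PP(W)$ is geometrically integral — has no sections supported on the proper closed set $Y\times\PP(W)$). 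So both $Z$ and $\Bl_YX$ are $\cI_Y$-saturated closed subschemes of the integral scheme $X\times_k\PP(W)$, and it remains to show they agree on the dense open $U\times\PP(W)$. Over $U$ the ideal $\cI_Y$ is the unit ideal, so $\Bl_YX\times_X U\cong U$ is just the graph section $U\hookrightarrow U\times\PP(W)$, which is exactly $\Gamma$; hence $\Bl_YX$ and $Z$ restrict to the same closed subscheme of $U\times\PP(W)$. Two closed subschemes of an integral scheme that are both saturated with respect to a coherent ideal $\cI_Y$ and agree on the open complement of $\Bl(\cI_Y)$ must coincide, because each equals the scheme-theoretic closure of its restriction to that open set; therefore $\Bl_YX=Z$.

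The main obstacle is the saturation bookkeeping: the naive relations $f_is_j-f_js_i$ need not generate the full ideal of $\Bl_YX$ inside $X\times\PP(W)$ — one really does have to pass to the saturation with respect to $\cI_Y$ (equivalently, throw away the component supported over $Y$ coming from $\PP(W)$ itself) — and the crux of the argument is that the scheme-theoretic-closure description of $Z$ builds this saturation in automatically. I would make this precise by working affine-locally on $X$, writing the Rees algebra $\cO_X[\cI_Y t]=\cO_X[f_0t,\dots,f_Nt]\subset\cO_X[t]$, noting it is a domain (it sits inside the domain $\cO_X[t]$ when $X$ is integral), and identifying its $\Proj$ with the closure of the graph of $x\mapsto[f_0(x):\dots:f_N(x)]$ on the locus where not all $f_j$ vanish; this is the one computation I would actually carry out, and it is short. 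The integrality hypothesis on $X$ is used exactly here and in the no-extra-components step, so I would flag it explicitly.
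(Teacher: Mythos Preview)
Your proof is correct and follows essentially the same route as the paper: construct the closed immersion $\Bl_YX\hookrightarrow X\times_k\PP(W)$ from the surjection of graded algebras onto the Rees algebra, then use integrality of $X$ (hence of the Rees algebra and of $\Bl_YX$) to conclude that $\Bl_YX$ equals the schematic closure of its restriction over $U$, which is the graph. The paper packages this a bit more cleanly and coordinate-free via the twisted algebra $\cA*\cL$ and the surjection $\Sym^\bullet(W\otimes_k\cO_X)\twoheadrightarrow\cA*\cL$, bypassing your saturation discussion entirely by invoking integrality of $\Bl_YX$ directly; your saturation remarks are correct but not needed once you observe (as you eventually do) that the Rees algebra is a domain.
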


\begin{proof}
  Let $\cA=\bigoplus _{d\ge 0}I_Y^d$ and let $\cA*\cL$ be a sheaf of
  graded $\cO_X$-algebras defined by $(\cA*\cL)_d=\cA_d\otimes _{\cO_X}\cL
  ^{\otimes d}$ for $d\ge 0$.
The surjective map $W\otimes _k \cO_X\to I_Y\otimes _{\cO_X} \cL$ defines a surjective map of sheaves of graded $\cO_X$-algebras
$$\Sym^{\bullet} (W\otimes _k\cO_X)\to \cA *\cL , $$
that preserves degrees. It gives rise to a closed immersion
$$Z:=\Proj \cA \simeq \Proj \cA*\cL \hookrightarrow \Proj \Sym^{\bullet} (W\otimes _K\cO_X)=\PP (W)_X
$$
of the blow up $Z$ of $X$ along $Y$ into $\PP (W)_X$. Since $X$ is integral, $Z$ is also integral.
Therefore $Z$ is the scheme-theoretic closure in $X\times_X\PP (W)_X=\PP (W)_X$ of the graph of the morphism
$U=X-Y\to \PP (W)_X$
defined by surjection $W\otimes _k \cO_U\to \cL|_U$. Since $\PP (W)_X=X\times _k\PP (W)$,
this is equivalent to taking the scheme-theoretic closure of the graph of the morphism $U\to \PP (W)$ defined by the linear system corresponding to $W$.
\end{proof}

\medskip

\begin{lemma}\label{blow-up-sum}
Let $X$ be a scheme and let  $Y_1, Y_2\subset X$ be subschemes. Let $f_i: Z_i\to X$ be the blow-ups of $X$ along $Y_i$ for $i=1,2$ and let $W_i$ be  the blow up of $Z_i$ along $f_i^{-1}(Y_{j})$, where $\{i,j\} =\{1,2\}$.
Then $W_1$ is isomorphic to $W_2$ as an $X$-scheme and both of them dominate the blow up of $X$ along the subscheme $Y_1\cup Y_2\subset X$.
\end{lemma}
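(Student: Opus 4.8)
The plan is to reduce everything to the universal property of blowing up — that $\Bl_{\mathcal J}X$ is terminal among $X$-schemes $g\colon T\to X$ for which $g^{-1}\mathcal J\cdot\cO_T$ is an invertible ideal sheaf — together with the standard fact that blowing up a product of two ideal sheaves is the same as blowing up the two factors one after the other.

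Write $\mathcal I_i=\mathcal I_{Y_i}\subset\cO_X$ for $i=1,2$, so that $f_i\colon Z_i=\Bl_{\mathcal I_i}X\to X$ and $W_i$ is the blow-up of $Z_i$ along the closed subscheme $f_i^{-1}(Y_j)$ defined by the ideal $f_i^{-1}\mathcal I_j\cdot\cO_{Z_i}$, with $\{i,j\}=\{1,2\}$. The first step is to invoke (or, if one prefers, to re-prove via the graph-closure description of Lemma \ref{blow-up}) the ``blow-up of a product'' identity: the blow-up of $Z_1=\Bl_{\mathcal I_1}X$ along $f_1^{-1}\mathcal I_2\cdot\cO_{Z_1}$ is canonically isomorphic over $X$ to $\Bl_{\mathcal I_1\mathcal I_2}X$. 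Since $\mathcal I_1\mathcal I_2=\mathcal I_2\mathcal I_1$, applying this to $Z_1$ and to $Z_2$ gives canonical isomorphisms of $X$-schemes
$$W_1\;\cong\;\Bl_{\mathcal I_1\mathcal I_2}X\;\cong\;W_2,$$
which is the first assertion.

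For the second assertion I would observe that on $W_1$ both $\mathcal I_1\cdot\cO_{W_1}$ and $\mathcal I_2\cdot\cO_{W_1}$ are invertible: $\mathcal I_2\cdot\cO_{W_1}$ cuts out the exceptional divisor of the blow-up $W_1\to Z_1$, while $\mathcal I_1\cdot\cO_{W_1}$ is the pullback along $W_1\to Z_1$ of the invertible ideal $\mathcal I_1\cdot\cO_{Z_1}=\cO_{Z_1}(-E_1)$ (an effective Cartier divisor pulls back to one, all the schemes in sight being integral in our applications). Hence $\mathcal I_{Y_1}\mathcal I_{Y_2}\cdot\cO_{W_1}=(\mathcal I_1\cdot\cO_{W_1})(\mathcal I_2\cdot\cO_{W_1})$ is an invertible ideal sheaf on $W_1$, and by the universal property the structure map $W_1\to X$ factors through $\Bl_{Y_1\cup Y_2}X$, where $Y_1\cup Y_2$ carries the scheme structure with ideal $\mathcal I_{Y_1}\mathcal I_{Y_2}$ (this coincides with the scheme-theoretic union whenever $Y_1$ and $Y_2$ are disjoint, or whenever one of the two contains the other, which covers the situations we use). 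The analogous map exists for $W_2$, and the canonical isomorphism $W_1\cong W_2$ intertwines the two.

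The only genuinely delicate point is the compatibility of these steps: one must check that the isomorphism $W_1\cong\Bl_{\mathcal I_1\mathcal I_2}X$ produced by the product identity is the canonical one over $X$, so that it agrees with the one for $W_2$ and so that the induced $X$-morphisms to $\Bl_{Y_1\cup Y_2}X$ really are the same map. Everything else is formal manipulation of ideal sheaves through the universal property.
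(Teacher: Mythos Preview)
Your argument for the first assertion is correct and is essentially the standard one the paper cites (\cite[Lemma 3.2]{Li}): identifying $W_1\cong\Bl_{\mathcal I_1\mathcal I_2}X\cong W_2$ over $X$.

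The second assertion, however, has a genuine gap. The scheme-theoretic union $Y_1\cup Y_2$ is by definition the closed subscheme with ideal sheaf $\mathcal I_1\cap\mathcal I_2$, \emph{not} $\mathcal I_1\mathcal I_2$; the paper's own proof says this explicitly. Your argument only shows that $(\mathcal I_1\mathcal I_2)\cdot\cO_{W_1}$ is invertible, hence that $W_1\to X$ factors through $\Bl_{\mathcal I_1\mathcal I_2}X$ --- but you already identified $W_1$ with this blow-up in the first step, so this adds nothing. What must be checked is that $(\mathcal I_1\cap\mathcal I_2)\cdot\cO_{W_1}$ is invertible. Your parenthetical escape clause is also wrong: when $Y_1\subset Y_2$ one has $\mathcal I_2\subset\mathcal I_1$, so $\mathcal I_1\cap\mathcal I_2=\mathcal I_2$ while $\mathcal I_1\mathcal I_2\subsetneq\mathcal I_2$ in general (take $\mathcal I_1=\mathcal I_2=(x)$ in $k[x]$). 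The disjoint case is fine, but it does not cover the applications in Section~\ref{wonder-section} and in the proof of Lemma~\ref{H^0T}, where one is dealing with nested linear subspaces.

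The paper's route for the second part is to observe that both $(g_1f_1)^{-1}(Y_1)$ and $(g_1f_1)^{-1}(Y_2)$ are effective Cartier divisors on $W_1$, and then to argue that the preimage of $Y_1\cup Y_2$ (ideal $\mathcal I_1\cap\mathcal I_2$) is again Cartier, so the universal property gives the factorisation through $\Bl_{Y_1\cup Y_2}X$. To repair your argument you would need to supply exactly this step: pass from invertibility of the two pulled-back ideals separately to invertibility of the pull-back of their intersection.
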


\begin{proof}
The first part of the lemma is standard (see, e.g., \cite[Lemma 3.2]{Li}). To prove the second one note that  both $(g_1\circ f_1)^{-1}(Y_1)\subset W_1$ and  $(g_1\circ f_1)^{-1}(Y_2)\subset W_1$ are Cartier divisors. Since
the ideal sheaf of $Y_1\cup Y_2$ is the intersection of ideal sheaves of $Y_1$ and $Y_2$ in $\cO_X$, also
 $(g_1\circ f_1)^{-1}(Y_1\cup Y_2)\subset W_1$ is a Cartier divisor. Hence the required  assertion follows from the universal property of the blow-up.
\end{proof}

\subsection{Wonderful compactifications}\label{wonderful-sec}

Let $X$ be a smooth algebraic variety defined over some perfect field $k$. An \emph{arrangement} of subvarieties $\cL$ is a finite collection of smooth subvarieties such that all nonempty scheme-theoretic intersections of subvarieties in $\cL$ are in $\cL$. Any such arrangement is a poset with order defined by inclusion.
 Let us set $X^o:=X-\bigcup _{Y\in \cL}Y$. A subset $\cB\subset \cL$ is called a \emph{building set} if for all $Y\in \cL-\cB$ the minimal elements in $\{Z\in \cB: Y\subset Z\}$ intersect transversally and their intersection is equal to $Y$.

For any arrangement $\cL$ one can define the \emph{wonderful compactification} of $X^o$ as the scheme-theoretic closure of the image of the natural locally closed embedding
$$X^o\hookrightarrow \prod _{Y\in \cL}\Bl _YX.$$
The following theorem is a summary of various results starting with \cite{DP}, \cite{Hu} and finishing with \cite[Theorem 1.2 and Theorem 1.3]{Li}. We state it only in a special case when a building set is equal to the arrangement. This is the only case that is interesting from the point of view of compactification of Drinfeld's half-space.

\begin{theorem}\label{wonderful-comp}
Wonderful compactification $\tilde X$ of $X^o$ is a smooth algebraic $k$-variety isomorphic to the blow up of $X$ along the product of ideal sheaves of $Y\in \cL$. For each $Y\in \cL$ there is a smooth divisor $D_Y\subset \tilde X$ such that $\tilde X-X^0=\bigcup _{Y\in \cL} D_Y $ is a simple normal crossings divisor and an intersection of divisors $D_{Y_1}\cap D_{Y_2}\cap ...\cap D_{Y_m}$ is nonempty if and only if $D_{Y_1}$, $D_{Y_2}$,..., $D_{Y_m}$
form a chain in the poset $\cL$.
Moreover, if we order elements of $\cL$ in such a way that for any $i$ the first $i$ elements of the order $D_1,....,D_N$ form a building set then
$$\tilde X=\Bl_{\tilde D_N}...\Bl_{\tilde D_2}\Bl_{D_1}X,$$
where $\tilde D_{i}$ are defined inductively as $f_i^{-1}(D_i)$ for $f_i: \Bl_{\tilde D_{i-1}}...\Bl_{D_1}X\to X$.
\end{theorem}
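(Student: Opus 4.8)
The plan is to assemble the statement from the cited literature together with the two blow-up lemmas of the previous subsection, rather than to reprove everything from scratch. The first sentence — that $\tilde X$ is smooth and isomorphic to the blow up of $X$ along the product $\prod_{Y\in\cL}I_Y$ — is essentially \cite[Theorem 1.2]{Li} in the case where the building set equals the arrangement: Li's main construction realizes the wonderful compactification as an iterated blow-up, and his Theorem 1.2(2) identifies it with a single blow-up along the product ideal. So the first step is simply to quote this, after checking that our hypotheses (smooth $X$, arrangement $\cL$ finite, building set $\cB=\cL$) are a special case of his. Likewise the structure of the boundary — the smooth divisors $D_Y$, the snc property, and the combinatorial criterion that $D_{Y_1}\cap\dots\cap D_{Y_m}\neq\emptyset$ iff the $Y_i$ form a chain — is \cite[Theorem 1.3]{Li} specialized to $\cB=\cL$; here one should note that in Li's general statement the nonemptiness criterion involves ``$\cB$-nested'' subsets, and for $\cB=\cL$ a nested subset of a totally-intersecting arrangement is precisely a chain, which is the simplification we record.

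For the last assertion — the explicit presentation $\tilde X=\Bl_{\tilde D_N}\dots\Bl_{\tilde D_2}\Bl_{D_1}X$ for any ordering whose initial segments are building sets — the plan is to combine Li's iterated-blow-up description with Lemma \ref{blow-up-sum}. Li proves that if one blows up the elements of $\cL$ in an order compatible with the poset (larger subvarieties first, or more precisely so that each initial segment is a building set for the arrangement it generates), then the result is $\tilde X$; this gives one valid ordering. To get the statement for \emph{every} admissible ordering, I would argue that any two such orderings differ by transpositions of ``independent'' consecutive blow-ups, and invoke Lemma \ref{blow-up-sum} to see that swapping the order of blowing up along $Y_1$ and $Y_2$ (followed in each case by the blow-up of the total transform of the other) yields canonically isomorphic $X$-schemes. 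Iterating this commutation moves, together with the observation that $\tilde D_i = f_i^{-1}(D_i)$ is exactly the total transform appearing in Lemma \ref{blow-up-sum}, shows all admissible orderings give the same tower up to canonical $X$-isomorphism, hence all equal $\tilde X$.

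The main obstacle I anticipate is bookkeeping rather than mathematics: matching our ``building set = arrangement'' normalization to the conventions in \cite{DP}, \cite{Hu}, \cite{Li}, where building sets are allowed to be strictly smaller than the arrangement and the nestedness combinatorics is stated in that generality. One has to verify carefully that in our special case ``$\cB$-nested set'' degenerates to ``chain in $\cL$'', and that the admissible orderings in Li's inductive construction are exactly those whose initial segments are building sets — in our situation, orderings refining the reverse-inclusion partial order on $\cL$ suffice, and one should check that the strict-transform operation $\tilde D_i = f_i^{-1}(D_i)$ is compatible with Li's inductive blow-up centers. Since all of this is routine translation of published results, I would present it compactly, citing \cite[Theorems 1.2 and 1.3]{Li} for the bulk and using Lemma \ref{blow-up-sum} only for the order-independence of the tower.
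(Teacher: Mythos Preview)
Your proposal is correct and matches the paper's approach: the paper does not prove this theorem at all but simply states it as ``a summary of various results starting with \cite{DP}, \cite{Hu} and finishing with \cite[Theorem 1.2 and Theorem 1.3]{Li}'', specialized to the case where the building set equals the arrangement. Your plan to extract the statements from Li's theorems and to note that $\cB$-nestedness degenerates to the chain condition when $\cB=\cL$ is exactly the translation the paper leaves implicit; your additional invocation of Lemma \ref{blow-up-sum} for order-independence is more than the paper provides (the paper relies entirely on the citations), but it is harmless and in fact already contained in \cite{Li}.
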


\subsection{Foliations}

Let $X$ be a normal variety defined over some field $k$. A
\emph{foliation} on $X$ is a saturated $\cO_X$-submodule $\cF\subset
T_{X/k} =\Der _k(\cO_X, \cO_X)$ preserved by the Lie bracket. If
$\cF\subset T_{X/k}$ is a foliation then by definition $T_{X/k}/\cF$
is torsion free and hence $\cF$ is reflexive. In particular, if $X$ is
a surface then $\cF$ is locally free. If $X$ is smooth and
$T_{X/k}/\cF$ is locally free then we say that $\cF$ is a \emph{smooth
  foliation}.

Let $f: X_2\to X_1$ be a birational morphism between normal
$k$-varieties. Then a foliation $\cF_1\subset T_{X_{1}/k}$ induces on
$X_2$ a foliation $\cF_2\subset T_{X_{2}/k}$ (of the same rank as
$\cF_1$) by taking the kernel of
$$T_{X_2/k}\to f^*T_{X_1/k}\to f^*(T_{X_1/k}/\cF_1)/\tors .$$
By assumption the first map is injective, so if $\cF_1$ is locally
free then $\cF_2$ is the saturation of $f^*\cF_1\cap T_{X_2/k}$ in
$T_{X_2/k}$.  Note that condition $[\cF_1, \cF_1]\subset \cF_1$
implies that $[\cF_2, \cF_2]\subset \cF_2$. Indeed, the map $\bigwedge
^2\cF_2\to T_{X_2/k}/\cF_2$ is $\cO_{X_2}$-linear and zero at the
generic point of $X_2$, so it is the zero map.

The above construction shows that we can try to resolve singularities
of a foliation $\cF_1\subset T_{X_{1}/k}$ and try to construct a
proper birational map $f: X_2\to X_1$ from a smooth variety $X_2$ such
that the induced foliation is smooth. In general, this is not possible
even for foliations on smooth complex surfaces (see
\cite{Se}). However, all the foliations considered in this paper have
nice resolutions by smooth foliations.

\subsection{$1$-foliations in positive characteristic}

Let $X$ be a normal variety defined over some field $k$ of positive
characteristic $p$. If $x$ is a $k$-derivation of the sheaf of rings
$\cO_X$ then the differential operator $x^p$ acts on $\cO_X$ as a
derivation. The corresponding derivation is denoted by $x^{[p]}$.
Thus we have a well defined $p$-th power map  $\cdot^{[p]}:T_{X/k}\to T_{X/k}$.

\medskip

We say that a foliation $\cF\subset T_{X/k}$ is a
\emph{$1$-foliation}, if $\cF ^{[p]}\subset \cF$.

If $f: X_2\to X_1$ is a birational morphism of smooth $k$-varieties
then a $1$-foliation $\cF_1\subset T_{X_{1}/k}$ induces a
$1$-foliation $\cF_2\subset T_{X_{2}/k}$. To check this one needs to
remark that the map $F_{X_2}^*\cF_2\to T_{X_2/k}/\cF_1$ is
$\cO_{X_2}$-linear and since $\cF ^{[p]}_1\subset \cF_1$, this map is
zero at the generic point of $X_2$. Therefore it is the zero map and
hence $\cF_2 ^{[p]}\subset \cF_2$.

\medskip

In positive characteristic, there always exist quotients by
$1$-foliations. More precisely, let $\cF\subset T_{X/k}$ be a
$1$-foliation. Then we can define a sheaf $\cO_Y=\cO_X^{\cF}$ of
abelian groups by setting
$$\cO_Y (U)=\{x\in \cO_X(U): \forall \,  V\subset U \, \hbox{ open, } \forall \, s\in \cF (V) \quad  s(x|_V)=0\}$$
for an open subset $U\subset X$. One can check that $\cO_Y$ is a sheaf
of $k$-algebras so it defines a scheme $Y$, which has $X$ as the
underlying topological space and $\cO_Y$ as a structure sheaf.  The
inclusion $\cO_Y\subset \cO_X$ defines the map $f: X\to Y$ called a
\emph{quotient by $1$-foliation} $\cF$. Usually, we denote $Y$ by
$X/\cF$. Since $\cO_X^p\subset \cO_Y$, the $p$th power map
$F_X^{\#}:\cO_X\to \cO_X$ factors through $\cO_Y\subset \cO_X$, so
there exists a map $g: Y\to X$ such that $gf=F_X$. Clearly, also
$F_Y^{\#}:\cO_Y\to \cO_Y$ factors through $\cO_Y\subset \cO_X$, so
$fg=F_Y$.

Let us define  the map of sheaves of abelian groups
$$\varphi : \cO_X\to \cF^*=\cHom _{\cO_X} (\cF, \cO_X)$$
by $(\varphi (x)) (s)=s(x)$ for $x\in \cO_X$ and $s\in \cF \subset \Der _k(\cO_X, \cO_X)$.
The map $\varphi$ can be also described as a composition of the  universal derivation $d:\cO_X\to \Omega_{X/k}$
with an $\cO_X$-linear map $\Omega_{X/k}\to \cF ^*$ dual to the inclusion
$\cF\subset T_{X/k}$. Note that $f_*(\varphi):  f_*\cO_X\to f_*(\cF ^*)$
is $\cO_Y$-linear. Indeed, if $y\in \cO_Y$, $x\in \cO_X$ and $s\in \cF$ then
$$(\varphi (yx)) (s)=s(yx)=ys(x)+xs(y)=ys(x).$$
Therefore we have the following lemma:

\begin{lemma}\label{foliation-sequence}
If $f: X\to Y$ is a quotient by $1$-foliation $\cF$ then
$$0\to \cO_Y\to f_*\cO_X\to f_*(\cF ^*)$$
is an exact sequence of $\cO_Y$-modules.
\end{lemma}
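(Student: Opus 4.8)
The plan is to verify exactness of $0\to \cO_Y\to f_*\cO_X\xrightarrow{f_*\varphi} f_*(\cF^*)$ at both non-trivial spots, working locally on $Y$ (equivalently, on $X$, since $f$ is a homeomorphism). The inclusion $\cO_Y\hookrightarrow f_*\cO_X$ is injective by construction, so the content is the identification $\cO_Y=\ker(f_*\varphi)$. First I would unwind the definitions: for an open $U\subset X$ and $x\in\cO_X(U)$, the element $\varphi(x)\in\cF^*(U)$ sends a local section $s\in\cF(V)$ to $s(x|_V)\in\cO_X(V)$. Thus $\varphi(x)=0$ on $U$ means precisely that $s(x|_V)=0$ for every open $V\subset U$ and every $s\in\cF(V)$ — which is exactly the defining condition for $x\in\cO_Y(U)=\cO_X^\cF(U)$. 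So the equality $\ker\varphi=\cO_Y$ holds already as sheaves of abelian groups on $X$, hence after applying $f_*$. This gives exactness at $f_*\cO_X$.

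It remains to check that the maps are $\cO_Y$-linear, so that the sequence is one of $\cO_Y$-modules rather than merely abelian sheaves. The inclusion $\cO_Y\hookrightarrow f_*\cO_X$ is tautologically $\cO_Y$-linear. For $f_*\varphi$, the computation is the one already displayed in the excerpt: for $y\in\cO_Y(V)$, $x\in\cO_X(V)$ and $s\in\cF(V')$ with $V'\subset V$, Leibniz gives $s(yx)=y\,s(x)+x\,s(y)$, and $s(y)=0$ because $y$ is a local section of $\cO_Y$; hence $\varphi(yx)(s)=y\cdot(\varphi(x)(s))$, i.e. $f_*\varphi(yx)=y\cdot f_*\varphi(x)$. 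This is precisely the displayed identity $(\varphi(yx))(s)=ys(x)$, so no further work is needed here.

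There is essentially no obstacle: the statement is a bookkeeping consequence of the definitions of $\cO_X^\cF$ and of $\varphi$. The only point requiring a word of care is the quantifier structure in the definition of $\cO_Y$ — membership is tested against \emph{all} sections of $\cF$ over \emph{all} smaller opens — but this is matched exactly by the condition $\varphi(x)=0$ as a morphism of sheaves (vanishing of a sheaf map is checked on all opens, or equivalently on stalks), so the kernel sheaf of $\varphi$ is literally $\cO_Y$ by definition. Hence I would simply record the two observations above: (i) $\ker(f_*\varphi)=\cO_Y$ from the definitions, and (ii) $\cO_Y$-linearity from the Leibniz computation already exhibited, and conclude.
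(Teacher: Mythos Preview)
Your proposal is correct and follows precisely the paper's own argument: the paper establishes the $\cO_Y$-linearity of $f_*\varphi$ via the Leibniz computation in the text immediately preceding the lemma, and the identification $\ker\varphi=\cO_Y$ is, as you observe, immediate from the definition of $\cO_Y=\cO_X^{\cF}$.
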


\medskip

\begin{lemma} \label{Q-factoriality} Let $X$ be a $\QQ$-factorial
  normal variety and let $\cF\subset T_{X/k}$ be a $1$-foliation.
  Then the quotient $Y=X/\cF$ is also a $\QQ$-factorial normal
  variety. Moreover, if $X$ is smooth and $D$ is a Weil divisor on $Y$
  then $pD$ is Cartier.
\end{lemma}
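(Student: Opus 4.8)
The plan is to exploit the two maps $f\colon X\to Y$ and $g\colon Y\to X$ with $gf=F_X$ and $fg=F_Y$, which show that $Y$ is sandwiched between $X$ and its Frobenius twist. First I would record that $Y$ is normal: $\cO_Y=\cO_X^{\cF}\subset \cO_X$ is a subring of a normal domain (on each affine open), and it is integrally closed because if $z\in\cO_X$ is integral over $\cO_Y$ then so is $z^p=F_X^{\#}(z)$, but $z^p\in\cO_X^p\subset\cO_Y$ already, and one checks that the integral closure of $\cO_Y$ inside $\cO_X$ is a ring on which $\cF$ still acts trivially; alternatively, normality of $Y$ is immediate from the fact that $f$ is a homeomorphism and $\cO_Y=\cO_X\cap K(Y)$ inside $K(X)$, where $K(Y)=K(X)^{\cF}$ is the fixed field. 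Since $X$ is a variety, so is $Y$.

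For $\QQ$-factoriality, let $D$ be a Weil (equivalently, a reflexive rank-one) divisorial sheaf on $Y$. Pull it back via $g$ (or rather take $g^{[*]}D:=(g^*\cO_Y(D))^{**}$) to get a Weil divisor on $X$; since $X$ is $\QQ$-factorial, some multiple $m\cdot g^{[*]}D$ is Cartier on $X$. Now push this Cartier divisor forward along $g$: because $fg=F_Y$ and $F_Y$ is a homeomorphism, pushing a Cartier divisor from $X$ to $Y$ via $f$, when composed appropriately, recovers a Frobenius-power twist of $D$. More precisely, the key computation is that for a reflexive rank-one sheaf $L$ on $Y$ one has $f_*f^{[*]}L \cong$ (something controlled), but the cleanest route is: $L^{[p]}:=(F_Y^*L)^{**}=(g^*f^*L)^{**}$, and since $f^{[*]}L$ is a Weil divisor on the $\QQ$-factorial $X$, some multiple $m\,f^{[*]}L$ is Cartier, hence $g^{[*]}(m\,f^{[*]}L)=(m)\,(gf)^{[*]}L=(m)(F_X^{[*]}L)$... — here I need to be careful about which way maps go. The robust statement I would prove is: if $E$ is Cartier on $X$ then $f_{*}\cO_X(E)\cap K(Y)^{\times}$-associated divisor, call it $f_{\star}E$, satisfies $g^{[*]}(f_{\star}E)$ is $\QQ$-Cartier on $X$ differing from $E$ by a bounded denominator, and symmetrically. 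Combining these with $gf=F_X$, $fg=F_Y$, and the fact that $F$ pulls back Weil divisors to $p$ times themselves (up to linear equivalence on the level of divisor classes, $F^*$ acts by multiplication by $p$ on $\Cl$ modulo torsion that is killed by known bounds), one deduces that for any Weil divisor $D$ on $Y$, $p\,D$ (or some fixed multiple) is $\QQ$-Cartier, hence $\QQ$-factoriality of $Y$ follows.

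For the last sentence, assume $X$ is smooth and let $D$ be a Weil divisor on $Y$. Then $g^{[*]}D$ is a genuine Cartier divisor on $X$ (smoothness!), and I claim $pD\cong f_{\star}(g^{[*]}D)$ as divisor classes, hence $pD$ is Cartier. The point: for the line bundle $M=\cO_X(g^{[*]}D)$ on $X$, the sheaf $f_*M$ need not be locally free, but $f^*(\cO_Y(pD))$ should be identified with $M^{\otimes ?}$ via $gf=F_X$: indeed $F_X^*\cO_Y(D)=g^*f^*\cO_Y(D)$, and $f^*\cO_Y(D)=\cO_X(g^{[*]}D)$ up to reflexive hull$=M$ since $X$ is smooth and $f$ is finite; therefore $F_X^*\cO_Y(D)=g^*M$, while also $F_X^*\cO_Y(D)=\cO_Y(D)^{[p]}$ pulled back, giving $\cO_X(pD)|$-type identity on $X$ and descending it via $f$ yields $pD$ Cartier on $Y$. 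The main obstacle I anticipate is bookkeeping the reflexive-hull operations and making sure the finite degree-$p$ maps $f,g$ interact with $\cO(-)$ of Weil divisors without losing track of torsion in the class group; once the identity $f^*\cO_Y(D)^{**}=g^*(\text{Cartier on }X)$ is pinned down using smoothness of $X$ and $gf=F_X$, the conclusion that $pD$ is Cartier is formal.
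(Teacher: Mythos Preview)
Your strategy is exactly the paper's --- use the factorization of Frobenius through $Y$ --- but the execution is tangled because you repeatedly swap the roles of $f$ and $g$ and of $fg$ versus $gf$. With your conventions $f\colon X\to Y$ and $g\colon Y\to X$, the composite $fg$ is $F_Y\colon Y\to Y$ and $gf$ is $F_X\colon X\to X$. So ``$g^{[*]}D$ is a Cartier divisor on $X$'' is meaningless as written (you cannot pull $D$ from $Y$ back along $g\colon Y\to X$); what you want there is $f^*D$. Likewise $g^{*}(f^{*}L)=(fg)^{*}L=F_Y^{*}L$, not $(gf)^{*}L$ as you wrote. These slips are why you never reach a clean conclusion and end up introducing ad hoc objects like ``$f_{\star}E$''.

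Once the directions are straight, the argument collapses to one line and no reflexive hulls or pushforwards are needed. For a Weil divisor $D$ on $Y$, the pullback $f^*D$ is a Weil divisor on the $\QQ$-factorial $X$, hence $\QQ$-Cartier; then
\[
pD \;=\; F_Y^*D \;=\; (fg)^*D \;=\; g^*(f^*D),
\]
which is the pullback of a $\QQ$-Cartier divisor along the finite morphism $g$, hence $\QQ$-Cartier. If $X$ is smooth then $f^*D$ is already Cartier, so $pD=g^*(f^*D)$ is Cartier. This is precisely the paper's proof.
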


\begin{proof}
  Normality of $Y$ is well-known.  Let $D$ be a Weil divisor on
  $Y$. Let us recall that we have finite morphisms $f:X\to Y$ and
  $g:Y\to X$ such that $fg=F_Y$ and $gf=F_X$. Since $f^*D$ is
  $\QQ$-Cartier and $pD=F_Y^*D=g^*(f^*D)$, the divisor $D$ is also
  $\QQ$-Cartier. If $X$ is smooth then $f^*D$ is Cartier, which
  implies that $pD$ is Cartier.
\end{proof}

\subsection{Hirokado's rational vector field}\label{Hirokado}

Let $V$ be a finite dimension $k$-vector space.
Let us consider the Euler exact sequence
$$0\to \cO_{\PP (V)}(-1)\mathop{\longrightarrow}^{s} V^*\otimes \cO_{\PP (V)}\to T_{\PP (V)}(-1)\to 0,$$
where $s$ is dual to the evaluation map $V\otimes \cO_{\PP (V)}\to
\cO_{\PP (V)}(1)$.  Pulling back this sequence by the Frobenius
endomorphism $\Fr: \PP(V)\to \PP (V)$ we have
$$0\to \cO_{\PP (V)}(-q)\mathop{\longrightarrow}^{\Fr^*s} V^*\otimes \cO_{\PP (V)}\to ({\Fr}^*T_{\PP (V)})(-q)\to 0.$$
Let us denote  by $\theta$ the composition
$$ \cO_{\PP (V)} (-q)\mathop{\longrightarrow}^{\Fr^*s}   V^*\otimes \cO_{\PP (V)}\to T_{\PP (V)}(-1)
$$
twisted by the identity map on $\cO_{\PP (V)}(1)$. This defines a
rational vector field on $\PP (V)$ with poles of order $(q-1)$.  We
denote it by $\delta$.  This gives a global, coordinate free
construction of an analogue of the vector field considered by Hirokado
in \cite[Proposition 2.1]{Hi} in case of $\PP ^3$.  The following
lemma is a generalization of the first part of \cite[Proposition
2.1]{Hi}

\begin{lemma}\label{lemma-Hirokado}
  $\delta$ defines a $p$-closed rational vector field on $\PP(V)$
  singular along $k$-rational points of $\PP (V)$.  More precisely, we
  have $\delta^{[p]}=\delta$.
\end{lemma}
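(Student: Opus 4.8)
The strategy is to make the coordinate-free construction of $\delta$ completely explicit on the standard affine charts of $\PP(V)$ and then read off both assertions. Fix a basis $x_0,\dots,x_n$ of $V$, giving homogeneous coordinates on $\PP(V)$, and let $\partial_0,\dots,\partial_n\in V^*$ be the dual basis. Under the identification $\Hom(\cO_{\PP(V)}(-1),V^*\otimes\cO_{\PP(V)})=V^*\otimes H^0(\PP(V),\cO(1))=V^*\otimes V$ the map $s$ corresponds to $\mathrm{id}_V=\sum_i \partial_i\otimes x_i$, hence $\Fr^*s$ corresponds to $\sum_i \partial_i\otimes x_i^q$. Plugging this into the definition of $\theta$, and then of $\delta$, a direct computation on the chart $D_+(x_0)$ with affine coordinates $t_i=x_i/x_0$ ($1\le i\le n$) — eliminating $\partial_0$ by the Euler relation $\sum_i x_i\partial_i\mapsto 0$ in $T_{\PP(V)}(-1)$ and trivialising $\cO(q-1)$ over $D_+(x_0)$ by the nowhere-vanishing section $x_0^{q-1}$ — gives
\[
\delta|_{D_+(x_0)}\;=\;\sum_{i=1}^n\bigl(t_i^{\,q}-t_i\bigr)\frac{\partial}{\partial t_i}.
\]
By symmetry the analogous formula holds on every chart $D_+(x_j)$, and these charts cover $\PP(V)$.

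\textbf{Singularities.} On $D_+(x_0)$ the vanishing scheme of $\delta$ is $V(t_1^{\,q}-t_1,\dots,t_n^{\,q}-t_n)$. Since $t^{\,q}-t=\prod_{a\in\FF_q}(t-a)$ has derivative $-1$, it is separable, so this scheme is reduced and consists exactly of the $q^n$ points with all $t_i\in\FF_q=k$, i.e.\ of the $k$-rational points of $D_+(x_0)$. Gluing over the covering $\{D_+(x_j)\}$ shows that $\Sing\delta$ is precisely the reduced set of $k$-rational points of $\PP(V)$; in particular $\delta\not\equiv 0$, so it does define a (rank-one) foliation, singular along the $k$-rational points as claimed.

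\textbf{$p$-closedness.} The identity $\delta^{[p]}=\delta$ means that in each chart the honest vector field $D:=\sum_{i=1}^n(t_i^{\,q}-t_i)\,\partial/\partial t_i$ satisfies $D^{[p]}=D$. Since $D^{[p]}=D^{\,p}$ and $D$ are both $k$-derivations, it suffices to check equality on the coordinates $t_\ell$. One has $D(t_\ell)=t_\ell^{\,q}-t_\ell$ and $D(t_\ell^{\,q})=q\,t_\ell^{\,q-1}D(t_\ell)=0$ because $p\mid q$; hence $D^2(t_\ell)=D(t_\ell^{\,q}-t_\ell)=-D(t_\ell)$, and inductively $D^{m}(t_\ell)=(-1)^{m-1}D(t_\ell)$. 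Therefore $D^{[p]}(t_\ell)=D^{\,p}(t_\ell)=(-1)^{p-1}D(t_\ell)=D(t_\ell)$, since $(-1)^{p-1}=1$ in $\FF_p$ for every prime $p$. Thus $D^{[p]}=D$, i.e.\ $\delta^{[p]}=\delta$.

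\textbf{Main obstacle.} The only genuinely delicate point is the first step: carefully matching the coordinate-free definition of $\delta$ with the chart formula, which requires keeping straight the Euler-sequence identifications and the chosen trivialisation of $\cO(q-1)$, and then observing that the chart-wise statement $D^{[p]}=D$ is exactly the content of $\delta^{[p]}=\delta$. Once the chart formula is in hand, both the determination of $\Sing\delta$ and the $p$-closedness are immediate.
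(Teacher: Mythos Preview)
Your proof is correct. The paper itself does not supply a proof of this lemma: it simply remarks that the argument ``was omitted'' in \cite{Hi} but ``can be found in the preprint version of the paper''. Your direct coordinate computation is exactly the expected one, and the local expression you derive,
\[
\delta|_{D_+(x_0)}=\sum_{i=1}^n\bigl(t_i^{\,q}-t_i\bigr)\frac{\partial}{\partial t_i},
\]
coincides with the formula the present paper records (and uses) later in Section~\ref{section-foliation} for $\delta_1$. So there is no divergence in approach to discuss: you have supplied precisely the omitted verification.
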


The proof of this lemma in \cite{Hi} was omitted but it can be found
in the preprint version of the paper.

\section{Determinantal schemes related to Moore's determinant}\label{determinantal}

Let us recall that a codimension $c$ subscheme of $\PP ^n$ is
called \emph{standard determinantal} if its homogeneous saturated
ideal is generated by the $m\times m$ minors of some $m\times (m+c-1)$
homogeneous matrix.

\medskip

Let $V$ be an $(n+1)$-dimensional vector space over a finite field
$k=\FF_q$, where $q=p^e$ for some prime number $p$ and a positive integer $e$.
Let us fix some homogeneous coordinates $x_0,...,x_n$ in $\PP (V)$.
In the following for a subset $I\subset \{0,...,n\}$ we denote by $|I|$ the number of elements in $I$ and we set
$\hat I= \{0,...,n\} -I$. If $|I|=m$ then we order elements of $I$ as  $0\le i_1<...<i_{m}\le n$ and we write
$x_{I}=({x}_{i_1}, ...,{x}_{i_{m}})$.

\begin{proposition} \label{codim-2} Let us fix $c\ge 1$ and let
  $Z_c\subset \PP (V)$ be the reduced induced subscheme structure on
  the union of all $k$-linear subspaces of codimension $c$ in $\PP
  (V)$.  Then after fixing some homogeneous coordinates $x_0,...,x_n$
  in $\PP (V)$ the saturated homogeneous ideal $I(Z_c)\lhd
  k[x_0,...,x_n]$ is generated by $\Delta_{q}(x_{\hat I})$ for all
  multiindices $I$ such that $|I|=c-1$.  Moreover, $Z_c\subset \PP
  (V)$ is a standard determinantal scheme. In particular, it is
  arithmetically Cohen--Macaulay.
\end{proposition}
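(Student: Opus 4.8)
The plan is to identify the vanishing locus of the $\Delta_q(x_{\hat I})$ with $Z_c$, and then exhibit an explicit homogeneous matrix whose maximal minors cut out $Z_c$, so that the last assertions follow from the standard theory of determinantal schemes.

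First I would understand the zero locus of a single Moore determinant. By Lemma~\ref{Moore}, $\Delta_q(x_{\hat I})$ factors as a product of $k$-rational linear forms in the variables $x_j$, $j\in\hat I$, and in fact every $k$-rational linear form in these variables (up to scalar) appears. Hence the zero locus of $\Delta_q(x_{\hat I})$ is the union of all $k$-rational hyperplanes of $\PP(V)$ that are defined by a linear form supported on the coordinates indexed by $\hat I$. A point $p\in\PP(V)$ lies on such a hyperplane iff the coordinates $x_j(p)$, $j\in\hat I$, are $\FF_q$-linearly dependent, i.e.\ iff the corresponding Moore matrix drops rank. Intersecting over all $I$ with $|I|=c-1$: a point $p$ lies in $V(I(Z_c))$ iff for every choice of $n+1-(c-1)=n-c+2$ of the coordinates, those coordinate values are $\FF_q$-linearly dependent. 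This is exactly the condition that the $\FF_q$-span of the full coordinate vector $(x_0(p),\dots,x_n(p))$ has dimension $\le n-c+1$, equivalently that $p$ lies on a $k$-rational linear subspace of codimension $\ge c$; that is, $p\in Z_c$ set-theoretically. So the radical of the ideal generated by the $\Delta_q(x_{\hat I})$ is $I(Z_c)$.

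Next I would produce the determinantal presentation, which simultaneously upgrades the above to an ideal-theoretic (saturated) statement. The natural candidate is built from the Moore-type matrix with rows indexed by $0,\dots,q$-powers: consider the $(n-c+2)\times(n+1)$ matrix $M$ whose $(\ell,j)$ entry is $x_j^{q^{\ell}}$ for $0\le\ell\le n-c+1$, $0\le j\le n$; then $Z_c$ is set-theoretically the locus where $\rk M\le n-c+1$, i.e.\ the vanishing of the maximal ($(n-c+2)\times(n-c+2)$) minors of $M$, and those maximal minors are, up to sign and reindexing, precisely the $\Delta_q(x_{\hat I})$ with $|I|=c-1$. So $M$ is an $m\times(m+c-1)$ matrix with $m=n-c+2$. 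The key step is then to check that the maximal minors generate a \emph{saturated} ideal of the expected codimension $c$, i.e.\ that the determinantal variety has the generic (``expected'') codimension $(m+c-1)-m+1=c$; once this holds, the classical Eagon--Northcott theory (see, e.g., the references giving standard determinantal schemes their arithmetic Cohen--Macaulay property) gives that $Z_c$ is arithmetically Cohen--Macaulay and that the minors generate the full saturated ideal $I(Z_c)$, and in particular the scheme defined by these minors is reduced (being CM and generically reduced, as it is generically a reduced union of linear spaces). The codimension count is the one place requiring care: I expect to verify it either by the dimension computation $\dim Z_c = n-c$ done above set-theoretically (each component being a linear space of that dimension, and there being finitely many), which forces codimension exactly $c$, hence the generic determinantal codimension, which is the hypothesis needed to invoke Cohen--Macaulayness and the structure theorem.

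The main obstacle, then, is the bookkeeping needed to pass from the set-theoretic description to the scheme-theoretic one: namely confirming that the ideal of maximal minors of $M$ has codimension equal to the generic determinantal codimension $c$ (so that the Eagon--Northcott complex resolves $k[x_0,\dots,x_n]/I_{m}(M)$), and that this forces $I_m(M)$ to be radical and saturated, hence equal to $I(Z_c)$. I would handle this by the dimension argument just sketched rather than by a direct syzygy computation. Everything else — the factorization of $\Delta_q$, the identification of minors with Moore determinants, and the final invocation of ``standard determinantal $\Rightarrow$ arithmetically Cohen--Macaulay'' — is either Lemma~\ref{Moore} or a citation.
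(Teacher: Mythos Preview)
Your overall architecture matches the paper's: set up the $(n-c+2)\times(n+1)$ Moore-type matrix, observe that its maximal minors are exactly the $\Delta_q(x_{\hat I})$, verify the codimension is the expected value $c$, and invoke the Eagon--Hochster/Northcott theorem to get Cohen--Macaulayness. The set-theoretic identification and the codimension count are fine.

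The gap is in the reducedness step. You write that the determinantal scheme is ``reduced (being CM and generically reduced, as it is generically a reduced union of linear spaces)'', but this is circular: knowing that the \emph{underlying set} is a union of linear spaces does not tell you the \emph{scheme} defined by the minors is generically reduced. A determinantal ideal of the expected codimension is Cohen--Macaulay but need not be radical, and Cohen--Macaulayness only reduces the question to generic reducedness --- it does not supply it. You must actually verify that at the generic point of each codimension-$c$ linear component the ideal of minors agrees with the ideal of that linear space. The paper does this by an explicit Jacobian computation: for the component $L=V(x_{n-c+1},\dots,x_n)$ it exhibits, using Corollary~\ref{partial-Moore}, a $c\times c$ diagonal block in the Jacobian with entries $\pm(\Delta_q(x_0,\dots,x_{n-c}))^q$, which is nonzero at the generic point of $L$. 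Once radical is established, saturatedness follows because graded radical ideals other than the irrelevant ideal are saturated. Your proposal is missing exactly this Jacobian step (or an equivalent argument), and without it the equality $I_m(M)=I(Z_c)$ does not follow.
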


\begin{proof}
  Let us fix some coordinates $x_0,...,x_n$ in $V^*$ that we will also
  treat as homogeneous coordinates $x_0,...,x_n$ in $\PP (V)$. So we
  have $V^*=\Spec S$ and $\PP (V)=\Proj S$, where $S=k[x_0,...,x_n]$.
  Let us consider the affine subscheme $Z_c'\subset V^*$ defined by
  the homogeneous ideal $J\lhd S$ generated by the maximal minors of
  the $(n+2-c)\times (n+1)$ homogeneous matrix
$$
\left(
\begin{array}{ccc}
x_0&...&x_n\\
x_0^{q}&...&x_n^{q}\\
\vdots&& \vdots\\
x_0^{q^{n+1-c}}&...&x_n^{q^{n+1-c}}\\
\end{array}
\right) .
$$
These minors are exactly of the form $\Delta_{q}(x_{\hat I})$ for multiindices $I$  with $|I|=c-1$.
We claim that $Z'_c$ is a codimension $c$ subscheme, whose underlying set of points
is the union of all $k$-linear vector subspaces of codimension $c$ in $V^*$.

By definition $Z_c'$ is the scheme-theoretic intersection of
$k$-subschemes $Z_{I}'\subset V^*$, defined by the ideal generated by
$f_I:=\Delta_{q}(x_{\hat I})$ for all multiindices $I$  with $|I|=c-1$.
Let us set $P_i=(0,...,\mathop{1}^{i},..., 0)\in V^*$.
By Lemma \ref{Moore} $Z_{I}'$ is the reduced scheme structure on
the set of all $k$-linear hyperplanes passing through $0$ and
$\{P_{i} \}_{i\in I}$.

Let $L\subset V^*$ be a $k$-vector subspace of codimension
$c$. The  subspace spanned by $L$ and $0, P_{i_1},..., P_{i_{c-1}}$ is $k$-linear
of dimension at most $n$. Therefore it is contained in some $k$-linear hyperplane and hence
 $L\subset \cap Z_{I}'=Z_c'$. On the other hand,
it is clear that if a point $x\in V^*$ does not lie on any $k$-vector
subspace of $V^*$ of codimension $c$, then $x \not \in \cap
Z_{I}'$. Therefore $Z'_c\subset V^*$ has codimension $c$.

Thus we proved that $J$ is a standard determinantal ideal. Therefore
by the Eagon--Hochster theorem (or just Eagon's theorem in this case)
the ring $S/J$ is Cohen--Macaulay.  In particular, $Z'_c$ has no
embedded components. We need to check that $Z'_c$ is reduced.  To do
so it is sufficient to check that it is reduced at the generic point
of each of its irreducible components. The irreducible components of
$Z_c'$ are codimension $c$ $k$-vector subspaces in $V^*$. Let us fix
such a subspace $L$. We can choose coordinates so that the ideal of
this subspace is equal to $(x_{n-c+1},..., x_n)\lhd S$. We need to
check that the Jacobian matrix
$$J_{\{f_I\}}=\left( \frac{\partial f_I}{\partial x_j}\right) _{I, j}$$
has rank $c$ at the generic point of $L$.
Let us note that for $n+1-c\le l\le n$ we have
$$\frac{\partial f_{n+1-c,...,\hat{j}, ...,n}}{\partial x_l}=\frac{\partial \Delta_{q}
(x_0,...,{x}_{n-c},x_j)}{\partial x_l}=\left\{
\begin{array}{cl}
(-1)^{n+1-c}(\Delta_q (x_0,...,x_{n-c}))^q&\hbox{for $l= j$,}\\
0&\hbox{for $l\ne j.$}\\
\end{array}
\right.$$
Therefore $J_{\{f_I\}}$ contains a $c\times c$ diagonal matrix
$$\left(\frac{\partial f_{n+1-c,...,\hat{j}, ...,n}}{\partial x_l}\right) _{n+1-c\le j,l\le n},$$
whose determinant is equal to $(-1)^{c(n+1-c)}(\Delta
(x_0,...,x_{n-c}))^{cq}$. Since this determinant is non-zero at the
generic point of $L$, $Z'_c$ is reduced at the generic point of $L$
and thus also reduced.  It follows that the graded ideal $J\lhd S$ is
radical. It is easy to see that graded radical ideals in $S$ different
to the ideal $(x_0,...,x_n)$ are saturated, so $J\lhd S$ is saturated.

It follows that $Z'_c$ is the affine cone over $Z_c$ and $J\lhd S$ is
also the homogeneous saturated ideal of $Z_c$. In particular, $Z_c$ is
a standard determinantal scheme and it is arithmetically
Cohen--Macaulay.
\end{proof}

\medskip

Note that we have a flag of determinantal schemes
$Z_{n+1}:=\emptyset\subset Z_n\subset...\subset Z_1\subset Z_0:=\PP
(V)$, showing that all $Z_c$ are so called good determinantal
subschemes. Moreover, this flag defines a partition $\PP
(V)=\bigcup_{i=0}^n(Z_i-Z_{i+1})$, where $\bar k$-rational points of
$Z_{i}-Z_{i+1}$ are exactly those $x\in (\PP(V))(\bar k)$ for which
$x, \Fr (x), \Fr ^2(x),...$ span a linear subspace of codimension $i$
in $\PP (V)$. In particular, this partition is invariant under the
Frobenius endomorphism on $\PP (V).$

\medskip

Let us consider evaluation map $\ev: V\otimes \cO_{\PP (V)}\to \cO_{\PP (V)}(1)$ and its pull backs
$(\Fr ^i)^* \ev: V\otimes \cO_{\PP (V)}\to \cO_{\PP (V)}(q^i)$ by the
Frobenius endomorphisms $\Fr ^i: \PP(V)\to \PP (V)$.
Let us fix  $c$ (``codimension'') and consider the map
$$g: V\otimes \cO_{\PP (V)}\to \cF:=\bigoplus _{i=0}^{n+1-c}\cO_{\PP (V)}(q^i)$$
given by the direct sum of the above maps.

\medskip

Proposition \ref{codim-2} allows us to write down a free resolution of the homogeneous
coordinate ring of $Z_c$. We write down just the corresponding sheafified version
of the Eagon--Northcott complex (see \cite[Chapter 2.C]{BV}).

\begin{corollary}\label{Eagon-Northcott}
For $1\le c\le n+1$  we have a natural acyclic complex
\begin{eqnarray*}
0\to {\bigwedge} ^{n+1} V \otimes(\Sym ^{c-1} \cF)^*\to {\bigwedge} ^{n} V \otimes (\Sym ^{c-2} \cF)^*&\to ... \to
 {\bigwedge} ^{n+3-c} V\otimes (\Sym ^1 \cF)^*\to
\\
\to
{\bigwedge} ^{n+2-c} V\otimes  \cO_{\PP (V)}\to \cO_{\PP (V)} (1+q+...+q^{n+1-c})&\to  \cO_{Z_c}(1+q+...+q^{n+1-c})\to 0.
\end{eqnarray*}
In particular, $\bigwedge ^{n+2-c}g$ induces an isomorphism
$$H^0 \left(\PP (V), I_{Z_c}\left(\frac{q^{n+2-c}-1}{q-1}\right)
\right)\simeq {\bigwedge} ^{n+2-c} V.$$
\end{corollary}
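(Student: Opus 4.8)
The plan is to deduce the Corollary directly from Proposition \ref{codim-2} by invoking the standard theory of the Eagon--Northcott complex associated to a map of vector bundles. Concretely, the map
$$g: V\otimes \cO_{\PP (V)}\to \cF=\bigoplus _{i=0}^{n+1-c}\cO_{\PP (V)}(q^i)$$
is, after choosing the homogeneous coordinates $x_0,\dots,x_n$, given by precisely the $(n+2-c)\times(n+1)$ matrix displayed in the proof of Proposition \ref{codim-2} (its $j$-th column is $(x_j, x_j^q,\dots, x_j^{q^{n+1-c}})$, and the twists on the target record the degrees $1,q,\dots,q^{n+1-c}$). The maximal minors of this matrix are the elements $\Delta_q(x_{\hat I})$ with $|I|=c-1$, which by Proposition \ref{codim-2} generate the saturated ideal $I_{Z_c}$; moreover $Z_c$ has the expected codimension $c = (n+1)-(n+2-c)+1$. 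These are exactly the hypotheses (the ``generic'' codimension condition) under which the Eagon--Northcott complex attached to $g$ is a resolution; see \cite[Chapter 2.C]{BV}. Writing out that complex in the present situation, with $\rk \cF = n+2-c$ and source of rank $n+1$, and twisting everything by $\cO(1+q+\dots+q^{n+1-c}) = \cO\!\left(\tfrac{q^{n+2-c}-1}{q-1}\right)$ so that the cokernel term becomes $\cO_{Z_c}$ of that degree, gives precisely the displayed acyclic complex. I would present this as: apply \cite[Ch.~2.C]{BV} to $g$, using Proposition \ref{codim-2} to check that the ideal of maximal minors cuts out a scheme of the expected codimension (so the Buchsbaum--Rim/Eagon--Northcott acyclicity criterion applies) and that this scheme is $Z_c$ with the asserted ideal.

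For the ``In particular'' statement, I would read off the last two nonzero terms of the complex. The map $\bigwedge^{n+2-c} g : \bigwedge^{n+2-c}(V\otimes\cO) \to \bigwedge^{n+2-c}\cF$ has target $\bigwedge^{n+2-c}\cF = \bigotimes_{i=0}^{n+1-c}\cO(q^i) = \cO\!\left(\tfrac{q^{n+2-c}-1}{q-1}\right)$, since $\cF$ is a direct sum of $n+2-c$ line bundles. Exactness of the Eagon--Northcott complex at the term $\bigwedge^{n+2-c}V\otimes\cO_{\PP(V)}$ says that the image of $\bigwedge^{n+2-c}g$ is exactly $I_{Z_c}\!\left(\tfrac{q^{n+2-c}-1}{q-1}\right)$, and that this map is injective on global sections (its kernel, being the image of the previous term, has no sections in the relevant degree — indeed the previous term is $\bigwedge^{n+3-c}V\otimes(\Sym^1\cF)^*$, a sum of line bundles of strictly negative degree). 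Taking $H^0$ of the surjection $\bigwedge^{n+2-c}V\otimes\cO_{\PP(V)}\to I_{Z_c}\!\left(\tfrac{q^{n+2-c}-1}{q-1}\right)$ and combining with this injectivity yields the claimed isomorphism $\bigwedge^{n+2-c}V \xrightarrow{\ \sim\ } H^0\!\left(\PP(V), I_{Z_c}\!\left(\tfrac{q^{n+2-c}-1}{q-1}\right)\right)$.

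The only genuine point requiring care — and what I expect to be the main obstacle — is bookkeeping the twists so that the Eagon--Northcott complex of \cite{BV}, which is stated for a map $E\to G$ of bundles in a form where $G$ need not be a sum of $\cO(q^i)$, produces exactly the terms $\bigwedge^j V\otimes(\Sym^{j-(n+2-c)}\cF)^*$ displayed here. Concretely one must track the determinant-of-$\cF$ twist that appears in the general formula and check it matches $\cO(1+q+\dots+q^{n+1-c})$; this is the routine ``lengthy elementary'' verification of the kind the paper elsewhere declines to spell out, but it is entirely mechanical once the identification $\bigwedge^{n+2-c}\cF = \cO\!\left(\tfrac{q^{n+2-c}-1}{q-1}\right)$ is in hand. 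I would also remark that acyclicity (as opposed to mere existence of the complex) is where Proposition \ref{codim-2}'s codimension computation is essential, since the Eagon--Northcott complex resolves $\cO_{Z_c}$ only when $Z_c$ has the expected codimension $c$ — which is exactly what was established there.
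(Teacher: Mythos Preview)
Your proposal is correct and follows essentially the same approach as the paper: the paper gives no proof beyond the sentence ``Proposition \ref{codim-2} allows us to write down a free resolution of the homogeneous coordinate ring of $Z_c$. We write down just the corresponding sheafified version of the Eagon--Northcott complex (see \cite[Chapter 2.C]{BV}),'' and you have spelled out precisely this, together with the bookkeeping of twists and the codimension hypothesis that makes the complex acyclic. One small point: for the ``In particular'' you argue injectivity on $H^0$ correctly, but surjectivity also needs $H^1$ of the kernel of $\bigwedge^{n+2-c}V\otimes\cO\to I_{Z_c}(d)$ to vanish, which follows by the usual splicing of the resolution into short exact sequences and the vanishing of intermediate cohomology of line bundles on $\PP^n$ --- worth one extra line.
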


\medskip

By the modular interpretation of the Grassmannian, the map $g$, which
is surjective on $\PP (V)-Z_c$, defines a morphism $\varphi _c: \PP
(V)-Z_c\to \Gras _k\, (V, n+2-c)$. Composing this map with the
Pl\"ucker embedding $\Gras \, (V, n+2-c)\hookrightarrow \PP
({\bigwedge} ^{n+2-c} V)$, we get the rational map
$\PP(V)\dashrightarrow \PP ({\bigwedge} ^{n+2-c} V)$ defined by the
linear system $|I_{Z_c}(1+q+...+q^{n+1-c})|$.  This immediately
follows from Corollary \ref{Eagon-Northcott} and definition of the
Pl\"ucker embedding.  Note that $|I_{Z_c}(1+q+...+q^{n+1-c})|$ has
scheme theoretic base locus $Z_c$ and the map $\PP(V)\dashrightarrow
\PP ({\bigwedge} ^{n+2-c} V)$ is by Lemma \ref{blow-up} resolved by
the blow up $\Bl_{Z_c}\PP (V)\to \PP (V)$.  After fixing homogenous
coordinates in $\PP (V)$ this rational map is given by
$$[x_0,...,x_n]\longrightarrow [\Delta_{q}(x_{I})]_{\{I:\, |I|=n+2-c\}}.$$

We have canonical isomorphisms ${\bigwedge} ^{i}V\simeq {\bigwedge}
^{n+1-i} V^*\otimes \det V$ for $i=0,...,n+1$.  They induce canonical
isomorphisms $\PP({\bigwedge} ^{i}V)\simeq \PP({\bigwedge} ^{n+1-i}
V^*)$ that are independent of the choice of an orientation $ \det
V={\bigwedge} ^{n+1} V\simeq k$.  Similarly, we have canonical
isomorphisms $\Gras _k (V, i)\simeq \Gras _k (V^*, n+1-i)$ compatible
with the corresponding Pl\"ucker embeddings.  In fact, this
isomorphism can be easily described on the level of functors as
sending a quotient $V_S\to \cE $ to $V_S^*\to (\ker \, (V_S\to \cE))
^*$.

Therefore we can treat $\varphi _c$ as a morphism $ \PP (V)-Z_c\to
\Gras _k\, (V^*, c-1)$. Composing it with the Pl\"ucker embedding $
\Gras (V^*, c-1)\hookrightarrow \PP ({\bigwedge} ^{c-1} V^*)$ we again
obtain the map defined by the linear system
$|I_{Z_c}(1+q+...+q^{n+1-c})|$. In coordinates the corresponding
rational map is given by the same equations as before but now the map
is written as
$$[x_0,...,x_n]\longrightarrow [\Delta_{q}(x_{\hat I})]_{\{I:\, |I|=c-1\}}.$$
Note that for $c=2$ we get a rational map $\PP (V)\dashrightarrow \PP
(V^*)$ given by the linear system $|I_{Z_2}\left(
  \frac{q^n-1}{q-1}\right)|$. The study of this rational map is the
subject of the following sections.

\section{Linear system of  plane curves with moving singularities}

The following proposition generalizes to higher characteristic Serre's
example of a linear system with moving singularities in characteristic
$2$, described in one of exercises in Hartshorne's book (see
\cite[Chapter III, Exercise 10.7]{Har}). It is probably folklore known
to some experts or people trying to solve Hartshorne's exercise. It is
essentially equivalent to \cite[Theorem 4.1]{CT} (with a different
proof) but it was also known to the author earlier. We describe 
this example separately as it is an elementary special case
of the more general linear system used later in Section \ref{endo}.

\medskip

\begin{proposition}\label{Serre}
  The linear system $|I_Z(q+1)|$ of degree $(q+1)$ plane curves on
  $\PP^2$ containing $Z=\PP^2(\FF_q)$ has dimension $2$ with base
  points $Z$. It determines an inseparable degree $p$ morphism from
  $\PP^2-Z$ to $\PP^2$, which as a rational map $\PP^2\dashrightarrow
  \PP^2$ is resolved by blowing up $Z$.  Moreover, on $\PP^2_{\bar k}$
  every curve $C\in |I_Z(q+1)|$ is singular at exactly one point of
  $\PP^2(\bar k)$ determining a bijection between curves in
  $|I_Z(q+1)|$ and $\PP^2 (\bar k)$.
\end{proposition}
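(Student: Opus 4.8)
The plan is to specialize the general construction of Section~\ref{determinantal} to the case $n=2$, $c=2$, and then add the sharpening about singular points. By Proposition~\ref{codim-2} applied with $n=2$, $c=2$, the saturated homogeneous ideal $I(Z)$ of $Z=Z_2=\PP^2(\FF_q)$ is generated by the maximal minors $\Delta_q(x_{\hat I})$ of the $2\times 3$ Moore matrix with rows $(x_0,x_1,x_2)$ and $(x_0^q,x_1^q,x_2^q)$; these minors are the three quadratic-in-nature forms $\Delta_q(x_i,x_j)=x_ix_j^q-x_i^qx_j$, each of degree $q+1$. So the three listed generators span a three-dimensional subspace of $H^0(\PP^2,\cO(q+1))$, and by Corollary~\ref{Eagon-Northcott} (the isomorphism $H^0(\PP^2,I_{Z_c}((q^{n+2-c}-1)/(q-1)))\simeq\bigwedge^{n+2-c}V$, here $\simeq\bigwedge^2 V\cong V^*$, which is $3$-dimensional) this is \emph{all} of $H^0(\PP^2,I_Z(q+1))$. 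Hence $|I_Z(q+1)|$ has dimension $2$ and base scheme exactly $Z$. First I would record these two facts, citing the two results.

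Next I would identify the morphism it defines. By the discussion following Corollary~\ref{Eagon-Northcott} (the case $c=2$), the linear system gives the rational map $\PP(V)\dashrightarrow\PP(V^*)$, $[x_0,x_1,x_2]\mapsto[x_1x_2^q-x_1^qx_2,\ x_2x_0^q-x_0x_2^q,\ x_0x_1^q-x_1x_0^q]$, which is a genuine morphism on $\PP^2-Z$ and, by Lemma~\ref{blow-up}, is resolved as a rational map precisely by $\Bl_Z\PP^2\to\PP^2$. It remains to see the morphism on $\PP^2-Z$ is finite, inseparable of degree $p$. Degree: the three coordinate forms have degree $q+1$, so the map has degree $(q+1)^2$ minus the contribution of the base locus; since $Z=\PP^2(\FF_q)$ consists of $q^2+q+1$ reduced points, B\'ezout gives $\deg=(q+1)^2-(q^2+q+1)=q$. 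Inseparability/degree $p$: I would argue via the Jacobian. Using Corollary~\ref{partial-Moore}, $\partial\Delta_q(x_i,x_j)/\partial x_k$ is (up to sign) a $q$-th power for each $k$, so the Jacobian matrix of $(w_0,w_1,w_2)$ has entries that are $q$-th powers; hence $dw_0,dw_1,dw_2$ are $\FF_q$-linear combinations (with $q$-th power coefficients) forcing the differential of the map to drop rank identically, so the map is inseparable. A cleaner route: observe $\psi^2=\Fr^{\,n-1}=\Fr$ in this dimension (this is exactly the relation $\psi^2=\Fr_{\PP^n}^{n-1}$ of Theorem~\ref{main-3} with $n=2$, which here I may prefer to just verify by the direct computation $\psi(\psi([x_0,x_1,x_2]))=[x_0^q,x_1^q,x_2^q]$), so $\psi$ is purely inseparable and $\deg\psi\mid q$; combined with $\deg\psi=q$... wait, that only shows $\deg\psi=q$, not $p$. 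Let me reconsider: the correct claim "degree $p$" must mean the map $\PP^2-Z\to\PP^2$ has inseparable degree $p$ onto its image, equivalently it factors as the relative Frobenius $F:\ \PP^2-Z\to(\PP^2-Z)^{(1)}$ followed by a birational morphism; this follows because the resolved map $\Bl_Z\PP^2\to\PP^2$ has degree $q=p^e$ but factors through $e$ Frobenii via the ``inseparable flop'' picture---however for the present self-contained proposition I would instead directly exhibit the factorization through one Frobenius only when $q=p$, and for general $q$ interpret ``degree $p$'' as: the map is not separable (its generic fibre is non-reduced), which is what the Jacobian computation gives. I would state it carefully as ``inseparable'' and note the degree is $q$.

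The final and most interesting assertion is the statement about singular points. Fix $[a]=[a_0,a_1,a_2]\in\PP^2(\bar k)$. I would show that for each such point there is a \emph{unique} curve $C_{[a]}\in|I_Z(q+1)|$ singular at $[a]$, and that $C_{[a]}$ is singular \emph{only} at $[a]$, giving the claimed bijection. Existence and uniqueness: a general member of the system is $C_\lambda=\{\lambda_0 w_0+\lambda_1 w_1+\lambda_2 w_2=0\}$ for $[\lambda]\in\PP(V)$; its partial derivatives at a point $x\notin Z$ are, by Corollary~\ref{partial-Moore}, the vector $\big(\sum_j\lambda_j\,\partial w_j/\partial x_k\big)_k$, and by that corollary $\partial w_j/\partial x_k$ equals $\pm(\text{linear form in }x)^q$, more precisely $\partial w_j/\partial x_k=\pm\Delta_q(\text{appropriate single variable})^q=\pm x_\ell^q$ up to indices (since $\Delta_q$ of one variable $w$ is just $w$); so the condition that $x=[a]$ is a singular point of $C_\lambda$ reads as a system of three $\FF_q$-linear (in the $q$-th powers $a_k^q$) equations in $\lambda$, whose coefficient matrix is, up to scalar, the skew form $(a_i^q)$-dependent matrix $\big(\,0,\ a_2^q,\ -a_1^q;\ -a_2^q,\ 0,\ a_0^q;\ a_1^q,\ -a_0^q,\ 0\,\big)$ of rank $2$; hence the solution space of $[\lambda]$ is a single point of $\PP(V)$, and automatically $w_0(a)\lambda_0+w_1(a)\lambda_1+w_2(a)\lambda_2=0$ is satisfied by Euler's relation (degree $q+1$), so $C_{[a]}$ is well-defined and passes through $[a]$. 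That $C_{[a]}$ has no other singular point: if $C_{[a]}$ were singular at another point $[b]$ then $[\lambda]$ would simultaneously solve the rank-$2$ linear systems attached to $[a]$ and to $[b]$; one checks these two systems have disjoint solution lines in $\PP(V)$ unless $[b]=\psi^{-1}\psi([a])$-related, and using $\psi^2=\Fr$ (so $\psi$ is injective on $\bar k$-points on $\PP^2-Z$ up to Frobenius) one concludes $[a]=[b]$. Finally, the map $[a]\mapsto[\lambda]=[C_{[a]}]$ is the identification $\PP^2(\bar k)=\PP(V)(\bar k)\to\PP(V^*)(\bar k)$ given by $[a]\mapsto[a_0^q,a_1^q,a_2^q]$ followed by the canonical $\PP(V^*)\cong\PP(\bigwedge^2 V)$, i.e.\ it is (a Frobenius twist of) the correspondence $\psi$ itself restricted to $\bar k$-points, visibly a bijection onto $\PP^2(\bar k)$.

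\textbf{Main obstacle.} The routine parts are the dimension count (immediate from Corollary~\ref{Eagon-Northcott}) and the resolution statement (immediate from Lemma~\ref{blow-up}). The part requiring genuine care is the ``exactly one singular point'' assertion: translating singularity-at-$[a]$ into the rank-$2$ linear system via Corollary~\ref{partial-Moore} is clean, but ruling out a \emph{second} singular point of $C_{[a]}$ — equivalently, showing that the assignment $[a]\mapsto C_{[a]}$ is injective and that each fibre of $\psi|_{\PP^2-Z}$ over a $\bar k$-point is a single reduced point — is where one must actually use the relation $\psi^2=\Fr$. For $q=p$ this is transparent; I expect that for general $q=p^e$ the bijection statement (as opposed to the weaker ``generic member has a moving singularity'') needs the observation that $\psi$ is purely inseparable, hence a homeomorphism on $\bar k$-points, which is exactly the content being imported from Theorem~\ref{main-3} (or re-derived by the direct two-line computation $\psi\circ\psi=\Fr$). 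I would present that computation inline to keep the proof self-contained.
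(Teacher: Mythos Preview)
Your approach is sound overall, and in fact you address parts of the statement (the degree, inseparability, and resolution by blow-up) that the paper's own proof passes over in silence. The one place where you make life harder than necessary is the singular-point bijection, and your ``main obstacle'' is an artifact of going in the wrong direction.

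The paper proceeds opposite to you: rather than fixing a point $[a]$ and asking which curves are singular there, it fixes a curve $C_\lambda$ with $\lambda=[a,b,c]$ and computes its singular locus directly. Writing $f=as_0+bs_1+cs_2$, one has
\[
\frac{\partial f}{\partial x_0}=cx_1^q-bx_2^q,\qquad
\frac{\partial f}{\partial x_1}=ax_2^q-cx_0^q,\qquad
\frac{\partial f}{\partial x_2}=bx_0^q-ax_1^q,
\]
and these vanish simultaneously at the unique point $[x_0,x_1,x_2]=[a^{1/q},b^{1/q},c^{1/q}]$. So every curve has \emph{exactly one} singular point, and the assignment $C_\lambda\mapsto\Sing(C_\lambda)$ is the inverse Frobenius on $\bar k$-points, visibly a bijection. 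Your worry about ruling out a second singular point of $C_{[a]}$ simply evaporates in this direction, and no appeal to $\psi^2=\Fr$ or to Theorem~\ref{main-3} is needed. That theorem comes \emph{after} this proposition in the paper, so importing it here would be a forward reference; your instinct to re-derive the identity inline is correct, but it is also unnecessary once you compute the partials as above.

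On the degree: you are right to be suspicious of ``degree $p$''. Your B\'ezout count gives degree $q$, and this agrees with the later Proposition~\ref{prop-2} (degree $q^{\binom{n}{2}}=q$ for $n=2$). The paper's own proof makes no attempt to establish the degree, so ``$p$'' in the statement looks like a slip for ``$q$'' (or an implicit $e=1$ assumption); stating the morphism as purely inseparable of degree $q$ is the honest thing to do.
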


\begin{proof}
The map $\theta$ from Subsection \ref{Hirokado} gives rise to the short exact sequence
$$0\to \cO_{\PP^2}(-q+1)\mathop{\to}^{\theta} T_{\PP^2}\to  I_Z(q+2)
\to 0.$$
Twisting this sequence  by  $\cO_{\PP^2}(-1)$ we get
$$0\to \cO_{\PP ^2} (-q)\to T_{\PP^2}(-1)\to I_Z(q+1)
\to 0.$$
Therefore we get $h^0(I_Z(q+1))=3$. A basis of $H^0(\PP^2, I_Z(q+1))$ can be obtained
by fixing three distinct points of $Z$ and taking products of equations of all
$\FF_q$-rational lines passing through the fixed points. Let us recall that the
group $\FF_q^*$ is cyclic and hence
$$x_1\prod _{a\in \FF_q}(x_2-ax_1)=x_1x_2^q-x_1^qx_2$$
(this is also a special case of Lemma \ref{Moore}).
So for example taking
$[1,0,0]$, $[0,1,0]$ and $[0,0,1]$, we see that
$s_0=x_1x_2^q-x_1^qx_2$, $s_1=x_0^qx_2-x_0x_2^q$ and $s_2=x_0
x_1^q-x_0^qx_1$ form a basis of $H^0(\PP^2, I_Z(q+1))$.

Let us fix $ a,b,c \in \bar k$, not all equal to $0$, and consider the polynomial
$$f(x_0,x_1,x_2):=as_0+bs_1+cs_2.$$
Since
$$\frac{\partial f}{\partial x_0}=-bx_2^q+cx_1^q,\quad \frac{\partial f}{\partial x_1}=ax_2^q-cx_0^q,\quad
\frac{\partial f}{\partial x_2}=bx_0^q-ax_1^q,$$ the point
$P=[\sqrt[q]{a} ,\sqrt[q]{b},\sqrt[q]{c}]\in \PP^2(\bar k)$ is the only
singular point of the curve $f(x_0,x_1,x_2)=0$.

Assume that $a\ne 0$ and let us set
$$y_1=\frac{x_1}{x_0}-\sqrt[q]{\frac {b}{a}}\quad\hbox{ and }\quad y_2=\frac{x_2}{x_0}-\sqrt[q]{\frac {c}{a}}.$$
Then in the corresponding affine chart, the curve $f(x_0,x_1,x_2)=0$
has equation
$$ \left(\sqrt[q]{\frac{c}{a}} -{\frac{c}{a}}\right)y_1^q +\left(\sqrt[q]{\frac{b}{a}} -{\frac{b}{a}}\right)y_2^q  + y_1y_2^q-y_1^qy_2=0.$$
Therefore if $[a,b,c]\in \PP^2(\FF_q)$ then in this chart the curve $f(x_0,x_1,x_2)=0$ is given by $y_1y_2^q-y_1^qy_2=0$. So the
curve $f(x_0,x_1,x_2)=0$ consists of all $\FF_q$-rational lines passing through
the point $P$ and $P=[a,b,c]$. If $[a,b,c]\not \in \PP^2(\FF_q)$ then
$P$ is a singular point of $f(x_0,x_1,x_2)=0$ of multiplicity $q$ with
only $1$ tangent direction.

Let us consider the Frobenius morphism $\Fr_{\PP^2}:\PP^2\to
\PP^2$ given by $[x_0,x_1,x_2]\to [x_0^q,x_1^q,x_2^q]$. Note
that by definition $P$ is the preimage of $[a,b,c]$. If $P$ is a fixed
point of $\Fr_{\PP^2}$ then the curve $f(x_0,x_1,x_2)=0$ consists of $(q+1)$
lines passing through $P$. Otherwise, $f(x_0,x_1,x_2)=0$ has only one
singularity of multiplicity $q$ at $P$.
This shows that the correspondence sending $C\in |I_Z(q+1)|$ to the
singular point of $C$ is a bijection and the singular points of
elements of the linear system $|I_Z(q+1)|$ move all over $\PP ^2(\bar k)$.
\end{proof}

\begin{remark}
Note that a part of \cite[Chapter III, Exercise 10.7]{Har} is
  incorrect. Namely, the curves $f(x_0,x_1,x_2)=0$ with singularities
  outside of $Z$ can be reducible (even in characteristic $2$).

  For example, consider the curve $C:=(f(x_0,x_1,x_2)=0)$ with
  singularity at a point $Q\not \in Z$ but lying on a line $L$ defined
  over $\FF_q$.  Note that $C$ contains at least $(q+2)$ points of
  $L$. Namely, it contains point $Q$ and $(q+1)$ points $L(\FF_q)$.
  Since $C$ has degree $(q+1)$, it must contain $L$ and hence
  $D$ is reducible. More precisely $C=L+C'$, where $C'$ is a degree
  $q$ curve passing through $Z-L$ and having multiplicity $(q-1)$ at
  $Q$ (it is easy to see that at $Q$, the curve $C'$ is of type
  $\alpha y_1^{q-1}-y_1^{q-1}y_2+ y_2^q=0$ for some $\alpha\ne
  0$). Moreover, $C'$ intersects $L$ at exactly one point $Q$ as any
  other intersection point would give another singularity of $C$.  In
  characteristic $2$ the corresponding cubic curve $C$ consists of the
  line $L$ and a smooth conic $C'$ tangent to it at $Q$.
\end{remark}

\section{Purely inseparable endomorphisms of Drinfeld half-spaces} \label{endo}

As before, let $V$ be an $(n+1)$-dimensional vector space over a finite field
$k=\FF_q$, where $q=p^e$ for some prime number $p$ and a positive integer $e$.
Let $\Omega (V)$ be the Drinfeld half-space, i.e., the complement of
all $k$-rational hyperplanes in $\PP (V)$. In the following we assume
that $n\ge 1$ as everything is trivial if $\dim V\le 1$.

Let us consider Euler's exact sequence
$$0\to \cO_{\PP (V)}(-1)\mathop{\longrightarrow}^{s} V^*\otimes \cO_{\PP (V)}\to T_{\PP (V)}(-1)\to 0,$$
where $s$ is the dual of the evaluation map $V\otimes \cO_{\PP
(V)}\to \cO_{\PP (V)}(1)$.  This gives us a canonical embedding
$\PP(T_{\PP(V)})\subset \PP(V)\times_k\PP (V^*)$.  Pulling back
the above sequence by the composition of Frobenius endomorphisms
$\Fr ^i: \PP(V)\to \PP (V)$ we obtain
$$0\to \cO_{\PP (V)}(-q^i)\mathop{\longrightarrow}^{(\Fr ^i)^*s} V^*\otimes \cO_{\PP (V)}\to ((\Fr^i)^*T_{\PP (V)})(-q^i)\to 0.$$
So we also have canonical embeddings
$\PP((\Fr^i)^*T_{\PP(V)})\subset \PP(V)\times_k\PP (V^*)$.

Let $X_V$ be the scheme theoretic intersection of $
\bigcap_{i=0}^{n-1}\PP((\Fr ^i)^*T_{\PP(V)})\subset \PP(V)\times_k\PP
(V^*)$.  Projections of $X_V$ onto the first and second factor are
denoted by $\pi_{1,V}$ and $\pi_{2,V}$, respectively.

In the following we denote by $\cL (V)$ the set of all proper
$k$-linear subspaces of $\PP (V)$.  Every $k$-linear subspace
$L\in \cL (V)$ is of the form $\PP (V/W)$ for some $k$-linear
subspace $W\subset V$. For $L=\PP (V/W)\subsetneq \PP (V)$ we set
$L^{\perp}= \PP (W^*)\subsetneq\PP (V^*)$.  If $\eta _L$ denotes
the generic point of $L$ then we set
$E_L:=\overline{\pi_{1,V}^{-1}(\eta _L)}\subset X_V$.

\begin{lemma} \label{linear-images}
For any $L\in \cL (V)$ we have $E_L=L\times _k L^{\perp}$  and
$\pi _{1,V}^{-1}(L)=\bigcup_{L'\in \cL (V), L'\subset L} E_{L'}$. In particular,
$E_L/k$ is smooth of dimension $(n-1)$ and $\pi_{1,V}^{-1}(L)$ is pure of dimension $(n-1)$.
\end{lemma}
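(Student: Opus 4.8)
The plan is to understand the fibre of $\pi_{1,V}$ over a point $x \in \PP(V)$ concretely in terms of the defining equations of $X_V$ inside $\PP(V) \times_k \PP(V^*)$. Recall that $X_V = \bigcap_{i=0}^{n-1} \PP((\Fr^i)^* T_{\PP(V)})$. A point of $\PP((\Fr^i)^* T_{\PP(V)})$ lying over $x \in \PP(V)$ is, by the Euler sequence pulled back by $\Fr^i$, a hyperplane $H \in \PP(V^*)$ containing the point $\Fr^i(x)$; equivalently, writing $x = [v]$ with $v \in V^*$ (using $\PP(V) = \Proj \Sym^\bullet V$, so points correspond to lines in $V^*$), the condition is that the linear form cutting out $H$ vanishes at the vector $\Fr^i(v)$, i.e.\ at $v^{q^i}$ coordinatewise. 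So over a point $x = [v]$, the fibre of $X_V$ is the linear subspace of $\PP(V^*)$ consisting of hyperplanes $H$ with $v, \Fr(v), \dots, \Fr^{n-1}(v) \in H$. This is exactly the condition used in Section \ref{determinantal} via the Moore determinant.

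Next I would fix $L = \PP(V/W) \in \cL(V)$ with $W \subset V$ of dimension $r$, so $L$ has dimension $n-r$ and $L^\perp = \PP(W^*)$ has dimension $r-1$. Take $x = \eta_L$ the generic point; a representative $v \in V^*$ of $\eta_L$ spans, together with its Frobenius twists $\Fr(v), \dots, \Fr^{n-1}(v)$, a $k$-subspace of $V^*$. The key linear-algebra point (which is where the finiteness of $k$ enters) is that for $v$ generic on $L$, the span of $v, \Fr(v), \dots, \Fr^{j}(v)$ stabilizes precisely at the $\bar k$-span of $W^\perp \subset V^*$ once $j$ is large enough, and for a point on a codimension-$i$ linear subspace this span has dimension $\dim W^\perp = n+1-r$; concretely, $\{H \in \PP(V^*) : v, \Fr(v),\dots \in H\}$ equals $\{H : W^\perp \subset H\}$, which is exactly $L^\perp = \PP(W^*)$. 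This identifies $\pi_{1,V}^{-1}(\eta_L) = \eta_L \times_k L^\perp$, and taking closure gives $E_L = L \times_k L^\perp$, which is smooth of dimension $(n-r) + (r-1) = n-1$.

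For the second assertion, $\pi_{1,V}^{-1}(L) = \bigcup_{L' \subset L} E_{L'}$: the inclusion $\supseteq$ follows since $E_{L'} = \overline{\pi_{1,V}^{-1}(\eta_{L'})}$ maps into $L' \subseteq L$. For $\subseteq$, stratify $L$ by the locally closed subsets $L \cap (Z_i - Z_{i+1})$ from Section \ref{determinantal} — equivalently, by which smaller linear subspace $L'$ a point $x$ generates under Frobenius; over each such $x$, the fibre computation above shows $\pi_{1,V}^{-1}(x) = \{x\} \times_k (L')^\perp \subset E_{L'}$, so $\pi_{1,V}^{-1}(L)$ is covered by the $E_{L'}$ with $L' \subseteq L$. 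Purity of dimension $(n-1)$ is then immediate since each $E_{L'}$ has dimension $n-1$; here one uses that the fibre dimension $\dim (L')^\perp = \operatorname{codim} L' - 1$ exactly compensates the drop $\dim L' = n - \operatorname{codim} L'$ in the base. The main obstacle is the linear-algebra stabilization claim — showing that the Frobenius orbit $v, \Fr(v), \Fr^2(v), \dots$ of a point generating a codimension-$i$ subspace spans exactly that subspace, so that the fibre is genuinely $L^\perp$ and not something larger — but this is precisely the content encoded in Proposition \ref{codim-2} and the accompanying Moore-determinant analysis, which I may invoke.
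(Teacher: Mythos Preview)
Your proposal is correct and follows essentially the same approach as the paper: both describe $X_V(\bar k)$ via the pairing conditions $\langle \Fr^i(v), w\rangle = 0$, identify the fibre over a point of the open stratum of $L$ as $L^\perp$, and conclude $E_L = L \times_k L^\perp$. The only minor difference is that where you invoke Proposition~\ref{codim-2} and the stratification $Z_i - Z_{i+1}$ for the stabilization claim, the paper argues directly from the fact that a subspace of $V_{\bar k}$ is $k$-rational if and only if it is $\Fr$-stable; your treatment of the second assertion $\pi_{1,V}^{-1}(L) = \bigcup_{L'\subset L} E_{L'}$ is in fact more explicit than the paper's.
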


\begin{proof}
  In the following $[v]\in \PP (V)(\bar k)$ denotes the $\bar
  k$-rational point determined by $\bar k$-point $v\in V^*({\bar
    k})$. Similarly, $[w]\in \PP (V^*)(\bar k)$ denotes the $\bar
  k$-point determined by $w\in V({\bar k})=(V^*)^*({\bar k})$.  Let us
  note that $X_V({\bar k})\subset \PP (V)({\bar k})\times \PP
  (V^*)({\bar k})$ is equal to
  $$\{ ([v], [w]): \langle v, w\rangle=\langle\Fr (v), w\rangle=...=\langle\Fr ^{n-1}(v), w\rangle=0
\} ,$$ where $\langle\cdot, \cdot \rangle$ denotes the natural
pairing $V^*(\bar k)\times V({\bar k})\to \bar k$. $\bar
k$-rational points of $\PP (V)$ correspond to hyperplanes in
$V_{\bar k}$ and a subspace $W\subset V_{\bar k}$ is $k$-rational
if and only if $\Fr (W)=W$.  So for a $k$-rational subspace
$L\subset \PP (V)$ we have $([v], [w])\in \pi_{1,V} ^{-1}(L)({\bar
k})$ if $[v]\in L({\bar k})$ and $[w]\in L^{\perp}({\bar k})$.
Note that if $L\subsetneq \PP (V)$ has codimension $c$ then
 $L^{\perp}\subsetneq \PP (V^*)$ has dimension $(c-1)$.
Therefore if $\eta _L$ denotes the generic point of $L$ then
 $$E_L=\overline{\pi_{1,V}^{-1}(\eta _L)}= \overline{\pi_{1,V} ^{-1}(L-\bigcup_{L'\in \cL (V), L'\subsetneq L} L')}=L\times_k L^{\perp}
 $$
 is a smooth divisor in $X$ and $\pi _{2,V}(\pi_{1,V}^{-1}(\eta _L))$ is the
 generic point of the $k$-rational subspace $L^{\perp}\subsetneq \PP (V^*)
$.
\end{proof}

\medskip

From now on to simplify notation we denote $Z_2$ from Proposition \ref{codim-2} by $Z$.

\begin{proposition} \label{prop-1}
The $k$-scheme  $X_V$ has the following properties.
\begin{enumerate}
\item $X_V$ is geometrically integral.
\item $X_V$ has dimension $n$. In particular, $X_V$ is a complete intersection in  $ \PP(V)\times_k\PP
(V^*)$.
\item $\pi _{1,V}: X_V\to \PP (V)$ is the blow
  up of $\PP (V)$ along $Z$.
\item The singular locus of $X_V/k$ is a divisor equal to $\Sing
  X_V=\bigcup_{L\in \cL (V), \codim L \ge 3}E_L$.
\item  $X_V$ is smooth if $n=2$ and $X_V$ is non-normal if $n\ge 3$.
\item $X_V$ is the graph of the rational map $\psi _V: \PP (V)\dashrightarrow
  \PP (V^*)$.
\end{enumerate}
\end{proposition}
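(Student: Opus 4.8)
The plan is to analyze the scheme $X_V \subset \PP(V) \times_k \PP(V^*)$ by working through the six statements in an order that lets each one feed the next, deducing the properties from the determinantal description already established in Section \ref{determinantal}. The starting observation is that $X_V$ is cut out inside $\PP(V) \times_k \PP(V^*)$ by the $n$ bihomogeneous equations coming from the Euler pull-backs, namely $\langle \Fr^i(v), w\rangle = \sum_j x_j^{q^i} y_j = 0$ for $i = 0, \dots, n-1$, where $x_j$ are coordinates on $\PP(V)$ and $y_j$ on $\PP(V^*)$. Eliminating $[y]$ amounts to asking that the $n \times (n+1)$ matrix with rows $(x_0^{q^i}, \dots, x_n^{q^i})$ drop rank, i.e. that all of its maximal minors $\Delta_q(x_{\hat I})$ with $|I| = 1$ vanish --- which by Proposition \ref{codim-2} (with $c = 2$) is exactly $Z = Z_2$. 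This identifies $\pi_{1,V}: X_V \to \PP(V)$, away from $Z$, with the graph of the rational map $\psi_V$ defined by $|I_Z(1+q+\dots+q^{n-1})|$, giving statement (6) and, via Lemma \ref{blow-up}, statement (3): the blow-up of $\PP(V)$ along $Z$ is the scheme-theoretic closure of this graph, and since $Z$ is arithmetically Cohen--Macaulay (Proposition \ref{codim-2}), the Eagon--Northcott resolution of Corollary \ref{Eagon-Northcott} shows the linear system has no base components beyond $Z$ and the associated ideal sheaf pulls back cleanly. I would need a short argument that the scheme $X_V$ (defined by the $n$ equations) coincides with that closure rather than having extra components over $Z$ --- this is where the main work lies (see below).

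Granting (3), statement (2) is immediate: the blow-up of an $n$-dimensional variety along a subscheme is $n$-dimensional, and since $X_V$ is cut out by exactly $n$ equations in the $(2n)$-dimensional product $\PP(V) \times_k \PP(V^*)$, it is a complete intersection of the expected codimension $n$. Statement (1), geometric integrality, then follows because $\Bl_Z \PP(V)$ is integral ($\PP(V)$ is integral and blowing up an integral scheme along a subscheme preserves integrality, as already used in the proof of Lemma \ref{blow-up}), and this property is insensitive to base change to $\bar k$ since the construction is the base change of $X_{V_{\bar k}}$ --- alternatively one invokes that $X_V$ is a complete intersection that is generically smooth and connected. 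For the singular locus, statement (4), I would compute the Jacobian of the $n$ defining bihomogeneous forms and use Lemma \ref{linear-images}, which already tells us $\pi_{1,V}^{-1}(L) = \bigcup_{L' \subset L} E_{L'}$ with each $E_L = L \times_k L^{\perp}$ smooth of dimension $n-1$; the singular points of $X_V$ must lie over $Z = Z_2$ (the base locus), and over the open stratum $Z_2 - Z_3$ (points spanning a codimension-$2$ subspace) one checks $X_V$ is smooth, while over $E_L$ with $\codim L \geq 3$ the fibre dimension jumps and the Jacobian drops rank, pinning $\Sing X_V = \bigcup_{\codim L \geq 3} E_L$. This is a divisor (codimension $1$ in $X_V$) precisely because each $E_L$ has dimension $n-1$.

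Finally, statement (5) is a corollary of (4): if $n = 2$ there are no $k$-linear subspaces of codimension $\geq 3$, so $\Sing X_V = \emptyset$ and $X_V$ is smooth; if $n \geq 3$ then $k$-rational points of $\PP(V)$ (codimension $n \geq 3$) give nonempty $E_L$, so $X_V$ is singular in codimension $1$, hence not normal by Serre's criterion (a normal variety is regular in codimension $1$). The step I expect to be the genuine obstacle is confirming that the scheme-theoretic intersection $X_V = \bigcap_{i=0}^{n-1} \PP((\Fr^i)^* T_{\PP(V)})$ is reduced and irreducible --- equivalently, that the $n$ bilinear equations generate the ideal of the graph closure with no embedded or extra components over $Z$. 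The clean way to see this is to use that $X_V$ has the expected dimension $n$ (so the equations form a regular sequence, making $X_V$ Cohen--Macaulay with no embedded components), combined with generic reducedness (smoothness over the dense open $\PP(V) - Z$), which together force reducedness; irreducibility then follows since $X_V$ maps onto the irreducible $\PP(V)$ with connected (in fact projective-space) fibres over the generic point. The dimension count itself reduces to Proposition \ref{codim-2}: the fibres of $\pi_{1,V}$ over $Z_i - Z_{i+1}$ are $\PP(L^{\perp})$ of dimension $i - 1$, and $\dim(Z_i - Z_{i+1}) = n - i$, so every fibre component contributes dimension exactly $n$.
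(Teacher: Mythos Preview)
Your proposal is correct and follows essentially the same route as the paper: write the $n$ bihomogeneous equations, identify $\pi_{1,V}$ as an isomorphism over $\PP(V)-Z$ (so $X_V$ contains the graph of $\psi_V$), use the complete-intersection lower bound $\dim \ge n$ together with a fibre-dimension count over $Z$ to force $X_V$ to equal the closure of that graph (hence geometrically integral, and by Lemma~\ref{blow-up} the blow-up), and compute the Jacobian for the singular locus.

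One arithmetic slip to fix: over the stratum $Z_i - Z_{i+1}$ you correctly have fibre dimension $i-1$ and base dimension $n-i$, but these sum to $n-1$, not $n$. This is in fact exactly what is needed (and is the content of Lemma~\ref{linear-images}): $\pi_{1,V}^{-1}(Z)$ is pure of dimension $n-1$, so no irreducible component of $X_V$ can lie over $Z$, and hence $X_V$ equals the closure of its irreducible open part over $\PP(V)-Z$. Your irreducibility sentence (``connected generic fibre'') should be read together with this dimension bound; the paper makes this explicit.
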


\begin{proof}
Let us fix homogeneous coordinates  $x_0,...,x_n$ in $ \PP(V)$ and
dual homogeneous coordinates $y_0,...,y_n$ in $\PP (V^*)$. Then
$X_V$ is defined by the ideal in the bigraded ring $k[x_0,...,x_n;
y_0,...,y_n]$ generated by bihomogeneous polynomials
$f_0=x_0y_0+...+x_ny_n$, $f_1=x_0^{q}y_0+...+x_n^{q}y_n$, ...,
$f_{n-1}=x_0^{q^{n-1}}y_0+...+x_n^{q^{n-1}}y_n$. To check where
$X_V\to \Spec k$ is smooth let us consider the Jacobian matrix
  $$\left(\frac{\partial f_i}{\partial x_j},\frac{\partial f_i}{\partial y_j}\right)_{i=0,...,n-1, j=0,1,...,n}=\left(
  \begin{array}{cccccc}
  y_0&...&y_n& x_0&...&x_n\\
  0&...&0&x_0^q&...&x_n^q\\
\vdots&&\vdots&\vdots&&\vdots\\
  0&...&0&x_0^{q^{n-1}}&...&x_n^{q^{n-1}}\\
  \end{array}
  \right) .
  $$
The only non-zero $n$th order minors  of this matrix (up to a sign) are of the form
 $$ \Delta_q (x_0,...,\hat x_i,..., x_n) \quad\hbox{and } \quad y_i\cdot (\Delta_q(x_0,...,\hat {x_i}, ...,
\hat {x_j},...,x_n))^q$$ for $i,j=1,...,n$ such that $i\ne j$. Let
$x$ be a point of the subscheme of $X_V$ defined by these minors.
Since for some $i$ we have $y_i(x)\ne 0$, $x$ lies in the preimage
under $\pi_{1,V}$ of the subscheme of $\PP (V)$ defined by the
ideal generated by $\Delta_q(x_0,...,\hat {x_i}, ..., \hat
{x_j},...,x_n)$ for $i\ne j$. By Proposition \ref{codim-2} we have
$x\in \pi_{1,V}^{-1}(\bigcup_{L\in \cL (V),  \codim L= 3}L)$.
Therefore the above Jacobian matrix has rank $n$ precisely at the
points not lying over a $k$-linear subspace of $\PP (V)$ of
codimension $3$. It follows that $\pi_{1,V}^{-1}(\PP
(V)-\bigcup_{L\in \cL (V),
  \codim L= 3}L)$ is smooth of dimension $n$ over $k$
  and the singular locus of $X_V/k$ equals to
$\pi_{1,V}^{-1}(\bigcup_{L\in \cL (V), \codim L= 3}L)=\bigcup_{L\in \cL (V), \codim L \ge 3}E_L$.

\medskip
Expansion with respect to the last row gives
$$0=\det
\left(
\begin{array}{ccc}
x_0&...&x_n\\
\vdots&& \vdots\\
x_0^{q^{n-1}}&...&x_n^{q^{n-1}}\\
x_0^{q^{j}}&...&x_n^{q^{j}}\\
\end{array}
\right) =\sum (-1)^{n+i}x_i^{q^j}\Delta_{q}(x_0,...,\hat{x_i},
...,x_n)$$ for $j=0,...,n-1$. Therefore
$y_i-(-1)^{n+i}\Delta_{q}(x_0,...,\hat{x_i}, ...,x_n)\in I(X)$ for $i=0,...,n$
proving that the projection $\pi _{1,V}: X_V\to \PP (V)$ is surjective. Since the fibers of
$(\pi_{1,V})_{\bar k}: (X_V)_{\bar k}\to \PP (V)_{\bar k}$ are linear subspaces of $\PP (V)_{\bar k}$, it follows that
$X_V$ is geometrically connected.

\medskip

Now we claim that $\pi_{1,V}':=\pi_{1,V}|_{X_V-\pi_{1,V} ^{-1}(Z)}: X_V-\pi_{1,V}
^{-1}(Z)\to \PP (V)-Z$ is an isomorphism. To prove that it is
sufficient to check that $\pi_{1,V}'$ is smooth and bijective on points.

Let us take $x\in \PP (V)-Z$. Then there exists $i$ such that $x\in D_{+}(\Delta_q(x_0,...,\hat x_i ,...,x_n))$.
We claim that $\pi_{1,V}^{-1}(x) \subset \pi_{2,V}^{-1}(D_{+}(y_i))$.
To prove that it is sufficient to note that the system of linear equations (in $y_0,...,\hat y_i,...y_n$)
$$\left\{
\begin{array} {cccccccccl}
  x_0y_0&+&...&+&\widehat{x_iy_i}&...&+&x_ny_n&=&0\\
  x_0^qy_0&+&...&+&\widehat{x_i^qy_i}&...&+&x_ny_n&=&0\\
  \vdots&& &&\vdots&&&\vdots &&\vdots\\
  x_0^{q^{n-1}}y_0&+&...&+&\widehat{x_i^{q^{n-1}}y_i}&...&+&x_ny_n&=&0
\end{array} \right.
$$
has no non-zero solutions.
If we consider $\pi_{2,V}^{-1}(D_{+}(y_i))$ then $\pi_{1,V}^{-1}(x)$ is defined  by
the system of linear equations
$$\left\{
\begin{array} {ccccccl}
x_1u_1&+&...&+&x_nu_n&=&-x_0\\
x_1^qu_1&+&...&+&x_n^qu_n&=&-x_0^q\\
\vdots&& &&\vdots&&\vdots\\
x_1^{q^{n-1}}u_1&+&...&+&x_n^{q^{n-1}}u_n&=&-x_0^{q^n}
\end{array} \right.
$$
which has exactly one solution.
Corollary \ref{codim-2} implies that the union of all $k$-linear
subspaces of codimension $2$ is given by vanishing of
$\Delta_{q}(x_0,...,\hat{x_i}, ...,x_n)$ for $i=0,...,n$.  Therefore
the restriction of $\pi_1$ to the preimage of the union of all
$k$-linear subspaces of codimension $2$ is an isomorphism with the
inverse given by $x\to (x, \psi (x))$, where
$\psi : \PP (V)-Z \to \PP (V^*)=\Proj k[y_0,...,y_n]$ is given by
$$\psi ^* (y_i)=(-1)^{i}\Delta_{q}(x_0,...,\hat{x_i}, ...,x_n).$$
In particular, by Section \ref{determinantal} the map $\psi$ is given by the linear system
$|I_Z(1+q+...+q^{n-1})|$.

Since the ideal of $X_V$ is generated by $n$ elements, each irreducible component of $X_V$ has dimension
$\ge \dim \PP (V)\times _k \PP (V^*)- n=n$. On the other hand,
$X_V-\pi_{1,V} ^{-1}(Z)$ is irreducible and by Lemma \ref{linear-images}
 $\pi_{1,V} ^{-1}(Z)$ is pure of dimension $n-1$, so
$X_V$ is irreducible.  In fact, the same arguments work also for $(X_V)_{\bar k}$, so $X_V$ is geometrically
irreducible. Since $X_V$ is smooth at the generic point and it is locally a complete intersection, it is also
geometrically reduced.
Therefore $X_V$ is geometrically integral and by the above it is equal to the graph of
$\psi: \PP (V) \dashrightarrow \PP (V^*)$.
Then Lemma \ref{blow-up} implies that $X_V$ is the blow up of $\PP (V)$ along $Z$.
\end{proof}

\begin{proposition} \label{prop-2}
There exists a finite, purely inseparable morphism $\varphi_{X_V}: X_V\to X_{V^*}$
such that the diagram
$$ \xymatrix{
X_V\ar[d]^{\pi_{1,V}} \ar[r]^{\varphi_{X_V}}\ar[rd]^{\pi_{2,V}}&X_{V^*}
\ar[d]^{\pi_{1,V^*}}\\
\PP (V)\ar@{-->}[r]^{\psi _V}& \PP (V^*)\\
}$$
is commutative. Moreover, we have $\varphi_{X_{V^*}}\circ \varphi_{X_V}=\Fr _{X_V} ^{n-1}$.
The restriction of $\psi _V$ to $\Omega (V)$ gives a finite purely inseparable morphism $\psi _V: \Omega(V)\to \Omega (V^*)$
of degree  $q^{\binom{n}{2}}$. Thus after fixing an isomorphism $\Omega(V^*)\simeq \Omega (V)$, we get a purely inseparable
endomorphism of Drinfeld's half-space $\Omega (V)$.
\end{proposition}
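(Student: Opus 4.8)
The plan is to build $\varphi_{X_V}$ explicitly in coordinates and then let everything follow formally. Fix homogeneous coordinates $x_0,\dots,x_n$ on $\PP(V)$, dual coordinates $y_0,\dots,y_n$ on $\PP(V^*)$, and (via $(V^*)^*\cong V$) coordinates $z_0,\dots,z_n$ on the second factor of $X_{V^*}\subset \PP(V^*)\times_k\PP(V)$, so that $X_{V^*}$ is cut out by $\sum_j y_j^{q^i}z_j$ for $i=0,\dots,n-1$. Define $\varphi_{X_V}$ to be the restriction to $X_V$ of the morphism $\PP(V)\times_k\PP(V^*)\to \PP(V^*)\times_k\PP(V)$ obtained by swapping the two factors and applying $\Fr^{n-1}$ on $\PP(V)$, i.e. $(x,y)\mapsto (y,\Fr^{n-1}(x))$. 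Since $\sum_j y_j^{q^i}x_j^{q^{n-1}}=\big(\sum_j x_j^{q^{n-1-i}}y_j\big)^{q^i}=f_{n-1-i}^{\,q^i}$ for $0\le i\le n-1$, and $f_0,\dots,f_{n-1}$ generate $I(X_V)$, this morphism carries $X_V$ into $X_{V^*}$, so $\varphi_{X_V}\colon X_V\to X_{V^*}$ is well defined. The triangle $\pi_{1,V^*}\circ\varphi_{X_V}=\pi_{2,V}$ is immediate from the formula, and $\psi_V\circ\pi_{1,V}=\pi_{2,V}$ as rational maps is exactly the fact that $X_V$ is the graph of $\psi_V$ (Proposition \ref{prop-1}(6)), so the diagram commutes. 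Writing $\varphi_{X_{V^*}}$ by the same recipe and composing, $\varphi_{X_{V^*}}\circ\varphi_{X_V}\colon (x,y)\mapsto (y,\Fr^{n-1}(x))\mapsto (\Fr^{n-1}(x),\Fr^{n-1}(y))$, which is $\Fr_{X_V}^{n-1}$; symmetrically $\varphi_{X_V}\circ\varphi_{X_{V^*}}=\Fr_{X_{V^*}}^{n-1}$.

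From these two relations I would read off the formal properties of $\varphi_{X_V}$: it is surjective (because $\Fr_{X_{V^*}}^{n-1}=\varphi_{X_V}\circ\varphi_{X_{V^*}}$ is), finite (because $\Fr_{X_V}^{n-1}$ is finite, hence affine and proper, and $\varphi_{X_{V^*}}$ is separated, so $\varphi_{X_V}$ is affine and proper), and radicial, i.e. purely inseparable (since $\Fr_{X_V}^{n-1}$ is radicial and factors through $\varphi_{X_V}$). As $X_V$ and $X_{V^*}$ are integral of dimension $n$ by Proposition \ref{prop-1}, $\deg\Fr_{X_V}^{n-1}=q^{n(n-1)}$, and the whole construction is symmetric under $V\leftrightarrow V^*$, so $\deg\varphi_{X_V}=\deg\varphi_{X_{V^*}}=q^{n(n-1)/2}=q^{\binom n2}$.

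The heart of the argument is tracking the boundary. Identify $\Omega(V)$ with the open $\pi_{1,V}^{-1}(\Omega(V))\subset X_V$, which is legitimate since $\pi_{1,V}$ is an isomorphism over $\PP(V)-Z\supset\Omega(V)$; then by Lemma \ref{linear-images} one has $X_V-\Omega(V)=\pi_{1,V}^{-1}(\PP(V)-\Omega(V))=\bigcup_{L\in\cL(V)}E_L$ with $E_L=L\times_k L^\perp$ (every proper $k$-linear subspace lies in a $k$-rational hyperplane), and likewise for $V^*$. Because $\Fr^{n-1}$ fixes every $k$-rational linear subspace set-theoretically, and $(L^\perp)^\perp=L$ under the canonical identifications, the explicit formula for $\varphi_{X_V}$ gives $\varphi_{X_V}(E_L)=L^\perp\times_k L=E_{L^\perp}$; hence $\varphi_{X_V}$ maps $X_V-\Omega(V)$ onto $X_{V^*}-\Omega(V^*)$, the index bijection being $L\mapsto L^\perp$. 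Now I would upgrade this to $\varphi_{X_V}^{-1}\big(X_{V^*}-\Omega(V^*)\big)=X_V-\Omega(V)$: the inclusion $\supseteq$ is what we just proved, and $\subseteq$ can only fail if some $p\in\Omega(V)$ has $\varphi_{X_V}(p)\in\varphi_{X_V}(X_V-\Omega(V))$, which by injectivity of $\varphi_{X_V}$ would force $p\in X_V-\Omega(V)$. Thus $\varphi_{X_V}^{-1}(\Omega(V^*))=\Omega(V)$ inside the graphs.

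With this in hand, under the identifications by $\pi_{1,V}$, $\pi_{1,V^*}$ and using $\pi_{1,V^*}\circ\varphi_{X_V}=\pi_{2,V}=\psi_V\circ\pi_{1,V}$, the restriction $\varphi_{X_V}\colon\varphi_{X_V}^{-1}(\Omega(V^*))\to\Omega(V^*)$ is precisely $\psi_V|_{\Omega(V)}\colon\Omega(V)\to\Omega(V^*)$. Being a base change of the finite morphism $\varphi_{X_V}$ along the open immersion $\Omega(V^*)\hookrightarrow X_{V^*}$, it is finite; it is purely inseparable as a restriction of the radicial $\varphi_{X_V}$; and its degree equals $\deg\varphi_{X_V}=q^{\binom n2}$, since $\Omega(V)$ and $\Omega(V^*)$ are dense open in the integral $X_V$, $X_{V^*}$. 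Finally, a choice of basis of $V$ identifies $V$ with $V^*$ compatibly with $k$-rational hyperplanes, hence $\Omega(V^*)$ with $\Omega(V)$, turning $\psi_V$ into a finite purely inseparable endomorphism of $\Omega(k^{n+1})$ of degree $q^{\binom n2}$.

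The main obstacle I anticipate is the boundary computation in the third paragraph: one must have $\varphi_{X_V}$ in exactly the right form for $\varphi_{X_V}(E_L)=E_{L^\perp}$ to hold identically — this is where the exponent $q^{n-1}$ (and not $q^n$) and the self-duality $(L^\perp)^\perp=L$ are essential — and then one must promote the set-theoretic statement ``$\varphi_{X_V}$ sends boundary onto boundary'' to the scheme-theoretic preimage equality $\varphi_{X_V}^{-1}(\Omega(V^*))=\Omega(V)$, which is what makes $\psi_V|_{\Omega(V)}$ genuinely finite rather than merely quasi-finite; radicialness of $\varphi_{X_V}$ is exactly the input that makes this upgrade work.
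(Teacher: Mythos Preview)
Your proof is correct and follows essentially the same route as the paper: define $\varphi_{X_V}$ as the restriction of $(x,y)\mapsto(y,\Fr^{n-1}(x))$ on the ambient product, verify it lands in $X_{V^*}$ via the identity $\big(\sum_j x_j^{q^{n-1-i}}y_j\big)^{q^i}=\sum_j y_j^{q^i}x_j^{q^{n-1}}$, read off $\varphi_{X_{V^*}}\circ\varphi_{X_V}=\Fr_{X_V}^{n-1}$, and use $\varphi_{X_V}(E_L)=E_{L^\perp}$ to see that $\Omega(V)$ is sent into $\Omega(V^*)$. Your treatment is in fact slightly more explicit than the paper's at two points: you spell out why $\varphi_{X_V}$ is finite and radicial from the Frobenius factorization, and you justify the preimage equality $\varphi_{X_V}^{-1}(\Omega(V^*))=\Omega(V)$ via injectivity, whereas the paper passes directly from ``$\varphi_{X_V}(E_L)\subset E_{L^\perp}$'' to ``$\varphi_{X_V}(\pi_{1,V}^{-1}\Omega(V))\subset\pi_{1,V^*}^{-1}\Omega(V^*)$'' (implicitly using that a radicial finite surjection is a homeomorphism).
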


\begin{proof}
Let  $p_1$ and $p_2$ be the projections of $\PP (V)\times _k \PP (V^*)$ onto the first and second factor, respectively.
Let us consider the map $\bar \varphi _V: \PP (V)\times _k \PP (V^*) \to \PP (V^*)\times _k \PP (V)$ defined by
$$\bar \varphi_V=   p_2\times _k (\Fr_{\PP (V)}^{n-1}  \circ p_1).$$
After fixing projective coordinates
$\bar \varphi _V$ is given by
$$([x_0,...,x_n], [y_0,...,y_n]) \to ([v_0,...,v_n], [w_0,...,w_n])=([y_0,...,y_n], [x_0^{q^{n-1}},...,x_n^{q^{n-1}}]).
$$
Since
$$(\sum x_i^{q^j}y_i)^{q^{n-1-j}}=\sum x_i^{q^{n-1}}y_i^{q^{n-1-j}}=\sum v_i^{q^{n-1-j}}w_i$$
for $j=0,1,...,n-1$,  $\bar \varphi _V$ maps $\PP((\Fr ^j)^*T_{\PP(V)})$ into $\PP((\Fr ^{n-1-j})^*T_{\PP(V^*)})$
and therefore  $X_V$ into $X_{V^*}$. We define $\varphi_{X_V}$ as
the restriction $\bar\varphi_V |_X:X_V\to X_{V^*}$.

Let us note that $\pi_{2, V}=\pi_{1, V^*}\circ \varphi_{X_V}$.
Moreover, since $\bar\varphi_{V^*}\circ \bar\varphi_{V}=\Fr_{\PP (V)\times _k \PP (V)}^{n-1}$,
we have $\varphi_{X_{V^*}}\circ \varphi_{X_V}=\Fr _{X_V} ^{n-1}$. In particular, $\varphi_{X_V}$ is a purely inseparable map of degree $q^{\binom{n}{2}}$.

We need to prove that $\psi_V$ maps $\Omega(V)$ into $\Omega (V^*)$.
Since $\pi_{1,V}:X_V \to \PP (V)$ is the blow up of $\PP (V)$ along $Z$ and Lemma
\ref{Moore} implies that $\Omega (V)$ is contained in
$\PP (V)-Z$, $\pi_{1,V}|_{\pi_{1,V}^{-1}(\Omega (V))}:\pi_{1,V}^{-1}(\Omega
(V)) \to \Omega (V)$ is an isomorphism. So we can treat $X_V$ as a
compactification of Drinfeld's half-space $\Omega (V)$. Since by Lemma \ref{linear-images}
$$\pi_{1,V}^{-1}(\Omega (V))=X_V-\bigcup _{{L\in \cL(V)}}E_L $$
and $\varphi_{X_V}$ maps $E_{L}$ into $E_{L^{\perp}}$, $\varphi_{X_V}$
maps $\pi_{1,V}^{-1}(\Omega (V))$ into $\pi_{1,V^*}^{-1}(\Omega (V^*))$.  But $\psi_V
|_{\PP (V)-Z}= \pi_{1, V^*} \circ\varphi_{X_V} \circ (\pi_{1,V}|_{\PP (V)-Z})^{-1}$,
so $\psi_V (\Omega (V))\subset \Omega (V^*)$.
Since $\varphi_{X_V}$ is finite and purely inseparable,  $\psi _V: \Omega(V)\to \Omega (V^*)$ is also finite
and purely inseparable. Moreover, since  $\varphi_{X_{V^*}}\circ \varphi_{X_V}=\Fr _X ^{n-1}$ we have $\psi_{V^*}\circ \psi_V=\Fr_{\Omega(V)}^{n-1}$.
\end{proof}

\begin{proposition} \label{prop-3}
The projection $\pi_{2,V}:X_V\to \PP (V^*)$ onto the second factor decomposes into a birational morphism $f_V:X_V\to Y_V$  and a purely inseparable morphism $\varphi _{Y_V}: Y_V\to \PP (V^*)$.
Morphism $f_V$ is an isomorphism outside of $(\bigcup _{{L\in \cL(V)},\dim L >0}E_L)$ and we have
$\codim E_L=1$ and $\dim f(E_L)=\codim L-1.$
\end{proposition}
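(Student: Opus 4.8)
The plan is to produce $f_V$ and $\varphi_{Y_V}$ as the Stein factorization of the proper morphism $\pi_{2,V}\colon X_V\to\PP(V^*)$, and then to read off every stated property from two facts already at hand: the factorization $\pi_{2,V}=\pi_{1,V^*}\circ\varphi_{X_V}$ with $\varphi_{X_V}$ finite and purely inseparable (Proposition \ref{prop-2}), and the description $E_L=L\times_kL^\perp$ of the linear strata of $\pi_{1,V}$ (Lemma \ref{linear-images}). Since $X_V$ is geometrically integral and projective over $k$ (Proposition \ref{prop-1}), the Stein factorization gives $X_V\xrightarrow{f_V}Y_V\xrightarrow{\varphi_{Y_V}}\PP(V^*)$ with $Y_V$ integral, $f_V$ proper and $(f_V)_*\cO_{X_V}=\cO_{Y_V}$, and $\varphi_{Y_V}$ finite and surjective (the latter because $\pi_{2,V}$ is, being the second projection of the graph of the dominant map $\psi_V$).

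The crux is that the ``contracting part'' of $\pi_{2,V}$ is carried entirely by the blow-up $\pi_{1,V^*}$. Let $Z_{V^*}\subset\PP(V^*)$ be the union of all $k$-linear subspaces of codimension $2$, so that $\pi_{1,V^*}$ is the blow-up of $\PP(V^*)$ along $Z_{V^*}$ (Proposition \ref{prop-1}) and hence an isomorphism over $\PP(V^*)\setminus Z_{V^*}$. Because $\varphi_{X_V}$ is finite, $\pi_{2,V}$ is a finite morphism over $\PP(V^*)\setminus Z_{V^*}$; hence the fibres of $f_V$ over $Y_V^{\circ}:=\varphi_{Y_V}^{-1}(\PP(V^*)\setminus Z_{V^*})$ are finite, so $f_V$, being proper, is finite over $Y_V^{\circ}$. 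A finite morphism $g$ with $g_*\cO=\cO$ on the target is an isomorphism (it is affine, so it is recovered as $\Spec$ of $g_*\cO$); thus $f_V$ restricts to an isomorphism over $Y_V^{\circ}$, and in particular $f_V$ is birational. Finally I would identify $f_V^{-1}(Y_V^{\circ})=\pi_{2,V}^{-1}(\PP(V^*)\setminus Z_{V^*})$ with $X_V\setminus\bigcup_{L\in\cL(V),\,\dim L>0}E_L$: the proof of Proposition \ref{prop-2} shows $\varphi_{X_V}(E_L)\subseteq E_{L^\perp}$, which is an equality by a dimension count ($\varphi_{X_V}$ is finite and $E_L$, $E_{L^\perp}$ are irreducible of dimension $n-1$ by Lemma \ref{linear-images}), and $\varphi_{X_V}$ is injective on points (being purely inseparable), so $\varphi_{X_V}^{-1}(E_M)=E_{M^\perp}$ for every $M\in\cL(V^*)$ (using that $(\cdot)^\perp$ is an involution under $(V^*)^*\cong V$); combining this with $\pi_{1,V^*}^{-1}(Z_{V^*})=\bigcup_{M\in\cL(V^*),\,\codim M\ge2}E_M$ (Lemma \ref{linear-images}) and the equivalence $\codim_{\PP(V^*)}M\ge2\iff\dim_{\PP(V)}M^\perp>0$ gives $\pi_{2,V}^{-1}(Z_{V^*})=\bigcup_{L\in\cL(V),\,\dim L>0}E_L$, as required.

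It remains to record the numerical claims. The morphism $\varphi_{Y_V}$ is purely inseparable: by the previous paragraph $k(Y_V)=k(X_V)$, and $k(X_V)=k(\PP(V))$ (via the birational $\pi_{1,V}$) is purely inseparable over $k(\PP(V^*))$ through $\psi_V$ (Proposition \ref{prop-2}), so the finite morphism $\varphi_{Y_V}$ is purely inseparable. For the codimension: by Lemma \ref{linear-images}, $E_L$ is smooth of dimension $n-1$, while $\dim X_V=n$ (Proposition \ref{prop-1}), so $\codim_{X_V}E_L=1$. For the image: $\pi_{2,V}(E_L)=L^\perp$ has dimension $\codim_{\PP(V)}L-1$ (Lemma \ref{linear-images}), and $\varphi_{Y_V}$ restricts to a finite surjection $f_V(E_L)\to\pi_{2,V}(E_L)$, so $\dim f_V(E_L)=\codim L-1$; in particular $f_V$ genuinely contracts $E_L$ exactly when $\codim L-1<n-1$, i.e.\ when $\dim L>0$, consistently with the previous paragraph.

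The two places I expect to need care are: establishing that $f_V$ is birational — i.e.\ that the entire (inseparable) degree of $\pi_{2,V}$ is absorbed by $\varphi_{Y_V}$, which is precisely what the computation over $\PP(V^*)\setminus Z_{V^*}$ together with the Stein property $(f_V)_*\cO_{X_V}=\cO_{Y_V}$ delivers; and the bookkeeping matching the divisors $E_L\subset X_V$ with the blown-up centre $Z_{V^*}\subset\PP(V^*)$ under the purely inseparable morphism $\varphi_{X_V}$. Everything else is formal.
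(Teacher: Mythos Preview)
Your proposal is correct and follows essentially the same approach as the paper: take Stein's factorization of $\pi_{2,V}$, use the factorization $\pi_{2,V}=\pi_{1,V^*}\circ\varphi_{X_V}$ to see that $\pi_{2,V}$ is finite over a dense open, conclude that $f_V$ is birational and $\varphi_{Y_V}$ purely inseparable, and identify the complement via $\varphi_{X_V}^{-1}(E_M)=E_{M^\perp}$ and Lemma \ref{linear-images}. The only minor deviations are technical: you work over $\PP(V^*)\setminus Z_{V^*}$ where the paper uses the smaller $\Omega(V^*)$, and you deduce $\varphi_{X_V}^{-1}(E_M)=E_{M^\perp}$ from injectivity of a purely inseparable map on points, whereas the paper uses the relation $\varphi_{X_{V^*}}\circ\varphi_{X_V}=\Fr_{X_V}^{n-1}$ together with $\Fr^{-1}(E_L)=E_L$; both arguments are valid and yield the same conclusion.
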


\begin{proof}
  Let us consider Stein's factorization $X_V\to Y_V\to \PP (V^*)$ of $\pi_{2,V}$.
 In particular, $f_V:X_V\to Y_V$ is birational with connected fibers and $\varphi _{Y_V}: Y_V\to \PP (V)$ is
  finite. Note that $$\pi_{2,V}|_{\pi _{1,V}^{-1}(\Omega(V))}: \pi _{1,V}^{-1}(\Omega(V)) \mathop{\to}^{\simeq} \Omega(V)\mathop{\to}^{\psi_V}\Omega(V^*)$$ is finite and purely inseparable, so $\varphi_{Y_V}$ is purely inseparable.

Clearly, we have $\varphi_{X_V}(E_L)=E_{L^{\perp}}$. Since $E_L$ is defined over $k$, we have $\Fr_X^{-1}(E_L)=E_L$
and hence $\varphi_{X_V}^{-1}(E_L)=E_L$.
Note that
$$\pi_{2,V}^{-1}(Z)=\varphi_{X_V}^{-1}(\pi_{1,V}^{-1}(Z))=\varphi_{X_V}^{-1} ( \bigcup _{{L\in \cL(V)},{\codim L \ge 2}}E_L)= \bigcup _{{L\in \cL(V)},\codim L \ge 2}
E_{L^{\perp}}= \bigcup _{{L\in \cL(V)}, \dim L >0}E_L$$ and $f_V$ is an
isomorphism on $\pi_{2,V}^{-1}(\PP (V)-Z)$. Therefore $\Exc (f_V)=\bigcup
_{{L\in \cL(V)},\dim L >0}E_L$.
\end{proof}

\begin{remark}
It is easy to see that $Y_V$ is a normal projective $k$-variety. Indeed, there exists a birational morphism
from the wonderful compactification $\bar X\to X_V$  (cf. Section \ref{wonder-section}) and $\bar X\to Y_V\to \PP(V^*)$ is Stein's factorization, so $Y_V$ is normal.
\end{remark}

\begin{theorem}  \label{main-3'}
There exists a finite morphism $\varphi _V: \PP (V)\to Y_{V^*}$ such that
we have the following commutative diagram of $k$-schemes
$$ \xymatrix{
Y_V\ar[d]^{\varphi_{Y_V}}&X_V\ar[l]_{f_V}\ar[ld]^{\pi_{2,V}}
\ar[r]^{\pi_{1,V}}\ar[d]^{\varphi_{X_V}}&\PP (V)\ar[d]^{\varphi _V}\\
\PP (V^*)&X_{V^*}\ar[l]^{\pi_{1,V^*}}\ar[r]_{f_{V^*}}&Y_{V^*}\\
}$$
in which horizontal maps are birational, vertical maps are purely inseparable
and they satisfy the following relations
$$\varphi_{X_{V^*}}\circ \varphi_{X_V}=\Fr _{X_V}^{\dim V-2},\quad \varphi _{Y_{V^*}}\circ \varphi_{V} =\Fr_{\PP(V)}^{\dim V-2}, \quad \varphi_{V^*}\circ \varphi _{Y_V}=\Fr_{Y_V}^{\dim V-2}.$$
The rational map  $\psi _V:=\pi_{1, V^*}\varphi_{X_V}\pi_{1,V}^{-1}: \PP(V)\dashrightarrow \PP (V^*)$ is  purely inseparable and it satisfies $\psi_{V^*}\circ \psi _V=\Fr_{\PP (V)}^{\dim V-2}$. Moreover, $\psi_V$ defines a finite morphism $\psi_V: \Omega(V)\to \Omega (V^*)$ such that $\psi_{V^*}\circ \psi _V=\Fr_{\Omega (V)}^{\dim V-2}$.
\end{theorem}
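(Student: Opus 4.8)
The plan is to observe that Propositions~\ref{prop-1}--\ref{prop-3} already establish every assertion of the theorem except for the existence of the finite morphism $\varphi_V\colon\PP(V)\to Y_{V^*}$ and the two relations $\varphi_{Y_{V^*}}\circ\varphi_V=\Fr_{\PP(V)}^{\dim V-2}$ and $\varphi_{V^*}\circ\varphi_{Y_V}=\Fr_{Y_V}^{\dim V-2}$. Writing $n=\dim V-1$ as in the section, the real task is thus to produce $\varphi_V$.

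I would obtain $\varphi_V$ by descending the morphism $f_{V^*}\circ\varphi_{X_V}\colon X_V\to Y_{V^*}$ along the blow-up $\pi_{1,V}\colon X_V\to\PP(V)$ of Proposition~\ref{prop-1}(3). The key identity, read off from the explicit formulas for $\bar\varphi_V$ and $\bar\varphi_{V^*}$ used in Proposition~\ref{prop-2}, is $\pi_{2,V^*}\circ\varphi_{X_V}=\Fr_{\PP(V)}^{n-1}\circ\pi_{1,V}$; since $\pi_{2,V^*}=\varphi_{Y_{V^*}}\circ f_{V^*}$ (Proposition~\ref{prop-3}) and $\varphi_{Y_{V^*}}$ is a finite purely inseparable morphism, hence a homeomorphism, it follows that $f_{V^*}\circ\varphi_{X_V}$ is constant on every fibre of $\pi_{1,V}$ --- indeed the fibre of $\pi_{1,V}$ over a general point of a codimension-$c$ linear subspace $L$ is a copy of $L^{\perp}$ inside $E_L=L\times_k L^{\perp}$, which $\varphi_{X_V}$ carries into a single fibre of $\pi_{2,V^*}$. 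As $\pi_{1,V}$ is proper and birational onto the normal variety $\PP(V)$, one has $(\pi_{1,V})_*\cO_{X_V}=\cO_{\PP(V)}$, and the standard descent lemma then produces a unique morphism $\varphi_V\colon\PP(V)\to Y_{V^*}$ with $\varphi_V\circ\pi_{1,V}=f_{V^*}\circ\varphi_{X_V}$. Upgrading this set-theoretic contraction of the fibres of $\pi_{1,V}$ to an honest morphism of schemes is the point requiring care; alternatively one can compute $f_{V^*}\circ\varphi_{X_V}$ in the Pl\"ucker coordinates of Section~\ref{determinantal} and exhibit the factorisation directly.

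Once $\varphi_V$ is available, the two relations follow formally. Post-composing $\varphi_V\circ\pi_{1,V}=f_{V^*}\circ\varphi_{X_V}$ with $\varphi_{Y_{V^*}}$ gives $\varphi_{Y_{V^*}}\circ\varphi_V\circ\pi_{1,V}=\pi_{2,V^*}\circ\varphi_{X_V}=\Fr_{\PP(V)}^{n-1}\circ\pi_{1,V}$, and cancelling the dominant morphism $\pi_{1,V}$ yields $\varphi_{Y_{V^*}}\circ\varphi_V=\Fr_{\PP(V)}^{n-1}$. Applying the whole construction to $V^*$ gives $\varphi_{V^*}\colon\PP(V^*)\to Y_V$ with $\varphi_{V^*}\circ\pi_{1,V^*}=f_V\circ\varphi_{X_{V^*}}$; combining this with $\varphi_{Y_V}\circ f_V=\pi_{2,V}=\pi_{1,V^*}\circ\varphi_{X_V}$ (Propositions~\ref{prop-2} and~\ref{prop-3}), the relation $\varphi_{X_{V^*}}\circ\varphi_{X_V}=\Fr_{X_V}^{n-1}$, and the naturality of Frobenius, one computes $\varphi_{V^*}\circ\varphi_{Y_V}\circ f_V=f_V\circ\varphi_{X_{V^*}}\circ\varphi_{X_V}=f_V\circ\Fr_{X_V}^{n-1}=\Fr_{Y_V}^{n-1}\circ f_V$, and cancelling the dominant $f_V$ gives $\varphi_{V^*}\circ\varphi_{Y_V}=\Fr_{Y_V}^{n-1}$.

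Finally I would check that $\varphi_V$ is finite and purely inseparable: it is proper because $\PP(V)/k$ is proper and $Y_{V^*}/k$ is separated, its fibres are contained in those of $\Fr_{\PP(V)}^{n-1}=\varphi_{Y_{V^*}}\circ\varphi_V$ and hence finite, so $\varphi_V$ is finite, and it is universally injective since $\varphi_{Y_{V^*}}\circ\varphi_V$ is. Commutativity of the displayed diagram is then the conjunction of $\varphi_V\circ\pi_{1,V}=f_{V^*}\circ\varphi_{X_V}$, of $\varphi_{Y_V}\circ f_V=\pi_{1,V^*}\circ\varphi_{X_V}=\pi_{2,V}$, and of the same identities for $V^*$. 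The statements about $\psi_V=\pi_{1,V^*}\varphi_{X_V}\pi_{1,V}^{-1}$ are already contained in Proposition~\ref{prop-2}, which shows it is purely inseparable and restricts to a finite morphism $\Omega(V)\to\Omega(V^*)$; and conjugating $\varphi_{X_{V^*}}\circ\varphi_{X_V}=\Fr_{X_V}^{\dim V-2}$ by $\pi_{1,V}$ and using $\pi_{1,V}\circ\Fr_{X_V}=\Fr_{\PP(V)}\circ\pi_{1,V}$ gives $\psi_{V^*}\circ\psi_V=\Fr_{\PP(V)}^{\dim V-2}$, both as a rational map on $\PP(V)$ and as a morphism on $\Omega(V)$.
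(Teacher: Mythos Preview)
Your approach is essentially the same as the paper's, and your derivation of the two Frobenius relations and the finiteness/radicial properties of $\varphi_V$ is correct. The one place where you are vaguer than the paper is exactly the step you flag: upgrading the set-theoretic constancy of $f_{V^*}\circ\varphi_{X_V}$ on fibres of $\pi_{1,V}$ to an honest factorisation through $\pi_{1,V}$. The paper handles this cleanly by Stein factorisation: writing $f_{V^*}\circ\varphi_{X_V}=g_2\circ g_1$ with $(g_1)_*\cO_{X_V}=\cO_{Y'}$ and $g_2$ finite, the composite $h=\varphi_{Y_{V^*}}\circ g_2\circ g_1$ equals $\Fr_{\PP(V)}^{n-1}\circ\pi_{1,V}$, which is \emph{another} Stein factorisation of the same map (here one uses $(\pi_{1,V})_*\cO_{X_V}=\cO_{\PP(V)}$, exactly the fact you invoke); uniqueness of Stein factorisation then forces $Y'\simeq\PP(V)$ and $g_1=\pi_{1,V}$, so $\varphi_V:=g_2$ is the desired morphism. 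This is precisely the ``standard descent lemma'' you are appealing to, made rigorous without having to worry about non-reduced fibres of the blow-up.
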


\begin{proof}
  Let us consider Stein's factorization of the map $f_{V^*}\circ
  \varphi_{X_V}: X_V\to Y_{V^*}$.  We get a proper morphism
  $g_1:X_V\to Y'$ such that $(g_1)_*\cO_{X_V}=\cO_{Y'}$ and a finite
  affine morphism $g_2:Y'\to Y_{V^*}$. Now let us consider the
  morphism $h:X_V\to \PP (V)$ defined by $h:=\Fr_{\PP (V)}^{\dim
    V-2}\circ \pi_{1,V}$. Note that $h_2=\varphi_{Y_{V^*}}\circ g_2$
  is finite and affine, so we have two Stein's factorizations of $h$,
  one coming from the definition and another one given by $h=h_2\circ
  g_1$. It follows that $Y'$ and $\PP (V)$ are isomorphic as affine
  schemes over $\PP (V)$ (as they are spectrums of the same sheaf of
  algebras $h_*\cO_{\PP (V)}$) and the birational map $g_1\circ
  \pi_{1,V}^{-1}:\PP (V)\to Y'$ extends to an isomorphism. It follows
  that the birational map $\varphi_V:= f_{V^*}\circ \varphi_{X_V}\circ
  \pi_{1,V}^{-1}: \PP (V)\dashrightarrow Y_{V^*}$ extends to a morphism.

The remaining assertions follow easily from Propositions
\ref{prop-1}, \ref{prop-2} and \ref{prop-3}.
\end{proof}

\section{Wonderful compactification of Drinfeld's half-space}\label{wonder-section}

As before, let $V$ be an $(n+1)$-dimensional vector space over a finite field
$k=\FF_q$ and let $\cL (V)$ be the set of all proper $k$-linear subspaces of $\PP (V)$.
$\cL (V)$ forms an arrangement of subvarieties of $\PP (V)$ and so we can consider
the wonderful compactification $\tilde X$ of $\Omega(V)$. By Theorem \ref{wonderful-comp}
we can construct $\tilde X$ by successively blowing up strict transforms of $k$-linear subspaces of
$\PP(V)$ of increasing dimensions. So we have
$$\tilde X=\tilde X_{n}\mathop{\longrightarrow}^{\tilde \pi_{n-1}} \tilde  X_{n-1}\longrightarrow \cdots \longrightarrow \tilde X_2\mathop{\longrightarrow}^{\tilde \pi_{1}} \tilde  X_{1}\mathop{\longrightarrow}^{\tilde \pi_{0}}  \tilde X_{0}=\PP(V),$$
where $\tilde  \pi_i$ is the blow up of $\tilde X_{i}$ along the  strict transforms of $k$-linear subspaces of
$\PP(V)$ of dimension $i$ (in particular $\tilde \pi_{n-1}$ is an isomorphism). The canonical map $\tilde  X\to \PP (V)$ is denoted by $\tilde \pi$. The pull-back of the hyperplane section to $\tilde  X$ is denoted by $H$.

Note that by Lemma \ref{blow-up-sum} we have canonical morphisms $\tilde  X_i\to \Bl_{Z_c}\PP (V)$ for $c=n+1-i,...,n+1$.
Therefore Corollary \ref{Eagon-Northcott} implies that  for $c=n+1-i,...,n+1$ the linear system $|I_{Z_c}(1+q+...+q^{n+1-c})|$ becomes base point free on $\tilde X_i$ giving rise to the canonical morphism
$$\varphi _{c}^{(i)}: \tilde  X_i\to \PP ({\bigwedge} ^{n+2-c} V).$$
For $c=n+1$ we have $Z_c=\emptyset$ and $|I_{Z_c}(1+q+...+q^{n+1-c})|=|\cO_{\PP (V)}(1)|$, so $\varphi^{(i)}_{ n+1}=\tilde \pi_0\circ \dots \circ \tilde  \pi_{i-1}$.
For $c=2,...,n+1$ we set $\varphi_c=\varphi ^{(n)}_{c}$. Note that for  $c=1$ we have  $I_{Z_c}(1+q+...+q^{n+1-c})=\cO_{\PP (V)}$, so  we can also define $\varphi_1$ as the map of $\tilde X$ to  $\PP ({\bigwedge} ^{n+1} V)$.

\begin{lemma}\label{log-general-type}
Let us fix  $0\le i\le n$.  The morphism
$$\varphi _{\tilde  X_i}: \tilde  X_i\to \PP (V)\times _k\PP ({\bigwedge} ^2 V)\times_k ... \times_k \PP ({\bigwedge} ^{i+1} V)$$
given by $(\varphi_{ n+1}^{(i)}, \varphi_{ n}^{(i)},..., \varphi_{n+1-i}^{(i)})$
is a closed embedding.
\end{lemma}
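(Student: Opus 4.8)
The plan is to show that the tuple of maps separates points and tangent vectors on $\tilde X_i$. Since $\tilde X_i$ is obtained from $\PP(V)$ by a sequence of blow-ups along (strict transforms of) $k$-linear subspaces of dimensions $0,1,\dots,i-1$, and $\varphi_{n+1}^{(i)}=\tilde\pi_0\circ\cdots\circ\tilde\pi_{i-1}:\tilde X_i\to\PP(V)$ is exactly the composite blow-down, it suffices to check that adding the further maps $\varphi_{n+1-j}^{(i)}:\tilde X_i\to\PP(\bigwedge^{j+1}V)$ for $j=1,\dots,i$ resolves the indeterminacy introduced at each blow-up. Concretely, I would proceed by induction on $i$: the case $i=0$ is trivial since $\varphi_{\tilde X_0}=\tilde\pi_0\circ\cdots$ is the identity of $\PP(V)$, and for the inductive step I compare $\tilde X_i\to\tilde X_{i-1}$ (the blow-up along the strict transforms of the $(i-1)$-dimensional $k$-linear subspaces) with the extra factor $\varphi_{n+2-i}^{(i)}:\tilde X_i\to\PP(\bigwedge^{i}V)$ that was not yet base-point-free on $\tilde X_{i-1}$.

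The key input is Corollary \ref{Eagon-Northcott} together with Lemma \ref{blow-up} and Lemma \ref{blow-up-sum}. By Lemma \ref{blow-up-sum} (applied repeatedly) there is a canonical morphism $\tilde X_{i}\to\Bl_{Z_c}\PP(V)$ for $c=n+1-i,\dots,n+1$, and on $\Bl_{Z_c}\PP(V)$ the linear system $|I_{Z_c}(1+q+\cdots+q^{n+1-c})|$ is base-point free and, by Lemma \ref{blow-up}, realizes $\Bl_{Z_c}\PP(V)$ as the closure of the graph of $\PP(V)\dashrightarrow\PP(\bigwedge^{n+2-c}V)$. Hence $\tilde X_i$ maps to $\prod_{c=n+1-i}^{n+1}\Bl_{Z_c}\PP(V)$, each factor being a subscheme of $\PP(V)\times_k\PP(\bigwedge^{n+2-c}V)$, and $\varphi_{\tilde X_i}$ is the composition of $\tilde X_i\to\prod\Bl_{Z_c}\PP(V)$ with the obvious closed immersion of the product into $\PP(V)\times\prod\PP(\bigwedge^{j+1}V)$. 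So everything reduces to showing that $\tilde X_i\to\prod_{c=n+1-i}^{n+1}\Bl_{Z_c}\PP(V)$ is a closed immersion.

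For that, I would argue as follows. First, $\varphi_{n+1}^{(i)}=\tilde\pi_0\circ\cdots\circ\tilde\pi_{i-1}$ is proper and birational, so it is enough to prove that the additional coordinates separate any two points in a fibre and separate tangent directions along a fibre. A point of $\PP(V)$ lies in $Z_c\setminus Z_{c+1}$, i.e. its Frobenius orbit spans a codimension-$\le c$, $>c$ space; over such a point, the blow-up $\tilde X_i$ has been modified only by blowing up the $k$-linear subspaces it lies on of dimension $\le i-1$. The map to $\Bl_{Z_{c}}\PP(V)$ for the appropriate $c$ with $Z_c$ equal (set-theoretically) to the union of the codimension-$c$ $k$-linear subspaces — precisely the relevant strict transforms — resolves exactly that locus, because by Proposition \ref{codim-2} and the subsequent discussion the exceptional fibre over a $k$-linear subspace $L$ of codimension $c$ in $\tilde X_i$ is, under $\varphi_{n+2-c}^{(i)}$, the embedded $L^\perp\cong\PP(\bigwedge^{c-1}$ (of the relevant quotient)$)$, i.e. each such map separates the fibre over $L$. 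Running over all $c$ from $n+1-i$ down handles all dimensions $0,\dots,i-1$ of blown-up centers. Thus no two points in a fibre of $\varphi_{n+1}^{(i)}$ have the same image under all the $\varphi_{n+1-j}^{(i)}$, and the same computation on tangent spaces (the exceptional divisor $D_L$ maps isomorphically onto $L\times_k L^\perp$, cf. Lemma \ref{linear-images}) shows injectivity of the differential.

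**Expected main obstacle.** The delicate point will be bookkeeping the order of blow-ups against the order of the determinantal schemes $Z_c$: on $\tilde X_i$ only the strict transforms of $k$-linear subspaces of dimension $\le i-1$ (equivalently codimension $\ge n+1-i$) have been blown up, so I must check that the maps $\varphi_{n+1-j}^{(i)}$ for $j=1,\dots,i$ — i.e. exactly those indexed by $c=n+1-i,\dots,n$ — together resolve all of these and nothing is missing, and that the intermediate strict transforms $\tilde D_\ell$ appearing in Theorem \ref{wonderful-comp} are compatible with the maps $\tilde X_i\to\Bl_{Z_c}\PP(V)$ furnished by Lemma \ref{blow-up-sum}. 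Once that matching is set up cleanly, separation of points and tangents is a local computation in the charts of $X_V$ already carried out in the proof of Proposition \ref{prop-1}, applied on each $\tilde X_i$.
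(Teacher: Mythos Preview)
Your plan is correct in outline, but it takes a longer route than necessary and the paper's own argument is precisely the short inductive step you sketch in your first paragraph --- and then abandon.

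The paper proceeds by induction on $i$, the case $i=0$ being trivial. For the inductive step it observes only this: $\tilde\pi_{i-1}:\tilde X_i\to\tilde X_{i-1}$ is the blow-up along the (disjoint, smooth) strict transforms of the $(i-1)$-dimensional $k$-linear subspaces, and on $\tilde X_{i-1}$ the pulled-back linear system $|I_{Z_{n+1-i}}(1+q+\cdots+q^{i})|$ has base ideal cutting out exactly this centre (up to an invertible exceptional factor, which does not affect the blow-up). Lemma~\ref{blow-up} then says that this blow-up \emph{is} the closure of the graph of the rational map to $\PP(\bigwedge^{i+1}V)$, i.e.\ one gets a closed immersion
\[
\tilde X_i \hookrightarrow \tilde X_{i-1}\times_k \PP({\textstyle\bigwedge}^{i+1}V).
\]
Composing with the embedding of $\tilde X_{i-1}$ furnished by the inductive hypothesis finishes the proof in two lines.

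You state this inductive idea at the outset, but then pivot to showing directly that $\tilde X_i\to\prod_c\Bl_{Z_c}\PP(V)$ is a closed immersion by separating points and tangent vectors fibre-by-fibre over the strata $Z_c\setminus Z_{c+1}$. That approach can be made to work, but it is strictly harder: you must analyse the fibres of $\tilde X_i\to\PP(V)$ over each stratum (which are iterated projective bundles, not simply $L^\perp$), and your citation of Lemma~\ref{linear-images} is not quite apt, since that lemma describes fibres of $X_V\to\PP(V)$, not of $\tilde X_i\to\PP(V)$. The ``main obstacle'' you anticipate --- bookkeeping the match between successive blow-ups and the determinantal loci $Z_c$ --- is exactly what the paper's inductive use of Lemma~\ref{blow-up} sidesteps: once you know each $\tilde X_i\to\tilde X_{i-1}$ is the blow-up of the base locus of the next linear system, the closed immersion is automatic and no fibre analysis is needed.
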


\begin{proof}
The proof is by induction on $i$, with $i=0$ being trivial. So let us assume that we know the assertion for $i-1$.
By the construction and Lemma \ref{blow-up} we get a closed embedding $ \tilde  X_i\hookrightarrow  \tilde  X_{i-1}\times_k \PP ({\bigwedge} ^{i+1} V)$, so the lemma follows from the induction assumption.
\end{proof}

\medskip

Let us set
$$ D_V^i:=\sum_{L\in \cL(V),\, \codim L=i} D_{L}
$$
for $i=1,2,...,n$ (notation for $D_L$ is explained in Subsection \ref{wonderful-sec}). Let us also set
$D:=\sum_{i=1}^nD_V^i.$
The exceptional divisor of  $\tilde \pi: \tilde  X\to \PP (V)$ is equal to the sum of all $D_V^i$ for $i=2,...,n$. The following lemma follows by a straightforward computation:

\begin{lemma}
Let us set 
$$H_{c}:= \frac{q^{n+2-c}-1}{q-1}H- \sum _{i=0}^{n-c} \frac{q^{i+1}-1}{q-1}D^{i+c}_V .$$
Then we have $\cO_{\tilde X}(H_c)\simeq \varphi _{c}^*\cO_{\PP ({\bigwedge} ^{n+2-c} V)}(1)$ for $c=1,...,n+1$. 
Moreover, we have
$$K_{\tilde  X}+\sum _{j=1}^n D^j_V= (q-1)\sum_{c=1}^{n} H_{c+1}.$$
\end{lemma}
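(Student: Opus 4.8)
The statement has two parts: an identification of the line bundle $\cO_{\tilde X}(H_c)$ with $\varphi_c^*\cO(1)$, and the canonical bundle formula $K_{\tilde X}+\sum_j D^j_V=(q-1)\sum_{c=1}^n H_{c+1}$. For the first part, the plan is to use Corollary \ref{Eagon-Northcott} together with Lemma \ref{blow-up} and Lemma \ref{blow-up-sum}. Recall that $\varphi_c\colon\tilde X\to\PP(\bigwedge^{n+2-c}V)$ is the morphism resolving the rational map defined by the linear system $|I_{Z_c}(1+q+\dots+q^{n+1-c})|$, whose degree I abbreviate as $e_c:=\frac{q^{n+2-c}-1}{q-1}=1+q+\dots+q^{n+1-c}$. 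So $\varphi_c^*\cO(1)$ is the pullback of $\cO_{\PP(V)}(e_c)$ twisted down by the exceptional contributions picked up when blowing up $Z_c$ on $\tilde X$. The key point is that on $\tilde X$ the strict transform of $Z_c$ is obtained by first passing through all blow-ups along $k$-linear subspaces of codimension $\ge c$, and the multiplicity of $Z_c$ along the generic point of a codimension-$(i+c)$ linear space $L$ (for $0\le i\le n-c$) is exactly $e_{i+1}-1 = \frac{q^{i+1}-1}{q-1}\cdot(q-1)/(q-1)$... more precisely, one checks the vanishing order of $\Delta_q$-type minors: since $Z_c$ near a generic codimension-$(i+c)$ linear subspace is (up to Frobenius twists) a cone over a lower $Z$, its multiplicity there is $\frac{q^{i+1}-1}{q-1}$. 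This is exactly the coefficient appearing in $H_c$, so the first claim follows by bookkeeping of total transforms through the tower $\tilde X_0\to\dots\to\tilde X_n$.

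For the canonical bundle formula, the plan is to compute $K_{\tilde X}$ directly via the blow-up tower of Theorem \ref{wonderful-comp}. Each $\tilde\pi_i\colon\tilde X_{i+1}\to\tilde X_i$ is a blow-up along the disjoint union of strict transforms of $k$-linear subspaces of dimension $i$, i.e. codimension $n-i$; such a center has codimension $(n-i)$ in the smooth variety $\tilde X_i$, so it contributes a discrepancy $(n-i-1)$ times its exceptional divisor. Summing, $K_{\tilde X}=\tilde\pi^*K_{\PP(V)}+\sum_{L\in\cL(V)}(\codim L-1)D_L = -(n+1)H+\sum_{i=2}^n (i-1)D^i_V$, using $\codim D_L$-data from Lemma \ref{linear-images}/Proposition \ref{prop-1} and the fact that codimension-$1$ linear subspaces (hyperplanes) are divisors already on $\PP(V)$ so have discrepancy $0$ but still appear in $D$. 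Hence $K_{\tilde X}+\sum_{j=1}^n D^j_V = -(n+1)H + \sum_{i=1}^n i\, D^i_V$.

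Then I would expand the right-hand side $(q-1)\sum_{c=1}^n H_{c+1}$ using the definition of $H_{c+1}$: $(q-1)H_{c+1} = (q^{n+1-c}-1)H - \sum_{i=0}^{n-c-1}(q^{i+1}-1)D^{i+c+1}_V$. Reindexing the double sum by $j=i+c+1$ and summing over $c$ from $1$ to $n$, the coefficient of $H$ becomes $\sum_{c=1}^n(q^{n+1-c}-1) = \frac{q^n-q}{q-1}-(n-1) = \frac{q^n-1}{q-1} - n = e_2 - n$; hmm, I need this to equal $-(n+1)$, so let me recheck — actually the constant term $H$-coefficient after combining with $K_{\tilde X}$: the point is that $-(n+1)H$ must match, which forces a telescoping identity $\sum_{c=1}^n(q^{n+1-c}-1)=n+1$? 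That is false in general, so the correct reading must be that $H_{c+1}$ for $c=n$ is $H_{n+1}=H$ and lower terms accumulate; the genuine check is the purely combinatorial identity $\sum_{c=1}^n(q^{n+1-c}-1)$ for the $H$-coefficient equals the $H$-coefficient of $K_{\tilde X}+D$, and for each $j\in\{1,\dots,n\}$ the coefficient of $D^j_V$ on both sides agrees. \textbf{The main obstacle} is precisely this coefficient matching: I would carefully reindex $(q-1)\sum_c H_{c+1}=\sum_{c=1}^n\bigl[(q^{n+1-c}-1)H - \sum_{j=c+1}^{n}(q^{j-c}-1)D^j_V\bigr]$, then for fixed $j$ compute $\sum_{c=1}^{j-1}(q^{j-c}-1) = \frac{q^j-q}{q-1}-(j-1)$ and verify this equals $j$ minus the contribution from... — the bookkeeping here, reconciling geometric discrepancies with the arithmetic sums of $q$-powers, is where all the content sits, and it is exactly what the statement calls ``a straightforward computation.'' Everything else (the blow-up tower, pullback formulas, Corollary \ref{Eagon-Northcott}) is formal.
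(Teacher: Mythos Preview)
Your approach is the right one and matches what the paper has in mind (the paper itself says only that the lemma ``follows by a straightforward computation'' and gives no details). Your computation of $K_{\tilde X}$ via the blow-up tower is correct: $K_{\tilde X}+\sum_j D^j_V = -(n+1)H+\sum_{j=1}^n jD^j_V$. Your expansion of $(q-1)\sum_{c=1}^n H_{c+1}$ is also correct.

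The gap is precisely where you got stuck. The two sides do \emph{not} agree as formal combinations of $H$ and the $D^j_V$: if you carry out the reindexing you started, you find
\[
(q-1)\sum_{c=1}^n H_{c+1}\;-\;\Bigl(K_{\tilde X}+\sum_{j=1}^n D^j_V\Bigr)
\;=\;\frac{q^{n+1}-1}{q-1}\,H\;-\;\sum_{j=1}^{n}\frac{q^{j}-1}{q-1}\,D^j_V
\;=\;H_1.
\]
So the identity you are trying to verify is not a formal one in $\ZZ\langle H,D^1_V,\dots,D^n_V\rangle$; it holds only modulo the linear equivalence $H_1\sim 0$. That relation is exactly the case $c=1$ of the first part of the lemma: $\varphi_1$ lands in $\PP(\bigwedge^{n+1}V)$, a single point, so $\varphi_1^*\cO(1)\simeq\cO_{\tilde X}$ and hence $H_1\sim 0$. (Equivalently, the sum $B$ of all $k$-rational hyperplanes has degree $\frac{q^{n+1}-1}{q-1}$, and its total transform on $\tilde X$ is $\sum_{j=1}^n\frac{q^j-1}{q-1}D^j_V$.) Once you invoke this, the bookkeeping closes up immediately; without it, no amount of reindexing will make the coefficients match. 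The paper records this relation explicitly in Remark \ref{very-ample}.
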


\begin{remark}\label{very-ample}
\begin{enumerate}
\item
In cases $c=n+1$ and $c=1$ the above lemma says that $H_{n+1}\sim H$ and
$H_1\sim 0$, so $\left(\sum_{i=0}^{n}q^i\right)H\sim  \sum _{i=0}^{n-1}
\left(\sum_{j=0}^{i}q^j\right)D^{i+1}_V.$
\item
Combining the above lemma
with Lemma \ref{log-general-type} we get very ampleness of $K_{\tilde
  X}+\sum _{j=1}^n D^j_V$. Note that in \cite[p.~227, Lemma]{Mu}
Mustafin claims that $K_{\tilde X}+\sum _{j=1}^n D^j_V$ is ample, but
his proof shows only that it is strictly nef.
\end{enumerate}
\end{remark}

\begin{lemma}
Let $\tilde Y$ be a smooth projective variety defined over an algebraically closed field and
let $B$ be a simple normal crossing divisor. Let us set $Y=\tilde Y-B$ and assume that $K_{\tilde Y}+B$
is ample. Then any separable and dominant rational map $Y\dashrightarrow Y$ extends to
an automorphism $\tilde Y\to \tilde Y$.
\end{lemma}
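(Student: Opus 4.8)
The plan is to run the classical argument of Iitaka \cite[Theorem~11.6]{Ii}: a separable dominant rational self-map pulls back log pluricanonical forms injectively, and since the space of such forms is finite-dimensional, this pullback must be an isomorphism, which forces the map to come from a linear automorphism of a pluricanonical embedding. Concretely, I would extend the given rational map to a dominant rational self-map $\tilde f\colon \tilde Y\dashrightarrow \tilde Y$ and choose a resolution of its indeterminacy: a smooth projective variety $W$ with a proper birational morphism $p\colon W\to \tilde Y$ and a morphism $q:=\tilde f\circ p\colon W\to \tilde Y$, arranged so that $B_W:=(p^{-1}(B))_{\rm red}\cup\Exc(p)$ is a simple normal crossing divisor. (This is where one needs resolution of indeterminacy; it is available by Hironaka in characteristic $0$, and for the low-dimensional cases relevant to the applications by lower-dimensional resolution or by explicit blow-ups of linear subspaces.) Write $L:=K_{\tilde Y}+B$, an ample line bundle. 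Note $q$ is again dominant and separable, and any prime divisor $\Gamma\subset W$ with $q(\Gamma)\subseteq B$ lies in $B_W$: such a $\Gamma$ is either $p$-exceptional, or $p(\Gamma)$ is a prime divisor which, because $f$ maps $Y$ into $Y$ and the indeterminacy locus of $\tilde f$ has codimension $\ge 2$, must be a component of $B$.

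The technical core is the pair of inequalities $p^*L\le K_W+B_W$ and $q^*L\le K_W+B_W$, the first with $K_W+B_W-p^*L$ effective and $p$-exceptional. The first holds because $(\tilde Y,B)$ is an SNC pair, hence log canonical, so the log discrepancy of every divisor over $\tilde Y$ is nonnegative; together with $p$-exceptionality this gives $H^0(W,m(K_W+B_W))=H^0(W,mp^*L)=H^0(\tilde Y,mL)$ for all $m\ge 0$. The second is a logarithmic ramification estimate, checked divisor by divisor: for $\Gamma\subset W$ not contracted by $q$ one uses $\mathrm{mult}_\Gamma(K_W-q^*K_{\tilde Y})\ge e_\Gamma-1$ (the different of the separable morphism $q$) and $\mathrm{mult}_\Gamma(B_W)=1$ when $q(\Gamma)\subseteq B$; for a contracted $\Gamma$ one gains the extra contribution coming from the positive-dimensional fibres of $q|_\Gamma$. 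In every case $\mathrm{mult}_\Gamma(K_W+B_W-q^*L)\ge 0$. Here separability enters decisively: $K_W-q^*K_{\tilde Y}$ is effective precisely because $q$ is separable, and this is the only place separability is used, consistently with the fact that the inseparable endomorphisms of Theorem~\ref{main-1}(2) do not extend.

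Granting this, for each $m$ I obtain a chain
$$H^0(\tilde Y,mL)\ \xrightarrow{\ q^*\ }\ H^0(W,mq^*L)\ \hookrightarrow\ H^0(W,m(K_W+B_W))\ \cong\ H^0(\tilde Y,mL),$$
whose composite sends a log $m$-canonical form $\omega$ to its pullback $\tilde f^*\omega$ (in particular this pullback again has at worst log poles of order $m$ along $B$). This composite is injective, and since $\tilde Y$ is projective its source and target have the same finite dimension, so it is an isomorphism. Choosing $m$ large enough that $|mL|$ is very ample, the induced automorphism of $H^0(\tilde Y,mL)$ yields a linear automorphism $A$ of the ambient projective space with $\phi_{|mL|}\circ\tilde f=A\circ\phi_{|mL|}$; since $\phi_{|mL|}$ is a closed embedding and $\tilde f$ is dominant, $A$ preserves its image, whence $\tilde f=\phi_{|mL|}^{-1}\circ A\circ\phi_{|mL|}$ is an automorphism of $\tilde Y$ extending $f$.

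The hard part will be the logarithmic ramification estimate $q^*L\le K_W+B_W$ along $q$-contracted divisors, and — in positive characteristic — securing a resolution of indeterminacy of $\tilde f$ with an SNC divisor $B_W$; once that geometric input is in hand, the remaining steps (the log-discrepancy bound, the dimension count, and the passage to a linear automorphism of the pluricanonical embedding) are essentially formal.
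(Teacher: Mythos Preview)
Your overall strategy---Iitaka's argument that a separable dominant self-map induces an isomorphism on log pluricanonical sections, hence comes from a linear automorphism of a pluricanonical embedding---is exactly the paper's. The difference is in how you set up the pullback, and your choice introduces a genuine gap.

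By passing to a smooth resolution of indeterminacy $p\colon W\to\tilde Y$ with $B_W$ simple normal crossing, you make the argument depend on resolution of singularities. You acknowledge this and appeal to low-dimensional resolution ``for the cases relevant to the applications'', but the lemma is stated for an arbitrary smooth projective variety over an arbitrary algebraically closed field; in positive characteristic and dimension $\ge 4$ no such $W$ is known to exist, so as written your proof does not establish the lemma. The paper sidesteps this completely: by the valuative criterion of properness the rational map extends to a morphism $\tilde\varphi\colon\tilde Y-Z\to\tilde Y$ with $\codim Z\ge 2$, and one checks the inclusion $\tilde\varphi^*\cO(K_{\tilde Y}+B)\subset\cO(K_{\tilde Y}+B)|_{\tilde Y-Z}$ directly there (this is your logarithmic ramification estimate, but with no exceptional divisors and hence none of the ``$q$-contracted'' analysis you flag as the hard part; the observation you already made, that any prime divisor mapping into $B$ must be a component of $B$, is all that is needed). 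Since $\tilde Y$ is normal and the sheaf is locally free, sections over $\tilde Y-Z$ extend to $\tilde Y$, yielding the injective endomorphism of $H^0(m(K_{\tilde Y}+B))$ without ever leaving $\tilde Y$. From that point your dimension count and passage to the very ample embedding go through verbatim.
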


\begin{proof}
 The idea of proof is the same as that of \cite[Theorem 11.6]{Ii}, which is a general result
but depending on the characteristic zero assumption and using resolution of singularities.

Let $\varphi: Y\dashrightarrow Y$ be a dominant separable rational map. By the valuative criterion of properness,
the induced rational map $\tilde Y \dashrightarrow \tilde Y$ is defined at every codimension one point.
So there exists a closed subset $Z\subset \tilde Y$ of codimension at least $2$ such that $\varphi$ extends to a morphism
$\tilde \varphi : \tilde Y -Z \to \tilde Y$. The same computation as that in \cite[11.4 a]{Ii}
shows that $\tilde \varphi^* \cO (K_{\tilde Y}+ B)\subset \cO ( K_{\tilde Y} +\log B)|_{\tilde Y-Z}$ (here we use that $\varphi$ is dominant
and separable).
Since on a normal variety sections of a locally free sheaf extend outside of codimension $2$, $\tilde \varphi$ induces injective linear
maps $$\tilde \varphi _m ^*: H^0(\tilde Y, \cO_{\tilde Y} (m(K_{\tilde Y}+B)))\to
H^0(\tilde Y-Z, \cO_{\tilde Y} (m(K_{\tilde Y}+B)))=H^0(\tilde Y, \cO_{\tilde Y} (m(K_{\tilde Y}+B))).$$
Since the dimensions of both spaces are the same, $\tilde \varphi _m ^*$ is an isomorphism.
Now taking $m$ such that $m(K_{\tilde Y}+B)$ is very ample, the required assertion follows from the commutative diagram:
$$
\xymatrix{  \tilde Y\ar@{^{(}->}[d] \ar@{-->}[r]^{\varphi} & \tilde Y\ar@{^{(}->}[d] \\
\PP (H^0(m(K_{\tilde Y}+B))^*) \ar[r] ^{\simeq} &\PP (H^0(m(K_{\tilde Y}+B))^*) \\
}
$$
\end{proof}

\begin{corollary}
Every separable and dominant $k$-endomorphism of $\Omega(V)$ is a $k$-automorphism
and in particular it extends to a $k$-authomorphism of $\PP(V)$.
\end{corollary}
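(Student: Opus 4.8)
The plan is to derive this from the previous lemma and the ampleness of $K_{\tilde X}+D$ proved in Remark \ref{very-ample}, followed by a descent of the resulting automorphism of the wonderful compactification $\tilde X$ down to $\PP(V)$. So let $\psi$ be a separable dominant $k$-endomorphism of $\Omega(V)$. After base change to $\bar k$ it is a separable dominant rational self-map of $\Omega(V)_{\bar k}=\tilde X_{\bar k}\setminus D_{\bar k}$, where $D=\sum_{j=1}^n D^j_V$ is a simple normal crossing divisor by Theorem \ref{wonderful-comp} and $K_{\tilde X}+D$ is ample by Remark \ref{very-ample}(2). Hence the previous lemma produces an automorphism $\tilde\psi$ of $\tilde X_{\bar k}$ extending $\psi_{\bar k}$.

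The extension of $\psi_{\bar k}$ to a morphism is unique, so for every $g\in \mathrm{Gal}(\bar k/k)$ the Galois conjugate ${}^{g}\tilde\psi$ again extends $\psi_{\bar k}$ (as $\psi$ is defined over $k$) and therefore equals $\tilde\psi$; by Galois descent $\tilde\psi$ is defined over $k$, and it is a $k$-automorphism of $\tilde X$ since it becomes one after $\otimes_k\bar k$. Next I would check that $\tilde\psi$ preserves the boundary: since $\psi(\Omega(V))\subseteq\Omega(V)$ and $\Omega(V)=\tilde X\setminus D$ is open with $\tilde\psi|_{\Omega(V)}=\psi$, we get $\tilde\psi^{-1}(D)\subseteq D$; but $\tilde\psi$ is a homeomorphism, so $\tilde\psi^{-1}(D)$ has the same finite number of irreducible components as $D$, and each such component is an irreducible divisor contained in $D$, hence one of the $D_L$. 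Therefore $\tilde\psi^{-1}(D)=D$, so $\tilde\psi$ restricts to an automorphism of $\Omega(V)$ with restriction $\psi$; in particular $\psi$ is a $k$-automorphism of $\Omega(V)$.

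It remains to descend $\tilde\psi$ to $\PP(V)$. By the description of the intersections of the $D_L$ in Theorem \ref{wonderful-comp}, $\tilde\psi$ permutes the $D_L$ preserving the relation ``$D_{L_1}\cap D_{L_2}\neq\emptyset$'', i.e., it induces an automorphism of the comparability graph of the poset $\cL(V)$; for $n\ge 2$ such an automorphism is induced either by a collineation, in which case it preserves $\dim L$, or by the duality, in which case it reverses $\dim L$ (for $n=1$ there is nothing to prove, as $\tilde X=\PP(V)$). The duality case must be excluded, because it would force $\tilde\psi$ to interchange the exceptional divisors $D_L$ over $k$-rational points of $\PP(V)$ with the strict transforms $D_L$ of $k$-rational hyperplanes, and for $n\ge 2$ these carry different intersection-theoretic invariants; this is precisely where the intersection theory of \cite{RTW} enters. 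Hence $\tilde\psi^*D^i_V=D^i_V$ for all $i$, and then Remark \ref{very-ample}(1) together with the torsion-freeness of $\Pic(\tilde X)$ (recall $\tilde X$ is rational) gives $\tilde\psi^*H=H$ in $\Pic(\tilde X)$.

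Finally, $\tilde\pi\colon\tilde X\to\PP(V)$ is the morphism attached to the complete base-point-free linear system $|H|$, and $\tilde\pi_*\cO_{\tilde X}=\cO_{\PP(V)}$ gives $H^0(\tilde X,\cO_{\tilde X}(H))=H^0(\PP(V),\cO_{\PP(V)}(1))$; so the isomorphism $\tilde\psi^*\cO_{\tilde X}(H)\cong\cO_{\tilde X}(H)$ yields a linear automorphism $\bar\psi$ of $\PP(V)$ with $\bar\psi\circ\tilde\pi=\tilde\pi\circ\tilde\psi$, and restricting to $\Omega(V)$, on which $\tilde\pi$ is an isomorphism, shows $\bar\psi|_{\Omega(V)}=\psi$. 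Thus $\psi$ extends to the $k$-automorphism $\bar\psi$ of $\PP(V)$. The only non-formal step is the exclusion of the duality case in the third paragraph, which is exactly the intersection-theoretic input of \cite{RTW}; everything else is a direct consequence of the previous lemma and of the ampleness of $K_{\tilde X}+D$.
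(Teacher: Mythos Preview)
Your proof is correct and follows essentially the same approach as the paper: apply the previous lemma together with the ampleness of $K_{\tilde X}+D$ from Remark~\ref{very-ample}(2) to extend $\psi$ to an automorphism of $\tilde X$, then descend to $\PP(V)$ via the argument of \cite{RTW}. The paper simply cites \cite[p.~1222]{RTW} for the descent step, whereas you spell out the Galois descent, the boundary preservation, the poset argument, and the recovery of $\tilde\pi$ from $|H|$; your only substantive appeal to \cite{RTW} is exactly where the paper's is, namely the intersection-theoretic exclusion of the duality case.
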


\begin{proof}
By Remark \ref{very-ample}.2 we know that  $K_{\tilde
  X}+\sum _{j=1}^n D^j_V$ is very ample. Since $\Omega(V)= {\tilde
  X}-\sum _{j=1}^n D^j_V$,  the previous lemma implies that a separable
and dominant $k$-endomorphism of $\Omega(V)$ extends to an automorphism of the wonderful compactification $\tilde X$. So the corollary follows as in  \cite[p.~1222]{RTW}.
\end{proof}

If $n=2$ then the wonderful compactification of $\Omega(V)$ is the
blow up of $\PP (V)$ along all $k$-rational points, so it coincides
with $X_V$. But if $n\ge 3$ then $X_V$ is non-normal, so it is not isomorphic
to the wonderful compactification of $\Omega(V)$.

\section{Relation to Deligne--Lusztig schemes} \label{DL-section}

The aim of this section is to provide an application of previous
results to the study of Deligne--Lusztig varieties corresponding to a
Coxeter element in the $A_n$ case. In particular, we prove that the
closure of the open Deligne-Lusztig variety in the full flag variety
is smooth (see Corollary \ref{smooth-DL}).

\medskip

Let $G$ be a connected reductive algebraic group defined over $\bar k$
and obtained by extension of scalars from $G_0$ defined over
$k=\FF_q$. Let us fix a $\Fr _G$-stable Borel subgroup $B\subset G$
containing a $\Fr_G$-stable maximal torus $T$. Let $X_G$ denote the
variety of Borel subgroups of $G$. The group $G$ acts on $X_G$ by
conjugation and there is a natural isomorphism $G/B\to X_G$ given by
$gB\to gBg^{-1}$.

Let $P$ and $Q$ be parabolic subgroups of $G$ containing $B$.
The product $G/P\times_{\bar k}G/Q$ with diagonal action of $G$ is a
$G$-variety. This $G$-variety is $G$-equivariantly isomorphic with
$G\times_P(G/Q)$, where the isomorphism
$$\xi: G\times_P(G/Q)\to G/P\times_{\bar k}G/Q$$
is given by $(g,hQ)\to (gP,ghQ)$.

Let $W=N(T)/T$ be the Weyl group of $G$. For any element $w\in W$ we define
the \emph{Bruhat cell} $C_{w,P}:=BwP\subset G/P$ and the \emph{Schubert
  variety} $S_{w,P}=\overline{C_{w,P}}\subset G/P$.

Let $\cO (w)\subset X_G\times_{\bar k} X_G $ be the $G$-orbit of $(eB,
\dot{w}B)$, where $\dot{w}$ is a representative of $w$ in $N(T)$.
It is equal to the image of $\xi (G\times_B C_{w,B})$. Another way to define it is to
say that $\cO (w)$ is the preimage of $w$ under the map
$$X_G\times_{\bar k} X_G\to G\backslash(X_G\times_{\bar k} X_G)=B\backslash G/B=W.$$
The Zariski closure $\overline{\cO(w)}\subset X_G \times_{\bar k}X_G$  is equal to the image of $\xi (G\times_B
S_{w,B})$.  Any closed irreducible $G$-stable subset of $G/P\times_{\bar
  k}G/Q$ is the image of some $\overline{\cO (w)}$ under the
projection $ X_G \times_{\bar k}X_G\to G/P\times_{\bar k}G/Q$.  The
image of $\overline{\cO (w)}$ under this projection is called a
\emph{$G$-Schubert variety} and denoted by $S_{w,P,Q}$.

If $w=s_1...s_n$ is a minimal expression for $w\in W$ then  $\overline{\cO (w)}$ has
Bott--Samelson (--Demazure--Hansen) desingularization
$\bar{\cO} (s_1,...,s_n)\to \overline{\cO (w)}$.

Let  $\Gamma\subset X_G\times_{\bar k} X_G$ be the graph of the Frobenius endomorphism
$\Fr_{X_G}$. The intersection of $\Gamma$ and $\cO (w)$ is transversal. We denote this intersection by
$X_G (w)$ and we call it  \emph{Deligne--Lusztig scheme}.  This scheme is obtained by extension
of scalars from a naturally defined $k$-scheme that we also denote by $X_G (w)$.
Note that traditionally $X_G(w)$ is called ``Deligne--Lusztig variety''. This name is rather unfortunate, as usually this
scheme is not a variety (often it is not  irreducible). Therefore we prefer to use a slightly different name.
By \cite[Lemma 9.11]{DL}  the graph $\Gamma$ is transverse to $\bar{\cO} (s_1,...,s_n)$ and the fibre product
$\overline{X_G} (s_1,...,s_n)$ is a smooth compactification of $X_G (w)$ with
complement being a normal crossing divisor.
We also have a canonical  map
$$\overline{X_G} (s_1,...,s_n)\to \overline{X_G (w)}\subset X_G.$$
Note that the closure of $X_G(w)$ in $X_G$ can be easily described using the Bruhat order in $W$
as
$$ \overline{X_G (w)} =\bigcup _{w'\le w}X_G(w') .$$

Let $P\subset G$ be a $\Fr _G$-stable parabolic subgroup containing $B$. If we set $W_P=(N(T)\cap P)/T$
then to any element $\bar w\in W_P\backslash W/W_P$ we can associate $\cO _P (\bar w)$ defined as
the preimage of $\bar w$ under the map
$$ G/P\times_{\bar k}G/P \to G\backslash( G/P\times_{\bar k}G/P)=P\backslash G/P=W_P\backslash W/W_P.$$
Then we define the \emph{generalized Deligne--Lusztig scheme} $X_{G,P}(\bar w)$  as
the product of the graph  $\Gamma _P \to  G/P\times_{\bar k}G/P$ of the Frobenius endomorphism  $\Fr_{G/P}$ and
$\cO _P (\bar w)\to  G/P\times_{\bar k}G/P$. If $w\in W$ is some lift of $\bar w$, then
$X_{G,P}(\bar w)$ can be also recovered as the image of
$X_G(w)$ under the canonical projection $G/B\to G/P$. If $\bar w$ is the class of $w$ we often write $X_{G,P}( w)$
instead of $X_{G,P}(\bar w)$.

\medskip

Now let $V$ be a $k$-vector space of dimension $(n+1)$ and let us consider the case when $G_0=\GL (V)$ and
$G=\GL (V_{\bar k})$. Then $W\simeq S_{n+1}$ and we consider the standard Coxeter element $w=(1,2,...,n+1)\in S_{n+1}$. The corresponding Deligne--Lusztig variety $X_G(w)$ is the Drinfeld's half-space $\Omega(V)$.
Let us fix a full flag of $k$-vector spaces
$$ V\twoheadrightarrow V_{n}\twoheadrightarrow ... \twoheadrightarrow V_2\twoheadrightarrow V_{1} ,$$
where  $\dim _{k}V_j=j$. Then we have  a standard  Borel subgroup  $B_0\subset G_0$ corresponding to linear
maps preserving this flag. Let us consider a sequence of parabolic groups $P_n=B_0\subsetneq P_{n-1}\subsetneq ...
\subsetneq P_1\subsetneq \GL (V)$ so that $P_i$ corresponds to linear maps preserving a partial flag of the form
$$ V\twoheadrightarrow V_{i}\twoheadrightarrow ... \twoheadrightarrow V_2\twoheadrightarrow V_{1} .$$
This  induces a sequence of flag varieties
$$\GL (V)/B_0\to\GL (V)/P_{n-1} \to ...\to \GL (V)/P_1= \PP (V).$$
Note that $\GL (V)/P_i\simeq \Flag (V;1,...,i)$, so for each $i$ we have the canonical embedding
$$\GL (V)/P_i\hookrightarrow
\Gras (V,1 )\times _k\Gras (V,2)\times_k ... \times_k \Gras (V, i)\hookrightarrow
\PP (V)\times _k\PP ({\bigwedge} ^2 V)\times_k ... \times_k \PP ({\bigwedge} ^{i} V),$$
where the first map is obtained by sending an $S$-point of $ \Flag (V;1,...,i)$
corresponding to a partial flag
$$ V_{S}\twoheadrightarrow \cE_{i}\twoheadrightarrow ... \twoheadrightarrow \cE_2\twoheadrightarrow \cE_{1} $$
to a tuple  $(V_S\twoheadrightarrow \cE_{j})_{j=1,...,i}$
(cf. proof of Theorem \ref{flag}) and the second map is the product of Pl\"ucker embeddings, given by sending the corresponding
tuple to
$$\left( V_S\twoheadrightarrow \det (\cE_1),{\bigwedge}^2 V_{S}\twoheadrightarrow \det (\cE_2), ..., {\bigwedge}^{i}V_{S}
\twoheadrightarrow \det (\cE_{i}) \right).$$

\begin{proposition}
We have a canonical isomorphism of the sequence  of closures of generalized Deligne--Lusztig $k$-schemes in flag varieties
$$\overline{X_G(w)}\to \overline{X_{G,P_{n-1}}(w)}\to ...\to \overline{X_{G,P_1}(w)} =\PP (V)$$
with the sequence
$$\tilde X=\tilde X_{n-1}\mathop{\longrightarrow}^{\tilde \pi_{n-2}} \tilde  X_{n-2}\longrightarrow \cdots \longrightarrow \tilde X_2\mathop{\longrightarrow}^{\tilde \pi_{1}} \tilde  X_{1}\mathop{\longrightarrow}^{\tilde \pi_{0}}  \tilde X_{0}=\PP(V)$$
considered in Section \ref{wonder-section}.
\end{proposition}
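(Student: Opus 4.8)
The plan is to realise both sequences, term by term, inside one and the same ambient variety. Matching them so that $\overline{X_{G,P_j}(w)}$ corresponds to $\tilde X_{j-1}$ for $j=1,\dots ,n$ (so that $\overline{X_G(w)}=\overline{X_{G,P_n}(w)}$ corresponds to $\tilde X=\tilde X_{n-1}$ and $\overline{X_{G,P_1}(w)}=\PP(V)$ to $\tilde X_0=\PP(V)$), the $j$-th ambient variety is the product $\PP(V)\times_k\PP({\bigwedge}^{2}V)\times_k\cdots\times_k\PP({\bigwedge}^{j}V)$. On the Deligne--Lusztig side, $\GL(V)/P_j\simeq\Flag(V;1,\dots ,j)$ embeds into this product as a closed subvariety via the product of Pl\"ucker embeddings recalled above, and the forgetful morphism $\GL(V)/P_j\to\GL(V)/P_{j-1}$ is the projection dropping the last factor $\PP({\bigwedge}^{j}V)$. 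On the blow-up side, Lemma \ref{log-general-type} with $i=j-1$ gives a closed embedding $\varphi_{\tilde X_{j-1}}=(\varphi^{(j-1)}_{n+1},\dots ,\varphi^{(j-1)}_{n+2-j})\colon\tilde X_{j-1}\hookrightarrow\PP(V)\times_k\cdots\times_k\PP({\bigwedge}^{j}V)$, and by construction $\tilde\pi_{j-2}\colon\tilde X_{j-1}\to\tilde X_{j-2}$ is again the projection dropping the last factor (each $\varphi^{(j-2)}_c\circ\tilde\pi_{j-2}=\varphi^{(j-1)}_c$, the relevant linear system being already base-point free on $\tilde X_{j-2}$). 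Hence it suffices to show, for $j=1,\dots ,n$, that $\varphi_{\tilde X_{j-1}}(\tilde X_{j-1})$ is contained in $\GL(V)/P_j$ and equals $\overline{X_{G,P_j}(w)}$ there.

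Both of these are integral closed subschemes of the product: $\tilde X_{j-1}$ is a blow-up of the integral variety $\PP(V)$, while $\overline{X_{G,P_j}(w)}$ is the closure of $X_{G,P_j}(\bar w)$, the image of the integral Deligne--Lusztig scheme $X_G(w)=\Omega(V)$. In each of them $\Omega(V)$ sits as a dense open subscheme: the composition $\tilde\pi_0\circ\cdots\circ\tilde\pi_{j-2}$ is an isomorphism over $\Omega(V)$ because every blow-up centre is a $k$-rational linear subspace, hence contained in $Z_1$ and disjoint from $\Omega(V)$; and $X_{G,P_j}(\bar w)$ is by definition the (dense) image of $\Omega(V)$ in $\GL(V)/P_j$. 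Since the ambient product is separated, it now suffices to show that the two resulting morphisms $\Omega(V)\to\PP(V)\times_k\cdots\times_k\PP({\bigwedge}^{j}V)$ --- the restriction of $\varphi_{\tilde X_{j-1}}$, and the composite $\Omega(V)\xrightarrow{\ \sim\ }X_G(w)\hookrightarrow\GL(V)/P_n\to\GL(V)/P_j\hookrightarrow\prod_{m=1}^{j}\PP({\bigwedge}^{m}V)$ --- agree; their scheme-theoretic closures then agree, and the resulting isomorphisms are compatible with the transition maps by the last remark of the first paragraph.

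To identify the two morphisms I would argue as follows. Over $\Omega(V)\subset\PP(V)-Z_c$ every map $g_c\colon V\otimes\cO\to\bigoplus_{i=0}^{n+1-c}\cO(q^i)$ of Section \ref{determinantal} is surjective, and for $c'\le c$ the inclusion of the first $n+2-c$ summands identifies $g_c$ with a truncation of $g_{c'}$, so $\ker g_{c'}\subseteq\ker g_c$. Thus the quotients $V\twoheadrightarrow V/\ker g_{n+2-m}$ for $m=1,\dots ,j$ form a flag of quotients of $V$ of ranks $1,2,\dots ,j$, i.e.\ a morphism $\Omega(V)\to\Flag(V;1,\dots ,j)=\GL(V)/P_j$ whose composition with the Pl\"ucker embeddings is, by Corollary \ref{Eagon-Northcott}, exactly $(\varphi^{(j-1)}_{n+1},\dots ,\varphi^{(j-1)}_{n+2-j})|_{\Omega(V)}$; in homogeneous coordinates it is $x\mapsto\bigl([\Delta_q(x_J)]_{|J|=m}\bigr)_{m=1,\dots ,j}$. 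On the other hand, the identification $X_G(w)=\Omega(V)$ sends $x$, viewed as a hyperplane $H_x\subset V_{\bar k}$, to the full flag $G_\bullet$ with $G_i=\bigcap_{m=0}^{n-i}\Fr^{m}(H_x)$; one checks $G_i\cap\Fr G_i=G_{i-1}$, so $(G_\bullet,\Fr G_\bullet)$ is in relative position $w=(1,2,\dots ,n+1)$ and this is the canonical embedding into $\GL(V)/P_n$. Because $\ker g_{n+2-m}|_x=\bigcap_{i=0}^{m-1}H_{\Fr^{i}x}=G_{n+1-m}$, projecting to $\GL(V)/P_j$ recovers exactly the flag of quotients $(V\twoheadrightarrow V/\ker g_{n+2-m})_{m=1,\dots ,j}$; equivalently, the composite embedding is given in coordinates by the same Moore determinants $[\Delta_q(x_J)]$. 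Hence the two morphisms coincide.

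The step I expect to be the main obstacle is precisely this last comparison: keeping straight the dictionaries between subspaces and quotients, between $V$ and $V^*$, and between the combinatorics of the Coxeter element $w=(1,\dots ,n+1)$ and the indexing of the maps $\varphi_c$ and of the parabolics $P_j$, and verifying that the Pl\"ucker coordinates really are the same Moore determinants on both sides. This is routine but error-prone, and I would carry it out once carefully and then quote it; the remaining ingredients (integrality, the isomorphism over $\Omega(V)$, compatibility with the transition maps, and passing to closures in a separated scheme) are formal.
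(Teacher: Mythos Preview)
Your proposal is correct and follows essentially the same strategy as the paper's proof: embed both $\tilde X_{j-1}$ and $\overline{X_{G,P_j}(w)}$ into the product $\PP(V)\times_k\cdots\times_k\PP({\bigwedge}^{j}V)$ via Lemma~\ref{log-general-type} and the Pl\"ucker embedding respectively, observe that both are integral and contain $\Omega(V)$ as a dense open, and check that the two resulting morphisms out of $\Omega(V)$ agree so that the closures coincide. The paper simply asserts the agreement on $\Omega(V)$ (citing \cite[2.2]{DL}), whereas you spell out the dictionary between the Deligne--Lusztig flag $G_i=\bigcap_{m=0}^{n-i}\Fr^m(H_x)$ and the kernels of the maps $g_c$, and hence the coincidence of Pl\"ucker coordinates with the Moore determinants; this is exactly the ``routine but error-prone'' bookkeeping you flag, and it is the only content beyond the formal closure argument.
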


\begin{proof}
  It is well-known that $X_G(w)\subset \GL (V)/B_0$ is mapped
  isomorphically onto $\Omega (V)\subset \PP (V)$ (see
  \cite[2.2]{DL}), so the sequence of flag varieties induces
  isomorphisms of the corresponding generalized Deligne--Lusztig
  $k$-schemes $X_{G,P_i}(w)$. Since by Lemma \ref{log-general-type}
 $$\varphi _{\tilde  X_{i-1}}: \tilde  X_{i-1}\to \PP (V)\times _k\PP ({\bigwedge} ^2 V)\times_k ... \times_k \PP ({\bigwedge} ^{i} V)$$
is a closed embedding and it coincides with the composition
$$X_{G,P_i}(w)\subset \GL (V)/P_i\hookrightarrow
\PP (V)\times _k\PP ({\bigwedge} ^2 V)\times_k ... \times_k \PP ({\bigwedge} ^{i} V)$$
on the pre-image of $\Omega (V)$, we get the required assertion.
\end{proof}

\begin{corollary} \label{smooth-DL}
All varieties $\overline{X_{G,P_{i}}(w)}$ are smooth. In particular, 
the canonical map $\overline{X_G} (w)\to \overline{X_G (w)}$ is an isomorphism.
\end{corollary}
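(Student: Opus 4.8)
The plan is to deduce the first assertion formally from the Proposition above together with Section~\ref{wonder-section}, and then to bootstrap from it the statement about the canonical map.

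First I would invoke the Proposition to identify the tower $\overline{X_G(w)}\to\overline{X_{G,P_{n-1}}(w)}\to\cdots\to\overline{X_{G,P_1}(w)}=\PP(V)$ with the blow-up tower $\tilde X=\tilde X_{n-1}\to\cdots\to\tilde X_0=\PP(V)$ of Section~\ref{wonder-section}. It then suffices to observe that every $\tilde X_j$ is smooth, which is part of Theorem~\ref{wonderful-comp}: $\tilde X_j$ is built from the smooth variety $\PP(V)$ by the successive blow-ups $\tilde\pi_0,\dots,\tilde\pi_{j-1}$, and at the $i$-th step one blows up the strict transforms of the $k$-linear subspaces of dimension $i$, which are smooth and, once all the $k$-linear subspaces of smaller dimension have been blown up, pairwise disjoint. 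Hence all $\overline{X_{G,P_i}(w)}$, and in particular $\overline{X_G(w)}\cong\tilde X$, are smooth.

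For the last assertion, let $g\colon\overline{X_G}(w)\to\overline{X_G(w)}$ be the canonical map. It is proper and birational and restricts to the identity on $X_G(w)$; its source is smooth by \cite[Lemma~9.11]{DL} and, by the first part, so is its target. Since a proper birational morphism between smooth varieties that contracts no divisor is an isomorphism (the exceptional locus of a proper birational morphism onto a locally factorial variety is either empty, forcing an isomorphism, or of pure codimension one), it is enough to check that $g$ contracts no divisor. As $g$ is an isomorphism over $X_G(w)$, any contracted divisor would be an irreducible component of the boundary $\overline{X_G}(w)\setminus X_G(w)$ mapping into $\overline{X_G(w)}\setminus X_G(w)$. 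Here I would use the recursive structure of the two compactifications: under $\overline{X_G(w)}\cong\tilde X$ the boundary is $\bigcup_{L\in\cL(V)}D_L$, and each $D_L$ --- with $L=\PP(V/W)$, so that $E_L=L\times_kL^\perp=\PP(V/W)\times_k\PP(W^*)$ by Lemma~\ref{linear-images} --- is, via the induced tower of blow-ups, a compactification of $\Omega(V/W)\times_k\Omega(W^*)$ of exactly the kind studied here; symmetrically, the boundary divisors of $\overline{X_G}(w)$ are Bott--Samelson-type Deligne--Lusztig compactifications of the same products, and $g$ respects this decomposition, inducing on each boundary divisor again a canonical map of the type appearing in the corollary. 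An induction on $\dim V$ --- the case $\dim V\le2$ being immediate, since then $\overline{X_G}(w)$ and $\overline{X_G(w)}$ are smooth projective curves compactifying $\Omega(V)$ and $g$ is automatically an isomorphism --- then shows that each such induced map is birational, so no component of the boundary of $\overline{X_G}(w)$ is contracted and $g$ is an isomorphism.

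The hard part will be the last step: pinning down the recursive description of the boundary divisors on both sides as compactifications of products of lower-dimensional Drinfeld half-spaces, and checking that $g$ restricts on each of them to a map of the same kind, which is what allows the induction to close. An alternative would be to show directly, using the transversality of the graph of Frobenius to $\bar{\cO}(s_1,\dots,s_n)$ from \cite[Lemma~9.11]{DL}, that $g$ has finite fibres and then conclude by Zariski's main theorem; but computing the fibre dimensions seems no easier than the boundary bookkeeping.
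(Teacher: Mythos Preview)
Your first paragraph---deducing smoothness of each $\overline{X_{G,P_i}(w)}$ from the identification with the $\tilde X_j$---is exactly the paper's (implicit) argument.

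For the second assertion the paper takes a much shorter route than your induction. Immediately after the corollary the paper cites \cite[Lemma~1]{Han}, which already shows that the canonical map $\overline{X_G}(w)\to\overline{X_G(w)}$ is \emph{bijective}. Once the target is known to be smooth (hence normal) by the first part, Zariski's Main Theorem finishes: a proper, birational, quasi-finite morphism onto a normal variety is an isomorphism. No boundary analysis or induction on $\dim V$ is needed; the new content is precisely the smoothness of the target, which upgrades Hansen's bijection to an isomorphism.

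Your inductive approach could in principle be made to work, but note a slip in the setup: Lemma~\ref{linear-images} describes the divisors $E_L\subset X_V$ in the \emph{non-wonderful} compactification (which is even non-normal for $n\ge 3$), not the boundary divisors $D_L\subset\tilde X$ of the wonderful compactification. The $D_L$ are not simply $L\times_k L^{\perp}$; they are themselves products of lower-dimensional wonderful compactifications, and establishing this recursive description precisely is exactly the ``hard part'' you flag at the end. Given that Hansen's bijectivity is already available, this bookkeeping---and the alternative fibre-dimension computation you mention---are both unnecessary.
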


By \cite[Lemma 1]{Han} it was known that the map $\overline{X_G} (w)\to
\overline{X_G (w)}$ is bijective, but it seems that the above
corollary is new.

\section{Modular interpretation}

In this section we give a modular interpretation of all varieties and
maps defined in Section \ref{wonder-section} and we give the
corresponding interpretation of the morphisms from Theorem \ref{main-3}.

\subsection{Modular interpretation of flag varieties}

It is a standard fact that the Grassmannian is a fine moduli space for
the Grassman functor of quotient modules (or vector subbundles in case
of locally free sheaves).  More precisely, let $\cE$ be a coherent
$\cO_S$-module on a scheme $S$. A \emph{quotient module} of $\cE$ is
an equivalence class of surjective maps $q:\cE\to \cE'$ of coherent
$\cO_S$-modules such that two maps $q_1:\cE\to \cE'_1$ and $q_2:\cE\to
\cE'_2$ are equivalent if their kernels give the same subsheaf of
$\cE$. Let $\underline{\Gras} (\cE, r): (\Sch /S)^o\to \mathop{\rm
  Sets}$ denote the functor associating to an $S$-scheme $T$ the set
of all locally free quotient modules $\cE_T:=\cE \otimes _{\cO_S}\cO_T\to \cE
'$ of rank $r$. This functor is represented by a projective $S$-scheme
denoted by $\Gras _S(\cE,r)$ (see, e.g., \cite[Lecture 5]{Mum} or
\cite[Examples 2.2.2 and 2.2.3]{HL}). In the special case when
$S=\Spec k$, we get a moduli interpretation of the usual Grassmanian
of quotients.

The corresponding moduli interpretation of flag schemes does not seem
to be well-known so we sketch it below.

\medskip

Let $S$ be a scheme and let $\cE$ be a locally free $\cO_S$-module of
rank $(n+1)$.  Let us fix a sequence of integers $0<d_{1}<d_2<\cdots
<d_{m}<n+1$ for some $m\ge 1$.  A \emph{flag of type $(d_1,...,d_m)$
  in $\cE$} is a filtration
$$\cE_{1} \subset \cdots \subset \cE_{m}\subset \cE$$
such that  the sheaf $\cE_i$ is locally free of rank $d_i$ for every $i=1,...,m$ and all
quotients $\cE /\cE_{i}$ are locally free. If $f:T\to S$ is  a morphism of schemes  then we set $\cE_T:=f^*\cE$.

We define a functor $\Flagf (d_1,...,d_m; \cE)$
from the category of $S$-schemes to the category of sets by setting
$$(\Flagf (d_1,...,d_m; \cE))(T)= \{ \hbox{flags of type $(d_1,...,d_m)$  in }\cE_T  \}.$$
For a morphism of $S$-schemes $f:T_1\to T_2$ we define the
corresponding map $$(\Flagf (d_1,...,d_m; \cE))(T_2)\to (\Flagf
(d_1,...,d_m; \cE ))(T_1)$$ by pull-back.

\begin{theorem} \label{flag}
The functor $\Flagf (d_1,...,d_m; \cE)$ is representable and the corresponding $S$-scheme $\Flag (d_1,...,d_m; \cE)$
is projective.
\end{theorem}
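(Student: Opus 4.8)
The plan is to represent the flag functor as a tower of relative Grassmannian bundles, using the fact that a flag of type $(d_1,\dots,d_m)$ is built up one quotient at a time, and that each successive choice lives in a Grassmannian over the base provided by the previous choices. Since relative Grassmannians are representable by projective morphisms (as recalled just above), composing a finite chain of them yields a projective $S$-scheme representing the whole functor.

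First I would reduce to the top step. Given $\cE$ locally free of rank $n+1$ on $S$, set $G_1 := \Gras_S(\cE, d_m)$; by the modular interpretation of the Grassmannian, $G_1$ carries a universal locally free subsheaf $\cK_m \subset \cE_{G_1}$ of rank $d_m$ with locally free quotient $\cE_{G_1}/\cK_m$. The point is that specifying a flag of type $(d_1,\dots,d_m)$ in $\cE_T$ is the same as specifying a locally free rank-$d_m$ subsheaf $\cE_m \subset \cE_T$ with locally free quotient (equivalently, a $T$-point of $G_1$), together with a flag of type $(d_1,\dots,d_{m-1})$ inside $\cE_m$. This last condition is — by an immediate unwinding of definitions and the universal property of $G_1$ — a $T$-point of $\Flag(d_1,\dots,d_{m-1};\cK_m)$, the flag scheme of the shorter type attached to the universal subsheaf on $G_1$. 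Hence, by induction on $m$ (the base case $m=1$ being exactly the representability of the Grassmannian), $\Flag(d_1,\dots,d_m;\cE)$ is represented by $\Flag(d_1,\dots,d_{m-1};\cK_m)$, which is projective over $G_1$, which is projective over $S$; a composition of projective morphisms being projective, we are done.

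The one point requiring genuine care is the bookkeeping of local freeness of quotients when passing between the two descriptions: one must check that a flag $\cE_1 \subset \cdots \subset \cE_m \subset \cE_T$ with all $\cE/\cE_i$ locally free is the same datum as a pair (a rank-$d_m$ subsheaf $\cE_m$ with $\cE_T/\cE_m$ locally free) together with (a flag of type $(d_1,\dots,d_{m-1})$ in $\cE_m$ with all $\cE_m/\cE_i$ locally free). The nontrivial implication is that if $\cE_T/\cE_m$ and $\cE_m/\cE_i$ are both locally free then so is $\cE_T/\cE_i$; this follows from the short exact sequence $0 \to \cE_m/\cE_i \to \cE_T/\cE_i \to \cE_T/\cE_m \to 0$, whose outer terms are locally free, so the middle term is locally free (being an extension, or by checking it is flat and finitely presented). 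I also need that $\cK_m$ on $G_1$ is itself locally free of rank $d_m$, which is part of the universal property of the Grassmannian of quotients. The functoriality in $T$ (compatibility with pullbacks) is formal once the bijection of points is set up naturally, since everything is defined by pullback of sheaves.

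I expect the main obstacle to be purely expository rather than mathematical: making the inductive identification of functors clean enough that the reader believes it without rewriting all the diagrams — in particular being explicit that the universal subsheaf $\cK_m \subset \cE_{G_1}$ furnishes the locally free rank-$d_m$ sheaf to which the shorter flag functor is applied, and that a $T$-point of $G_1$ followed by a flag of the residual type in the pulled-back $\cK_m$ reassembles to a flag of the original type in $\cE_T$, with the pullback $\cK_m|_T$ canonically the chosen subsheaf $\cE_m$. Once that dictionary is fixed, representability and projectivity are immediate from Theorem on Grassmannians cited above together with stability of projectivity under composition.
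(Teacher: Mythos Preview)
Your proof is correct but takes a different route from the paper's. The paper embeds $\Flagf(d_1,\dots,d_m;\cE)$ as a \emph{closed subfunctor} of the product $\underline{\Gras}(d_1,\cE)\times_S\cdots\times_S\underline{\Gras}(d_m,\cE)$: an $m$-tuple of subsheaves $(\cE_i\subset\cE_T)_i$ forms a flag precisely when the composites $\cE_i\to\cE_T\to\cE_T/\cE_{i+1}$ vanish, and vanishing of such maps is a closed condition on $T$. This cuts out the flag scheme as a closed subscheme of a product of projective $S$-schemes in one stroke. Your approach instead realizes the flag scheme as an iterated Grassmannian bundle, peeling off the top step and inducting on $m$. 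What your argument buys is more structural information for free (smoothness over $S$, the forgetful tower, relative dimension); what the paper's argument buys is the explicit closed embedding into the product of Grassmannians, which is exactly what gets used downstream in Lemma~\ref{log-general-type} and the comparison with Deligne--Lusztig varieties. One minor notational slip: in the paper's conventions $\Gras_S(\cE,d_m)$ parametrizes rank-$d_m$ \emph{quotients}, so its universal subsheaf has rank $n+1-d_m$; you mean $\Gras_S(d_m,\cE)$, the dual version introduced at the start of the paper's proof, whose universal subsheaf has rank $d_m$.
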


\begin{proof}
  Let us first recall the Grassmannian but in the setting dual to the one
  described above. Let $\underline{\Gras} (r, \cE): (\Sch /S)^o\to
  \mathop{\rm Sets}$ denote the functor associating to an $S$-scheme
  $T$ the set of all locally free submodules $\cE'\subset \cE_T$ of
  rank $r$ with locally free quotient $\cE_T/\cE'$. This functor is
  represented by a projective $S$-scheme denoted by $\Gras _S(r,\cE)$
  (here we use the fact that $\cE$ is locally free of finite
  rank). This gives the required assertion for $m=1$.

  In general, let us consider the functor $\underline{\Gras} (d_1,
  \cE)\times _S... \times _S \underline{\Gras} (d_m, \cE)$ defined by
  associating to an $S$-scheme $T$ an $m$-tuple $\{\cE_i\subset \cE_T
  \}_{i=1,...,m}$ such that $\cE_i$ is locally free of rank $d_i$ and
  each quotient $\cE_T/\cE_i$ is locally free. The functor $\Flagf
  (d_1,...,d_m; \cE)$ is a subfunctor of the above functor defined by
  the conditions $\cE _i\subset \cE_{i+1}$ for $i=1,...,m-1$. Note
  that this subfunctor is closed, i.e., for any $S$-scheme $T$ and an
  $m$-tuple $\{\cE_i\subset \cE_T \}_{i=1,...,m}$ as above, there
  exists a closed subscheme $Z\subset T$ such that a morphism $f:
  T'\to T$ of $S$-schemes factors through $Z$ if and only if the
  $m$-tuple $\{f^*\cE_i\subset \cE_{T'} \}_{i=1,...,m}$ defines a
  filtration, i.e., we have $f^*\cE _i\subset f^*\cE_{i+1}$ for
  $i=1,...,m-1$.  The above $Z\subset T$ can be defined by vanishing
  of the canonical maps $\cE_i\to \cE_T\to \cE_T/\cE_{i+1}$ for
  $i=1,...,m-1$.  Since the functor $\underline{\Gras} (d_1,
  \cE)\times _S... \times _S \underline{\Gras} (d_m, \cE)$ is
  representable, the functor $\Flagf (d_1,...,d_m; \cE)$ is
  representable by a closed subscheme of the product of Grassmannians
  ${\Gras} (d_1, \cE)\times _S ... \times _S{\Gras} (d_m, \cE)$.
\end{proof}

One can also state and prove the corresponding theorem for an
arbitrary coherent $\cO_S$-module $\cE$ but then we need to use
equivalence classes of quotient filtrations as in the case of
Grassmannians.  More precisely, let $S$ be a scheme and let $\cE$ be a
coherent $\cO_S$-module.  Let us fix a sequence of integers
$0<d_{1}<d_2<\cdots <d_{m}$ for some $m\ge 1$.  A \emph{quotient flag
  of type $(d_1,...,d_m)$ in $\cE$} is a sequence of surjective maps
$$\cE\twoheadrightarrow \cE_{m}\twoheadrightarrow ... \twoheadrightarrow \cE_2\twoheadrightarrow \cE_{1} $$
such that  the sheaf $\cE_i$ is locally free of rank $d_i$ for every $i=1,...,m$.
We say that two quotient flags are equivalent if there exists a commutative diagram
$$ \xymatrix{
\cE\ar@{=}[d]\ar[r]&\cE_m\ar[d]\ar[r]&...\ar[r]& \cE_2\ar[r]\ar[d]&\cE_1\ar[d]\\
\cE\ar[r]&\cE_m'\ar[r]&...\ar[r]& \cE_2'\ar[r]&\cE_1'\\
}$$
in which all vertical maps are isomorphisms.
We define a functor $\Flagf (\cE; d_1,...,d_m)$
from the category of $S$-schemes to the category of sets by setting
$$(\Flagf (\cE; d_1,...,d_m))(T)= \{ \hbox{equivalence classes of quotient flags of type $(d_1,...,d_m)$  in }\cE_T  \}.$$
The functor on morphisms is again induced by pull-backs of quotient
flags.  Similar arguments as those used in proof of Theorem \ref{flag}
show that $\Flagf (\cE; d_1,...,d_m)$ is represented by a projective
$S$-scheme $\Flag (\cE; d_1,...,d_m)/S$.  If $\cE$ is locally free of
rank $(n+1)$ then the $S$-scheme $\Flag (\cE; d_1,...,d_m)/S$ is
isomorphic to the $S$-scheme $\Flag (n+1- d_m,...,n+1-d_1; \cE)/ S$.
The isomorphism is realised by sending a quotient flag
$\cE\twoheadrightarrow \cE_{m}\twoheadrightarrow
... \twoheadrightarrow \cE_2\twoheadrightarrow \cE_{1} $ to $\ker
(\cE\to \cE_1) \subset ...\subset \ker (\cE \to \cE_m)\subset \cE$.

\subsection{Moduli spaces of F-flags}

Note that in notation of Section \ref{DL-section} $X_{\GL (V)}= \Flag
(V;1,...,n)= \Flag (1,...,n;V^*)$ and the definition of flag schemes
does not require a choice of a flag.  However, Shubert varieties are
defined using choice of a Borel subgroup, i.e., a choice of the full
flag in $V$.  Nevertheless, it is possible to use the Frobenius
morphism to give a modular interpretation of the above closures of our
generalized Deligne--Lusztig $k$-schemes without any choice of a flag
in $V$. In fact, we give a more general construction that is useful
also in other contexts.

\medskip

Let $k=\FF_q$ and let $V$ be a $k$-vector space of dimension $(n+1)$.
Let us fix a sequence of integers $0<d_{1}<d_2<\cdots <d_{m}<n+1$ for
some $m\ge 1$.  Let us also fix a subset $S\subset \{d_1,...,d_m\}
\times \{d_1,...,d_m \} $. For $s\in S$ we denote by $s_1$ the first
coordinate and by $s_2$ the second one.  Note that if $T$ is a
$k$-scheme and $\cE_{1} \subset \cdots \subset \cE_{m}\subset V_T$ is
a flag of type $(d_1,...,d_m)$ then $\Fr ^* \cE_{1} \subset \cdots
\subset \Fr ^*\cE_{m}\subset \Fr ^*V_T=V_T$ is also a flag of type
$(d_1,...,d_m)$ in $V_T$.  We say that a flag $\cE_{1} \subset \cdots
\subset \cE_{m}\subset V_T$ of type $(d_1,...,d_m)$ is an \emph{F-flag
  of type $S$} if for all $s\in S$ we have $\Fr ^* \cE_{s_1}\subset
\cE_{s_2} $ if $s_1\le s_2$ and $ \cE_{s_2}\subset \Fr ^* \cE_{s_1} $
if $s_1> s_2$.

We define a functor $\Flagf ^F (S; V)$
from the category of $k$-schemes to the category of sets by setting
$$(\Flagf ^F(S; V))(T)= \{ \hbox{F-flags $\cE_{1} \subset \cdots \subset \cE_{m}\subset V_T$  of type } S  \}.$$
For a morphism of $S$-schemes $f:T_1\to T_2$ the corresponding
map $$(\Flagf ^F(S; V))(T_2)\to (\Flagf ^F (S; V ))(T_1)$$ is defined
by pull-back.

Similarly, one can define functors for quotient flags.

\medskip

Since the conditions defining the functor $\Flagf ^F (S; V)$ are
closed, in the same way as Theorem \ref{flag} one can prove the
following theorem:

\begin{theorem} \label{F-flag} 
  The functor $\Flagf ^F (S; V)$ is representable and the
  corresponding $k$-scheme is a projective subscheme of $\Flag
  (d_1,...,d_m; V)$.
\end{theorem}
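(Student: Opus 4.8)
The plan is to realize $\Flagf^F(S;V)$ as a closed subfunctor of the functor represented by $\Flag(d_1,\dots,d_m;V)$, mimicking the proof of Theorem \ref{flag}, where the flag functor was cut out inside a product of Grassmannians by the closed conditions $\cE_i\subset\cE_{i+1}$. Granting this, the general fact that a closed subfunctor of a representable functor is itself represented by a closed subscheme produces a $k$-scheme representing $\Flagf^F(S;V)$; being a closed subscheme of the projective $k$-scheme $\Flag(d_1,\dots,d_m;V)$, it is projective over $k$, which is the assertion.

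So I would start from the universal flag $\cE_1\subset\cdots\subset\cE_m\subset V_{\Flag(d_1,\dots,d_m;V)}$ and check, for each $s\in S$, that the corresponding incidence relation is a closed condition. Fix $s$. Since each $\cE_i$ is a subbundle of $V_T$ with locally free quotient, the sequence $0\to\cE_i\to V_T\to V_T/\cE_i\to 0$ is locally split, hence stays exact after applying $\Fr^*$; using the identification $\Fr^*V_T=V_T$ recorded before the statement, $\Fr^*\cE_i$ is again a subbundle of $V_T$ and $V_T/\Fr^*\cE_i=\Fr^*(V_T/\cE_i)$ is locally free. Now if $s_1\le s_2$ the condition $\Fr^*\cE_{s_1}\subset\cE_{s_2}$ inside $V_T$ is equivalent to the vanishing of the composite
$$\Fr^*\cE_{s_1}\longrightarrow V_T\longrightarrow V_T/\cE_{s_2},$$
and if $s_1>s_2$ the condition $\cE_{s_2}\subset\Fr^*\cE_{s_1}$ is equivalent to the vanishing of
$$\cE_{s_2}\longrightarrow V_T\longrightarrow V_T/\Fr^*\cE_{s_1};$$
in both cases this is a morphism between locally free $\cO_T$-sheaves. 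The locus in the base over which such a morphism (formed from the universal flag) vanishes is, locally, the common zero scheme of the entries of a matrix of functions, hence a closed subscheme with the expected universal property; so each $s\in S$ determines a closed subfunctor.

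Intersecting the finitely many closed subschemes of $\Flag(d_1,\dots,d_m;V)$ obtained as $s$ ranges over $S$ gives the closed subscheme representing $\Flagf^F(S;V)$, which is projective over $k$; the quotient-flag variant is handled verbatim, replacing the subsheaves $\cE_i$ by the kernels of the quotient maps. I do not expect a genuine obstacle: essentially everything is as in Theorem \ref{flag}, and the only extra verification --- that $\Fr^*$ carries the subbundle $\cE_i\subset V_T$ to a subbundle with locally free quotient, so that the two composites displayed above really are maps of vector bundles --- follows immediately from the local splitness of $0\to\cE_i\to V_T\to V_T/\cE_i\to 0$.
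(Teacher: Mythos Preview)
Your proposal is correct and follows exactly the route the paper indicates: the paper simply states that ``the conditions defining the functor $\Flagf^F(S;V)$ are closed, in the same way as Theorem~\ref{flag}'' and omits the details. You have supplied those details faithfully, including the small extra check (not made explicit in the paper) that $\Fr^*$ preserves local splitness so that $V_T/\Fr^*\cE_{s_1}$ is locally free.
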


If $S=\{ (s_1,s_2)\in   \{d_1,...,d_m\}
\times \{d_1,...,d_m \} : \, s_1<s_2\} $ then the scheme $\Flag ^F (S; V)$ is denoted by $\Flag ^F (d_1,...,d_m; V)$.
If $S=\{ (s_1,s_2)\in   \{d_1,...,d_m\}
\times \{d_1,...,d_m \} : \, s_1>s_2\} $ then the scheme $\Flag ^F (S; V)$ is denoted by $\Flag _F (d_1,...,d_m; V)$.

\begin{proposition}
We have $\Flag ^F (1,...,m; V)\simeq \tilde X_{m-1}$ for $m=1,...,n$.
\end{proposition}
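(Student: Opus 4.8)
The statement to prove is $\Flag^F(1,\dots,m;V)\simeq \tilde X_{m-1}$ for $m=1,\dots,n$, where $\tilde X_{m-1}$ is the $(m-1)$-st stage of the iterated blow-up from Section \ref{wonder-section}. The plan is to proceed by induction on $m$. The base case $m=1$ is trivial: an F-flag of type $\{1\}$ with empty condition set $S$ (since $\{1,\dots,m\}^2$ with $s_1<s_2$ is empty) is just a rank-one quotient of $V_T$, so $\Flag^F(1;V)=\Gras(V,n)=\PP(V)=\tilde X_0$. For the inductive step, I would assume $\Flag^F(1,\dots,m-1;V)\simeq \tilde X_{m-2}$ and analyze how adding the $m$-th step on each side relates the two.

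On the moduli side, an F-flag of type $(1,\dots,m)$ consists of an F-flag $\cE_1\subset\dots\subset\cE_{m-1}$ of type $(1,\dots,m-1)$ together with a rank-$m$ locally free subsheaf $\cE_m\supset \cE_{m-1}$ with locally free quotient, subject to the extra conditions $\Fr^*\cE_i\subset\cE_m$ for $i=1,\dots,m$ (equivalently, since the first $m-1$ conditions say $\Fr^*\cE_i\subset\cE_{m-1}\subset\cE_m$ is automatic given $\Fr^*\cE_i\subset \cE_j$ for the previous stage, the genuinely new conditions are $\Fr^*\cE_{m-1}\subset\cE_m$ and $\Fr^*\cE_m\subset\cE_m$). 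Dualizing to the quotient picture used in Section \ref{wonder-section}, and using the evaluation maps $(\Fr^i)^*\ev$ from Section \ref{determinantal}, the locus where $\cE_m$ can be defined compatibly is exactly where a certain bundle map is surjective; off the bad locus $\cE_m$ is \emph{uniquely determined} (it is the span of $\cE_{m-1}$ and the $\Fr$-iterates of a vector in $\cE_1$), so the forgetful map $\Flag^F(1,\dots,m;V)\to \Flag^F(1,\dots,m-1;V)$ is an isomorphism over the complement of this locus, and the fiber over a point of the bad locus is a projective space parametrizing the choices of $\cE_m$.

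On the blow-up side, $\tilde X_{m-1}=\Bl_{\tilde D}\tilde X_{m-2}$, where $\tilde D$ is the strict transform in $\tilde X_{m-2}$ of the union of $k$-linear subspaces of dimension $m-2$ in $\PP(V)$. The key is to identify, inside $\tilde X_{m-2}\simeq \Flag^F(1,\dots,m-1;V)$, the center of this blow-up with the bad locus from the moduli analysis, and then to recognize the blow-up itself as the graph of the rational map sending an F-flag to the additional datum $\cE_m$. Concretely, I would use Lemma \ref{blow-up}: the rational map $\tilde X_{m-2}\dashrightarrow \Gras(V,m)$ (or the appropriate $\PP(\bigwedge^m V)$ after Plücker) defined by the relevant evaluation-map cokernel has base scheme exactly that center, hence its graph closure is the blow-up; and that graph closure is visibly $\Flag^F(1,\dots,m;V)$, since a point of it is precisely an F-flag of type $(1,\dots,m-1)$ together with a compatible rank-$m$ quotient, the compatibility being the closed condition cutting out the F-flag scheme inside the product. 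Here I would lean on the relation to the determinantal schemes $Z_c$ and Corollary \ref{Eagon-Northcott}, exactly as in Section \ref{wonder-section}, together with Lemma \ref{blow-up-sum} to match up the iterated centers.

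The main obstacle I anticipate is the scheme-theoretic (not just set-theoretic) identification of the blow-up center with the base scheme of the moduli map, i.e. checking that the F-flag conditions cut out precisely the ideal sheaf whose blow-up gives $\tilde X_{m-1}$, rather than some scheme with the same support. This requires showing that the closed conditions $\Fr^*\cE_{m-1}\subset\cE_m$ are generically transverse and that the resulting base scheme is reduced (or has the correct structure) — which should follow from the determinantal/Cohen--Macaulay analysis in Proposition \ref{codim-2} and the explicit Jacobian computations there, combined with the fact (Theorem \ref{wonderful-comp}) that the wonderful compactification is an iterated blow-up along strict transforms of the linear subspaces. Once that matching is established, the inductive step closes by the universal property of the blow-up applied to both descriptions.
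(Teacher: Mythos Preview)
Your inductive strategy is sound in outline, but it is substantially more elaborate than what the paper does, and there is one slip: the condition $\Fr^*\cE_m\subset\cE_m$ is \emph{not} part of the definition of $\Flag^F(1,\dots,m;V)$, since the index set is $S=\{(s_1,s_2):s_1<s_2\}$. The only genuinely new constraint when passing from type $(1,\dots,m-1)$ to $(1,\dots,m)$ is $\Fr^*\cE_{m-1}\subset\cE_m$; the others are automatic from $\cE_{m-1}\subset\cE_m$.

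The paper's argument is a two-line shortcut that exploits the preceding Proposition in Section~\ref{DL-section}. That result already identifies $\tilde X_{m-1}$ with the closure $\overline{X_{G,P_m}(w)}$ of a generalized Deligne--Lusztig scheme, and in particular realizes $\tilde X_{m-1}$ as a closed subscheme of the partial flag variety $\Flag(V;1,\dots,m)$. Restricting the universal flag to this subscheme yields a flag satisfying the Frobenius conditions (this is what the Deligne--Lusztig condition amounts to), so the universal property of $\Flag^F(1,\dots,m;V)$ furnishes a canonical map $\tilde X_{m-1}\to\Flag^F(1,\dots,m;V)$ compatible with both closed embeddings into $\Flag(V;1,\dots,m)$. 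One then checks bijectivity on $\bar k$-points and concludes. All the scheme-theoretic matching of blow-up centers that you rightly flag as the main obstacle is thereby bypassed: rather than comparing two constructions step by step, the paper places both objects inside the same ambient flag variety and lets the universal property do the work. Your route, by contrast, would give a self-contained proof independent of the Deligne--Lusztig identification, at the cost of the explicit base-locus analysis you sketch; that is a reasonable trade-off if one wants to avoid Section~\ref{DL-section}, but it is not what the paper does.
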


\begin{proof}
  Restricting the universal family of flags on $X_{\GL (V)}$ to
  $\tilde X_{m-1}$ and using the universal property of $\Flag ^F
  (1,...,m; V)$ we get a canonical map $ \tilde X_{m-1}\to \Flag ^F
  (1,...,m; V)$. This map is compatible with embeddings into $X_{\GL
    (V)}$ and bijective on $\bar k$-points, so it is an isomorphism.
\end{proof}

\medskip

Note that there are many interesting, natural maps between moduli
spaces of F-flags. They are given by Frobenius pull-backs of some
factors in the flag.  For example if $m=2$  then we have the map
$$\Flag ^F (d_1,d_2; V)\to \Flag _F (d_1,d_2; V)$$
given on functors by sending flag $\cE_{1} \subset \cE_{2}\subset V_T$ to $\Fr^*\cE_{1} \subset \cE_{2}\subset V_T$
and the map
$$\Flag _F (d_1,d_2; V)\to \Flag ^F (d_1,d_2; V)$$
given by sending flag $\cE_{1} \subset \cE_{2}\subset V_T$ to $\cE_{1}
\subset \Fr^* \cE_{2}\subset V_T$.  The composition of these maps
sends $\cE_{1} \subset \cE_{2}\subset V_T$ to $\Fr^*\cE_{1} \subset
\Fr^* \cE_{2}\subset V_T$, so it is equal to the Frobenius
endomorphism of ${\Flag ^F (d_1,d_2; V)}$.

Let us take $n=2$, $d_1=1$ and $d_2=2$.  Then the above maps
correspond to $\varphi_{X_V}$ and $\varphi _{X_{V^*}}$.  We have a
canonical isomorphism $ \Flag ^F (1,2; V)\simeq \Flag _F (1,2; V^*) $
given by sending flag $\cE_{1} \subset \cE_{2}\subset V_T$ to
$\cE_1'=\ker (V_T^* \to \cE_{2}^*) \subset \cE_2'=\ker (V_T^* \to
\cE_{1}^*) \subset V^*_T.$ Therefore $ \Flag ^F (1,2; V)$ is
non-canonically isomorphic to $\Flag _F (1,2; V)$. If we choose an
isomorphism $V\simeq V^*$ then the above maps become the same
endomorphism of $\Flag ^F (1,2; V)$. One can also obtain a similar
modular interpretation of the maps from Theorem \ref{main-3'} in
higher dimensions.

\section{Logarithmic tangent bundle}\label{section-logarithmic}

Let $X$ be a smooth projective variety with a divisor $D$, both defined over some field $k$.
Let $j:U\hookrightarrow X$ be the open subset where the pair $(X, D)$ is smooth. Then we can consider
the logarithmic tangent bundle $T_U(-\log D|_U)$. Since $U$ is a big open subset
of $X$ (i.e., the codimension of the complement is at least $2$), the sheaf $T_X(-\log D):=j_* (T_U(-\log D|_U))$
is reflexive and it is called \emph{the logarithmic tangent sheaf} of $(X,D)$.
We will also use a well-known interpretation of $T_X(-\log D)$ as the sheaf of derivations preserving the ideal
sheaf of $D$.

We say that $D$ is \emph{free} if $T_X(-\log D)$ is locally free.

\medskip

As in Subsection \ref{Hirokado} we consider the maps $\theta _i: \cO_{\PP
  (V)} (-q^i+1)\to T_{\PP (V)}$ defined by $(\Fr ^i)^*s\otimes \id
_{\cO _{\PP (V)}(1)}$ for $i=1,...,n$.  Let us set $$\theta=(\theta
_1,...,\theta_{n}): \bigoplus _{i=1}^{n}\cO_{\PP (V)}
(-q^i+1){\to} T_{\PP (V)} .$$
By Lemma \ref{Moore} this map is an isomorphism outside of the sum $B$ of all $k$-linear hyperplanes
in $\PP (V)$. In particlar, $\theta$ is injective as a map of $\cO_{\PP(V)}$-modules.

\begin{proposition} \label{log-tangent}
$\theta$ induces an isomorphism $T_{\PP (V)}(-\log \,  B)\simeq  \bigoplus _{i=1}^{n}\cO_{\PP (V)}
(-q^i+1)$. In particular, $B$ is a free hyperplane arrangement on $\PP (V)$.
\end{proposition}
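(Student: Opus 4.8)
The plan is to show that the injective map $\theta$ is actually an isomorphism onto the logarithmic tangent sheaf $T_{\PP(V)}(-\log B)$. First I would recall the two standard facts that make this tractable: (i) a local derivation $\delta$ on an open set $U$ lies in $T_{\PP(V)}(-\log B)(U)$ if and only if $\delta$ preserves the ideal sheaf of $B$ on $U$, and (ii) since $\theta_i = (\Fr^i)^* s \otimes \id$ comes from the Frobenius pullback of the Euler sequence, in homogeneous coordinates $x_0,\dots,x_n$ the derivation corresponding to the standard generator of $\cO_{\PP(V)}(-q^i+1)$ is (up to the twist) the "$q^i$-th Euler field" $\sum_j x_j^{q^i}\partial_{x_j}$, which acts on a linear form $\ell = \sum a_j x_j$ over $k$ by $\sum_j x_j^{q^i} a_j = (\sum_j a_j^{1/q^i} x_j)^{q^i}$; since $\ell$ is defined over $\FF_q$ and $q^i$ is a power of $q$, this is just $\ell^{q^i}$, hence divisible by $\ell$. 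So each $\theta_i$, and therefore $\theta$, maps into $T_{\PP(V)}(-\log B)$. This already gives an injection $\bigoplus_{i=1}^n \cO_{\PP(V)}(-q^i+1) \hookrightarrow T_{\PP(V)}(-\log B)$.

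Next I would verify that this injection is an isomorphism. Both sides are reflexive sheaves of rank $n$ on $\PP(V)$, and by Lemma \ref{Moore} (applied to the product expansion of the Moore determinant, equivalently the computation already recorded before Proposition \ref{log-tangent}) the map $\theta$ is an isomorphism on the complement $\PP(V) - B$, which is a big open set. The cleanest way to finish is to check that $\theta$ is an isomorphism in codimension one, i.e.\ at the generic point of each $k$-rational hyperplane $H \subset B$; then, since both sheaves are reflexive (so determined by their restriction to a big open set) and they already agree outside codimension $2$, they must be equal. To do the codimension-one check, fix a hyperplane $H$, choose coordinates so that $H = \{x_0 = 0\}$, and work in the local ring at the generic point of $H$: there $B$ is locally just $x_0$ (the other hyperplanes do not pass through the generic point of $H$), and $T_{\PP(V)}(-\log B)$ is freely generated by $x_0 \partial_{x_0}, \partial_{x_1}, \dots, \partial_{x_{n}}$ (in an affine chart). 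One then writes out the $n$ sections $\theta_i$ in this local frame and computes that the transition matrix has unit determinant (its reduction mod $x_0$ is, up to a nonzero scalar coming from a Moore determinant in the remaining variables, invertible — this is essentially the same Jacobian-rank computation as in Corollary \ref{partial-Moore} and the proof of Proposition \ref{codim-2}). Hence $\theta$ is an isomorphism of sheaves, and in particular $T_{\PP(V)}(-\log B)$ is locally free, so $B$ is a free arrangement.

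The main obstacle is the explicit codimension-one local computation: one must pin down the local generators of $T_{\PP(V)}(-\log B)$ at the generic point of a hyperplane and confirm that the $\theta_i$, expressed as combinations of these generators with coefficients in the local ring, form a basis — i.e.\ that the relevant determinant is a unit. This reduces, after clearing the obvious factor of $x_0$, to nonvanishing of a Moore determinant $\Delta_q$ in the variables transverse to $H$ at the generic point of $H$, which holds because that $\Delta_q$ vanishes only along a proper closed subset (the union of lower-dimensional $k$-rational linear spaces, by Lemma \ref{Moore}). Everything else — reflexivity of both sides, agreement on the big open set $\PP(V) - B$, the rank count — is formal. An alternative that avoids some of the local bookkeeping is a degree/Chern-class argument: $\theta$ is injective with both sheaves reflexive of the same rank, and $c_1\bigl(\bigoplus_i \cO_{\PP(V)}(-q^i+1)\bigr) = -\sum_i(q^i-1) = -(\,|B|\,) + n$ matches $c_1(T_{\PP(V)}(-\log B)) = c_1(T_{\PP(V)}) - \cO(B) = (n+1) - \sum_{i=1}^n q^i$ wait—one checks $\sum_{i=1}^n q^i = \#\{k\text{-rational hyperplanes}\}= \deg B$, so the first Chern classes agree, and an injection of reflexive sheaves of equal rank and equal determinant which is an isomorphism on a big open set is an isomorphism. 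Either route gives the proposition; I would present the local-computation route as the primary argument since it also exhibits the explicit splitting used later in Section \ref{section-logarithmic}.
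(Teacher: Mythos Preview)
Your proposal is correct, and your alternative Chern-class argument is essentially Saito's criterion, which is exactly what the paper uses---but the paper organizes it differently. The paper first passes to the affine cone $V^*$: it takes the derivations $\delta_i=\sum_j x_j^{q^i}\partial_{x_j}$ for $i=0,\dots,n$ on $V^*$, checks (as you do) that $\delta_i(\ell)=\ell^{q^i}$ so each $\delta_i$ preserves the ideal of the affine arrangement $\bar B$, and then invokes Saito's criterion (the determinant comparison you sketch) on the cone to conclude $(\delta_0,\dots,\delta_n)$ gives an isomorphism $\cO_{V^*}^{n+1}\simeq T_{V^*}(-\log \bar B)$. Only afterwards does it descend to $\PP(V)$ via the $\GG_m$-torsor $V^*\setminus\{0\}\to\PP(V)$ and the Euler sequence, modding out by $\delta_0$. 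Your primary route---working directly on $\PP(V)$ and verifying the isomorphism at the generic point of each hyperplane---bypasses the cone entirely and is a perfectly good substitute; it trades the clean citation of Saito's criterion for an explicit local determinant computation, which as you note reduces to nonvanishing of a Moore determinant in the transverse variables. Either way the content is the same.

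One minor slip to fix: the number of $k$-rational hyperplanes in $\PP(V)$ is $\frac{q^{n+1}-1}{q-1}=1+q+\cdots+q^n$, not $\sum_{i=1}^n q^i$. Your Chern-class identity still holds, since $c_1(T_{\PP(V)}(-\log B))=(n+1)-(1+q+\cdots+q^n)=n-\sum_{i=1}^n q^i$, which matches $c_1\bigl(\bigoplus_{i=1}^n\cO(-q^i+1)\bigr)$; just correct the count of hyperplanes in the write-up.
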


\begin{proof}
        Let $\bar B$ be the sum of all $k$-linear hyperplanes in vector space $V^*$
        and let $\nu: V^*_0:=V^*-\{0\}\to \PP (V)$ denote the canonical projection. Let us fix linear coordinates in $V^*$ and let us consider derivations $\delta_i:=\sum_{j=0}^n x_j^{q^i}\frac{\partial}{\partial x_j}$ of $\cO_{V^*}$. If $a_j\in \FF_q$ then
    $$\delta_i (\sum a_jx_j)=(\sum a_jx_j)^{q^i}\in (\sum a_jx_j) \cO_{V^*},$$
    so derivation $\delta _i$ preserves the ideal of $\bar B$. Hence the image of
    $(\delta_0,...,\delta_n): \cO_{V^*}^{n+1}\to T_{V^*}$ lies in $T_{V^*}(-\log \bar B)$. Then comparison of the first Chern classes shows that the image of this map coincides with
    $T_{V^*}(-\log \bar B)$ (this fact is usually called Saito's criterion; see also \cite[Example 4.24]{OT}).   Euler's exact sequence implies that
    we have an exact sequence
    $$0\to \cO_{V^*_0}\mathop{\longrightarrow}^{\delta _0}T_{V^*}(-\log \bar B)|_{V^*_0}
    \mathop{\longrightarrow}^{\eta} \nu^*T_{\PP(V)}(-\log B) .$$
    Since $\nu$ is a $\GG_m$-torsor and $T_{\PP(V)}(-\log B)$ is reflexive,
    $\nu^*T_{\PP(V)}(-\log B)$ is also reflexive. On the other hand, $\delta_0$ defines a nowhere vanishing section of  $T_{V^*}(-\log \bar B)|_{V^*_0}$, so the cokernel of the corresponding map is locally free. It follows that  $\eta$ induces an injection of a locally free sheaf into a reflexive sheaf. Since $\eta$ is generically surjective, it
    is surjective and $\nu^*T_{\PP(V)}(-\log B)$ is locally free. Hence using descent we see that $T_{\PP(V)}(-\log B)$ is also locally free.
    Since the maps $\eta\delta_i: \cO_{V^*_0}\to \nu^*T_{\PP(V)}(-\log B)$ for $i=1,...,n$ descend to
    $\theta _i: \cO_{\PP (V)} (-q^i+1){\to} T_{\PP (V)}(-\log B)$, we get the required assertion.
    \end{proof}

\section{1-forms and foliations on the wonderful compactification}\label{section-foliation}

For $j=1,...,n$ let us denote by  $\cF_{j}\subset T_{\PP (V)}$ the image of $\bigoplus
_{i=1}^{j}\cO_{\PP (V)} (-q^i+1)$ under $(\theta
_1,...,\theta_{j})$.
By  Corollary \ref{Eagon-Northcott} the map $V\otimes \cO_{\PP (V)}\to \bigoplus
_{i=0}^{j}\cO_{\PP (V)} (q^i) $ is surjective outside of $Z_{n+1-j}$. By Euler's exact sequence this map
induces a generically surjective map
$$\Omega_{\PP(V)}\to \bigoplus _{i=1}^{j}\cO_{\PP (V)} (q^i-1).$$
Dualizing this map we see that for $j=1,...,n-1$ the quotient $T_{\PP (V)}/\cF_{j}$ is torsion free and locally free
outside of $Z_{n+1-j}$.

An easy computation shows that
$$[\theta_i, \theta_j]=\theta_j-\theta _i,$$
so $[\cF_j, \cF_j]\subset \cF_j$. Using this equality, Lemma \ref{lemma-Hirokado} and
Jacobson's identity one can also check that $\cF_j^p\subset \cF_j$,
so $\cF_j\subset T_{\PP (V)}$ is a $1$-foliation for $j=1,...,n-1$.
By Proposition \ref{log-tangent} we have an increasing filtration
$$\cF _1\subset \cF_2\subset ...\subset \cF_n=T_{\PP (V)}(-\log B)\subset T_{\PP(V)}.$$
Note that $T_{\PP (V)}(-\log B)/\cF_j$ is locally free, so $\cF_j\subset T_{\PP (V)}(-\log B)$ are ``smooth  logarithmic $1$-foliations'' (note however that the pair $(\PP (V), B)$ is not log smooth).
One can also show that singularities of $\cF_j\subset T_{\PP (V)}$ are resolved by passing to $\tilde X_j$, i.e., the foliation induced by $\cF_j\subset T_{\PP (V)}$ on $T_{\tilde X_j}$ is smooth. 
Hirokado's construction from \cite{Hi} uses this fact in the special case when $n=3$ and $j=1$.
Since we do not need this fact in general, we skip its proof.

\medskip

Let us consider the maximal $1$-foliation $\cF_{n-1}\subset T_{\PP (V)}$. We have an exact sequence
$$0\to \cF_{n-1} \to T_{\PP (V)}\to I_Z((1+q+...+q^{n-1})+1)\to 0.$$
Let $\omega $ be a rational $1$-form  corresponding to the last non-zero map in this sequence.
It fits into an exact sequence
$$0\to \cO_{\PP (V)}(-1-q-...-q^{n-1})\mathop{\longrightarrow}^{\omega} \Omega_{\PP (V)}(1)
\mathop{\longrightarrow}^{\theta(-1)^{\vee}} \bigoplus _{i=1}^{n-1}\cO_{\PP (V)}
(q^i).$$
Let us fix a basis $e_0,...,e_n$ of $V$. Then we have
$$\omega=\sum_{i=0}^n(-1)^i\Delta_q(x_0,...,\hat x_i,...,x_n) e_i,$$
when treated as an element of
$$H^0(  \cO_{\PP (V)}(1+q+...+q^{n-1})\otimes \Omega_{\PP (V)}(1))\subset H^0(\cO_{\PP (V)}(1+q+...+q^{n-1})\otimes V).$$

\medskip

\begin{proposition} \label{codim-1-foliation}
The rational $1$-form $\omega$ induces a $1$-foliation on the wonderful compactification $\tilde X$ of Drinfeld's half-space $\Omega (V)$. More precisely, we have a short exact sequence
$$0\to \tilde \cF_{n-1} \to T_{\tilde X}\to \cO_{\tilde X} \left( \left(\frac{q^{n}+q-2}{q-1}\right)H
-\sum _{i=1}^{n-1} \frac{q^{i}+q-2}{q-1}D_{V}^{i+1}\right)\to 0$$
in which $\tilde \cF_{n-1}$ is a smooth $1$-foliation of rank $(n-1)$ induced by $\cF_{n-1}\subset T_{\PP (V)}$.
\end{proposition}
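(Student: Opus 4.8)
The plan is to obtain $\tilde\cF_{n-1}$ by pulling back the corank‑one $1$‑foliation $\cF_{n-1}\subset T_{\PP(V)}$ along $\tilde\pi\colon\tilde X\to\PP(V)$, and to read off the precise exact sequence from an explicit local computation of the pulled‑back $1$‑form near the generic point of each boundary divisor. Recall that $\cF_{n-1}$ is a $1$‑foliation fitting into $0\to\cF_{n-1}\to T_{\PP(V)}\to I_Z((1+q+\dots+q^{n-1})+1)\to 0$, equivalently given by the rational $1$‑form $\omega=\sum_{i=0}^n(-1)^i\Delta_q(x_0,\dots,\hat x_i,\dots,x_n)\,e_i$, whose zero scheme is $Z=Z_2$. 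By the construction recalled in Section~\ref{section-foliation}, $\tilde\pi$ induces on $\tilde X$ a $1$‑foliation $\tilde\cF_{n-1}\subset T_{\tilde X}$ of rank $n-1$; concretely $\tilde\cF_{n-1}=\ker(T_{\tilde X}\to\cO_{\tilde X}(mH-\Gamma))$, where $m=\frac{q^n+q-2}{q-1}$ and $\Gamma$ is the divisorial part of the zero locus of $\tilde\pi^*\omega$, regarded as a global section of $\Omega_{\tilde X}(mH)$ through the natural map $\tilde\pi^*\Omega_{\PP(V)}\to\Omega_{\tilde X}$; since $\omega$ vanishes only on $Z_2$ and $\tilde\pi$ is an isomorphism near the generic point of every other $k$‑linear subspace, $\Gamma$ is supported on the exceptional divisor $\sum_{i=1}^{n-1}D_V^{i+1}$. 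The proposition thus reduces to two statements: (i) $\Gamma=\sum_{i=1}^{n-1}\frac{q^i+q-2}{q-1}D_V^{i+1}$; and (ii) the residual $1$‑form $\tilde\omega$, obtained from $\tilde\pi^*\omega$ by dividing out the canonical section of $\cO_{\tilde X}(\Gamma)$, has empty zero locus — equivalently $T_{\tilde X}/\tilde\cF_{n-1}$ is a line bundle, i.e.\ $\tilde\cF_{n-1}$ is a smooth foliation.

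For (i), and for smoothness at the generic point $\eta_{D_L}$ of the divisor attached to a $k$‑linear subspace $L$ of codimension $c=i+1\ge2$, I would compute in local coordinates. Since no $k$‑linear subspace of dimension $<\dim L$ passes through $\eta_L$ and every $k$‑linear subspace properly containing $L$ meets $D_L$ in codimension $\ge1$, near $\eta_{D_L}$ the morphism $\tilde\pi$ is just the blow‑up of $L$. Choosing affine coordinates with $L=\{t_{n-c+1}=\dots=t_n=0\}$ and blow‑up coordinates $u=t_{n-c+1}$, $v_m=t_m/u$ for $m>n-c+1$, and using $dt_m=v_m\,du+u\,dv_m$ together with the product formula for the Moore determinant (Lemma~\ref{Moore}) and its $\FF_q$‑additivity in each variable, one finds the $u$‑adic orders of the coefficients of $\tilde\pi^*\omega$: the $dv_m$‑coefficients have order exactly $1+(1+q+\dots+q^{c-2})$, the $dt_j$‑coefficients have strictly larger order, and — after the cancellation that is the arithmetic heart of the computation — the $du$‑coefficient has order at least $1+(1+q+\dots+q^{c-2})$. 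Hence the multiplicity of $\Gamma$ along $D_L$ equals $1+\frac{q^{c-1}-1}{q-1}=\frac{q^i+q-2}{q-1}$, which gives (i); and after dividing $\tilde\pi^*\omega$ by $u$ to this power, its $dv_m$‑coefficient becomes a unit, so $\tilde\omega$ does not vanish at $\eta_{D_L}$.

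It remains to upgrade this to (ii), namely that the codimension‑$\ge2$ part $W$ of the zero locus of $\tilde\omega$ is empty. This is exactly the claim, stated without proof in Section~\ref{section-foliation} for general $j$ but genuinely needed here for $j=n-1$, that the successive blow‑ups $\tilde X_1\to\dots\to\tilde X_{n-1}=\tilde X$ resolve the singularities of $\cF_{n-1}$; I expect this to be the main obstacle. Any component of $W$ lies over some $k$‑linear subspace, hence its generic point lies on an intersection $D_{L_1}\cap\dots\cap D_{L_r}$ with $L_1\subsetneq\dots\subsetneq L_r$ a chain (Theorem~\ref{wonderful-comp}), and I would proceed by induction on $r$ (equivalently on the blow‑up step), repeating the coordinate computation above in a chart with several blow‑up parameters and keeping track of which Moore factors have become units, to conclude that $\tilde\omega$ is nowhere zero.

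Granting (ii), the exact sequence follows with $T_{\tilde X}/\tilde\cF_{n-1}\cong\cO_{\tilde X}(mH-\Gamma)$ and $\Gamma$ as in (i); as a consistency check, $\det(T_{\tilde X}/\tilde\cF_{n-1})=-K_{\tilde X}-\det\tilde\cF_{n-1}$ agrees with the value computed from $K_{\tilde X}=-(n+1)H+\sum_{i=1}^{n-1}i\,D_V^{i+1}$ and $\det\tilde\cF_{n-1}=\det(\tilde\pi^*\cF_{n-1})\otimes\cO_{\tilde X}(\sum_{i=1}^{n-1}\tfrac{q^i-1}{q-1}D_V^{i+1})$, where $\tilde\cF_{n-1}$ is the saturation of $\tilde\pi^*\cF_{n-1}\cong\bigoplus_{i=1}^{n-1}\cO_{\tilde X}((1-q^i)H)$. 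Finally, $\tilde\cF_{n-1}$ is a $1$‑foliation of rank $n-1$ simply because it is induced from the $1$‑foliation $\cF_{n-1}$, which records $\tilde\cF_{n-1}^{[p]}\subset\tilde\cF_{n-1}$ exactly as in the induced‑foliation discussion of Section~\ref{section-foliation}.
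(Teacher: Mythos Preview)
Your strategy is the same as the paper's: pull back $\omega$ along $\tilde\pi$, compute the divisorial vanishing $\Gamma$, and show the residual $1$-form is nowhere zero. Your multiplicity computation in (i) is correct and matches the paper's answer.

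The difference is in the choice of local charts, and this is exactly where your ``main obstacle'' (ii) evaporates. The paper does not argue stratum by stratum and then induct on the depth $r$ of a chain $L_1\subsetneq\dots\subsetneq L_r$. Instead it uses the iterated blow-up charts of R\'emy--Thuillier--Werner: on $U=D_+(x_0)$ with $t_i=x_i/x_0$, one takes $\tilde U=D(h_n)\subset\Spec k[s_1,\dots,s_n]$ with $\pi^*(t_i)=\prod_{j\le i}s_j$, where $h_n=\Delta_q(1,s_1,s_1s_2,\dots,s_1\cdots s_n)/\prod_i s_i^{1+q+\dots+q^{n-i}}$. These charts cover $\tilde X$ (under the $\GL(V)(\FF_q)$-action), and in each of them \emph{all} the exceptional divisors $D_V^{2},\dots,D_V^{n}$ appear simultaneously as the coordinate hyperplanes $s_1=0,\dots,s_{n-1}=0$. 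A single explicit computation then gives
\[
\pi^*\omega|_{\tilde U}=\prod_{i=1}^{n-1}s_i^{\,(1+q+\dots+q^{n-1-i})+1}\bigl(\alpha_1\,ds_1+\dots+\alpha_n\,ds_n\bigr),
\]
with $\alpha_n=(-1)^n h_{n-1}$; since $h_{n-1}\mid h_n$, this $\alpha_n$ is a unit on $\tilde U=D(h_n)$. So the multiplicity formula and smoothness of the cokernel come out of the same line, at every point of $\tilde X$, with no induction needed. Your single-blow-up charts only see one boundary divisor at a time, which is why you are forced into the stratification argument; switching to the RTW charts removes that difficulty entirely.
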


\begin{proof}
Let $U=\Spec k[t_1,...,t_n]\subset \PP (V)$ correspond to $D_{+}(x_0)$ with $t_i=x_i/x_0$.
Then
$$\omega|_U=\sum_{i=1}^n(-1)^i\Delta_q(1,t_1,...,\hat t_i,...,t_n)\, dt_i.$$
Let us note that
\begin{eqnarray*}
h_n&:=&\prod_{i=1}^n\prod_{(a_i,...,a_n)\in k^{n+1-i}}(1+a_is_i+a_{i+1}s_is_{i+1}+...+a_ns_i...s_n)=\prod _{i=1}^n\frac {\Delta_q(1,s_i,s_is_{i+1},...,s_i...s_n)}{\Delta_q(s_i,s_is_{i+1},...,s_i...s_n)}\\
&=& \frac{\Delta_q(1,s_1,s_1s_{2},...,s_1...s_n)}{\Delta_q(s_n)}\cdot \prod _{i=1}^{n-1}
\frac {\Delta_q(1,s_{i+1},s_{i+1}s_{i+2},...,s_{i+1}...s_n)}{\Delta_q(s_{i},s_is_{i+1},...,s_i...s_n)}
=\frac {\Delta_q(1,s_1,s_1s_{2},...,s_1...s_n)}{\prod _{i=1}^{n} s_i^{q^0+q^1+...+q^{n-i}}}.
\end{eqnarray*}
The map $\pi: \tilde X \to \PP (V)$ is covered by charts that are given 
$$\pi: \tilde U:= D(h_n)\subset \Spec k[s_1,...,s_n]\to \Spec k[t_1,...,t_n]$$
with $\pi^*(t_i)=\prod_{j\le i}s_j$ (see \cite[proof of Lemma 1.3]{RTW} but note that the last diagram on [ibid., p. 1214] needs to be restricted  to an open subset of $Y$ as the map $q$ is given by a different formula on whole $\Spec k[t_1,...,t_n]$;  there is also a misprint in description of the exceptional divisor in [ibid.]).
The exceptional divisor of this map is given by $\mathrm{div}\, (s_1...s_{n-1})$ and we have
$$\pi^*\left(\frac{dt_i}{t_i}\right)=\sum_{j\le i} \frac{ds_j}{s_j}.$$
Therefore we obtain
\begin{eqnarray*}
\pi^*\omega|_{\tilde U}&=&\sum_{i=1}^n(-1)^i\Delta_q(1,s_1,...,\widehat{s_1...s_i},...,s_1...s_n)
\left( s_1...s_i \sum_{j\le i} \frac{ds_j}{s_j}\right)=\\
&=&\sum_{j=1}^n \left(
\sum_{i\ge j} (-1)^is_1...\hat s_j...s_i\cdot \Delta_q(1,s_1,...,\widehat{s_1...s_i},...,s_1...s_n)
  \right) ds_j .
\end{eqnarray*}
The coefficient of $\pi^*\omega|_{\tilde U}$ at $ds_n$ is equal to
$$(-1)^{n}s_1...s_{n-1}\Delta_q(1,s_1,...,s_1...s_{n-1})
 =(-1)^n
h_{n-1}\cdot  \prod _{i=1}^{n-1} s_i^{(q^0+q^1+...+q^{(n-1)-i})+1}.$$
It is  easy to see that
the coefficients of $\pi^*\omega|_{\tilde U}$ at $ds_j$ for $j=1,...,n-1$ are
also divisible by $\prod _{i=1}^{n-1} s_i^{(q^0+q^1+...+q^{(n-1)-i})+1}$.
Since the polynomial $h_{n-1}$ divides $h_n$, we can write
$$\pi^*\omega|_{\tilde U}=\prod _{i=1}^{n-1} s_i^{(q^0+q^1+...+q^{(n-1)-i})+1}
(\alpha_1 ds_1+...+\alpha_n ds_n)$$
for some $\alpha_i\in k[s_1,...,s_n]$ such that $\alpha_n$ is invertible in $ \cO(D(h_n))=
k[s_1,...,s_n]_{h_n}$.  This shows that $\pi^*\omega$
defines the map
$$ \cO_{\tilde X} \left( - \left(\frac{q^{n}+q-2}{q-1}\right)H
+\sum _{i=1}^{n-1} \frac{q^{i}+q-2}{q-1}D_{V}^{i+1}\right) \to \Omega_{\tilde X}$$
with locally free cokernel. After dualizing we get the required smooth $1$-foliation on the wonderful compactification of
$\Omega (V)$.
\end{proof}

\medskip

A simple computation of the canonical divisor of a blow up shows that
$$K_{\tilde X}=-(n+1)H+\sum _{i=1}^{n-1} i\, D_{V}^{i+1}.$$
Therefore we have
\begin{eqnarray*}
c_1(\det \tilde \cF_{n-1})&=& -K_{\tilde X}-\left(
\left(\frac{q^{n}-1}{q-1} +1\right)H
-\sum _{i=1}^{n-1} \left( \frac{q^{i}-1}{q-1} +1\right)
D_{V}^{i+1}\right) \\
&=& -\left(\frac{q^{n}-1}{q-1} -n\right) H+\sum _{i=1}^{n-1}
\left(\frac{q^{i}-1}{q-1} -i+1\right) D_{V}^{i+1} .
\end{eqnarray*}

\medskip

Let us note that
$$\pi ^*\left(t_i\frac{\partial}{\partial t_i}\right)=\left\{ \begin{array}{cl}
s_i\frac{\partial}{\partial s_i}-s_{i+1}\frac{\partial}{\partial s_{i+1}}&
\hbox{for $i<n$,}\\
s_n\frac{\partial}{\partial s_n}&
\hbox{for $i=n$.}\\
\end{array}\right.
$$
Let $\delta_j$ be the rational vector field corresponding to $\theta_j$. Then in the notation of proof
of Proposition \ref{codim-1-foliation} we can write
$$\delta_j|_U=\sum _{i=1}^n(t_i^{q^j-1}-1)t_i\frac{\partial}{\partial t_i} .$$
Therefore after a short computation we get
$$\pi^*\delta_j|_{\tilde U}=(s_1^{q^j-1}-1) s_1\frac{\partial}{\partial s_1} +s_1^{q^j-1}(s_2^{q^j-1}-1)s_2\frac{\partial}{\partial s_2}+...+(s_1...s_{n-1})^{q^j-1}(s_n^{q^j-1}-1)s_n\frac{\partial}{\partial s_n}.$$

Let us set $D:=\sum _{i=1}^nD^i_V$.

\begin{proposition} \label{dim-1-foliation}
The rational  vector field  $\delta_1$ induces a smooth $1$-foliation
$$\tilde \cF_1\simeq   \cO_{\tilde X} \left(  -(q-1)H +D_{V}^{n}\right) \subset T_{\tilde X}$$ on the wonderful compactification $\tilde X$ of Drinfeld's half-space $\Omega (V)$.\end{proposition}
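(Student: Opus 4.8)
The plan is to reduce the statement to a local computation on the charts of $\tilde X$ used in the proof of Proposition~\ref{codim-1-foliation}, and then to exploit the elementary divisibility $s_1^{q-1}-1\mid h_n$. First, recall from the discussion in Section~\ref{section-foliation} that $\cF_1\subset T_{\PP(V)}$ is the saturated rank-one $1$-foliation which is the image of $\theta_1\colon\cO_{\PP(V)}(-q+1)\to T_{\PP(V)}$; thus $\cF_1\cong\cO_{\PP(V)}(-(q-1)H)$, and $\delta_1$ is exactly the section of $T_{\PP(V)}(q-1)$ given by the twist of the inclusion $\cF_1\hookrightarrow T_{\PP(V)}$, which by Lemma~\ref{lemma-Hirokado} vanishes precisely along the set $Z_n$ of $\FF_q$-rational points of $\PP(V)$. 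By the construction of induced foliations, $\tilde\cF_1$ is the saturation of $\pi^*\cF_1\cap T_{\tilde X}$ in $T_{\tilde X}$. Since $\pi^*\cF_1\cong\cO_{\tilde X}(-(q-1)H)$ is a line bundle, $\pi$ is an isomorphism over $\PP(V)\setminus Z$ with $Z=Z_2$, and $\pi^{-1}(Z)=\bigcup_{i=2}^{n}D_V^i$ as sets, it follows that $\tilde\cF_1\cong\cO_{\tilde X}(-(q-1)H+E)$ for a unique effective divisor $E=\sum_{i=2}^{n}e_iD_V^i$, where $e_i$ is the vanishing order of $\pi^*\delta_1$ at a generic point of $D_V^i$. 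It therefore remains to show that $E=D_V^n$ and that the section $\sigma\in H^0(\tilde X,T_{\tilde X}((q-1)H-D_V^n))$ attached to $\tilde\cF_1\hookrightarrow T_{\tilde X}$ is nowhere vanishing.

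For the computation I would work in the charts $\pi\colon\tilde U=D(h_n)\subset\Spec k[s_1,\dots,s_n]\to\Spec k[t_1,\dots,t_n]$, $\pi^*(t_i)=\prod_{j\le i}s_j$, from the proof of Proposition~\ref{codim-1-foliation}, in which $\{s_i=0\}$ is a chart of $D_V^{n+1-i}$ for $i=1,\dots,n-1$. Factoring $s_1$ out of the formula for $\pi^*\delta_1|_{\tilde U}$ recorded just before the statement yields
\[
\pi^*\delta_1|_{\tilde U}=s_1\Bigl[(s_1^{q-1}-1)\tfrac{\partial}{\partial s_1}+s_1^{q-2}\sum_{i=2}^{n}(s_2\cdots s_{i-1})^{q-1}(s_i^{q-1}-1)\,s_i\,\tfrac{\partial}{\partial s_i}\Bigr]=:s_1\,\tau .
\]
The crucial observation is that $s_1^{q-1}-1$ is invertible on $\tilde U$: by Lemma~\ref{Moore} one has $\prod_{a\in\FF_q}(1+as_1)=1-s_1^{q-1}$, which is one of the factors of $h_n$, so $s_1^{q-1}-1\mid h_n$. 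Hence the coefficient $(s_1^{q-1}-1)s_1$ of $\partial/\partial s_1$ in $\pi^*\delta_1|_{\tilde U}$ is a unit times $s_1$: it is nonzero at a generic point of $D_V^{n+1-i}$ for $2\le i\le n-1$, and it vanishes to order exactly one along $\{s_1=0\}=D_V^n$. Since every $D_V^i$ with $2\le i\le n$ has a generic point lying in such a chart, we obtain $e_i=0$ for $i<n$ and $e_n=1$, so $E=D_V^n$. On each $\tilde U$ the section $\sigma$ is then represented by the vector field $\tau$, whose $\partial/\partial s_1$-coefficient $s_1^{q-1}-1$ is a unit, so $\sigma$ is nowhere vanishing on $\tilde U$.

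It remains to see that $\sigma$ has no zeros on all of $\tilde X$. On the locus where $\pi$ is an isomorphism, $\sigma$ is identified with $\delta_1$, which vanishes only along $Z_n\subset Z$; as $Z_n$ is blown up, $\sigma$ has no zeros there. Together with the charts $\tilde U$, this confines the zero set of $\sigma$ to $D_V^1\cap\bigcup_{i\ge2}D_V^i$, i.e.\ to the intersections $D_L\cap D_{L'}$ with $L$ an $\FF_q$-hyperplane and $L'\subsetneq L$. Since $\cF_1\subset T_{\PP(V)}(-\log B)$, the field $\delta_1$ is tangent to each $\FF_q$-hyperplane $L$, so $\sigma$ is tangent to $D_L$; and under the identification $D_L\cong\widetilde{\PP(L)}$ (with $H$ and $D_V^n$ restricting to the hyperplane class and the point-exceptional divisor of $\widetilde{\PP(L)}$), $\sigma|_{D_L}$ is exactly the section attached to $\delta_1$ on the $(n-1)$-dimensional Drinfeld space. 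Hence non-vanishing of $\sigma$ along $D_L$ follows by induction on $n$, the base case $n=1$ being trivial (there $\tilde X=\PP^1$ and $\tilde\cF_1=\cO_{\PP^1}(-(q-1)H+D_V^1)=T_{\PP^1}$). Thus $\sigma$ is nowhere vanishing, $T_{\tilde X}/\tilde\cF_1$ is locally free, and $\tilde\cF_1$ is a smooth $1$-foliation.

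I expect the main obstacle to be the identification $E=D_V^n$ rather than some larger exceptional divisor: a priori $\pi^*\delta_1$ could vanish along several of the $D_V^i$ to various orders, and the divisibility $s_1^{q-1}-1\mid h_n$ is precisely what rules this out, while at the same time delivering the non-vanishing of $\sigma$ on the charts $\tilde U$. The remaining subtlety is that these charts systematically avoid the strict transforms $D_V^1$ of the $\FF_q$-hyperplanes, which is why the restriction to $D_L\cong\widetilde{\PP(L)}$ and the induction on $n$ are needed to conclude near $D_V^1$.
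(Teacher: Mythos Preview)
Your argument is correct and tracks the paper's proof closely: both compute $\pi^*\delta_1$ in the charts $\tilde U$, factor out $s_1$ to identify $E=D_V^n$, and use non-vanishing of the $\partial/\partial s_1$-coefficient $s_1^{q-1}-1$ to conclude smoothness. Your explicit observation that $1-s_1^{q-1}=\prod_{a\in\FF_q}(1+as_1)$ divides $h_n$ (so that $s_1^{q-1}-1$ is a unit on all of $\tilde U$) is a nice sharpening of what the paper states more tersely.

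Your inductive argument over the strict transforms $D_V^1$, however, is unnecessary, because your premise that ``these charts systematically avoid $D_V^1$'' is false. In the chart $\tilde U$ with $\pi^*(t_i)=\prod_{j\le i}s_j$, the hyperplane $t_n=0$ pulls back to $s_1\cdots s_n=0$, and after removing the exceptional part $s_1\cdots s_{n-1}$ its strict transform is $\{s_n=0\}$, which meets $\tilde U$ (one checks $h_n$ does not vanish identically on $s_n=0$). Thus each chart already sees one component of $D_V^1$, and varying over all full $\FF_q$-flags the charts cover $\tilde X$ entirely, including every $D_L\cap D_{L'}$. The paper therefore finishes immediately after the chart computation: since $\cF_1\subset T_{\PP(V)}$ is singular only along $Z_n$, and in each chart $\pi^{-1}(Z_n)\cap\tilde U\subset\{s_1=0\}$ (indeed $h_n$ vanishes at every other $\FF_q$-point via a factor $1+a_1s_1+\dots$), the only possible singular locus of $\tilde\cF_1$ is $\{s_1=0\}$, where $s_1^{q-1}-1=-1$ shows $\tau$ does not vanish. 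Your restriction-and-induction over $D_L\cong\widetilde{\PP(L)}$ is a valid alternative route to the same conclusion, but it is extra work you do not need.
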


\begin{proof}
Taking $j=1$ in the above formula on $\pi^*\delta_j|_{\tilde U}$, we see that $\pi^*\delta_1|_{\tilde U}$ vanishes with order one along $\mathrm{div}\, (s_1)$ and it does not vanish along any other component of the exceptional divisor
 $\mathrm{div}\, (s_1...s_{n-1})$.  So the corresponding foliation $\tilde \cF_1$ is isomorphic to
 $\cO_{\tilde X} \left(  -(q-1)H +D_{V}^{n}\right)$.  Since $\cF_1\subset T_{\PP (V)}$
has singularities only along $Z_n=\PP(V)(\FF_q)$, this foliation can have singular points only 
on $\mathrm{div}\, (s_1)$. Since $s_1^{q-1}-1$ does not vanish on this divisor,  $\cF_1$ is smooth.
\end{proof}

The above proposition implies that although $\cF_j\subset T_{\PP (V)}(-\log B)$ for all $j$, already
$\tilde \cF_1$ is not contained in  $T_{\tilde X}(-\log\, D)$. One can generalize the above proposition and describe the filtration
$$\tilde \cF _1\subset\tilde \cF_2\subset ...\subset \tilde \cF_n=T_{\tilde X}$$
and the induced filtration on $T_{\tilde X}(-\log\, D)$. More precisely, each $\tilde \cF_j$ is a smooth foliation of rank $i$ and 
$$\tilde \cF_j/\tilde \cF_{j-1}\simeq \cO_{\tilde X}\left( -(q-1)H_{n+2-j}+D_V^{n+1-j}\right)$$
(see Proposition \ref{seq-fol-dim3} for the $3$-dimensional case).

Let $\cL_j$ be the image of $\theta_j$ in $T_{\PP (V)}$ and let $\tilde \cL_j\subset T_{\tilde X}$ be the corresponding foliation. Let us also set $\cM_j:=\tilde \cL_j\cap T_{\tilde X}(-\log\, D)$. 
Note that for $j\ge 2$ the foliation $\tilde \cL_j$ is not smooth as $\pi: \tilde X\to \PP(V)$ is an isomorphism over $\PP(V)-Z$ and for $j\ge 2$ the foliation $\cL_j\subset T_{\PP (V)}$ has singularities along a non-empty set
of $\FF_{q^j}$-points of $\PP(V)-Z$.
However, $\cM_j \simeq \cO_{\tilde X} \left(  -(q^j-1)H \right)$
and $T_{\tilde X}(-\log\, D)/\cM_j$ is locally free. In particular, we have
$$\bigoplus _{i=1}^n \cM_i\simeq \bigoplus _{i=1}^n\cO_{\tilde X} \left(  -(q^i-1)H \right)\subset T_{\tilde X}(-\log\, D).$$
Let us set $\cG_0=0$ and $\cG_j=\tilde \cF_j\cap T_{\tilde X}(-\log\, D)$ for $j=1,...,n$. By definition $\cG_j$ is the saturation of $\bigoplus _{i=1}^j \cM_i$ in $T_{\tilde X}(-\log\, D).$ Then the quotients of the filtration
$\cG_{\bullet}$ of $T_{\tilde X}(-\log\, D)$ are  given by
$$\cG_{j}/\cG_{j-1}\simeq \cO_{\tilde X}(-(q-1)H_{n+2-j})=\cO_{\tilde X}\left( -(q^j-1)H+ \sum _{i=0}^{j-2} \left(q^{i+j-1}-1\right)D^{n-i}_V\right) $$ 
for $j=1,...,n$. We leave the proofs of these facts to an interested reader (for the last fact see also Proposition \ref{foliations} in the surface case).

\section{Rational surfaces whose cotangent bundle contains an ample line bundle}

Let $\pi:X\to \PP^2$ be the blow up of $\PP^2$ along $Z=\PP^2(\FF_q)$. We denote by
$H$ the pull back of the hyperplane divisor on $\PP ^2$ and by $E=\sum E_i$  the exceptional divisor of $\pi$.
Let $\tilde L_i$ for $i=1,...,q^2+q+1$ be the strict transforms of lines $L_i$ passing through at least two points
of $Z$. Let us set $B=\sum L_i$, $\tilde B=\sum \tilde L_i$ and $D=\tilde B+E$.

The following proposition should be compared with its non-explicit version \cite[Lemma 8.3]{La}:

\begin{proposition} \label{foliations}
We have short exact sequences
$$0\to \cO _{ X}(-(q+2)H+2E)\to \Omega_{ X}\to \cO _{X}((q-1)H-E)\to 0,$$
$$0\to \cO _{ X}((q^2-1)H-(q-1)E)\to \Omega_{X}(\log \tilde B)\to \cO _{ X}((q-1)H-E)\to 0$$
and 
$$0\to \cO _{ X}((q^2-1)H-(q-1)E)\to \Omega_{ X}(\log D)\to \cO _{ X}((q-1)H)\to 0.$$
In particular, $\Omega_{\tilde X}(\log \tilde B)$ and $\Omega_{\tilde X}(\log D)$ contain $M=(q^2-1)H-(q-1)E$ such that $M^2>0$ and $MH>0$.
\end{proposition}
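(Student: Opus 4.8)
The plan is to obtain all three short exact sequences as explicit refinements of the foliation exact sequences already produced in Section \ref{section-foliation}, specialized to $n=2$, and then to use the pull-back formulas for $\delta_1$ and $\omega$ to read off the precise vanishing orders along the exceptional divisor $E$. Concretely, I would first note that for $n=2$ the wonderful compactification $\tilde X$ coincides with $X=\Bl_Z\PP^2$, so Proposition \ref{dim-1-foliation} gives the line subbundle $\tilde\cF_1\simeq\cO_X(-(q-1)H+E)$ of $T_X$ (here $D_V^n=D_V^2=E$), and Proposition \ref{codim-1-foliation} gives the line subbundle $\tilde\cF_{n-1}=\tilde\cF_1$ with quotient $\cO_X\big((\tfrac{q^2+q-2}{q-1})H-(\tfrac{2q-2}{q-1})D_V^2\big)=\cO_X((q+2)H-2E)$. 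Dualizing the first of these yields exactly the first sequence: $0\to\cO_X(-(q+2)H+2E)\to\Omega_X\to\cO_X((q-1)H-E)\to 0$, with the quotient line bundle being $(\tilde\cF_1)^\vee$.

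Next I would handle the logarithmic versions. For the third sequence, the point is that $\tilde\cF_1\subset T_X$ is \emph{not} contained in $T_X(-\log D)$ because of the components of $E$ (this is the phenomenon noted right after Proposition \ref{dim-1-foliation}): in the chart $\tilde U$ of the proof of Proposition \ref{codim-1-foliation} the field $\pi^*\delta_1|_{\tilde U}=(s_1^{q-1}-1)s_1\partial_{s_1}+s_1^{q-1}(s_2^{q-1}-1)s_2\partial_{s_2}$, and since $s_2$ cuts out the single exceptional component $E$ while $s_1$ is a unit on $E$ (in this chart), $\pi^*\delta_1$ is tangent to $E$ but the section of $T_X$ it defines, when paired against $d\log(\text{local equation of }E)$, is a unit — so passing to $T_X(-\log D)$ forces a twist by $\cO_X(-E)$ in the sub-line-bundle and correspondingly a twist by $\cO_X(E)$ in the quotient, turning $\cO_X((q-1)H-E)$ into $\cO_X((q-1)H)$. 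Dualizing $\cG_1=\tilde\cF_1\cap T_X(-\log D)$ then gives the third sequence; I would cross-check the sub-bundle $\cO_X((q^2-1)H-(q-1)E)$ against the formula $\cG_1=\cM_1\simeq\cO_{\tilde X}(-(q-1)H)$ only up to the exceptional correction, or more directly against $\omega$: the proof of Proposition \ref{codim-1-foliation} shows $\pi^*\omega$ vanishes to order $(q^0+\dots+q^{(n-1)-i})+1$ along each exceptional component, which for $n=2$ is order $2$ along $E$ — consistent with the sub-line-bundle $\cO_X((q^2-1)H-(q-1)E)$ (note $(q-1)\cdot 1 = q-1$ but the degree of $\omega$ is $1+q$, and $\pi^*\cO_{\PP^2}(q+1)=\cO_X((q+1)H)$; matching $\cO_X(-(q+1)H+\text{(mult along }E)E)$ with the Euler-sequence twist gives the stated bundle after the standard identification).

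For the middle sequence, logarithmic along $\tilde B$ only (not along $E$), the foliation $\tilde\cF_1$ \emph{is} contained in $T_X(-\log\tilde B)$ — this is essentially Proposition \ref{log-tangent} applied on $\PP^2$, where $\cF_1\subset T_{\PP^2}(-\log B)$, together with the fact that $\pi$ is an isomorphism near the generic points of the $\tilde L_i$ — so $\cG_1^{\tilde B}:=\tilde\cF_1\cap T_X(-\log\tilde B)$ has quotient still $\cO_X((q-1)H-E)$, while the sub-bundle picks up exactly the twist needed to account for tangency to $\tilde B$: each $\tilde L_i$ is cut out (away from $E$) by a linear form, $\delta_1$ preserves it, and the induced twist along $\tilde B=\sum\tilde L_i$ (which is linearly equivalent to $(q^2+q+1)H - (q+1)E$ roughly, via Lemma \ref{Moore}: the full product of $\FF_q$-lines is the Moore-type form of degree $q^2+q+1$ vanishing to order $q+1$ at each point of $Z$) upgrades $\cO_X(-(q-1)H+E)$ precisely to $\cO_X((q^2-1)H-(q-1)E)$. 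Dualizing gives the stated sequence.

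The main obstacle — and the step I would spend the most care on — is pinning down the \emph{exact} exceptional multiplicities in the two logarithmic sub-line-bundles, i.e. verifying that the coefficient of $E$ is $q-1$ (not $q$ or $q-2$) and the coefficient of $H$ is $q^2-1$. This is the only genuinely computational point: it requires the local chart computation (the formulas for $\pi^*(dt_i/t_i)$, $\pi^*(t_i\partial_{t_i})$ and the explicit $\pi^*\omega|_{\tilde U}$, $\pi^*\delta_1|_{\tilde U}$ already in Section \ref{section-foliation}) combined with the global identification of $\tilde B$ with a Moore-type divisor via Lemma \ref{Moore}; once the $n=2$ specializations of Propositions \ref{dim-1-foliation} and \ref{codim-1-foliation} are in hand this is bookkeeping, but it must be done consistently in a fixed chart. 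Finally, for the last assertion, with $M=(q^2-1)H-(q-1)E$ and the standard intersection numbers $H^2=1$, $H\cdot E=0$, $E^2=-(q^2+q+1)$ on $X=\Bl_Z\PP^2$, one computes $M^2=(q^2-1)^2-(q-1)^2(q^2+q+1)=(q-1)^2\big((q+1)^2-(q^2+q+1)\big)=(q-1)^2 q>0$ and $M\cdot H=q^2-1>0$, and $M$ embeds into $\Omega_X(\log\tilde B)$ and into $\Omega_X(\log D)$ as the sub-line-bundle of the middle and third sequences respectively; this completes the proof.
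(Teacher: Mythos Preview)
Your overall approach coincides with the paper's: specialize Propositions \ref{dim-1-foliation} and \ref{codim-1-foliation} to $n=2$ (where $\tilde X=X$, $D_V^2=E$, $D_V^1=\tilde B$), dualize to get the first sequence, then pass to the logarithmic tangent sheaves and dualize again. The intersection computation at the end is correct.

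There are, however, two genuine confusions in your write-up that you should fix before the argument is clean. First, in the chart of Proposition \ref{codim-1-foliation} with $\pi^*t_i=\prod_{j\le i}s_j$, the exceptional divisor for $n=2$ is $\mathrm{div}(s_1)$, not $\mathrm{div}(s_2)$; you have these swapped. With the correct labeling, the local generator of $\tilde\cF_1$ is $s_1^{-1}\pi^*\delta_1=(s_1^{q-1}-1)\partial_{s_1}+s_1^{q-2}(s_2^{q-1}-1)s_2\partial_{s_2}$, and it is this vector field whose action on $s_1$ is the unit $s_1^{q-1}-1$, forcing the twist by $\cO_X(-E)$ when you intersect with $T_X(-\log D)$.

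Second, your description of the middle sequence is muddled: there is no ``upgrade'' of $\tilde\cF_1$. For $T_X(-\log\tilde B)$ one has $\tilde\cF_1\subset T_X(-\log\tilde B)$ \emph{unchanged}, and what you must actually verify (and this is the step the paper carries out explicitly) is that the cokernel $T_X(-\log\tilde B)/\tilde\cF_1$ is locally free at the points of $\tilde B\cap E$. Away from these points this is automatic (from Proposition \ref{log-tangent} off $E$, and from smoothness of $\tilde\cF_1\subset T_X$ off $\tilde B$). At a point $s_1=s_2=0$ where a branch of $\tilde B$ is $(s_2=0)$, the sheaf $T_X(-\log\tilde B)$ is generated by $\partial_{s_1}$ and $s_2\partial_{s_2}$, and the generator of $\tilde\cF_1$ has unit coefficient on $\partial_{s_1}$, so the cokernel is free of rank one. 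Once this is done, the identification of the quotient line bundle (hence of the sub of $\Omega_X(\log\tilde B)$) is forced by the first Chern class, using $\tilde B\sim(q^2+q+1)H-(q+1)E$; the bundle $\cO_X((q^2-1)H-(q-1)E)$ appears as the \emph{dual of the quotient}, not as a twist of $\tilde\cF_1$. The same local check at $\tilde B\cap E$ handles the third sequence with $T_X(-\log D)$.
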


\begin{proof}
The first sequence  comes from dualizing the inclusion $\tilde \cF_1\subset T_{X}$  described in Proposition \ref{codim-1-foliation}. In the notation of proof of this proposition we have
$$\pi^*\delta_1 |_{\tilde U}=s_1\left[(s_1^{q-1}-1)\frac{\partial}{\partial s_1}+s_1^{q-2}(s_2^{q-1}-1)s_2\frac{\partial}{\partial s_2}\right] .$$
Note that $\delta_1$ defines $\cO _{\PP^2}(1-q)\to T_{\PP^2}(-\log B)$ and the cokernel is locally free
(see Proposition \ref{log-tangent}). Since  $\pi^*\delta_1$ vanishes with order $1$ along the exceptional divisor $E$, it
defines a map $\cO _{X}(-(q-1)H+E)\to T_{X}(-\log \tilde B)$, which by the above is locally free outside of $E$. It is also locally free outside of $\tilde B$, so we need only to check that the cokernel is locally free at all points of 
intersection of $\tilde B$ and $E$.  On $\tilde U$ the intersection $\tilde B\cap \pi^{-1}(0)$ is just $1$ point given by $s_1=s_2=0$. Since at this point $T_{ X}(-\log \tilde B)$ has local generators $\frac{\partial}{\partial s_1}$ and $s_2\frac{\partial}{\partial s_2}$,  the cokernel of  $\cO _{ X}(-(q-1)H+E)\to T_{ X}(-\log \tilde B)$  is locally free
also at this point.  After dualizing  this shows existence of the second exact sequence. 

To see the last exact sequence note that  by similar arguments $\pi^*\delta_1$ defines  also the map $\cO _{ X}(-(q-1)H)\to T_{ X}(-\log D)$ with locally free cokernel.
\end{proof}

It is interesting to note that the last sequence from the above proposition is usually  (but not always) split. 
It will be more convenient to consider the dual sequence.

\begin{proposition} \label{splitting-sequence}
The short exact sequence
\begin{align}
0\to \cO _{ X}(-(q-1)H)\to T_{X}(-\log D)\to \cO _{ X}(    -(q^2-1)H+(q-1)E)\to 0 \label{seq}
\end{align}
splits (canonically) if and only if $q\ne 2$.
\end{proposition}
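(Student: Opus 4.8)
The plan is to twist \eqref{seq} by the inverse of its quotient line bundle and reduce the splitting to a cohomological statement on $\PP(V)=\PP^2$. Write $\cS=\cO_X(-(q-1)H)$ and $\cQ=\cO_X(-(q^2-1)H+(q-1)E)$ for the sub and quotient in \eqref{seq}, and set $\Lambda=(q^2-q)H-(q-1)E=c_1(\cS\otimes\cQ^{-1})$. Tensoring \eqref{seq} by $\cQ^{-1}$ gives an exact sequence $0\to\cO_X(\Lambda)\to\cE\to\cO_X\to 0$ with $\cE:=T_X(-\log D)\otimes\cQ^{-1}$, which splits iff \eqref{seq} does. First I would check that $H^0(X,\cO_X(\Lambda))=0$: a curve in $|(q^2-q)H-(q-1)E|$ is a plane curve of degree $q^2-q$ with multiplicity $\ge q-1$ at every $\FF_q$-point, hence meets each of the $q^2+q+1$ $\FF_q$-lines $\ell$ in at least $(q+1)(q-1)=q^2-1>q^2-q$ points, so it contains every such $\ell$, hence $B$, which is impossible for degree reasons. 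Since also $H^1(X,\cO_X)=0$, the long exact sequence shows that \eqref{seq} splits iff $H^0(X,\cE)\ne 0$, and that a splitting, if it exists, is unique because $\Hom(\cO_X,\cO_X(\Lambda))=H^0(\cO_X(\Lambda))=0$; this is what ``canonically'' refers to.

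Next I would compute $H^0(X,\cE)=H^0\!\big(X,\,T_X(-\log D)((q^2-1)H-(q-1)E)\big)$. Pushing \eqref{seq} forward by $\pi\colon X\to\PP^2$ and using $\pi_*\cS\cong\cO_{\PP^2}(1-q)$, $\pi_*\cQ\cong\cO_{\PP^2}(1-q^2)$ (because $\pi_*\cO_X((q-1)E)=\cO_{\PP^2}$), $R^1\pi_*\cS=0$ and $\Ext^1_{\PP^2}(\cO(1-q^2),\cO(1-q))=H^1(\PP^2,\cO(q^2-q))=0$, one gets $\pi_*T_X(-\log D)\cong T_{\PP^2}(-\log B)=\cO_{\PP^2}(1-q)\,\delta_1\oplus\cO_{\PP^2}(1-q^2)\,\delta_2$, where $\delta_1,\delta_2$ are Hirokado's vector fields. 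Hence every section of $T_X(-\log D)((q^2-1)H)$ has the form $\pi^*\bar\sigma$ with $\bar\sigma=F\delta_1+c\delta_2$, $F\in H^0(\PP^2,\cO(q^2-q))$, $c\in k$. Working in the affine charts of $X=\tilde X$ from the proof of Proposition \ref{codim-1-foliation} (where near an exceptional component $E_p=\{s_1=0\}$ one has $\pi^*\delta_j=(s_1^{q^j-1}-1)s_1\partial_{s_1}+s_1^{q^j-1}(s_2^{q^j-1}-1)s_2\partial_{s_2}$), the $s_2\partial_{s_2}$–coefficient of $\pi^*\bar\sigma$ is automatically divisible by $s_1^{q-1}$, while the $s_1\partial_{s_1}$–coefficient $\pi^*(F/x_l^{q^2-q})(s_1^{q-1}-1)+c(s_1^{q^2-1}-1)$ is divisible by $s_1^{q-1}$ iff $F/x_l^{q^2-q}+c\in\mathfrak m_p^{\,q-1}$ for one — equivalently every — coordinate $x_l$ with $x_l(p)\ne 0$; independence of $l$ follows from $(x_l/x_{l'})^{q^2-q}=\big((x_l/x_{l'})^{q-1}\big)^{q}\equiv 1\pmod{\mathfrak m_p^{\,q-1}}$, using $q=p^e$ and that the residue field at $p$ is $\FF_q$. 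So $H^0(X,\cE)\cong\{(F,c):F\equiv -c\,x_l^{q^2-q}\pmod{\mathfrak m_p^{\,q-1}}\ \forall\,p\in\PP^2(\FF_q)\}$, and since $c=0$ forces $F\in H^0(\cO_X(\Lambda))=0$, the sequence splits iff there is a form $F$ of degree $q^2-q$ with $F\equiv x_l^{q^2-q}\pmod{\mathfrak m_p^{\,q-1}}$ at every $\FF_q$-point (take $c=-1$).

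For $q=2$ this asks for a conic $F$ with $F(p)=1$ at all $7$ points of $\PP^2(\FF_2)$, which is impossible: by the character–sum identity $\sum_{p\in\PP^2(\FF_2)}F(p)=0$ for every conic $F$ (deduced from $\sum_{v\in\FF_2^3}v^{(a,b,c)}=0$ when $a+b+c=2$, and passing to the projective sum), whereas $\sum_p 1=7\equiv 1\ne 0$ in $\FF_2$; hence $H^0(X,\cE)=0$ and \eqref{seq} does not split. For $q\ge 3$ I would produce such an $F$ by patching, over the three coordinate hyperplanes (or the $q+1$ lines of a pencil), local sections equal to $x_l^{q^2-q}$ and checking the higher–order conditions modulo $\mathfrak m_p^{\,q-1}$ along the lines where these local pieces collide; the obstruction responsible for the $q=2$ failure — nonvanishing of the degree–bounded character sum $\sum_p 1$ — is absent here, and in fact even the value relation $\sum_p F(p)=0$ already fails for forms of degree $q^2-q=q(q-1)$ when $q\ge 3$ (e.g. on $F=x_0^{q-1}x_1^{q-1}x_2^{(q-1)(q-2)}$). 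Carrying out this explicit construction of $F$ uniformly in $q\ge 3$, together with the bookkeeping of the local computation near each $E_p$ in every chart, is the part I expect to be the main obstacle; the rest of the argument is formal.
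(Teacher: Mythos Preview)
Your approach coincides with the paper's: both parametrize candidate splittings as $\eta=c\,\delta_2+F\,\delta_1$ with $F$ homogeneous of degree $q^2-q$ and $c\ne 0$, and reduce the question to a local divisibility-by-$s_1^{q-1}$ condition near each exceptional component $E_P$. Your uniqueness remark (via $H^0(\cO_X(\Lambda))=0$) and your $q=2$ non-existence argument are correct; the paper obtains the latter from Chevalley--Warning rather than from your character-sum identity $\sum_{P\in\PP^2(\FF_2)}F(P)=0$, but the content is equivalent.

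One point of comparison: you arrive at the jet condition $F\equiv -c\,x_l^{q^2-q}\pmod{\mathfrak m_P^{\,q-1}}$, while the paper states the condition simply as $F(P)=1$. For $q=2$ these coincide since $q-1=1$; for $q\ge 3$ yours is ostensibly stronger, and the paper's passage from ``vanishes to order $q-1$ along $s_1=0$'' to ``$F(1,a_1,a_2)=1$'' does not explicitly treat the intermediate orders. Your derivation here is careful, and your observation that the choice of $l$ is irrelevant because $(x_l/x_{l'})^{q^2-q}$ is a $q$-th power (hence $\equiv 1\pmod{\mathfrak m_P^{\,q}}$) is exactly the right mechanism.

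The genuine gap in your proposal is the $q\ge 3$ existence, which you only sketch. The paper's construction is short and explicit: take $F=G^{q-1}$ where $G$ is a degree-$q$ form over $\FF_q$ with no $\FF_q$-rational zero, obtained by restricting the norm $N_{k'/k}$ of a degree-$q$ extension $k'\supset k=\FF_q$ to a $3$-dimensional $k$-subspace of $k'$. Then $G(P)\in\FF_q^\times$, so $F(P)=G(P)^{q-1}=1$ for every $P\in\PP^2(\FF_q)$. This replaces your vague ``patching over coordinate hyperplanes'' with a one-line construction and is the missing ingredient. If you insist on verifying your stronger jet condition, note that since $x_l^{q^2-q}$ is a $q$-th power it is congruent to a constant modulo $\mathfrak m_P^{\,q}$, so the target jet is just the constant $1$; checking that $G^{q-1}\equiv 1\pmod{\mathfrak m_P^{\,q-1}}$ then amounts to controlling the low-order Taylor coefficients of $G$ at each $\FF_q$-point, which you should carry out rather than leave implicit.
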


\begin{proof}
First let us remark that if the sequence (\ref{seq}) splits then the splitting $ 
 \cO _{ X}(    -(q^2-1)H+(q-1)E) \to T_{ X}(-\log D)$ is unique (up to multiplication by a non-zero constant). To prove this one needs to remark
that $\Hom (  \cO _{X}(    -(q^2-1)H+(q-1)E) ,  \cO _{X}(- (q-1)H))=H^0( \cO _{ X}((q^2-q)H-(q-1)E))=0$.
The proof is the same as that of \cite[Lemma 2.6]{Hi}.

Note that  after push-forward by $\pi$  the above sequence becomes the sequence
$$0\to \cO _{X}(-(q-1))\to T_{\PP ^2}(-\log B)\to \cO _{\PP^2}(    -(q^2-1))\to 0,$$
which always splits by Proposition \ref{log-tangent}. 
Now the splitting $\cO _{\PP^2}(    -(q^2-1))\to T_{\PP ^2}(-\log B)$ of this sequence can be written (up to multiplication
by a constant) as $\eta=\theta_2+F \theta_1$ for some homogeneous polynomial $F\in k[x_0,x_1,x_2]$ of degree $q^2-q$. The sequence (\ref{seq}) splits if and only if for some $F$ the (rational) vector field $\pi^*\eta$ defines  splitting. As before let us consider  subset $U=D_{+}(x_0)\subset \PP ^2$  with affine coordinates $t_1=x_1/x_0$ and $t_2=x_2/x_0$.
Let us take an $\FF_q$-point $P=(a_1,a_2)\in U$. Let $u_1, u_2$ be the affine coordinates on $U$ given by $u_i=t_i-a_i$ for $i=1,2$. Then we can write
$$\eta|_U= \sum_{i=1}^2\left((u_i^{q^2}-1) - F(1, a_1+u_1, a_2+u_2)(u_i^{q-1}-1)\right) u_i\frac{\partial}{\partial u_i}. 
$$
Let $V\subset U$ be an open subset containing only one $\FF_q$-rational point $P$ and let $\tilde V=\pi^{-1} (V)$.
On an affine chart of $\tilde V$ with coordinates $s_1,s_2$ such that $\pi^*u_1=s_1$ and $\pi^*u_2=s_1s_2$ we can write
\begin{eqnarray*}
\pi^*\eta|_{\tilde V}=&\left((s_1^{q^2}-1) - F(1, a_1+s_1, a_2+s_1s_2)(s_1^{q-1}-1)\right) s_1\frac{\partial}{\partial s_1}
\\
&+
 s_1^{q-1}\left( s_1^{q^2-q}(s_2^{q^2-1}-1) -F(1, a_1+s_1, a_2+s_1s_2) (s_2^{q-1}-1)  \right)s_2\frac{\partial}{\partial s_2}
\end{eqnarray*} 
$\pi ^* \eta$ defines the map $ \cO _{ X}(    -(q^2-1)H) \to T_{X}(-\log D)$
which defines splitting of  the sequence (\ref{seq}) on $\tilde V$ if and only if   $\left((s_1^{q^2}-1) - F(1, a_1+s_1, a_2+s_1s_2)(s_1^{q-1}-1)\right)$ vanishes up to order $(q-1)$ along the exceptional divisor given by $s_1=0$ (for all $P\in \PP^2(\FF_q)$).
This happens (on $\tilde V$) if and only if $F(1,a_1,a_2)=1$. It follows that  the sequence (\ref{seq}) splits if and only if there exists 
a homogeneous polynomial $F\in k[x_0,x_1,x_2]$ of degree $q^2-q$ such that
$$F(a_0,a_1,a_2)=1$$
for all $(a_0,a_1,a_2)\in \FF_q^3-\{0\}$ (note that  if $a_0\ne 0$ then $F(a_0,a_1,a_2)=a_0^{q^2-q}F(1,a_1/a_0,a_2/a_0)=1$; similarly we have equalities for other charts).
If $q=2$ then such a polynomial does not exists as every conic defined over $\FF_q$ has an $\FF_q$-rational point 
(this follows from Chevalley--Warning theorem). 
Now we claim that if $q\ne 2$ then there exists a degree $q$ plane projective curve defined over $\FF_q$ with no 
$\FF_q$-rational points. This is equivalent to finding a homogeneous polynomial $G\in k[x_0,x_1,x_2]$ of degree $q$ 
that has no zeroes in $k^3-\{0\}$. 
To find it let us choose an extension $k\subset k'$ of degree $q$. Then the norm map $N_{k'/k}: k'\to k$ is given by a homogenous polynomial of degree $q$ in $q$ variables. Restricting it to a $3$-dimensional $k$-vector subspace of $k'$
and choosing some coordinates we get the required polynomial $G$.
Now the required homogeneous polynomial can be obtained  by setting $F=G^{q-1}$. 
\end{proof}

\medskip

\begin{remark}
Note that if $q>2$ then there exists only one polynomial  $F\in k[x_0,x_1,x_2]$ of degree $q^2-q$ such that
$$F(a_0,a_1,a_2)=1$$
for all $(a_0,a_1,a_2)\in \FF_q^3-\{0\}$.
This is non-obvious from the construction of this polynomial  
and it follows from uniqueness (up to multiplication by a constant) of splitting of the sequence (\ref{seq}).
\end{remark}

\medskip

Note that $\tilde B$ is a sum of $(q^2+q+1)$ curves $\tilde L_i\simeq \PP^1$
with self-intersection $\tilde L_i^2=-q$.  Let $f:{ X}\to Y$ be the
contraction of $\tilde L_1,..., \tilde L_{q^2+q+1}$ to a set of points
$Q_1,...,Q_{q^2+q+1}$.  Let us also set $E_Y=f(E)$. Since $E_Y$ is an
effective $\QQ$-Cartier Weil divisor and $\rho (Y)=1$, it is also
ample.  Clearly, $Y$ has $\QQ$-factorial klt singularities at $Q_i$
of the type considered in Section \ref{cone}. We show that $\hat \Omega_Y$
contains an ample line bundle. Since $\Omega _X$ does not contain big
line bundles, we see that $f_*\Omega_X\ne \hat \Omega_Y$ as predicted
by Proposition \ref{main-2'}.

\begin{proposition} \label{B_2} The sheaf $f_*\Omega_{ X}(\log \tilde B)$ is reflexive and we
have a short exact sequence
$$0\to \cO _Y(B_1)\to \hat \Omega_Y\to \cO _Y(B_2)\to 0,$$
where $B_1$ is an ample Cartier divisor and $B_2$ is an anti-ample $\QQ$-Cartier Weil divisor.
\end{proposition}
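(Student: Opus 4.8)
The plan is to push the second short exact sequence of Proposition \ref{foliations},
$$0\to \cO_X((q^2-1)H-(q-1)E)\to \Omega_X(\log \tilde B)\to \cO_X((q-1)H-E)\to 0,$$
forward along $f$, and to control each term through the geometry of the contracted curves $\tilde L_i$. First I would record the relevant intersection numbers on $X$. Each of the $N:=q^2+q+1$ lines $L_i$ passes through exactly $q+1$ of the blown-up points, and the strict transforms $\tilde L_1,\dots ,\tilde L_N$ are pairwise disjoint smooth rational curves; hence $\tilde L_i\cdot H=1$, $\tilde L_i\cdot E=q+1$ and $\tilde L_i^2=-q$. The key numerical coincidence is that $\cO_X((q^2-1)H-(q-1)E)$ restricts to $\cO_{\PP^1}$ on every $\tilde L_i$, its degree there being $(q^2-1)-(q-1)(q+1)=0$, while the conormal bundle of $\tilde L_i$ in $X$ is $\cO_{\PP^1}(q)$.

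From this triviality on the fibres of $f$ I would extract two facts. By the theorem on formal functions, filtering the infinitesimal neighbourhoods of $\tilde L_i$ — whose successive graded pieces are $\cO_{\PP^1}(mq)$ with $m\ge 0$, all with vanishing $H^1$ — one gets $R^1f_*\cO_X((q^2-1)H-(q-1)E)=0$; and, by cohomology and base change together with $f_*\cO_X=\cO_Y$, the sheaf $f_*\cO_X((q^2-1)H-(q-1)E)$ is invertible, say $\cO_Y(B_1)$, with $f^*\cO_Y(B_1)=\cO_X((q^2-1)H-(q-1)E)$, so that $B_1$ is Cartier. Pushing the displayed sequence forward then yields
$$0\to \cO_Y(B_1)\to f_*\Omega_X(\log \tilde B)\to f_*\cO_X((q-1)H-E)\to 0.$$
Next I would check that $f_*\cO_X((q-1)H-E)$ is reflexive. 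Since $((q-1)H-E+n\tilde L_i)\cdot\tilde L_i=-2-nq<0$ for every $n\ge 0$, the exact sequences $0\to \cO_X((q-1)H-E+n\tilde L_i)\to \cO_X((q-1)H-E+(n+1)\tilde L_i)\to \cO_{\tilde L_i}((q-1)H-E+(n+1)\tilde L_i)\to 0$ together with $H^0(\PP^1,\cO(-2-(n+1)q))=0$ show that sections of $\cO_X((q-1)H-E)$ over a punctured neighbourhood of $\tilde L_i$ extend across $\tilde L_i$, i.e. $f_*\cO_X((q-1)H-E)$ satisfies $S_2$ at each $Q_i$; being also torsion free it is reflexive, hence of the form $\cO_Y(B_2)$ for a Weil divisor $B_2$, which is $\QQ$-Cartier since $Y$ is $\QQ$-factorial (its singularities are cones over $(\PP^1,\cO_{\PP^1}(q))$).

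It then follows formally that $f_*\Omega_X(\log\tilde B)$ is reflexive: it is an extension of the reflexive sheaf $\cO_Y(B_2)$ by the invertible sheaf $\cO_Y(B_1)$, so it is torsion free, and the depth estimate $\operatorname{depth}_y f_*\Omega_X(\log\tilde B)\ge \min\bigl(\operatorname{depth}_y\cO_Y(B_1),\operatorname{depth}_y\cO_Y(B_2)\bigr)$ gives condition $S_2$. Since $f$ is an isomorphism over the smooth locus $U=Y\setminus\{Q_1,\dots,Q_N\}$ and $\tilde B$ is contracted into $\{Q_i\}$, the restriction of $f_*\Omega_X(\log\tilde B)$ to $U$ is $\Omega_U$; as both $f_*\Omega_X(\log\tilde B)$ and $\hat\Omega_Y$ are reflexive and agree off a set of codimension $2$, they coincide, and the preceding sequence becomes the asserted one. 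For the (anti)ampleness I would use $\rho(Y)=1$ and the relation $f^*E_Y=E+\tfrac{q+1}{q}\tilde B$ (forced by $f^*E_Y\cdot\tilde L_i=0$, with $E\cdot\tilde B=N(q+1)$, $\tilde B^2=-Nq$, $H\cdot\tilde B=N$) to compute $B_1\cdot E_Y=N(q-1)>0$ and $B_2\cdot E_Y=-\tfrac{N(q+2)}{q}<0$; since $E_Y$ is ample and generates $\NS(Y)_{\QQ}$, the first forces $B_1$ to be ample and the second forces $B_2$ to be anti-ample.

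The hard part is precisely the reflexivity of $f_*\Omega_X(\log\tilde B)$: this is where the logarithmic poles along the \emph{contracted} divisor $\tilde B$ are indispensable, and it is exactly the property that fails for $f_*\Omega_X$ (compare Proposition \ref{main-2} and the absence of big line bundles in $\Omega_X$). Everything hinges on $\cO_X((q^2-1)H-(q-1)E)\cdot\tilde L_i=0$, which at once kills $R^1f_*$ of the sub-line-bundle and makes its push-forward an honest line bundle; for $\Omega_X$ without logs the corresponding sub-bundle $\cO_X(-(q+2)H+2E)$ has positive degree $q$ on each $\tilde L_i$, so that $f_*$ of it acquires extra sections near the $Q_i$ and is a non-reflexive, ideal-sheaf-type object, which is what destroys $S_2$ of $f_*\Omega_X$.
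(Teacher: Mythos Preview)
Your proof is correct and follows essentially the same route as the paper: push forward the second sequence of Proposition~\ref{foliations}, use the vanishing $((q^2-1)H-(q-1)E)\cdot\tilde L_i=0$ to get that the sub-line-bundle pushes to an invertible sheaf with $R^1f_*=0$, verify reflexivity of the quotient's push-forward, and read off (anti)ampleness from $\rho(Y)=1$. The only cosmetic differences are that the paper quotes Lemma~\ref{Wahl's reflexivity} for the reflexivity of $f_*\cO_X((q-1)H-E)$ where you reprove it by hand via the negativity $((q-1)H-E+n\tilde L_i)\cdot\tilde L_i<0$, and that the paper uses the relation $(q^2+q+1)H\sim\tilde B+(q+1)E$ to write $B_1\sim_{\QQ}\tfrac{q^2-q}{q^2+q+1}E_Y$ and $B_2\sim_{\QQ}-\tfrac{q+2}{q^2+q+1}E_Y$ directly, rather than going through $f^*E_Y$ and intersection numbers.
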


\begin{proof}
Since we have
$$((q^2-1)H-(q-1)E)\tilde L_i=0,$$
locally around $\tilde L_i$, the sheaf $\cO_{{ X}}((q^2-1)H-(q-1)E)$ is isomorphic to $\cO_{ X}$. In particular,
$f_*\cO _{ X}((q^2-1)H-(q-1)E)$ is locally free and  $R^1f_*\cO _{ X}((q^2-1)H-(q-1)E)=0$. Since
$$((q-1)H-E)\tilde L_i=-2,$$
Lemma \ref{Wahl's reflexivity} shows that $f_* \cO _{ X}((q-1)H-E)$ is reflexive.
Let us recall that $(q^2+q+1)H\sim \tilde B+(q+1)E$. Hence we have
$$(q^2-1)H-(q-1)E\sim _{\QQ} \frac{q^2-q}{q^2+q+1}E+\frac{q^2-1}{q^2+q+1}{\tilde B}$$
and
$$(q-1)H-E\sim _{\QQ} -\frac{q+2}{q^2+q+1}E+\frac{q-1}{q^2+q+1}{\tilde B}.$$
Therefore
$B_1=f_*((q^2-1)H-(q-1)E)\sim_{\QQ}\frac{q^2-q}{q^2+q+1}E_Y$
is ample and
$B_2=f_*((q-1)H-E)\sim_{\QQ}-\frac{q+2}{q^2+q+1}E_Y$
is anti-ample.  Now the required short exact sequence can be obtained  by pushing down the second exact sequence from Proposition \ref{foliations}.
\end{proof}

The above proposition completes the proof of Theorem \ref{B-S-failure}.

\bigskip

\begin{example}
  
  Now we can use the $1$-foliations from Section
  \ref{section-foliation} to give a simple interpretation of Theorem
  \ref{main-3} in the case $n=2$ and $k=\FF_p$. Let us recall that the
  map $\pi : \tilde X=X\to \PP^2$ is the blow up along $\PP^2(\FF_p)$
  and the map $f$ is the contraction of all the strict transforms of
  the $\FF_p$-lines on $\PP^2$. The remaining maps can be identified
  as follows.  The map $\varphi: \PP^2\to Y$ is the quotient by
  $1$-foliation defined by $\cF_1\subset T_{\PP ^2}$, the map
  $\varphi_X: X\to X$ is the quotient by the $1$-foliation $\tilde
  \cF_1\subset T_{X}$ and the map $\varphi_Y: Y\to \PP^2$ is the
  quotient by the $1$-foliation $\cO_Y(-B_2)\subset T_Y$ coming by
  dualization from Proposition \ref{B_2}.  The necessary computations
  are left to the reader or they can be found in the preprint version
  of the paper.
\end{example}

\medskip

Note that if $q=p$ then $(p+1)H-E$ and $H$ are both globally generated
and they define maps contracting different curves. Therefore
$a((p+1)H-E)+bH$ is ample for any positive real numbers $a$ and $b$.
In particular $A=(p+\sqrt{p}+1)H-E$ is ample. One can easily check
that $\varphi_X^*A=\sqrt{p}\, A$, so $\varphi_X:X\to X$ is a polarized
endomorphism. However, this polarization is an $\RR$-divisor and it
cannot be chosen to be a $\QQ$-divisor.

\medskip

In fact, we have the following more general lemma suggested by De-Qi Zhang:

\begin{lemma}\label{De-Qi}
Let $\varphi: X\to X$ be an endomorphism of a scheme $X$ and let us assume that for some $n\ge 1$, $\lambda \in \RR$ and some line bundle $L$ we have $(\varphi ^n)^*L\equiv \lambda ^n L$. If we set   $M=(L+\frac{1}{\lambda}\varphi ^*L+...+\frac{1}{\lambda^{n-1}}(\varphi^{n-1}) ^*L)$ then
we have $\varphi^* M\equiv{\lambda} M$. In particular, if $\varphi^n$ is polarized then $\varphi$ is also polarized.
\end{lemma}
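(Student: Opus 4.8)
The plan is to prove the numerical identity $\varphi^* M \equiv \lambda M$ by a direct telescoping computation, carried out in the real Néron--Severi group $\NS(X)_{\RR}$. Note first that $M$ only makes sense as a class in $\NS(X)_{\RR}$, since its definition divides the classes $(\varphi^i)^* L$ by powers of $\lambda$. I would record at the outset the two formal facts that make everything work: pullback along $\varphi$ is $\RR$-linear on $\NS(X)_{\RR}$, and $(\varphi^j)^*\circ(\varphi^i)^* = (\varphi^{i+j})^*$ on numerical classes, so that $\varphi^*$ shifts the terms of $M$ by one.

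With these in hand the computation is immediate. Writing $M = \sum_{i=0}^{n-1}\lambda^{-i}(\varphi^i)^*L$ and applying $\varphi^*$ term by term gives
\[
\varphi^* M \equiv \sum_{i=0}^{n-1}\frac{1}{\lambda^{i}}(\varphi^{i+1})^*L = \sum_{j=1}^{n}\frac{1}{\lambda^{j-1}}(\varphi^{j})^*L .
\]
In the summand with $j=n$ I substitute the hypothesis $(\varphi^{n})^*L \equiv \lambda^{n}L$, which replaces it by $\lambda L$; the remaining summands, those with $1\le j\le n-1$, are exactly $\lambda$ times the summands of $M-L$. Hence $\varphi^* M \equiv \lambda L + \lambda(M-L) = \lambda M$, as claimed. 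There is essentially no obstacle here: the only point that requires any care is the bookkeeping of the exponents of $\lambda$ and the observation that the statement is a statement about $\RR$-divisor classes.

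For the final sentence, suppose $\varphi^{n}$ is polarized, say $(\varphi^{n})^*L \equiv \mu L$ for an ample line bundle $L$ and a real number $\mu>1$; set $\lambda=\mu^{1/n}>1$, so that $(\varphi^{n})^*L \equiv \lambda^{n}L$ and the lemma applies. Since a polarized endomorphism is finite and $\varphi^{n}=\varphi^{n-1}\circ\varphi$, the morphism $\varphi$ is itself finite, so each $(\varphi^{i})^*L$ is ample, whence $M$ is a positive $\RR$-linear combination of ample classes, i.e.\ an ample $\RR$-divisor class. Then $\varphi^* M\equiv\lambda M$ with $\lambda>1$ exhibits $\varphi$ as a polarized endomorphism. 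I would close by noting that this is precisely the mechanism underlying the earlier remark that $A=(p+\sqrt p+1)H-E$ satisfies $\varphi_X^*A\equiv\sqrt p\,A$.
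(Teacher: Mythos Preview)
Your proof is correct and is precisely the intended telescoping computation; the paper itself omits the argument, stating only that it is ``an easy computation left to the reader.'' The additional remarks you make on the polarized case (finiteness of $\varphi$ via cancellation from finiteness of $\varphi^n$, and ampleness of $M$ as a positive combination of pullbacks of $L$) fill in details the paper does not spell out.
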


The proof is an easy computation left to the reader.

\medskip

Let $\varphi: X\to X$ be an endomorphism and let $H$ be an ample divisor such that $\varphi ^*H\equiv \lambda H$. Let us set $X^n:= \underbrace{X\times ...\times X}_{n}$.
Then we can consider the map $\tilde \varphi : X^n \to X^n$
given by
$$\tilde \varphi (x_1,...,x_n)=(\varphi (x_n), x_1,...,x_{n-1}).$$
Let us set $A=(H,...,H)$. Then we have $\tilde{\varphi}^n
=\underbrace{\varphi \times ...\times \varphi}_{n}$ and
$(\tilde{\varphi}^n)^*A\equiv \lambda A$.  Therefore
$(\tilde{\varphi}^n)^*A\equiv \lambda A$ and Lemma \ref{De-Qi} implies
that there exists an ample $\RR$-divisor $\tilde A$ such that
$\tilde{\varphi}^*\tilde A\equiv \sqrt[n]{\lambda}\cdot \tilde A$.
Clearly, the same equality holds if we restrict $\tilde A$ to any
subvariety $Y\subset X^n$ preserved by $\tilde{\varphi}$. However, it
is usually difficult to find such a subvariety.

The morphism $\varphi_X:X\to X$ from Theorem \ref{main-3} is obtained
by this construction applied to the Frobenius morphism $\Fr _{\PP
  ^n}:\PP^n\to \PP^n$.

\section{A new non-liftable Calabi--Yau threefold in characteristic $2$}

Let us consider the $3$-dimensional case, i.e. $n=3$, and let $\tilde f: \tilde X\to \tilde Y=\tilde X/\tilde \cF_{2}$ be the quotient by $1$-foliation.  The main aim of this section is to prove that for $p=q=2$ the obtained variety $\tilde Y$
is a new example of a smooth projective Calabi--Yau $3$-fold that does not lift to characteristic $0$ (see Theorem \ref{new-CY'}). To study this quotient we need some preparatory lemmas.

\begin{lemma} \label{seq-fol-dim3}
We have a short exact sequence
$$0\to \cO_{\tilde X} \left(  -(q-1)H +D_{V}^{3}\right)  \to \tilde \cF _2\to \cO_{\tilde X} \left(  -(q^2-1)H +(q-1)D_V^3+ D_{V}^{2}\right) \to 0.$$
\end{lemma}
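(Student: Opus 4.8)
The plan is to obtain the asserted sequence as the natural filtration sequence $0\to \tilde\cF_1\to \tilde\cF_2\to \tilde\cF_2/\tilde\cF_1\to 0$ of the induced foliations on $\tilde X$ in the case $n=3$, and then to identify all three terms as line bundles using Propositions \ref{dim-1-foliation} and \ref{codim-1-foliation} together with a determinant computation.

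First I would note that on $\PP(V)$ we have $\cF_1\subset \cF_2\subset T_{\PP(V)}$ (the filtration of Section \ref{section-foliation}), and that passing to $\tilde X$ preserves this inclusion: since $\tilde\cF_2$ is saturated in $T_{\tilde X}$ and contains $\pi^*\cF_2\cap T_{\tilde X}\supseteq \pi^*\cF_1\cap T_{\tilde X}$, the saturation $\tilde\cF_1$ of the latter satisfies $\tilde\cF_1\subseteq \tilde\cF_2$. Next, by Proposition \ref{dim-1-foliation} (with $n=3$) the rank-$1$ foliation $\tilde\cF_1$ is smooth, so $T_{\tilde X}/\tilde\cF_1$ is locally free of rank $2$; by Proposition \ref{codim-1-foliation} (where $\tilde\cF_{n-1}=\tilde\cF_2$ when $n=3$) the rank-$2$ foliation $\tilde\cF_2$ is smooth, so $T_{\tilde X}/\tilde\cF_2$ is invertible. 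Hence $\tilde\cF_1$ and $\tilde\cF_2$ are subbundles of $T_{\tilde X}$, the quotient $\cQ:=\tilde\cF_2/\tilde\cF_1$ fits into an exact sequence of locally free sheaves $0\to\cQ\to T_{\tilde X}/\tilde\cF_1\to T_{\tilde X}/\tilde\cF_2\to 0$, in particular $\cQ$ is a line bundle, and we obtain the short exact sequence $0\to \tilde\cF_1\to \tilde\cF_2\to \cQ\to 0$.

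It then remains to identify the line bundles. Proposition \ref{dim-1-foliation} gives $\tilde\cF_1\simeq \cO_{\tilde X}(-(q-1)H+D_V^3)$, the asserted subsheaf. For $\cQ$ I would take determinants in the sequence above, so that $\cQ\simeq \det(T_{\tilde X}/\tilde\cF_1)\otimes \det(T_{\tilde X}/\tilde\cF_2)^{\vee}$. Using $\det T_{\tilde X}=\cO_{\tilde X}(-K_{\tilde X})$ with $-K_{\tilde X}=4H-D_V^2-2D_V^3$ (the blow-up formula recalled in Section \ref{section-foliation}) one gets $\det(T_{\tilde X}/\tilde\cF_1)=\cO_{\tilde X}(-K_{\tilde X})\otimes\tilde\cF_1^{\vee}=\cO_{\tilde X}((q+3)H-D_V^2-3D_V^3)$, while Proposition \ref{codim-1-foliation} with $n=3$ (after the elementary simplifications $\frac{q^3+q-2}{q-1}=q^2+q+2$, $\frac{q+q-2}{q-1}=2$, $\frac{q^2+q-2}{q-1}=q+2$) gives $\det(T_{\tilde X}/\tilde\cF_2)=\cO_{\tilde X}((q^2+q+2)H-2D_V^2-(q+2)D_V^3)$. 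Subtracting the corresponding divisors yields $\cQ\simeq \cO_{\tilde X}(-(q^2-1)H+D_V^2+(q-1)D_V^3)$, which is exactly the asserted quotient.

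I do not expect a serious obstacle; the two points deserving care are (i) that $\tilde\cF_1$ and $\tilde\cF_2$ are genuine subbundles of $T_{\tilde X}$, which is precisely the content of the smoothness assertions in Propositions \ref{dim-1-foliation} and \ref{codim-1-foliation} and is what makes the determinant bookkeeping legitimate, and (ii) the routine arithmetic with the quotients $\frac{q^i+q-2}{q-1}$. If one prefers not to invoke smoothness of $\tilde\cF_2$, the same sequence can instead be verified directly in the blow-up chart $\tilde U=D(h_3)\subset \Spec k[s_1,s_2,s_3]$ of Section \ref{section-foliation}, by factoring the appropriate common monomial out of $\pi^*\delta_2$ working modulo $\tilde\cF_1$; this reproves smoothness of $\tilde\cF_2$ along the way and pins down $\cQ$ locally, at the cost of a more computational argument.
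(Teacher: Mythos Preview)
Your argument is correct and essentially identical to the paper's: both use the smoothness of $\tilde\cF_1$ and $\tilde\cF_2$ (Propositions \ref{dim-1-foliation} and \ref{codim-1-foliation}) to see that $\tilde\cF_2/\tilde\cF_1$ is a line bundle via the sequence $0\to \tilde\cF_2/\tilde\cF_1\to T_{\tilde X}/\tilde\cF_1\to T_{\tilde X}/\tilde\cF_2\to 0$, and then identify it by a determinant computation. The only cosmetic difference is that the paper computes $\cQ\simeq(\det\tilde\cF_2)\otimes\tilde\cF_1^{-1}$ using the formula for $c_1(\det\tilde\cF_{n-1})$ displayed after Proposition \ref{codim-1-foliation}, whereas you compute the equivalent $\cQ\simeq\det(T_{\tilde X}/\tilde\cF_1)\otimes\det(T_{\tilde X}/\tilde\cF_2)^{\vee}$.
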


\begin{proof}
By  Proposition \ref{codim-1-foliation} and Proposition \ref{dim-1-foliation} we know that $T_{\tilde X}/\tilde \cF_i$ are locally free for $i=1,2$. Therefore the sequence
$$0\to \tilde \cF_2/\tilde \cF_1 \to T_{\tilde X}/\tilde \cF_1\to T_{\tilde X}/\tilde \cF_2\to 0$$
shows that $\tilde \cF_2/\tilde \cF_1$ is a line bundle.
Let us recall that the same propositions imply that $\tilde \cF_1\simeq  \cO_{\tilde X}( -(q-1)H+D_{V}^3) $ and
$$
\det \tilde \cF_2\simeq \cO_{\tilde X}( -(q-1)(q+2)H+q\,
D_{V}^3+D_{V}^2).
$$
Hence the required sequence follows from
$$\tilde \cF_2/\tilde \cF_1\simeq \det (\tilde \cF_2/\tilde \cF_1)\simeq (\det \tilde \cF_2)\otimes \tilde \cF_1^{-1}\simeq \cO_{\tilde X} \left(  -(q^2-1)H +(q-1)D_V^3+ D_{V}^{2}\right) . $$
\end{proof}

\medskip

\begin{lemma}\label{H^1}
$\tilde Y$ is a smooth projective
$3$-fold  with  $H^1(\cO_{\tilde Y})=0$.
\end{lemma}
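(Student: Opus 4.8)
The plan is to establish the two assertions separately, the first being essentially formal and the second the real point. For smoothness: by Proposition \ref{codim-1-foliation} the foliation $\tilde\cF_2=\tilde\cF_{n-1}$ (here $n=3$) is a smooth $1$-foliation on the smooth variety $\tilde X$, and the quotient of a smooth variety by a smooth $1$-foliation is again smooth (this is well known); $\tilde Y$ is moreover projective, since the canonical map $g\colon\tilde Y\to\tilde X$ with $g\circ\tilde f=F_{\tilde X}$ is finite and $\tilde X$ is projective. It remains to prove $H^1(\cO_{\tilde Y})=0$.

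Here I would run a cohomology chase. Lemma \ref{foliation-sequence} provides an exact sequence $0\to\cO_{\tilde Y}\to\tilde f_*\cO_{\tilde X}\to\tilde f_*(\tilde\cF_2^{*})$; let $\cQ$ denote the cokernel of the first map, a subsheaf of $\tilde f_*(\tilde\cF_2^{*})$. Since $\tilde f$ is finite one has $H^i(\tilde Y,\tilde f_*\cG)=H^i(\tilde X,\cG)$ for every coherent $\cG$. As $\tilde X$ is obtained from $\PP(V)=\PP^3$ by successively blowing up smooth centres of codimension $\ge 2$ — the $k$-rational points and then the strict transforms of the $k$-rational lines (Section \ref{wonder-section}) — we have $\tilde\pi_*\cO_{\tilde X}=\cO_{\PP^3}$ and $R^i\tilde\pi_*\cO_{\tilde X}=0$ for $i>0$, so $H^1(\tilde X,\cO_{\tilde X})=H^1(\PP^3,\cO_{\PP^3})=0$. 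Also $H^0(\cO_{\tilde Y})=H^0(\tilde X,\cO_{\tilde X})$ (both equal the base field, as $\tilde Y$ is reduced and connected with the same underlying space as the geometrically integral $\tilde X$) and the inclusion $\cO_{\tilde Y}\hookrightarrow\tilde f_*\cO_{\tilde X}$ induces an isomorphism on $H^0$. The long exact sequence of $0\to\cO_{\tilde Y}\to\tilde f_*\cO_{\tilde X}\to\cQ\to 0$ then gives $H^1(\cO_{\tilde Y})\cong H^0(\tilde Y,\cQ)\hookrightarrow H^0(\tilde X,\tilde\cF_2^{*})$, so it is enough to show $H^0(\tilde X,\tilde\cF_2^{*})=0$.

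To prove the latter, dualize the short exact sequence of Lemma \ref{seq-fol-dim3}; since all three terms are locally free this yields
\[
0\to\cO_{\tilde X}\big((q^2-1)H-(q-1)D_V^3-D_V^2\big)\to\tilde\cF_2^{*}\to\cO_{\tilde X}\big((q-1)H-D_V^3\big)\to 0,
\]
and it suffices that both outer line bundles have no sections. After pushing forward along $\tilde\pi\colon\tilde X\to\PP^3$, a section of $\cO_{\tilde X}((q-1)H-D_V^3)$ is a degree $q-1$ form on $\PP^3$ vanishing at every $k$-rational point, i.e.\ a degree $q-1$ element of $I(Z_3)$; by Proposition \ref{codim-2} this ideal is generated by the forms $\Delta_q(x_{\hat I})$ with $|I|=2$, of degree $q+1>q-1$, so there is none. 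Similarly, a section of $\cO_{\tilde X}((q^2-1)H-(q-1)D_V^3-D_V^2)$ is a degree $q^2-1$ form vanishing, in particular, on every $k$-rational line, i.e.\ a degree $q^2-1$ element of $I(Z_2)$, which by Proposition \ref{codim-2} is generated by the $\Delta_q(x_{\hat I})$ with $|I|=1$, of degree $q^2+q+1>q^2-1$; so again there is none. Hence $H^0(\tilde X,\tilde\cF_2^{*})=0$, and therefore $H^1(\cO_{\tilde Y})=0$.

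The step needing some care — the main place where something could go wrong — is the translation in the previous paragraph between the twisted sheaves $\cO_{\tilde X}(aH-\sum b_iD_V^i)$ and vanishing conditions on forms on $\PP^3$; concretely, one must check that $\tilde\pi_*\cO_{\tilde X}(-D_V^3)=I_{Z_3}$ and $\tilde\pi_*\cO_{\tilde X}(-D_V^2)=I_{Z_2}$. This is verified by a local analysis of the two-step blow-up $\tilde X\to\Bl_{Z_3}\PP^3\to\PP^3$: each $D_L$ is the reduced exceptional divisor lying over $L$, and the strict transforms of the $k$-rational lines meet the exceptional divisors over $k$-rational points transversally, so the pullback of a homogeneous form acquires no unexpected multiplicities along the $D_L$. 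Granting this (and the cited smoothness of foliation quotients), the whole argument is a formal consequence of Lemmas \ref{foliation-sequence} and \ref{seq-fol-dim3} together with Proposition \ref{codim-2}.
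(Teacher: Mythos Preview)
Your proof is correct and follows essentially the same approach as the paper: smoothness from the smoothness of $\tilde\cF_2$, then reduction via Lemma \ref{foliation-sequence} and $H^1(\tilde X,\cO_{\tilde X})=0$ to showing $H^0(\tilde\cF_2^{*})=0$, which is deduced from the dual of Lemma \ref{seq-fol-dim3} together with the inclusions $\tilde\pi_*\cO_{\tilde X}((q-1)H-D_V^3)\subset I_{Z_3}(q-1)$ and $\tilde\pi_*\cO_{\tilde X}((q^2-1)H-(q-1)D_V^3-D_V^2)\subset I_{Z_2}(q^2-1)$ and degree reasons. The only cosmetic difference is that the paper cites Corollary \ref{Eagon-Northcott} for the vanishing of $H^0(I_{Z_c}(\cdots))$ while you invoke the generator degrees from Proposition \ref{codim-2} directly.
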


\begin{proof}
Smoothness of $\tilde Y$ is a consequence of the fact that
 $\tilde \cF _2$ is a smooth $1$-foliation.

By Lemma \ref{seq-fol-dim3} we have a short exact sequence
$$0\to \cO_{\tilde X} \left(  (q^2-1)H -(q-1)D_V^3- D_{V}^{2}\right)  \to \tilde \cF _2^*\to \cO_{\tilde X} \left(  (q-1)H-D_V^3\right) \to 0.$$
But $\pi_*\cO_{\tilde X} \left(  (q^2-1)H -(q-1)D_V^3- D_{V}^{2}\right))\subset I_{Z_2}(q^2-1)$ and by Corollary \ref{Eagon-Northcott}
$H^0( I_{Z_2}(q^2-1))=0$, so $H^0( \cO_{\tilde X} \left(  (q^2-1)H -(q-1)D_V^3- D_{V}^{2}\right))=0$.
Similarly, we have $\pi_* \cO_{\tilde X} \left(  (q-1)H-D_V^3\right) \subset I_{Z_3}(q-1)$ and again by  Corollary \ref{Eagon-Northcott} we have $H^0( I_{Z_3}(q-1))=0$, so $H^0( \cO_{\tilde X} \left(  (q-1)H-D_V^3\right))=0$.
It follows that $H^0( \tilde \cF_2 ^*)=0$. Now by Lemma \ref{foliation-sequence} we have
an exact sequence
$$0\to \cO_{\tilde Y}\to \tilde f_*\cO_{\tilde X}\to \tilde  f_* \tilde \cF_2 ^*.$$
This shows that $H^0(\tilde f_*\cO_{\tilde X}/\cO_{\tilde Y})\subset H^0(\tilde f_* \tilde \cF_2 ^*)= H^0( \tilde \cF_2 ^*)=0$. But we have an exact sequence
$$H^0(\tilde f_*\cO_{\tilde X}/\cO_{\tilde Y})\to H^1(\cO_{\tilde Y})\to H^1(\tilde f_*\cO_{\tilde X})=0,$$
so $H^1(\cO_{\tilde Y})=0.$
\end{proof}

\medskip

There exists a radicial degree $p$ map $\tilde g:  \tilde Y\to \tilde X ^{(1)} $ such that
$\tilde g \circ \tilde f =F_{\tilde X/k}$.
We have $T _{\tilde X/\tilde Y}=\tilde \cF$ and  the sheaf $\cL:=T_{\tilde Y/\tilde X^{(1)}}$ is a line bundle on $\tilde Y$. We also have two short exact sequences
$$0\to \tilde \cF_2\to T_{\tilde X}\to \tilde f^*\cL\to 0$$
and
$$0\to \cL \to T_{\tilde Y}\to {(\sigma\circ \tilde g)}^*\tilde \cF_2\to 0,$$
where $\sigma:  \tilde X ^{(1)} \to \tilde X$ is the canonical map coming from base change by the Frobenius over the base field.

\begin{lemma}\label{H^0T}
We have $H^0(T_{\tilde Y})=0$.
\end{lemma}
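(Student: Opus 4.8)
The idea is to leverage the two short exact sequences just recorded together with the vanishing results already obtained. From the second sequence
$$0\to \cL \to T_{\tilde Y}\to {(\sigma\circ \tilde g)}^*\tilde \cF_2\to 0$$
we get an injection $H^0(\tilde Y, T_{\tilde Y})\hookrightarrow H^0(\tilde Y, {(\sigma\circ \tilde g)}^*\tilde \cF_2)$ once we know $H^0(\tilde Y,\cL)=0$. So the proof naturally splits into two tasks: (1) show $H^0(\tilde Y,\cL)=0$, and (2) show $H^0(\tilde Y, {(\sigma\circ \tilde g)}^*\tilde \cF_2)=0$.

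\emph{Step 1: controlling $\cL$.} Since $\tilde g\circ\tilde f=F_{\tilde X/k}$ and $T_{\tilde X/\tilde Y}=\tilde \cF_2$, a local computation with the factorization of Frobenius (as in Lemma \ref{Q-factoriality} and the discussion preceding Lemma \ref{foliation-sequence}) identifies $\cL=T_{\tilde Y/\tilde X^{(1)}}$ explicitly. Concretely, pulling back by $\tilde f$ and using the first sequence $0\to \tilde \cF_2\to T_{\tilde X}\to \tilde f^*\cL\to 0$ together with $\det T_{\tilde X}=-K_{\tilde X}$ and the formula for $\det\tilde\cF_2$ from Lemma \ref{seq-fol-dim3}, one computes $\tilde f^*\cL\simeq \cO_{\tilde X}((q^2+q+1)H-q D_V^3 - D_V^2 + K_{\tilde X}^{-1}\otimes(\det\tilde\cF_2))$ — more precisely $\tilde f^*\cL \simeq \cO_{\tilde X}$-twist whose associated class, upon pushing to $\PP^3$, has negative degree. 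The cleanest route: since $\tilde f$ is finite, $H^0(\tilde Y,\cL)\subset H^0(\tilde Y,\tilde f_*\tilde f^*\cL)=H^0(\tilde X,\tilde f^*\cL)$, and $\tilde f^*\cL$ is (a line bundle whose) $\pi$-pushforward lands in an ideal sheaf $I_{Z_c}(m)$ with $m$ small enough that $H^0=0$ by Corollary \ref{Eagon-Northcott}, exactly as in the proof of Lemma \ref{H^1}. So Step 1 reduces to an explicit divisor-class bookkeeping plus one application of Corollary \ref{Eagon-Northcott}.

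\emph{Step 2: the pulled-back foliation.} Here I would again use finiteness of $\sigma\circ\tilde g:\tilde Y\to \tilde X^{(1)}$ (it is a composition of a radicial degree $p$ map and the Frobenius base-change map, hence finite) to get $H^0(\tilde Y,(\sigma\circ\tilde g)^*\tilde\cF_2)\subset H^0(\tilde X^{(1)},(\sigma\circ\tilde g)_*(\sigma\circ\tilde g)^*\tilde\cF_2)$. Since $\tilde g\circ\tilde f=F_{\tilde X/k}$ and $\tilde f_*\cO_{\tilde X}$ contains $\cO_{\tilde Y}$ as a direct summand's worth of a subsheaf, one has $(\sigma\circ\tilde g)^*\tilde\cF_2$ related by the projection formula to $F_{\tilde X}^*$ of a twist of $\tilde\cF_2$; the upshot is that $H^0(\tilde Y,(\sigma\circ\tilde g)^*\tilde\cF_2)$ embeds into $H^0(\tilde X, \sigma^*\tilde\cF_2\otimes(\text{stuff of Frobenius})) $ whose global sections vanish because, via the two-step filtration of $\tilde\cF_2$ from Lemma \ref{seq-fol-dim3}, each graded piece pushes forward to an ideal sheaf twist with no sections (again Corollary \ref{Eagon-Northcott}). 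The argument mirrors the computation of $H^0(\tilde\cF_2^*)=0$ in Lemma \ref{H^1}, now for $\tilde\cF_2$ itself rather than its dual; note $\tilde\cF_2$ involves the line bundles $\cO_{\tilde X}(-(q-1)H+D_V^3)$ and $\cO_{\tilde X}(-(q^2-1)H+(q-1)D_V^3+D_V^2)$, both of which have $\pi_*$ landing in ideal sheaves of $Z_3$ resp.\ $Z_2$ twisted by negative-enough degrees, so already $H^0(\tilde X,\tilde\cF_2)=0$; the Frobenius twist only makes the relevant degree smaller (more negative once we divide back out by $H$-powers), so the vanishing survives.

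\emph{Main obstacle.} The delicate point is Step 2: one must be careful that the Frobenius pullback $(\sigma\circ\tilde g)^*$ and the twist implicit in the projection formula do not \emph{raise} the relevant cohomological degree past the threshold where Corollary \ref{Eagon-Northcott} gives vanishing. The safe way is to write everything in terms of the explicit line bundles on $\tilde X$, track the exact twist by which $\tilde f^*$ and $\tilde g^*$ change the $H$- and $D_V^i$-coefficients (using the relations $\tilde g\tilde f=F_{\tilde X}$, hence $\tilde f^*\tilde g^* = (\text{mult.\ by } q) $ on Picard classes, together with the surjection $T_{\tilde X}\to \tilde f^*\cL$), and then verify directly that the $\pi$-pushforward of each resulting line bundle is contained in some $I_{Z_c}\big(\tfrac{q^{c'}-1}{q-1}\big)$ with $c'<$ the relevant bound, so Corollary \ref{Eagon-Northcott} applies. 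Once the bookkeeping is done the cohomology vanishing is immediate and $H^0(T_{\tilde Y})=0$ follows from the long exact sequence of $0\to\cL\to T_{\tilde Y}\to(\sigma\circ\tilde g)^*\tilde\cF_2\to 0$.
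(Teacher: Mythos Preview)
Your overall strategy---use the sequence $0\to\cL\to T_{\tilde Y}\to(\sigma\circ\tilde g)^*\tilde\cF_2\to 0$ and kill both ends---is exactly the paper's, and your Step~2 is fine (indeed simpler than you make it: after pulling back by $\tilde f$ one has $\tilde f^*(\sigma\circ\tilde g)^*\tilde\cF_2=F_{\tilde X}^*\tilde\cF_2$, and the two graded pieces of $\tilde\cF_2$ from Lemma~\ref{seq-fol-dim3} already have negative $H^2$-degree, so their Frobenius pullbacks do too and $H^0=0$ is immediate).

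The gap is in Step~1. You propose to show $H^0(\tilde X,\tilde f^*\cL)=0$ by pushing forward to $\PP^3$ and invoking Corollary~\ref{Eagon-Northcott} ``exactly as in the proof of Lemma~\ref{H^1}''. But from the first sequence one computes
\[
\tilde f^*\cL \;\simeq\; \cO_{\tilde X}\bigl((q^2+q+2)H-(q+2)D_V^3-2D_V^2\bigr),
\]
so $\pi_*(\tilde f^*\cL)$ sits only inside $I_{Z_2}(q^2+q+2)$ (with higher-order vanishing conditions that Corollary~\ref{Eagon-Northcott} does not address). Since $q^2+q+2>1+q+q^2$, the ideal $I_{Z_2}$ already has nonzero sections in this degree, and the Eagon--Northcott argument gives nothing. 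The situation is genuinely different from Lemma~\ref{H^1}, where the relevant $H$-degrees ($q^2-1$ and $q-1$) were strictly below the generation degrees of $I_{Z_2}$ and $I_{Z_3}$.

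The paper's substitute is a restriction trick you are missing: restrict $\tilde\cL:=\tilde f^*\cL$ to the divisor $D_V^1=\sum_{\codim L=1}D_L$. The components $\tilde\Pi=D_L$ are pairwise disjoint (they dominate the blow-up of $\PP(V)$ along the $\FF_q$-line $L\cap L'$, so distinct strict transforms separate), and on each one a direct calculation gives $\tilde\cL|_{\tilde\Pi}\simeq\cO_{\tilde\Pi}(-q((q+1)H-E))$, which is anti-nef, hence has no sections. Thus $H^0(\tilde\cL)=H^0(\tilde\cL(-D_V^1))$, and now the $H$-coefficient drops enough that $c_1(\tilde\cL(-D_V^1))\cdot H^2<0$ forces $H^0=0$. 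Without this geometric step your argument for $H^0(\cL)=0$ does not close.
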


\begin{proof}
By Lemma \ref{seq-fol-dim3} $F_{\tilde X}^*\tilde \cF_2$ has a filtration whose quotients are two line bundles, whose intesection with $H^2$ is strictly negative. Therefore $H^0(F_{\tilde X}^*\tilde \cF_2)=0$.
On the other hand, we have a short exact sequence
$$0\to \tilde \cL \to \tilde f^*T_{\tilde Y}\to F_{\tilde X}^*\tilde \cF_2\to 0,$$
where $\tilde \cL=\tilde f^*\cL .$
So it is sufficient to show that $H^0(\tilde \cL)=0$. Then $H^0( \tilde f^*T_{\tilde Y})=0$, which by the projection formula implies vanishing of $H^0( T_{\tilde Y})$.

By Proposition \ref{codim-1-foliation} we get
$$\tilde \cL\simeq \cO_{\tilde X}((q^2+q+2)H-(q+2)D_V^3-2D_V^2).$$
Let $\Pi\subset \PP (V)$ be an $\FF_q$-rational plane and let $\tilde \Pi\subset \tilde X$ be its strict transform.
The map $\tilde \Pi\to \Pi$ is the blow up of $\Pi$ at  $\Pi (\FF_q)$.
Note that  the restriction of $D^2_V$ to $\tilde \Pi$ is the strict transform $\tilde B$ of all $\FF_q$-lines contained 
in $\Pi$ and the restriction of $D^3_V$ to $\tilde \Pi$ is the exceptional divisor $E$ of $\tilde \Pi\to \Pi$.
Since $\tilde B+(q+1)E\sim (q^2+q+1)H|_{\tilde \Pi}$ we have 
$$\tilde \cL |_{\tilde \Pi}\simeq \cO_{\tilde \Pi}((q^2+q+2)H-(q+2)D_V^3-2D_V^2)\simeq \cO_{\tilde \Pi}(-q((q+1)H-E)).$$
Since $(q+1)H-E$ is nef on $\tilde \Pi$, we have  $H^0(\tilde \cL |_{\tilde \Pi})=0$.
Note that if $\Pi_1$ and $\Pi_2$ are two $\FF_q$-rational planes then they intersect along an $\FF_q$-rational line $L$.
By Lemma \ref{blow-up-sum} $\tilde X$ dominates the blow up of $\PP (V)$ along $L$, so $\tilde \Pi_1$ and $\tilde \Pi_2$ do not intersect. So we have a short exact sequence
$$0\to \tilde \cL (-D^1_V)\to \tilde\cL\to \bigoplus _{L\in \cL(V),\, \codim L=1} \tilde\cL|_{D_L}\to 0,$$
which by the above implies that
$$H^0(\tilde \cL)=H^0( \tilde \cL (-D^1_V)).$$
But $ c_1(\tilde \cL (-D^1_V))H^2<0$, so $H^0( \tilde \cL (-D^1_V))=0$.
\end{proof}

\begin{theorem}\label{new-CY'}
If $p=q=2$ then the following conditions are satisfied:
\begin{enumerate}
\item $\tilde Y$ is a smooth, projective Calabi--Yau $3$-fold,
\item $\tilde Y$ is unirational,
\item $b_2(\tilde Y)=51$ and $h^1(\cO_{\tilde Y})=h^2(\cO_{\tilde Y})=0$,
\item $h^0(T_{\tilde Y})=0$, 
\item $T_{\tilde Y}$ is not semistable with respect to some ample polarizations,
\item $\tilde Y$ does not admit a formal lifting to characteristic zero.
\end{enumerate}
\end{theorem}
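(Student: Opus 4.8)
The plan is to verify the six properties in turn, all over $\bar \FF_2$ with $p=q=2$; the whole statement hinges on a numerical coincidence peculiar to $p=q=2$: for $n=3$ one has $K_{\tilde X}=-4H+D_V^2+2D_V^3$, whereas Lemma \ref{seq-fol-dim3} gives $\det \tilde \cF_2=-(q^2+q-2)H+qD_V^3+D_V^2$, and these two divisor classes agree exactly when $q=2$. First I would establish (1). Smoothness of $\tilde Y$ and $h^1(\cO_{\tilde Y})=0$ are Lemma \ref{H^1}. For the canonical bundle, the canonical bundle formula for a quotient by a $1$-foliation (equivalently, the two exact sequences relating $T_{\tilde X}$, $\tilde \cF_2$, $T_{\tilde Y}$ and $\cL$ recorded just before Lemma \ref{H^0T}) gives $\omega_{\tilde X}\simeq \tilde f^*\omega_{\tilde Y}\otimes (\det \tilde \cF_2)^{\otimes (p-1)}$; with $p=2$ and the coincidence above this yields $\tilde f^*\omega_{\tilde Y}\simeq \cO_{\tilde X}$, hence $\omega_{\tilde Y}^{\otimes 2}\simeq \cO_{\tilde Y}$. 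To upgrade this to $\omega_{\tilde Y}\simeq \cO_{\tilde Y}$ I would use that $\tilde f$ is a universal homeomorphism and $\tilde X$ is an iterated blow-up of $\PP(V)=\PP^3$ along smooth centres (Theorem \ref{wonderful-comp}), so $\pi_1^{\et}(\tilde Y)=\pi_1^{\et}(\tilde X)=\pi_1^{\et}(\PP^3)=1$; together with $h^1(\cO_{\tilde Y})=0$ this makes $\Pic(\tilde Y)$ torsion free, whence $\omega_{\tilde Y}\simeq \cO_{\tilde Y}$ and (1) follows. Part (2) is immediate: $\PP^3\dashleftarrow \tilde X\xrightarrow{\tilde f}\tilde Y$ is dominant (a birational map composed with a finite surjective one), so $\tilde Y$ is unirational.

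For (3): $h^1(\cO_{\tilde Y})=0$ is Lemma \ref{H^1}, and $h^2(\cO_{\tilde Y})=h^1(\omega_{\tilde Y})=h^1(\cO_{\tilde Y})=0$ by Serre duality and (1). For the Betti numbers, a universal homeomorphism induces isomorphisms on $\ell$-adic cohomology, so $b_i(\tilde Y)=b_i(\tilde X)$ for all $i$; and $\tilde X$ is obtained from $\PP^3$ by blowing up the $15$ $\FF_2$-rational points and then the (smooth, connected) strict transforms of the $35$ $\FF_2$-rational lines, blowing up the $\FF_2$-rational planes being an isomorphism since they are divisors. Each such blow-up is along a smooth connected centre of codimension $2$ or $3$, hence raises $b_2$ by one and leaves $b_3$ unchanged; therefore $b_2(\tilde Y)=1+15+35=51$ and $b_3(\tilde Y)=0$ (the latter needed for (6)). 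Finally (4) is exactly Lemma \ref{H^0T}.

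For (5): the exact sequence $0\to \cL\to T_{\tilde Y}\to (\sigma\circ\tilde g)^*\tilde \cF_2\to 0$ exhibits the line bundle $\cL$ as a subsheaf of $T_{\tilde Y}$, and for $q=2$ the formula for $\tilde \cL=\tilde f^*\cL$ in the proof of Lemma \ref{H^0T} simplifies to $c_1(\tilde f^*\cL)=-2K_{\tilde X}$. Since $K_{\tilde Y}$ is numerically trivial, $\mu_A(T_{\tilde Y})=0$ for every ample $A$; hence $T_{\tilde Y}$ fails to be $A$-semistable once $c_1(\cL)\cdot A^2>0$, which after pulling back along the degree-$4$ morphism $\tilde f$ amounts to $K_{\tilde X}\cdot (\tilde f^*A)^2<0$. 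As $M^{\otimes p}$ lies in $\tilde f^*\Pic(\tilde Y)$ for every line bundle $M$ on $\tilde X$ (because $\tilde g\circ \tilde f$ is a Frobenius and $p=2$), every class $2(NH+A'')$ with $A''$ ample on $\tilde X$ equals $\tilde f^*A$ for some ample $A$ on $\tilde Y$, and $K_{\tilde X}\cdot (NH+A'')^2=N^2(K_{\tilde X}\cdot H^2)+O(N)=-4N^2+O(N)<0$ for $N\gg 0$ since $K_{\tilde X}\cdot H^2=-4$. This produces the required polarization.

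The main obstacle is (6). Suppose $\tilde Y$ admits a formal lift over $W(\bar \FF_2)$. Since $H^2(\tilde Y,\cO_{\tilde Y})=0$ by (3), every ample line bundle on $\tilde Y$ lifts to the formal scheme, so by Grothendieck's existence theorem the formal lift is algebraizable; spreading out the resulting projective lift and base-changing to $\CC$ one obtains a smooth projective threefold $\mathcal Y_{\CC}$ with special fibre $\tilde Y$. By upper semicontinuity $h^1(\cO)=h^2(\cO)=0$ on $\mathcal Y_{\CC}$, by constancy of $\chi(\cO)$ then $h^3(\cO)=1$, and $\omega_{\mathcal Y_{\CC}}$ is effective (Serre duality) and numerically trivial (degrees of $\omega$ on curves are locally constant in the family, and vanish on the special fibre since $\omega_{\tilde Y}\simeq\cO$), hence $\omega_{\mathcal Y_{\CC}}\simeq\cO$: so $\mathcal Y_{\CC}$ is a Calabi--Yau threefold over $\CC$. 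By smooth proper base change $b_3(\mathcal Y_{\CC})=b_3(\tilde Y)=0$; but Hodge theory gives $b_3=2h^{3,0}+2h^{2,1}=2+2h^{2,1}\ge 2$ for every Calabi--Yau threefold over $\CC$, a contradiction, so no formal lift exists. The only genuinely delicate points are the algebraization/spreading-out bookkeeping and the transport of $\omega\simeq\cO$ and of the cohomological invariants to the lift; everything substantive reduces to the numerical identity $K_{\tilde X}\equiv \det \tilde \cF_2$ isolated at the start.
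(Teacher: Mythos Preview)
Your overall strategy matches the paper closely, and your treatments of (2), (4), (5), (6) are correct (for (5) and (6) you actually spell out more than the paper does; the paper simply cites \cite{Sc} for (6) and uses the nef class $g^*H$ and a perturbation for (5), while your choice $A=\tilde g^*(NH+A'')$ is a legitimate variant). There is, however, a genuine gap in your proof of (1), which then propagates to your derivation of $h^2(\cO_{\tilde Y})=0$ in (3).

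From $\tilde f^*\omega_{\tilde Y}\simeq\cO_{\tilde X}$ you correctly obtain $\omega_{\tilde Y}^{\otimes 2}\simeq\cO_{\tilde Y}$, but your upgrade to $\omega_{\tilde Y}\simeq\cO_{\tilde Y}$ via ``$\pi_1^{\et}(\tilde Y)=1$ and $h^1(\cO_{\tilde Y})=0$, hence $\Pic(\tilde Y)$ is torsion free'' is not justified in characteristic $p$. Triviality of $\pi_1^{\et}$ only kills torsion of order prime to $p$: an order-$p$ line bundle corresponds to a $\mu_p$-torsor, and in characteristic $p$ such a torsor is purely inseparable, not \'etale, so it is invisible to $\pi_1^{\et}$. (Classical Enriques surfaces in characteristic~$2$ illustrate the phenomenon: there $h^1(\cO)=0$ and the K3 ``double cover'' is an inseparable $\mu_2$-torsor, yet $K_S$ is nontrivial $2$-torsion in $\Pic$.) Since your argument for $h^2(\cO_{\tilde Y})=0$ in (3) uses Serre duality with $\omega_{\tilde Y}\simeq\cO_{\tilde Y}$, it inherits the same gap.

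The paper closes this differently and avoids the torsion issue altogether. From $2K_{\tilde Y}=0$ one has $c_1(T_{\tilde Y})$ numerically trivial, so Riemann--Roch gives $\chi(\cO_{\tilde Y})=\frac{1}{24}\,c_1c_2=0$. Combined with $h^0=1$ and $h^1=0$ this yields $h^3(\cO_{\tilde Y})=h^2(\cO_{\tilde Y})+1\ge 1$; Serre duality then gives $h^0(K_{\tilde Y})\ge 1$, so $K_{\tilde Y}$ is effective, and an effective divisor with $2K_{\tilde Y}=0$ must be zero. This simultaneously yields $K_{\tilde Y}=0$ and $h^2(\cO_{\tilde Y})=0$. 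Inserting this short Riemann--Roch step in place of your torsion-freeness claim repairs both (1) and (3).
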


\begin{proof}
Since $p=q=2$ by \cite[Corollary 3.4]{Ek} we have
$$\tilde f^*K_{\tilde Y}=K_{\tilde X}-(p-1)\, c_1(\det \tilde \cF _2)=0.$$
Therefore $pK_{\tilde Y}=0$. 

By Lemma \ref{H^1} we have $h^1(\cO_{\tilde Y})=0$. By the Riemann--Roch theorem we get
$$\chi(\cO_{\tilde Y})=\frac{1}{24}\int _{\tilde Y}c_1(T_{\tilde Y})c_2(T_{\tilde Y})=0.$$
Hence we have
$$h^0(K_{\tilde Y})=h^3(\cO_{\tilde Y})=h^2(\cO_{\tilde Y})+1\ge 1,$$
which implies that $K_{\tilde Y}=0$ and $h^2(\cO_{\tilde Y})=0$.

Let $X_0$ be the blow up of $\PP ^3$ along $\PP^3(\FF_2)$, i.e., in $15$ points.
Then $\tilde X$ is the blow up of $X_0$ along the strict transforms of all $\FF_2$-lines in $\PP ^3$, i.e.,
along $35$ disjoint smooth rational curves. Therefore $b_2(\tilde X)=1+15+35=51$.
Since $\tilde f: \tilde X\to \tilde Y$ is radicial, $\tilde Y$ is unirational and we have $b_2(\tilde Y)=b_2(\tilde X)=51$.

By Lemma \ref{H^0T} we have $h^0(T_{\tilde Y})=0$.

Let us set $g=\sigma\circ \tilde g$. Since $\tilde f \circ   g =F_{\tilde Y}$ we have
$$p\mu_{ g^*H} (\cL)= \mu_{ g^ *H}(F^*_{\tilde Y} \cL)=\deg \tilde g\cdot \mu_H(\tilde f^*\cL)=
p\cdot  c_1(\tilde f^*\cL)H^2=p(q^2+q+2 )>0.$$
In particular, $\mu_{g^*H} (\cL)>\mu_{ g^*H} (T_{\tilde Y})=0$ and $T_{\tilde Y}$ is ${g^*H}$-unstable.
Then $T_{\tilde Y}$ is $A$-unstable for ample polarizations $A$ that are close to the divisor $\tilde g^*H$ (which is only nef and big) in the (rational) Neron-Severi group of $\tilde Y$.

The fact that $\tilde Y$ does not admit a formal lifting to characteristic zero follows by the same arguments as that of 
\cite[Theorem 2.1]{Sc}.
\end{proof}

\begin{remark}
One can expect that similarly to Hirokado's example, the above example is arithmetically rigid (see \cite[Theorem A]{Ek}). Analogously to [ibid.], one should compute all the Hodge numbers of $\tilde Y$ and prove that one can lift a group of automorphisms, but it seems that there is no short and easy way to do so. 
\end{remark}

\section{Cones over projective spaces}\label{cone}

Let us recall the following lemma which comes from \cite[Lemma
4.14]{Wa2} (the proof works also in positive characteristic and in higher dimensions).

\begin{lemma} \label{Wahl's reflexivity} Let $Z$ be an affine variety
  with singularity at $z$ and let $f:\tilde Z\to Z$ be resolution of
  singularities.  Let us assume that $f$ is an isomorphism on
  $Z-\{z\}$. Let $M$ be a line bundle on $\tilde Z$ such that
  $L^{\otimes m}\simeq \cO_{\tilde Z}(D)$ for some positive integer
  $m$ and an effective exceptional divisor $D$. Then $f_*L$ is
  reflexive.
\end{lemma}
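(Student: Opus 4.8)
The plan is to reduce the reflexivity of $f_*L$ to the well-known fact that the pushforward of the structure sheaf along a resolution of an isolated singularity is the structure sheaf of the normalization, which here is just $\cO_Z$ since $Z$ is (we may assume) normal — or, more precisely, to the standard vanishing $f_*\cO_{\tilde Z}(D')=\cO_Z$ for any effective $f$-exceptional divisor $D'$ supported on the fiber over the singular point. The key observation is that reflexivity of a coherent sheaf on a normal variety is equivalent to the sheaf being $S_2$, i.e.\ satisfying Serre's condition along codimension two. Since $f$ is an isomorphism over $Z-\{z\}$ and $\dim Z\ge 2$, the only point where we need to check the $S_2$ (equivalently: "extends over codimension two") property is the singular point $z$ itself.

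First I would observe that, by the projection formula applied on the open set $Z-\{z\}$ where $f$ is an isomorphism, the sheaf $f_*L$ agrees with a line bundle there, so $f_*L$ is a rank one torsion-free sheaf; hence it suffices to show $(f_*L)^{\vee\vee}=f_*L$. Next I would pick the positive integer $m$ and the effective exceptional divisor $D$ with $L^{\otimes m}\simeq \cO_{\tilde Z}(D)$. The heart of the argument is the inclusion of sheaves $L\hookrightarrow L^{\otimes m}=\cO_{\tilde Z}(D)$ coming from a choice of $m-1$ copies of the tautological section of $\cO_{\tilde Z}(D)$ — more carefully, one uses the $m$-th root structure: locally $L$ is generated inside its $m$-th tensor power, and because $D$ is exceptional, sections of $L$ over $f^{-1}(U)$ for $U\ni z$ that a priori might have poles along the exceptional locus are forced to be regular. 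Concretely, I would compute $f_*L$ using the chain of inclusions $f_*L\subset f_*\cO_{\tilde Z}(D)=\cO_Z$ after twisting appropriately, together with the fact that a torsion-free rank one sheaf on a normal variety which is contained in a reflexive sheaf and agrees with it outside codimension two is itself reflexive.

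The cleanest route, and the one I would actually carry out, is: let $j:Z-\{z\}\hookrightarrow Z$ and $U=f^{-1}(Z-\{z\})$. Since $f$ is proper and an isomorphism over $Z-\{z\}$, and $\dim Z\ge 2$ so $\{z\}$ has codimension $\ge 2$, we have $f_*L \subseteq j_*(f_*L|_{Z-\{z\}}) = j_*(L|_U)$, and this last sheaf is reflexive (pushforward of a line bundle under an open immersion whose complement has codimension $\ge 2$ on a normal scheme). To get equality $f_*L = j_*(L|_U)$ I would argue that any local section of $L$ over $U$ near $z$, viewed via the $m$-th power isomorphism as a local section of $\cO_{\tilde Z}(D)$ over $U$, extends to a section of $\cO_{\tilde Z}(D)$ over all of $f^{-1}(V)$ for a neighbourhood $V$ of $z$, because $f_*\cO_{\tilde Z}(D)=\cO_Z$ (as $D$ is effective and exceptional, this is the standard fact that a resolution does not acquire new functions even after allowing poles along a purely exceptional divisor); then taking $m$-th roots — which is legitimate fibrewise on the line bundle $L$ — shows the section of $L$ extends too. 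I expect the main obstacle to be making this "extend, then take the $m$-th root" step rigorous without circularity: one must check that the extension of the $m$-th power actually lies in the image of $L^{\otimes m}\to \cO_{\tilde Z}(D)$, which is an isomorphism of line bundles, so in fact there is nothing to check once the isomorphism $L^{\otimes m}\simeq\cO_{\tilde Z}(D)$ is used throughout; the subtlety is purely in verifying that $j_*(L|_U)$ is the right reflexive hull and that $f_*\cO_{\tilde Z}(D)=\cO_Z$, both of which are standard for resolutions of isolated singularities and hold in arbitrary characteristic and dimension, exactly as claimed in the parenthetical remark citing \cite{Wa2}.
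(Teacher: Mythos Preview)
The paper does not give its own proof of this lemma; it simply cites \cite[Lemma 4.14]{Wa2} and remarks that the argument there works in arbitrary characteristic and dimension. Your overall strategy is the standard one and is almost certainly the argument Wahl has in mind: reduce to showing $f_*L=j_*(L|_U)$ via the chain $f_*\cO_{\tilde Z}(D)=\cO_Z$ (using that $D$ is effective and exceptional and $Z$ is normal), and then pass from the extension of $\sigma^{\otimes m}$ to the extension of $\sigma$.

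There is, however, a genuine gap in your justification of the last step. You write that ``taking $m$-th roots is legitimate fibrewise on the line bundle $L$'' and then identify the obstacle as checking that the extended section lies in the image of $L^{\otimes m}\to \cO_{\tilde Z}(D)$. But that map is an isomorphism by hypothesis, so there is nothing to check there; the real issue is why the extended section of $L^{\otimes m}$ is the $m$-th power of a section of $L$. ``Fibrewise'' does not help: there is no canonical $m$-th root of a section. The correct argument is algebraic. After locally trivializing $L$ near a point of the exceptional locus, $\sigma$ becomes a rational function $g$ in the fraction field of $\cO_{\tilde Z,p}$, and the extension of $\sigma^{\otimes m}$ shows that $g^m$ is regular at $p$. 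Then $g$ satisfies the monic equation $x^m-g^m=0$ over $\cO_{\tilde Z,p}$, and since $\tilde Z$ is smooth (hence normal), $g\in\cO_{\tilde Z,p}$. This is where smoothness of the resolution is actually used; once you insert this sentence your argument is complete and matches the cited proof.
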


\medskip
Let $g: \tilde Z=\VV (\cO_{\PP ^m}(-d))=\Spec \bigoplus _{i\ge 0} \cO_{\PP ^m}(d)^{\otimes i}\to \PP^m$ be
the total space of $\cO_{\PP ^m}(d)$ and let $S$ be its zero
section.  Let $f:\tilde Z \to Z=\Spec \bigoplus _{i\ge 0} H^0(\PP^m,
\cO_{\PP ^m}(d)^{\otimes i})$ be the contraction of $S$ to a cone over $(\PP^m, \cO_{\PP ^m}(d))$.

\begin{proposition} \label{main-2'}
  $Z$ has only klt singularities and $f_*\Omega_{\tilde Z} (\log S)$
  is reflexive.  If the base field has positive characteristic $p$ and
  $p$ divides $d$ then $f_*\Omega_{\tilde Z}$ is not reflexive. Moreover, $f_*\Omega_{\tilde Z} (\log S)$
is locally free if $p=d=2$ and $m=1$.
\end{proposition}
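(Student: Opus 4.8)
The plan is to pull everything back to the smooth total space $\tilde Z=\VV(\cO_{\PP^m}(-d))$, with its projection $p\colon\tilde Z\to\PP^m$ and the $\Gm$-action scaling the fibres. Here $S$ is the zero section, $N_{S/\tilde Z}\cong\cO_{\PP^m}(-d)$ and $\Omega_{\tilde Z/\PP^m}\cong p^*\cO_{\PP^m}(d)$, hence $\omega_{\tilde Z}\cong p^*\cO_{\PP^m}(d-m-1)$; the contraction $f\colon\tilde Z\to Z$ sends $S$ to the vertex $0$, $f_*\cO_{\tilde Z}=\cO_Z$, and $\Gamma(\tilde Z,\cO_{\tilde Z})=A:=\bigoplus_{w\ge0}H^0(\PP^m,\cO(dw))$ is the cone ring. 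Since $p$ is affine and $\Gm$-equivariant, $p_*$ is exact and for any quasi-coherent $\cG$ on $\tilde Z$ one has $f_*\cG=\bigoplus_{w}H^0(\PP^m,(p_*\cG)_w)$ as a graded $A$-module, so all sheaves in sight are computed weight by weight from sheaves on $\PP^m$. For the klt assertion I would restrict $K_{\tilde Z}=f^*K_Z+a(S)\,S$ to $S$: using $f^*K_Z|_S\equiv 0$ (numerically, as $S$ is contracted and $\rho(S)=1$) and $S|_S=N_{S/\tilde Z}=\cO_{\PP^m}(-d)$ one gets $a(S)=\tfrac{m+1}{d}-1>-1$. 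Together with normality of $Z$ (the $d$-uple Veronese is projectively normal) and $\QQ$-Cartierness of $K_Z$ (indeed $dK_Z$ is principal, $\cong\cO_Z(-(m+1))$), and the fact that a log resolution of $Z$ factors through the single blow-up $\tilde Z\to Z$, this is the standard statement that the cone over the Fano-polarised pair $(\PP^m,\cO(d))$ is klt.

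For reflexivity of $f_*\Omega_{\tilde Z}(\log S)$ I would use the logarithmic relative cotangent sequence $0\to p^*\Omega_{\PP^m}\to\Omega_{\tilde Z}(\log S)\to\Omega_{\tilde Z/\PP^m}(\log S)\to0$, in which $\Omega_{\tilde Z/\PP^m}(\log S)\cong\cO_{\tilde Z}$ is generated by $\tfrac{d\tau}{\tau}$ (the relative $d\log$ of the fibre coordinate $\tau$), of $\Gm$-weight $0$. Since $\Omega_{\tilde Z}(\log S)$ and $\Omega_{\tilde Z}$ agree on $\tilde Z\setminus S\cong Z\setminus\{0\}$ we get an inclusion $f_*\Omega_{\tilde Z}(\log S)\subseteq j_*\Omega_{Z\setminus\{0\}}=\hat\Omega_Z$, and for the reverse inclusion it is enough, by $\Gm$-equivariance, to treat homogeneous $1$-forms. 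A weight-$w$ form on $\tilde Z\setminus S$ is locally $\sum a_j\,dy_j+b\,\tfrac{d\tau}{\tau}$ with $a_j,b$ of weight $w$, hence regular on $\tilde Z$ (polynomial in $\tau$) as soon as $w\ge0$, so it already lies in $\Omega_{\tilde Z}(\log S)$; and there are no nonzero sections of weight $<0$ since $H^0(\PP^m,\Omega_{\PP^m}(dw))=H^0(\PP^m,\cO(dw))=0$ for $w<0$. Hence $f_*\Omega_{\tilde Z}(\log S)=\hat\Omega_Z$, which is reflexive.

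For the last two assertions the key input is that the extension class of the sequence above lies in $\Ext^1_{\tilde Z}(\cO_{\tilde Z},p^*\Omega_{\PP^m})=H^1(\PP^m,\Omega_{\PP^m})$ and equals $c_1(\cO_{\PP^m}(d))$ (its cocycle is $d\cdot d\log(x_i/x_j)$), which vanishes when $p\mid d$. Then $\tfrac{d\tau}{\tau}$ patches to a global section of $\Omega_{\tilde Z}(\log S)$ and $\Omega_{\tilde Z}(\log S)\cong p^*\Omega_{\PP^m}\oplus\cO_{\tilde Z}\tfrac{d\tau}{\tau}$; intersecting with the subsheaf of pole-free forms gives $\Omega_{\tilde Z}\cong p^*\Omega_{\PP^m}\oplus\cO_{\tilde Z}(-S)$ (second summand generated by $d\tau$). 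Pushing forward, $f_*\cO_{\tilde Z}(-S)=f_*I_S$ is the ideal sheaf $\mathfrak m\subset A$ of the vertex, which is not reflexive ($\mathfrak m=\mathfrak m^{**}\cap$-style fails: $\mathfrak m$ agrees with $A$ in codimension $1$ but $\mathfrak m\ne\mathfrak m^{**}=A$ as $\dim Z\ge2$); since a direct sum is reflexive only if each summand is, $f_*\Omega_{\tilde Z}$ is not reflexive. (Alternatively, by the residue sequence and the previous paragraph, $f_*\Omega_{\tilde Z}=\ker(\hat\Omega_Z\to k_0)$ where $k_0=f_*\cO_S$, and the residue of $\tfrac{d\tau}{\tau}$ along $S$ is $1$, so this map is nonzero.) For $p=d=2$, $m=1$ we have $p\mid d$, so $\Omega_{\tilde Z}(\log S)\cong p^*\cO_{\PP^1}(-2)\oplus\cO_{\tilde Z}$ and $f_*\Omega_{\tilde Z}(\log S)\cong f_*p^*\cO_{\PP^1}(-2)\oplus\cO_Z$; the first summand has weight-$w$ part $H^0(\PP^1,\cO(-2)\otimes\Sym^w\cO(2))=H^0(\PP^1,\cO(2w-2))=A_{w-1}$, so $f_*p^*\cO_{\PP^1}(-2)\cong\cO_Z(-1)$ is invertible and $f_*\Omega_{\tilde Z}(\log S)\cong\cO_Z(-1)\oplus\cO_Z$ is locally free.

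The delicate point is the reflexivity statement: one must rule out that $f_*\Omega_{\tilde Z}(\log S)$ is strictly smaller than $\hat\Omega_Z$, i.e. that every $1$-form on $Z\setminus\{0\}$ extends across the vertex with at worst a logarithmic pole along $S$, and this is precisely where the $\Gm$-grading and the vanishing $H^0(\PP^m,\Omega_{\PP^m}(k))=0$ for $k<0$ are used. The positive-characteristic input $c_1(\cO_{\PP^m}(d))=0$ for $p\mid d$ is what makes the last two assertions diverge from the characteristic-zero picture of \cite{GKKP}.
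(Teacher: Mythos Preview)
Your argument is correct. The overall architecture matches the paper's: both identify $\Omega_{\tilde Z}(\log S)$ as sitting in an extension of $\cO_{\tilde Z}$ by $p^*\Omega_{\PP^m}$ whose class is $c_1(\cO_{\PP^m}(d))\in H^1(\Omega_{\PP^m})$, deduce the splitting when $p\mid d$, and then read off $\Omega_{\tilde Z}\cong p^*\Omega_{\PP^m}\oplus\cO_{\tilde Z}(-S)$ so that $f_*\Omega_{\tilde Z}$ has the non-reflexive summand $f_*I_S=\mathfrak m_0$.

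The one place you take a genuinely different route is the reflexivity of $f_*\Omega_{\tilde Z}(\log S)$. The paper invokes Wahl's reflexivity criterion (Lemma~\ref{Wahl's reflexivity}) to see that $f_*(g^*\cO_{\PP^m}(-1))$ is reflexive, then uses the Euler sequence to conclude $f_*(g^*\Omega_{\PP^m})$ is reflexive, and cites \cite[Proposition~3.3]{Wa} for the identification $\Omega_{\tilde Z}(\log S)\cong g^*G$ (including the explicit form $G\cong\cO_{\PP^m}(-1)^{\oplus(m+1)}$ when $p\nmid d$). Your $\Gm$-weight argument instead proves directly that $f_*\Omega_{\tilde Z}(\log S)=\hat\Omega_Z$: the inclusion is restriction to $\tilde Z\setminus S$, and every weight-$w$ form on $\tilde Z\setminus S$ either extends logarithmically across $S$ (when $w\ge 0$) or vanishes (when $w<0$, by Bott-type vanishing on $\PP^m$). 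This is more self-contained and avoids the external references; the paper's route, on the other hand, yields the finer structural statement $f_*\Omega_{\tilde Z}(\log S)\cong M^{\oplus(m+1)}$ when $p\nmid d$, which you do not obtain.

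Two small remarks. First, your ``$\cO_Z(-1)$'' in the $p=d=2$, $m=1$ case is the graded $A$-module $A(-1)$, which as an ungraded module is just $A$; so the pushforward is literally $\cO_Z$ and your local-freeness conclusion stands. Second, for the klt claim it suffices to note that $\tilde Z\to Z$ is already a log resolution (both $\tilde Z$ and $S$ are smooth), so the single discrepancy $a(S)=\tfrac{m+1}{d}-1>-1$ is all that is needed; your phrasing about factoring a log resolution through $\tilde Z$ is correct but slightly more than required.
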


\begin{proof}
 A simple computation shows that $K_{\tilde Z}+S=f^*K_Z+\frac{m+1}{d}S$, so $Z$ has only klt singularities.
    Let us consider an exact sequence
$$0\to \Omega_{\PP^m}^1\to G\to \cO_{\PP^m}\to 0$$
determined by the class of $\cO_{\PP^m}(d)$ in $H^1(\Omega_{\PP^m}^1)$ via
$H^1(\cO^*_{\PP^m})\to H^1(\Omega_{\PP^m}^1)$.  By \cite[Proposition
3.3]{Wa} we have $\Omega_{\tilde Z} (\log S)\simeq g^*G$.  If $p$
divides $d$ then the above sequence splits and $\Omega_{\tilde Z}
(\log S)\simeq  g^*\Omega_{\PP^m}\oplus \cO_{\tilde Z}$.  If $p$ does
not divide $d$ then the above sequence does not split and
$\Omega_{\tilde Z} (\log S)\simeq g^*\cO_{\PP^m}(-1)^{\oplus (m+1)}$.

Since $ g^*\cO_{\PP^m}(-d)\simeq \cO_{\tilde Z}(S)$,
Lemma \ref{Wahl's reflexivity} shows that $M=f_*(g^*\cO_{\PP^m}(-1))$
is a  rank $1$ reflexive sheaf (corresponding to the ideal sheaf of a hyperplane passing
through the vertex $z$ of the cone $Z$).
Euler's exact sequence
$$0\to \Omega_{\PP^m}^1\to \cO_{\PP ^m}(-1)^{\oplus (m+1)}\to \cO_{\PP^m}\to 0$$
leads to the exact sequence
$$0\to f_*(g^*\Omega_{\PP^m}^1)\to M^{\oplus (m+1)}\to \cO_{Z},$$
which shows that $ f_*(g^*\Omega_{\PP^m}^1)$ is reflexive.
Therefore $f_*\Omega_{\tilde Z} (\log S)$ is always reflexive.
More precisely, $f_*\Omega_{\tilde Z} (\log S)\simeq f_*(g^*\Omega_{\PP^m}^1)\oplus \cO_{Z} $
if $p$ divides $d$ and $f_*\Omega_{\tilde Z} (\log S)\simeq M^{\oplus (m+1)}$ if $p$
does not divide $d$. In particular, $f_*\Omega_{\tilde Z} (\log S)$ is
locally free if  $p=d=2$ and $m=1$.

Now let us note that the map $ g^*\Omega_{\PP^m}^1\to \Omega_{\tilde
  Z} (\log S)$ factors through the canonical map
$g^*\Omega_{\PP^m}^1\to \Omega_{\tilde Z}$. Let us assume that $p$
divides $d$.  Then $ g^*\Omega_{\PP^m}^1\to \Omega_{\tilde Z} (\log
S)$ splits and hence the sequence
$$0\to g^*\Omega_{\PP^m}^1\to \Omega_{\tilde Z}\to \Omega_{\tilde Z/\PP^m}\to 0$$
also splits. It follows that  $ \Omega_{\tilde Z}\simeq \cO_{\tilde Z}(-S) \oplus g^*\Omega_{\PP^m}^1$,
so  $f_* \Omega_{\tilde Z}\simeq  m_z\oplus f_*(g^*\Omega_{\PP^m}^1)$ is not reflexive.
\end{proof}

The above proposition immediately implies Proposition \ref{main-2}.

\section{Purely inseparable flops}

In this section we construct an analogue of the celebrated Atiyah's flop. Let us recall that Atiyah's flop is obtained from
a quadric singularity $(xy=zw)\subset \AA^4$ by blowing up along the planes $x=z=0$ and $x=w=0$.
The corresponding rational map $X_1\dashrightarrow X_2$ between the blow-ups is resolved by the blow up of the singularity
along the point $x=y=z=w=0$. Geometrically, this rational map is obtained by taking $\PP ^1\subset X_1$ with normal bundle
$\cO_{\PP ^1}(-1)^{\oplus  2}$, blowing it up and then contracting the exceptional divisor $\PP ^1\times \PP ^1$ in the other
direction to $\PP^1$ with the same normal bundle  $\cO_{\PP ^1}(-1)^{\oplus  2}$. In fact, this flop can be seen also when studying
the standard Cremona transformation $\PP ^3\dashrightarrow \PP^3$ defined by $[x_0,x_1,x_2,x_3]\to [x_0^{-1},x_1^{-1},x_2^{-1},x_3^{-1}]$
(see, e.g., \cite[p.~6014]{Le}). Similarly, our ``purely inseparable Cremona transformation'' leads to a purely inseparable flop
described below.

\medskip

Let $X_1$ be a smooth $3$-fold defined over an algebraically closed field of characteristic $p$ and let us fix some $q=p^e$.
Let $C_1\subset X_1$ be a smooth rational curve with normal bundle $N_{C_1|X_1}\simeq \cO_{\PP ^1}(-q)^{\oplus  2}$ and let
$p_1:X\to X_1$ be the blow up of $X_1$ along $C_1$. The exceptional divisor $E_1\simeq \PP ^1\times \PP ^1$ has normal bundle
$N_{E_1|X}\simeq \cO_{\PP ^1\times \PP^1}(-1,-q)$. Assume that there exists a purely inseparable morphism  $\varphi: X\to X$ exchanging rulings, so
that $E_2=\varphi(E_1)$ has normal bundle $N_{E_2|X}\simeq \cO_{\PP ^1\times \PP^1}(-q,-1)$. Then contracting $E_2$ along the other ruling
we get $p_2:X\to X_2$.

Note that since the conormal bundle of $E_1$ in $X$ is ample, we can contract $E_1$ to a point, i.e., there exists a morphism
$f_1: X\to Y$ to a normal algebraic space $Y$ that is an isomorphism outside of $E_1$ and contracts $E_1$ to a point
(see \cite[Corollary 6.12]{Ar}).
Clearly, $f_1$ factors through $p_1$ inducing a contraction $q_1:X_1\to Y$. Similarly, we can construct $q_2: X_2\to Y$. Then
$\varphi: X\to X$ induces a purely inseparable endomorphism $\varphi_Y:Y\to Y$. Summing up, we have the following diagram:
$$\xymatrix{
X \ar@/_1pc/[dd]_{f_1}\ar[r]^{\varphi}\ar[d]^{p_1} & X\ar[d]_{p_2}\ar@/^1pc/[dd]^{f_2}\\
X_1 \ar@{-->}[r]\ar[d]^{q_1} & X_2\ar[d]_{q_2}\\
Y \ar[r]^{\varphi_Y}& Y\\
}$$
in which the rational map $X_1\dashrightarrow X_2$ is a purely inseparable map  with indeterminacy locus equal to $C_1$.
Geometrically, this rational map replaces $C_1$ with another smooth rational curve $C_2$ with normal bundle
$N_{C_2|X_2}\simeq \cO_{\PP ^1}(-q)^{\oplus  2}$.

We could also directly try to contract $E_1\subset X$ along the other ruling obtaining maps $f_1:X\to Z_1$ and $g_1:Z_1\to Y$, but then $Z_1$ acquires rather bad singularities along the curve $f_1(E_1)$. $Z_1$ comes with a purely inseparable morphism $Z_1\to X_2$ compatible with
$\varphi$ and $\varphi_Y$.

\medskip

In fact, by \cite[Corollary 6.13]{Ar} the above $E_1\subset X$ is locally in the \'etale topology isomorphic
to the cylinder over $(\PP ^1\times \PP^1, \cO_{\PP ^1\times \PP^1}(1,q))$. Then locally $Y$ is simply the cone over
$(\PP ^1\times \PP^1, \cO_{\PP ^1\times \PP^1}(1,q))$. Then the map $\varphi _Y: Y\to Y$ is induced from the map
$\PP ^1\times \PP^1\to \PP ^1\times \PP^1$ sending $(x,y)$ to $(y^q,x)$.

\section*{Appendix: Plane curves in positive characteristic}

Let us fix $\bar k$-points $\Sigma=\{P_1,...,P_{m}\}$ of $\PP^2$ and
some positive integers $(q_1,...,q_m)$.  Let $\pi:X\to \PP^2$ be the
blow up of $\PP^2$ along $\Sigma$. Let us fix some positive integer
$d$ and let us set $$B=(d+1)H-\sum q_iE_i,$$ where $H$ is the pull
back of a line on $\PP ^2$ and $E=\sum E_i$ is the exceptional divisor
of $\pi$.  The projection formula implies that
$$\chi (X, K_X+B)=\chi(\PP^2 , m_{P_1}^{q_1-1}...m_{P_m}^{q_m-1}(d-2)) =\binom{d}{2}-\sum_{i=1}^m\binom{q_i}{2}.$$
Moreover, we have
$$H^i (X, K_X+B)=H^i(\PP^2 , m_{P_1}^{q_1-1}...m_{P_m}^{q_m-1}(d-2))$$
for $i=0,1,2.$

\medskip

Let $C\subset \PP^2$ be a reduced plane curve of degree $d$. Let us take as
$\Sigma=\{P_1,...,P_{m}\}$  the singular locus of $C$ and let $q_i$
be the multiplicity of $P_i$ in $C$.
Below we recall a certain lemma contained in a part of \cite[Exercise
6.8]{Laz}.

\begin{lemma} \label{Lazarsfeld}
There exists an ample $\QQ$-divisor $M$ on $X$ such that
$$H^1(X, K_X+\lceil M\rceil)=H^1(\PP^2 , m_{P_1}^{q_1-1}...m_{P_m}^{q_m-1}(d-2)).$$
\end{lemma}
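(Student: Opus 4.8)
\textbf{Proof proposal for Lemma \ref{Lazarsfeld}.}

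The plan is to realize the Weil divisor $m_{P_1}^{q_1-1}\cdots m_{P_m}^{q_m-1}(d-2)$ on $\PP^2$ as a $K_X + \lceil M\rceil$ on the blow-up $X$, and then to perturb. First I would compute $K_X = -3H + \sum E_i$, so that $K_X + B = (d-2)H - \sum(q_i-1)E_i$, which via the projection formula is exactly the line bundle whose cohomology is $H^i(\PP^2, m_{P_1}^{q_1-1}\cdots m_{P_m}^{q_m-1}(d-2))$ (the identification of sections and higher cohomology already noted in the text above for all $i$). So the content of the lemma is that $B$ itself can be replaced, without changing $H^1$, by a round-up $\lceil M\rceil$ of an \emph{ample} $\QQ$-divisor $M$.

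The key step is to choose $M$ to be a small ample perturbation of $B$. Since $-E_i$ is $\pi$-ample, for $0<\varepsilon\ll 1$ the $\QQ$-divisor $M_\varepsilon := (d+1)H - \sum(q_i-\varepsilon)E_i = B + \varepsilon\sum E_i$ is ample on $X$ (it is $\pi$-ample because its restriction to each exceptional line is $\cO(\varepsilon-0)$-ish positive, and $(d+1)H$ is nef with positive pullback, so a standard openness-of-ampleness argument applies; alternatively use the Seshadri-type criterion for blow-ups of points). For $\varepsilon$ irrational (or simply for $0<\varepsilon<1$) we have $\lceil M_\varepsilon\rceil = (d+1)H - \sum\lceil q_i-\varepsilon\rceil E_i = (d+1)H - \sum q_i E_i = B$ since $0 < q_i - (q_i-\varepsilon) < q_i$ forces the ceiling of $q_i-\varepsilon$ to be $q_i$. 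Hence $K_X + \lceil M_\varepsilon\rceil = K_X + B$, and combining with the first step gives
$$H^1(X, K_X + \lceil M_\varepsilon\rceil) = H^1(X, K_X+B) = H^1(\PP^2, m_{P_1}^{q_1-1}\cdots m_{P_m}^{q_m-1}(d-2)),$$
which is the assertion with $M = M_\varepsilon$.

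The only genuine point requiring care is the ampleness of $M_\varepsilon$ on $X$. I would handle it by the classical fact that on the blow-up of $\PP^2$ at finitely many (possibly infinitely near, but here distinct) points, a divisor $aH - \sum b_i E_i$ with $a>0$ and $b_i>0$ small relative to $a$ is ample — concretely, one can write it as a positive combination of $H$ (nef, big) and the $\pi$-ample classes $H - \tfrac{1}{N}\sum E_i$ for $N$ large, or invoke the Nakai--Moishezon criterion directly: $M_\varepsilon^2 = (d+1)^2 - \sum(q_i-\varepsilon)^2 > 0$ for $\varepsilon$ chosen so that the $q_i-\varepsilon$ stay below the relevant bound, $M_\varepsilon\cdot H = d+1>0$, $M_\varepsilon\cdot E_i = q_i-\varepsilon>0$, and $M_\varepsilon\cdot \tilde C>0$ for strict transforms of curves since $\varepsilon$ is a small perturbation of the (nef, here we may additionally shrink $\varepsilon$) situation. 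Note the lemma as stated does not require $M_\varepsilon^2>0$ to be large, only ampleness, so for the perturbation to be ample it suffices that $\varepsilon$ be taken small enough depending on $d$ and the $q_i$; no uniform control is needed. Everything else is the projection-formula bookkeeping already recorded above, so this is the whole proof.
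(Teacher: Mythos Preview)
There is a genuine gap in the ampleness claim. Your $M_\varepsilon = (d+1)H - \sum(q_i-\varepsilon)E_i$ is a small perturbation of $B$, but $B$ need not be nef, so no small perturbation will be ample. For a concrete counterexample, take $C$ to be $d\ge 5$ lines in general position: then $m=\binom{d}{2}$, all $q_i=2$, and $B^2=(d+1)^2-4\binom{d}{2}=-d^2+4d+1<0$, so neither $B$ nor $M_\varepsilon$ can be ample. Your Nakai--Moishezon check implicitly assumes $B$ is nef (``small perturbation of the nef situation''), which is exactly what fails.

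There is also a sign slip in the round-up: $\lceil M_\varepsilon\rceil$ means taking $\lceil -(q_i-\varepsilon)\rceil = -q_i+1$ as the coefficient of $E_i$, not $-\lceil q_i-\varepsilon\rceil = -q_i$. This is easy to fix by using $q_i+\varepsilon$ instead of $q_i-\varepsilon$, but that does not rescue the ampleness.

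The paper's proof uses the one piece of data you discarded: the curve $C$ itself. Writing $\tilde C$ for its strict transform, one takes
\[
M=(d+1)H-a\tilde C-\sum a(q_i+\epsilon)E_i
\]
for $0<a<1$ close to $1$ and $\epsilon>0$ small. Since $\tilde C\equiv dH-\sum q_iE_i$, one gets $M\equiv (d+1)(1-a)H+a(H-\epsilon E)$, a positive combination of nef and ample, hence ample. Choosing $a>\frac{q_i}{q_i+\epsilon}$ for all $i$ (and $a<1$) forces $\lceil M\rceil=B$. The point is that the effective divisor $\tilde C$ absorbs the ``excess'' in the $E_i$-coefficients that makes $B$ itself fail to be nef; without invoking $C$ the argument cannot go through.
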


\begin{proof}
  Let $\tilde C$ be the strict transform of $C$. Then $\pi^*C=\tilde C+\sum
  q_iE_i$.  Let us choose a small $\epsilon>0$ such that $H-\epsilon
  E$ is ample. For some $a$ let us consider the $\QQ$-divisor
$$M=(d+1)H- a\tilde C - \sum a(q_i+\epsilon)E_i .$$
If $0<a<1$ then the numerical equivalence
$$M\equiv (d+1)(1-a)H +a(H-\epsilon E)+a(dH-\tilde C -\sum q_iE_i) \equiv (d+1)(1-a)H +a(H-\epsilon E)$$
shows that $M$ is ample. Now let us choose $a<1$ such that
$a>\frac{q_i}{q_i+\epsilon}$ for every $i$.  Then
$$B=\lceil M\rceil =(d+1)H-\sum q_i E_i,$$
so $$H^1(X, K_X+\lceil M\rceil)= H^1(\PP^2 , m_{P_1}^{q_1-1}...m_{P_m}^{q_m-1}(d-2))$$
as required.
\end{proof}

\medskip

\begin{remark}\label{ncd}
Note that even though the support of $\Delta=\lceil M\rceil -M$ need
not be a simple normal crossing divisor, the proof of Sakai's lemma
\cite[Exercise 6.6]{Laz} shows that if $\tau : X'\to X$ is a log
resolution of $(X, \Delta )$ then vanishing of $H^1(X', K_{X'}+\lceil
\tau^* M\rceil )$ implies vanishing of $H^1(X, K_{X}+\lceil M\rceil
)$. Let $E'$ be the exceptional divisor of $\tau$.  We can choose a
small $\epsilon '>0$ such that $M'=\tau^* M-\epsilon 'E'$ is ample and
$\lceil M'\rceil=\lceil \tau^* M\rceil$. Therefore if $H^1(X,
K_{X}+\lceil M\rceil )\ne 0$ then there exists an ample divisor $M'$
on $X'$ such that its fractional part $\Delta'=\lceil M'\rceil -M'$ is
a simple normal crossing divisor and $H^1(X', K_{X'}+\lceil M'\rceil
)\ne 0$.
\end{remark}

\medskip

\begin{question}
In the notation as above:
\begin{enumerate}
\item Is it true that $\Sigma$ imposes
independent conditions on curves of degrees $s \ge d-2$?
\item More generally, is it true that
$$H^1(\PP^2 , m_{P_1}^{q_1-1}...m_{P_m}^{q_m-1}(s))=0 $$
for $s\ge d-2$?
\item If $\binom{d}{2}\le \sum_{i=1}^m\binom{q_i}{2}$ (corresponding to $\chi (Y, K_Y+B)\le 0$)
then can the linear system $ |m_{P_1}^{q_1}...m_{P_m}^{q_m}(d)|$ contain any reduced curves?
\end{enumerate}
\end{question}

Clearly, in characteristic zero by the Kawamata--Viehweg vanishing  the answers to  questions 1 and 2 are positive
and  the answer to question 3 is negative (see \cite[Exercise 6.8]{Laz}). By \cite[Theorem 1.1]{CT} Kawamata--Viehweg
vanishing fails on rational surfaces in positive characteristic. However, one can still hope that answers
to the above questions are the same as in characteristic zero.

\subsection*{Acknowledgement}

The author would like to thank. H. Esnault, R. Pink, K. Schwede, V. Srinivas,
A. Werner and D. Zhang for useful conversations.  In particular, the
idea of proof of the first part of Theorem \ref{main-1} is entirely
due to H. Esnault and V. Srinivas, although the author had not seen
their proof of ampleness of $K_{\bar X}+D$.

\end{document}